\documentclass[10pt,a4paper]{amsart}

\usepackage{amsmath,amsthm,verbatim,amscd,amssymb,setspace,enumitem,hyperref}
\usepackage{exscale,color}
\usepackage[colorinlistoftodos,prependcaption]{todonotes}
\usepackage{graphicx}
\usepackage{tikz}
\usetikzlibrary{patterns}
\usepackage{enumitem}
\usepackage{mathrsfs}
\usepackage{tikz-cd}
\usepackage{caption}

\renewcommand{\epsilon}{\varepsilon}
\newcommand{\N}{\mathbb{N}}

\newcommand{\R}{\mathbb{R}}
\newcommand{\C}{\mathbb{C}}
\renewcommand{\P}{\mathbb{P}}

\newcounter{mtheorem}
\newtheorem{mtheorem}[mtheorem]{Theorem}

\newtheorem{mcorollary}[mtheorem]{Corollary}

\renewcommand{\P}{\mathbb{P}}
\newcommand{{\vol}}{\rm vol}

\newcommand{\Ric}{\operatorname{Ric}}

\newcommand{\Rm}{\operatorname{Rm}}

\providecommand{\sol}{\operatorname{sol}}

\def\Rm{\operatorname{Rm}}

\newtheoremstyle{fancy}{}{}{\itshape}{}{\textbf\bgroup}{.\egroup}{ }{}
\newtheoremstyle{fancy2}{}{}{\rm}{}{\textbf\bgroup}{.\egroup}{ }{}

\theoremstyle{fancy}
\newtheorem{theorem}{Theorem}[section]
\newtheorem{lemma}[theorem]{Lemma}
\newtheorem{corollary}[theorem]{Corollary}
\newtheorem{defn-prop}[theorem]{Definition \& Proposition}
\newtheorem{prop}[theorem]{Proposition}

\newtheorem{conjecture}[theorem]{Conjecture}

\theoremstyle{fancy2}
\newtheorem{definition}[theorem]{Definition}
\newtheorem{example}[theorem]{Example}
\newtheorem{remark}[theorem]{Remark}

\newtheorem{claim}[theorem]{Claim}

\theoremstyle{definition}
\newtheorem{assumption}{Assumption}[section]

\theoremstyle{definition}

\setlist{leftmargin=*}

\numberwithin{equation}{section}

\begin{document}
\title{K\"ahler-Ricci Tangent Flows in the Analytic Minimal Model Program}

\author{Longteng Chen}
\address[Longteng Chen]{Université Paris-Saclay, CNRS, Laboratoire de Mathématiques d'Orsay, 91405 Orsay, France }
\email{longteng.chen@universite-paris-saclay.fr}
\author{Max Hallgren}
\address[Max Hallgren]{University of Arizona, Department of Mathematics, 617 N. Santa Rita Ave., Tucson, AZ 85721}
\email{mhallgren@arizona.edu}
\author{Lucas Lavoyer}
\address[Lucas Lavoyer]{Mathematisches Institut, Universit\"at M\"unster, 48149 M\"unster, Germany}
\email{lucas.lavoyer@uni-muenster.de}

\begin{abstract} We describe certain finite-time singularities of the K\"ahler–Ricci flow arising in the analytic minimal model program. Assuming that convergence to an asymptotically conical K\"ahler–Ricci shrinker is realized by holomorphic maps, we prove that, in a fixed holomorphic gauge, the nearby flow is modeled on the shrinker at the level of K\"ahler potentials. Consequently, every noncollapsed K\"ahler–Ricci flow through singularities in complex dimension two is modeled on a shrinker–cone–expander transition, confirming a strong form of Song's conjectural picture. We also show analogous results in higher dimensions under the Calabi ansatz, and improve known results in the compact shrinker case. These give the first compact Ricci flows through conical singularities whose small-scale behavior is fully described.
\end{abstract}

\maketitle

\section{Introduction}

\subsection{Background and local models}

A central problem in complex geometry is to understand complex manifolds up to birational equivalence. The minimal model program addresses this problem by replacing a given variety with a simpler birational model through a sequence of birational surgeries. A typical surgery is encoded by a diagram
\begin{equation} 
\label{diagram:macro} \begin{tikzcd}
	M && {M'} \\
	& Y
	\arrow[dashed, from=1-1, to=1-3]
	\arrow["\pi", from=1-1, to=2-2]
	\arrow["{\pi'}"', from=1-3, to=2-2]
\end{tikzcd}
\end{equation}
where $M,M',Y$ are normal analytic varieties, $M,M'$ have mild singularities, and $\pi,\pi'$ are proper morphisms contracting appropriate families of holomorphic curves in $M,M'$. 

The analytic minimal model
program, initiated by Song and Tian \cite{songtian}, seeks a metric counterpart of this process. For simplicity, we restrict here to the case in which $M$ and $M'$ are smooth and projective. Given a K\"ahler metric $\omega$ on $M$ in the appropriate K\"ahler class, it is shown in \cite{songtian} that there are canonical solutions $(\omega_t)_{t\in [0,T)}$ and $(\omega_t')_{t\in (T,T')}$ of the K\"ahler-Ricci flow 
\begin{equation*}
    \partial_t \omega_t = -\Ric(\omega_t)
\end{equation*}
such that $\omega_0 = \omega$ and 
\begin{equation*}
    \lim_{t \nearrow T} \pi_{\ast}\omega_t = \omega_Y =\lim_{t\searrow T}\pi_{\ast}'\omega_t'
\end{equation*} 
in the sense of currents for some singular metric $\omega_Y$ on $Y$, with smooth convergence away from the exceptional loci of $\pi,\pi'$. The morphisms $\pi$ and $\pi'$ contract exactly those curves whose areas with respect to $\omega_t$ and $\omega_t'$, respectively, converge to $0$ as $t\to T$. It is conjectured in \cite{songtian} that the flow through singularities is continuous in the Gromov-Hausdorff sense, so that the Ricci flow provides a continuous geometric transformation corresponding to a sequence of discrete algebraic transformations. Song-Weinkove confirmed this continuity in the case $\dim_{\mathbb{C}}(M)=2$ \cite{SW2}; in that setting, $M'=Y$ and $\pi$ blows down finitely many disjoint $(-1)$-curves in $M$. In higher dimensions, this continuity is still conjectural, though it is known in certain situations where $\omega$ is invariant under a large symmetry group \cite{songyuan,JianSong}.  
For further background on the K\"ahler--Ricci flow and its relation to
the analytic minimal model program, we refer to
\cite{SongTian-SurfPositKodDim,SongTian-CanonicalMeasures,SW1,SW2,
SongWeinkove,TosattiNotes,songyuan,jiansongtian,
HallgrenJianSongTian} and the references therein.

The aforementioned results do not, however, provide a detailed description of the small-scale metric geometry of the surgery. This is an important difference
from Perelman's treatment of three-dimensional Ricci flow \cite{Perelman2003}, where a good understanding of the
geometry of high-curvature regions is essential for the surgery
construction. Consequently, several fundamental questions
remain: which blow-up models can occur near the contracted locus, which tangent cones arise at the singular time slice, and how the small-scale behaviors of the incoming and outgoing flows are related near the surgery. 

To state the picture conjectured in \cite{JianSong}, we consider a K\"ahler-Ricci flow through singularities corresponding to the diagram \eqref{diagram:macro}. For simplicity, we shift the singular time to $t=0$, and continue to assume that $M$ and $M'$
 are smooth, though a more general version is formulated in \cite[Section 6]{JianSong}.
 
\begin{conjecture}[\cite{JianSong}] 
\label{conjecture:song}
The K\"ahler-Ricci flow through a smooth flip or divisorial contraction satisfies the Type I curvature bounds
\begin{equation}
\label{eq:TypeIcurvatureconjecturein} \limsup_{t\nearrow 0}|t| \sup_M |\Rm_{g_t
}|_{g_t} <\infty, 
\end{equation}
\begin{equation} \label{eq:TypeIcurvatureconjectureout}
 \limsup_{t\searrow 0} t\sup_{M'} |\Rm_{g_t'}|_{g_t'}<\infty,
\end{equation}
and for any choice of $y_0 \in Y$, $x_0 \in \pi^{-1}(y_0)$, $x_0'\in (\pi')^{-1}(y_0)$, the pointed rescaled flows as $\lambda\to\infty$ satisfy
\begin{equation} \label{eq:tangentflowin}
    (M,(\lambda g_{\lambda^{-1}t})_{t\in [-\lambda T,0)},x_0) \to (M_{\sol},(g_{\sol,t})_{t\in (-\infty,0)},x_{\infty}), 
\end{equation}
\begin{equation} \label{eq:tangentflowout}
    \qquad (M',(\lambda g_{\lambda^{-1}t}')_{t\in (0,\lambda (T'-T)]},x_0') \to (M_{\exp},(g_{\exp,t})_{t\in (0,\infty)},x_{\infty}')
\end{equation}
in the pointed Cheeger-Gromov sense, where $(M_{\sol},g_{\sol,t})$ and $(M_{\exp},g_{\exp,t})$ are the self-similar flows generated by asymptotically conical shrinking and expanding K\"ahler-Ricci solitons, respectively. Moreover, $(M_{\sol},g_{\sol})$ and $(M_{\exp},g_{\exp})$ have a common asymptotic cone $(\mathcal{C},g_{\mathcal{C}})$, equal to the unique tangent cone of $(Y,g_0)$ at $y_0$. In particular, the ancient flow $(g_{\sol,t})_{t<0}$ and the immortal flow $(g_{\exp,t})_{t>0}$  form a self-similar K\"ahler-Ricci flow through singularities corresponding to the diagram
\begin{equation}
\label{diagram:micro}
\begin{tikzcd}
	{M_{\operatorname{sol}}} && {M_{\exp}} \\
	& {\mathcal{C}}
	\arrow[dashed, from=1-1, to=1-3]
	\arrow["{\pi_{\operatorname{sol}}}", from=1-1, to=2-2]
	\arrow["{\pi_{\exp}}"', from=1-3, to=2-2]
\end{tikzcd},
\end{equation}
and this serves as the infinitesimal model of the birational surgery \eqref{diagram:macro} at $y_0$. 
\end{conjecture}
In this paper, we establish a strong form of Conjecture \ref{conjecture:song} in complex dimension two (Corollary \ref{cor:2dim}), for metric flips satisfying a Calabi ansatz (Corollary \ref{cor:calabi}), and for canonical surgical contractions to smooth orbifolds satisfying a Calabi ansatz (Corollary \ref{cor:orb}). In each of these settings, it is already known that \eqref{eq:TypeIcurvatureconjecturein} and \eqref{eq:tangentflowin} hold \cite{jiansongtian,CHM}. Our Theorem \ref{thm:main1} upgrades the infinitesimal description \eqref{eq:tangentflowin} of the singularity to a local description: we show that on a fixed neighborhood of the
exceptional set of $\pi$, the original K\"ahler--Ricci flow can be written as a
$\partial\overline{\partial}$-exact perturbation of its tangent flow, with
quantitative decay estimates for the potential function and its derivatives. In particular, this shows $(Y,g_0)$ is a compact analytic space with isolated conical singularities in the sense of \cite[Definition 1.1]{CHL}, thereby ensuring a strong local description of the flow coming out of the singularity as well, and verifying Conjecture \ref{conjecture:song} in these settings.

\subsection{Statements of main theorems}

Let $(M^n,J)$ be an $n$-dimensional compact K\"ahler manifold, and let $(\widetilde{g}_t)_{t\in [-T,0)}$ be a K\"ahler-Ricci flow on $M$ with first singular time $t=0$. In \cite[Section 2.6]{Bam3}, it was shown that there is a naturally defined metric space $(M_0,d_0)$ which serves as the $0$-time slice of the flow. We refer the reader to Section \ref{sec-basic setup for polarized KRF} for the precise definition. By \cite{GPSS,LiZhang}, $M_0$ is isometric to the unique Gromov-Hausdorff limit of $(M,d_{\widetilde{g}_t})$ as $t\nearrow 0$. By \cite[Theorem 1.4]{hacon}, there exists a surjective holomorphic map $\pi:M\to Y$ with connected fibers to a normal K\"ahler space $(Y,\omega_Y)$ such that $\alpha:=\pi^{\ast}[\omega_Y]=[\widetilde{\omega}_{-T}]-Tc_1(M)$. Then by \cite[Theorem 1.5]{collinstosatti}, the K\"ahler-Ricci flow $(\widetilde{g}_t)_{t\in [-T,0))}$ develops singularities along the non-K\"ahler locus $E_{nK}(\alpha)$ of the class $\alpha$, so $\widetilde{g}_t|_{M\setminus E_{nK}(\alpha)}$ converges locally smoothly to a K\"ahler metric $\widetilde{g}_0$, and $(M\setminus E_{nK}(\alpha),d_{\widetilde{g}_0})$ is locally isometric to a (possibly empty) open subset of $M_0$. 

\begin{remark} \label{rem:pihat} By extending arguments of \cite{CHM}, we will show in Lemma \ref{limit on the base} that there exists a canonical continuous and surjective map $\widehat{\pi}:M_0 \to Y$ making the following diagram commute:
\[\begin{tikzcd}
	{M\setminus E_{nK}(\alpha)} && M \\
	\\
	{M_0} && Y
	\arrow[hook, from=1-1, to=1-3]
	\arrow[hook', from=1-1, to=3-1]
	\arrow["\pi", from=1-3, to=3-3]
	\arrow["{\widehat{\pi}}"', from=3-1, to=3-3]
\end{tikzcd}\]
where we identify $M\setminus E_{nK}(\alpha)$ with a subset of $M_0$ by $x\mapsto \lim_{t_0\nearrow 0}(\nu_{x,t_0;t})_{t<t_0}$ as in \cite[Section 2.6]{Bam3}. 
\end{remark}

For any $\widetilde{\nu}=(\widetilde{\nu}_t)_{t\in [-T,0)} \in M_0$, and any $t_i \nearrow 0$, the Ricci flow compactness theory of \cite{Bam3} gives a subsequence whose parabolic dilations 
\begin{equation*}
    (M,(|t_i|^{-1}\widetilde{g}_{|t_i|t})_{t\in [-|t_i|^{-1}T,0)},(\widetilde{\nu}_{|t_i|t})_{t\in [-|t_i|^{-1}T,0)})
\end{equation*}
converge in an appropriate sense to the flow induced by a (possibly singular) gradient K\"ahler-Ricci shrinker. Such a limit is called a tangent flow of $(M,J,(\widetilde{g}_t)_{t\in [-T,0)},\widetilde{\nu})$. We restrict to the case in which the tangent shrinker $(M_{\sol},J_{\sol},g_{\sol},f_{\sol})$ is smooth. Writing $d\widetilde{\nu}_t = (2\pi|t|)^{-n}e^{-\widetilde{f}_t}d\widetilde{g}_t$, there is then an open exhaustion $(U_i)_{i\in \mathbb{N}}$ of $M_{\sol}$, and open smooth embeddings $\psi_i:U_i \to M$ such that 
\begin{equation} \label{eq:diffeoconvergence}
    |t_i|^{-1}\psi_i^{\ast}\widetilde{g}_{|t_i|t}\to g_{\sol,t}, \qquad \psi_i^{\ast}J \to J_{\sol}, \qquad \psi_i^{\ast}\widetilde{f}_{|t_i|t} \to f_{\sol,t}+W
\end{equation}
in $C_{\operatorname{loc}}^{\infty}(M_{\sol}\times (-\infty,0))$ as $i\to \infty$, where $g_{\sol,t}$, $f_{\sol,t}$, and $W$ are defined in Section \ref{subsection:ACshrinkers}.

\begin{definition}\label{definition: holomorphic approximation}
We say $(M,J,(g_t)_{t\in [-T,0)},(\nu_t)_{t\in [-T,0)})$ can be \emph{holomorphically approximated} by the shrinker $(M_{\sol},J_{\sol},g_{\sol},f_{\sol})$ if there exist embeddings $\psi_i$ as in \eqref{eq:diffeoconvergence} that additionally satisfy $\psi_i^{\ast}J = J_{\sol}$. 
\end{definition}

\begin{definition} We say a gradient K\"ahler-Ricci shrinker $(M_{\sol},J_{\sol},g_{\sol},f_{\sol})$ is \emph{asymptotically conical} (AC) if it is \emph{non-flat}, non-compact, complete and satisfies
\begin{equation} \label{eq:quadraticcurvaturedecay}
    \sup_{M_{\sol}} |\Rm_{g_{\sol}}|_{g_{\sol}}f_{\sol}<\infty.
\end{equation}
\end{definition}

Our first theorem shows that holomorphic approximation by an AC shrinker forces uniqueness of the tangent flow and yields convergence, in a fixed holomorphic gauge, at the level of K\"ahler potentials. In the following we adhere to the notation of Remark \ref{rem:pihat} and Theorem \ref{thm:AC}; in particular, $\pi:M\to Y$ is a holomorphic map with connected fibers mapping onto a K\"ahler space $(Y,\omega_Y)$ such that $[\widetilde{\omega}_{-T}]-Tc_1(M) = \pi^{\ast}[\omega_Y]$, and $\pi_{\sol}:M_{\sol}\to \mathcal{C}$ is the Remmert reduction map of the AC shrinker $M_{\sol}$ to its asymptotic cone $(\mathcal{C},g_{\mathcal{C}})$. The vertex and radius function of the cone are denoted by $o$ and $r$, respectively. For any point in $M\setminus E_{nK}(\alpha),$ the convergence below will trivially be satisfied if one considers the Gaussian shrinker as the model, so we only focus on the behavior of the flow near its singular set $E_{nK}(\alpha).$

\begin{mtheorem} \label{thm:main1} Assume $(M,J,(\widetilde{g}_t)_{t\in [-T,0)},(\widetilde{\nu}_t)_{t\in [-T,0)})$ can be holomorphically approximated by an AC K\"ahler-Ricci shrinker $(M_{\sol},J_{\sol},g_{\sol},f_{\sol})$.

Then there are neighborhoods $\mathcal{U} \subseteq \{r<\frac{1}{4}\}$ of $o$ and $\mathcal{V}\subseteq Y$ of $y_0:=\widehat{\pi}(\widetilde{\nu})$, along with biholomorphisms $F:\pi_{\sol}^{-1}(\mathcal{U})\to \pi^{-1}(\mathcal{V})$ and $\widehat{F}:\mathcal{U} \to \mathcal{V}$ and functions 
\begin{equation*} \widetilde{\varphi} \in C^{\infty}\left(\pi_{\sol}^{-1}(\mathcal{U})\times [-\min\{T,\frac{1}{4}\},0)\right)\cap C^0\left(\pi_{\sol}^{-1}(\mathcal{U}) \times [-\min\{T,\frac{1}{4}\},0]\right), \qquad \widehat{\varphi} \in C^{\infty}(\mathcal{U}\setminus \{o\})\cap C^0(\mathcal{U})\end{equation*} such that the following hold:
\begin{enumerate}
    \item\label{mainthm:1} $\widehat{F}(o)=y_0$, $F(\pi_{\sol}^{-1}(o))=\pi^{-1}(y_0)$, and the following diagram commutes:
    \[\begin{tikzcd}
	{M_{\operatorname{sol}}} & {\pi_{\operatorname{sol}}^{-1}(\mathcal{U})} && {\pi^{-1}(\mathcal{V})} & M \\
	\\
	{\mathcal{C}} & {\mathcal{U}} && {\mathcal{V}} & Y
	\arrow[hook', from=1-2, to=1-1]
	\arrow["F", from=1-2, to=1-4]
	\arrow["{\pi_{\operatorname{sol}}}"', from=1-2, to=3-2]
	\arrow[hook, from=1-4, to=1-5]
	\arrow["\pi", from=1-4, to=3-4]
	\arrow[hook', from=3-2, to=3-1]
	\arrow["{\widehat{F}}"', from=3-2, to=3-4]
	\arrow[hook, from=3-4, to=3-5]
\end{tikzcd};\]

    \item \label{mainthm:2} $F^{\ast}\widetilde{\omega}_t = \omega_{\sol,t}+\sqrt{-1}\partial \overline{\partial} \widetilde{\varphi}_t$ on $\pi^{-1}_{\sol}(\mathcal{U})$ for all $t\in [-\min\{T,\frac{1}{4}\},0)$;

    \item \label{mainthm:3} $\widehat{F}^{\ast}(\pi_{\ast}\widetilde{\omega}_0) = \omega_{\mathcal{C}}+\sqrt{-1}\partial \overline{\partial}\widehat{\varphi}$ and $\pi_{\sol}^{\ast}\widehat{\varphi}=\widetilde{\varphi}_0$ on $\mathcal{U}\setminus \{o\}$, where $\widetilde{\omega}_0:=\lim_{t\nearrow 0}\widetilde{\omega}_t$ in the sense of currents;

    \item \label{mainthm:4} For any $k\in \mathbb{N}$, there exists a constant $C_k \in [1,\infty)$ such that
    \begin{equation*} |(\nabla^{g_{\sol,t}})^k\widetilde{\varphi}_t|_{g_{\sol,t}}(x) \leq C_k \left( r(x)+\sqrt{|t|}  \right)^{2-k} \left| \log \left( r(x)+\sqrt{|t|}\right) \right|^{-1}
    \end{equation*}
    for all $(x,t)\in \pi_{\sol}^{-1}(\mathcal{U})\times [-\min\{T,\frac{1}{4}\},0)$,

    \item \label{mainthm:5} For any $k\in \mathbb{N}$, there exists $C_k \in [1,\infty)$ such that 
    \begin{equation*} |(\nabla^{g_{\mathcal{C}}})^k\widehat\varphi|_{g_{\mathcal{C}}}(x) \leq C_k r^{2-k}(x) \left| \log r(x)\right|^{-1}
    \end{equation*}
    for all $x\in \mathcal{U} \setminus \{o\}$.
\end{enumerate}
\end{mtheorem}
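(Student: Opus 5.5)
The plan is to first build the holomorphic gauge, and only afterwards compare potentials in that gauge.

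\textbf{Step 1: construct $F$ and $\widehat F$.} Fix one index $i$ so large that $\psi_i$ is defined on $\pi_{\sol}^{-1}(\{r<R\})$ for some $R$ large, and that $|t_i|^{-1}\psi_i^{\ast}\widetilde g_{-|t_i|}$ is $C^\infty$-close to $g_{\sol,-1}$ there. Since $\psi_i^{\ast}J=J_{\sol}$, the map $\psi_i$ is a holomorphic open embedding. The compact exceptional set $\pi_{\sol}^{-1}(o)$, the maximal compact analytic subset of $M_{\sol}$, is sent to a compact analytic set. This set is contracted by $\pi$, because the areas of curves on it tend to zero. We identify it with $\pi^{-1}(y_0)$ using the map $\widehat\pi$ of Lemma~\ref{limit on the base} together with the convergence $\widetilde\nu_{|t_i|t}\to$ the shrinker conjugate heat kernel. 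Holomorphic functions on $\mathcal{C}$ near $o$ pull back to $\pi_{\sol}$-fiber-constant functions. Pushing these forward by $\psi_i$ and using normality of $Y$ (Riemann extension and Remmert reduction) yields $\widehat F$ with $\widehat F\circ\pi_{\sol}=\pi\circ F$, where $F:=\psi_i\circ(\text{dilation by }\sqrt{|t_i|})$ is precomposed with the soliton flow so that it matches scales. Since the vector field $\nabla f_{\sol}$ is holomorphic and acts on $\mathcal C$ by dilations, we may rescale so that $\mathcal U\subseteq\{r<1/4\}$. This gives item~\eqref{mainthm:1}.

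\textbf{Step 2: reduce to a parabolic complex Monge--Amp\`ere equation.} On $\pi_{\sol}^{-1}(\mathcal U)$, the forms $F^{\ast}\widetilde\omega_t$ and $\omega_{\sol,t}$ are both Kähler--Ricci flows. Their cohomology classes are affine in $t$ with the same slope $-c_1$, and they agree at $t=-1$ up to an exact term, because $H^{1,1}$ of a neighborhood of the exceptional set is detected on $\pi_{\sol}^{-1}(o)$, where the classes match in the limit. Hence
\[
F^{\ast}\widetilde\omega_t=\omega_{\sol,t}+\sqrt{-1}\partial\overline\partial\widetilde\varphi_t ,
\]
with $\widetilde\varphi_t$ solving
\[
\partial_t\widetilde\varphi=\log\frac{(\omega_{\sol,t}+\sqrt{-1}\partial\overline\partial\widetilde\varphi)^n}{\omega_{\sol,t}^n}+h ,
\]
where $h$ is pluriharmonic, and we normalize so that $h=0$. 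This gives item~\eqref{mainthm:2}. Continuity of $\widetilde\varphi$ up to $t=0$ follows from the standard uniform $C^0$ and $\dot\varphi$ bounds of Song--Tian, and item~\eqref{mainthm:3} follows once the limit $\widetilde\varphi_0$ is shown to be constant on the fibers of $\pi_{\sol}$, since then it descends to $\widehat\varphi$ on $\mathcal U$.

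\textbf{Step 3: the decay estimate \eqref{mainthm:4} (main obstacle).} Parabolic rescaling by $\lambda=(r(x)+\sqrt{|t|})^{-2}$ converts item~\eqref{mainthm:4} into the statement that the rescaled potential $\lambda\widetilde\varphi$ is $O(|\log\lambda|^{-1})$ in $C^k$ on unit parabolic regions. Higher derivatives then follow from local parabolic Evans--Krylov and Schauder estimates, because the rescaled metrics are uniformly equivalent to the model by the convergence \eqref{eq:diffeoconvergence}. The essential point is therefore the $C^0$ rate. Convergence of every rescaling to the same shrinker — uniqueness of the tangent flow, which has to be proved within this step — only gives $o(1)$. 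To get the logarithmic rate, I would use a Łojasiewicz-type or discrete iteration argument over dyadic scales $\lambda=4^j$. Linearizing the Monge--Amp\`ere equation at the AC shrinker, the operator $\Delta_f+1$ acting on potentials has its kernel given by holomorphy potentials, which can be absorbed by modifying $F$ through automorphisms. Its spectral gap, on the asymptotically conical ends, then yields a decay inequality of the form $a_{j+1}\le a_j-ca_j^2$ for the scale-$j$ deviation $a_j$, giving $a_j\lesssim 1/j\sim|\log r|^{-1}$. The difficulty is justifying this on a noncompact shrinker with only a local gauge. I expect this to require weighted estimates on the cone-like region together with the matching of the boundary at $r\sim 1/4$.

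\textbf{Step 4: the cone estimate \eqref{mainthm:5}.} Finally, item~\eqref{mainthm:5} follows by letting $t\to0$ in item~\eqref{mainthm:4}. On regions away from $\pi_{\sol}^{-1}(o)$, the shrinker metric $\omega_{\sol,t}$ converges to $\pi_{\sol}^{\ast}\omega_{\mathcal C}$, so the derivative bounds pass to the limit and descend to $\widehat\varphi$.
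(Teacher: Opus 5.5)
There is a genuine gap, and it is where you locate the main obstacle: Step 3 is essentially the whole theorem, and the mechanism you sketch for it is not the right one. A spectral gap for the linearized operator modulo gauge gives a contraction $a_{j+1}\le e^{-c}a_j$, not $a_{j+1}\le a_j-ca_j^2$. The logarithm in items (4)--(5) is not a \L{}ojasiewicz rate.

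In the paper, the decay is first made exponential by two gauge corrections. The pluriharmonic eigenfunctions with $\lambda_\alpha\le 1$ are subtracted from the potential; this includes the \emph{unstable} ones with $\lambda_\alpha<1$, which you do not mention. The modes in $H$ are absorbed by automorphisms $\zeta_1^{h_j}\to\zeta_1^{h_\infty}$. After this, the potential decays exponentially in $s=\log\frac{1}{|t|}$, i.e.\ like $|t|^{1+\alpha}$. But it does so only on the parabolic region $b_{\sol,t}\lesssim\sqrt{|t|\log\frac{1}{|t|}}$, because the decay is measured in the Gaussian-weighted norm on $\Omega_h(a\sqrt{s})$ (Proposition~\ref{prop:decay}).

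Even this needs an ingredient your proposal lacks. The contradiction--compactness argument only closes if the weighted $L^2$ norms of the blown-up potentials do not concentrate at spatial infinity of the shrinker. The paper proves this in Lemma~\ref{lem:nonconcentration} by grafting $\varphi_\pm$ onto global subsolutions, then using hypercontractivity (for $\Delta_{f_h}$ and along the Ricci flow) together with the Fang--Li conjugate heat kernel bounds.

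The second gap is the conical region $r(x)\gg\sqrt{|t|}$ as $t\nearrow 0$. There the rescaled flow is close to the cone rather than to the shrinker core, there is no spectral gap, and your dyadic iteration has nothing to act on. The paper reaches this region by pseudolocality. For each $x$ it chooses $t_0(x)$ with $b_{h_\infty,0}^2(x)\sim|t_0(x)|\log\frac{1}{|t_0(x)|}$, and applies pseudolocality at scale $b(x)$ to get $|\Rm|\lesssim b^{-2}(x)$ on $[t_0(x),0)$. Integrating in time then gives $|F^{\ast}\widetilde{g}_t-g_{h_\infty,t}|\lesssim|t_0(x)|/b^2(x)\sim 1/\log\frac{1}{b(x)}$. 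This is exactly where the $|\log|^{-1}$ comes from.

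The same point undermines your claim that higher derivatives follow from local estimates because of \eqref{eq:diffeoconvergence}. That convergence is subsequential and holds only on compact subsets of $M_{\sol}\times(-\infty,0)$. It gives no uniform bound $|\Rm(\widetilde{g}_t)|\lesssim(r+\sqrt{|t|})^{-2}$ up to $t=0$; that bound must be produced, and it is again the pseudolocality output. Likewise, continuity of $\widetilde{\varphi}$ at $t=0$ comes in the paper from the resulting bound on $\partial_t\widetilde{\varphi}$. It does not follow directly from Song--Tian bounds, which are relative to a smooth reference form rather than the degenerating $\omega_{\sol,t}$.

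Finally, item (1) cannot be settled in Step 1 before any estimates. Contraction of curves only gives $F(\pi_{\sol}^{-1}(o))\subseteq\pi^{-1}(y_0)$. You also need equality, and $\pi$-saturation of $F(\pi_{\sol}^{-1}(\mathcal{U}))$, so that $F$ maps onto $\pi^{-1}(\mathcal{V})$ and $\widehat{F}$ is a biholomorphism. For this you must know that $F(\pi_{\sol}^{-1}(\mathcal{U}\setminus\{o\}))$ avoids the non-K\"ahler locus $E_{nK}(\alpha)$. The paper deduces this from that same curvature bound.
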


\begin{remark} We note that \ref{mainthm:1}, \ref{mainthm:2} and \ref{mainthm:4} immediately imply the smooth Cheeger-Gromov convergence \eqref{eq:tangentflowin}. Similarly, \ref{mainthm:1}, \ref{mainthm:3} and \ref{mainthm:5} imply the pointed Gromov-Hausdorff convergence $(Y,\lambda d_{Y},y_0)\to (\mathcal{C},d_{g_{\mathcal{C}}},o)$ as $\lambda \to \infty$, where $d_Y$ is the length metric of $(\mathcal{V}\setminus \{y_0\},\pi_{\ast}\widetilde{\omega}_0)$ completed near $y_0$. 
\end{remark}

\begin{remark} 
    Our proof of Theorem \ref{thm:main1} also gives fast convergence in a smaller region of the flow (cf. \eqref{eq:metricclosenessinshrinkerregion}):
    \begin{equation*}
        \sup_{t\in [-\min\{T,\frac{1}{4}\},0)} \sup_{\{r \leq \beta \sqrt{|t|\log\frac{1}{|t|}}\} } |(\nabla^{g_{\sol,t}})^k \widetilde{\varphi}_t|_{g_{\sol,t}} \leq C_k |t|^{1+\beta-\frac{k}{2}}
    \end{equation*}
for some $\beta>0$. With respect to the modified flow \eqref{eq:intro:modifiedKRF}, defined for all large $s\geq 0$, this corresponds to exponential decay of the relative K\"ahler potential on a region of the form
\begin{equation*}
    \bigcup_{s \geq s_0} \left( \{f_{\sol} \leq \beta s\}\times \{s\} \right).
\end{equation*}
\end{remark}

\subsubsection{K\"ahler-Ricci flow with surgery on complex surfaces}

We now apply Theorem \ref{thm:main1} to birational surgeries appearing in the analytic minimal model program \cite{songtian}. Combined with the results of \cite{CHM,CHL}, this yields a detailed description of any K\"ahler-Ricci flow through surgery on K\"ahler surfaces, and confirms Conjecture \ref{conjecture:song} in complex dimension two. 

Let $(M^2,(g_t)_{t\in [-T,0)})$ be a compact K\"ahler-Ricci flow of complex dimension two that develops a singularity at time $t=0$ and satisfies
    \begin{equation} \label{eq:noncollapsing}
        \lim_{t \nearrow 0} \operatorname{Vol}_{g_t}(M)>0.
    \end{equation}
By \cite{SW1}, there is a blowdown map $\pi:M\to M'$ along a finite disjoint union of $(-1)$-curves $E_1,...,E_N$. The flow performs a canonical surgical contraction with respect to $\pi$ and yields a canonical K\"ahler-Ricci flow $(g_t')_{t\in (0,T']}$ on $M'$ and a metric $d$ on $M'$ such that $T'>0$ and 
    \begin{equation*}
        \lim_{t \nearrow 0} (M,d_{g_t}) = (M',d) = \lim_{t\searrow 0} (M',d_{g_t'})
    \end{equation*}
    in the Gromov-Hausdorff sense, and such that $(M',d)$ is the metric completion of $(M'\setminus \cup_{i=1}^N \pi(E_i),\pi_{\ast}\omega_0)$. We let $\pi_{\operatorname{FIK}}:M_{\operatorname{FIK}}=\mathcal{O}_{\P^1}(-1)
\to \mathbb{C}^2$ be the blowup of $\mathbb{C}^2$ at the origin, and recall that there exists a unique (up to holomorphic isometry) K\"ahler-Ricci shrinker $g_{\operatorname{FIK}}$ on $M_{\operatorname{FIK}}$ \cite{FIK,ConlonDeruelleSun}. The asymptotic cone of this soliton is the K\"ahler cone metric $\omega_{\mathcal{C}} := \frac{\sqrt{-1}}{\sqrt{2}}\partial \overline{\partial}(|z|^{\sqrt{2}})$ on $\mathbb{C}^2$. Moreover, there exists a unique $U(2)$-invariant gradient K\"ahler-Ricci expander $g_{\operatorname{Cao}}$ on $\mathbb{C}^2$ asymptotic to $(\mathbb{C}^2,\omega_{\mathcal{C}})$ at infinity, constructed in \cite{CaoExpander}.  

    By \cite[Theorem A]{CHM}, the flow $(g_t)_{t\in [-T,0)}$ satisfies the Type I curvature bound \eqref{eq:TypeIcurvatureconjecturein} as $t\nearrow 0$.
    Consequently, \cite{Naber4solitons,EndersMullerTopping,MM2015} implies that every tangent flow at time $t=0$ is a smooth K\"ahler-Ricci soliton with bounded curvature. By \eqref{eq:TypeIcurvatureconjecturein} and \cite{CifarelliConlonDeruelleKsurface}, every nontrivial tangent flow is in fact holomorphically isometric to the FIK shrinker $M_{\operatorname{FIK}}$. The following corollary gives a stronger, fixed-gauge sense in which the flow in and out of the singularity at $t=0$ is modeled on a specific shrinker-expander transition. 

\begin{mcorollary} \label{cor:2dim}

    \begin{enumerate}
        \item \label{cor:2dim1} The flow $(M',(g_t')_{t\in (0,T']})$ satisfies the Type I curvature bound \eqref{eq:TypeIcurvatureconjectureout}.

        \item \label{cor:2dim2} For each contracted $(-1)$-curve $E_i \subseteq M$, there exists an open holomorphic embedding $F_i:\mathcal{U} \to M$, where $\mathcal{U} \subseteq M_{\operatorname{FIK}}$ is an open set containing the unique $(-1)$-curve $E_{\operatorname{FIK}}$ of $M_{\operatorname{FIK}}$, such that $F_i(E_{\operatorname{FIK}})=E_i$ and $F_i^{\ast}\omega_t = \omega_{\operatorname{FIK},t}+\sqrt{-1}\partial \overline{\partial} \varphi_{i,t}$ for some $\varphi_i \in C^{\infty}(\mathcal{U} \times [-T,0)) \cap C^0(\mathcal{U}\times [-T,0])$, where for each $k\in \mathbb{N}$,
        \begin{equation*}
            |(\nabla^{g_{\operatorname{FIK},t}})^k \varphi_{i,t}|_{g_{\operatorname{FIK},t}}\leq o \left( (r(x)+\sqrt{|t|})^{2-k} \right)
        \end{equation*}
        for $(x,t)\in \mathcal{U} \times [-T,0)$ as $r(x)+\sqrt{|t|} \to 0$, where $r$ is the radius function of the cone metric $\omega_{\operatorname{FIK},0}=\omega_\mathcal{C}$.

        \item \label{cor:2dim3} $(M',d)$ is a compact analytic space with isolated conical singularities in the sense of \cite[Definition 1.1]{CHL}, whose cone singularities $y_i = \pi(E_i)$ are each modeled on $(\mathbb{C}^2,\omega_{\mathcal{C}})$. 
        
        \item \label{cor:2dim4} For each $i\in \{1,...,N\}$, there exists an open holomorphic embedding $F_i':\pi_{\operatorname{FIK}}(\mathcal{U}) \subseteq \C^2\to M'$ such that $\pi \circ F_i = F_i' \circ \pi_{\operatorname{FIK}}$ and $(F_i')^{\ast}\omega_t' = \omega_{\operatorname{Cao},t}+\sqrt{-1}\partial \overline{\partial} \psi_{i,t}$ for some 
        \newline $\psi_i \in C^{\infty}(\pi_{\operatorname{FIK}}(\mathcal{U})\times (0,T'])\cap C^0(\pi_{\operatorname{FIK}}(\mathcal{U})\times [0,T'])$ with $\pi_{\operatorname{FIK}}^{\ast} \psi_{i,0} = \varphi_{i,0}$ such that for each $k\in \mathbb{N}$,
        \begin{equation*}
            |(\nabla^{g_{\operatorname{Cao},t}})^k \psi_{i,t}|_{g_{\operatorname{Cao},t}}\leq o\left( (r(x)+\sqrt{t})^{2-k} \right)
        \end{equation*}
        for $(x,t)\in \pi_{\operatorname{FIK}}(\mathcal{U})\times (0,T']$ as $r(x)+\sqrt{t} \to 0$.
    \end{enumerate}
\end{mcorollary}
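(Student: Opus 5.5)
The plan is to reduce Corollary \ref{cor:2dim} to Theorem \ref{thm:main1}, and then feed the resulting conical structure of $(M',d)$ into the results of \cite{CHL}. The work splits into three steps.

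\emph{Step 1: holomorphic approximation.} By \cite[Theorem A]{CHM} the flow is Type I, and by \cite{CifarelliConlonDeruelleKsurface} every nontrivial tangent flow at a point of $E_i$ is the FIK shrinker, which is AC because its curvature decays quadratically. The remaining hypothesis of Theorem \ref{thm:main1} is holomorphic approximation: we need embeddings $\psi_i$ with $\psi_i^{\ast}J=J_{\operatorname{FIK}}$ exactly, not just in the limit. I would obtain this as follows.
\begin{itemize}
\item Start from the Cheeger--Gromov embeddings \eqref{eq:diffeoconvergence}, on which $\psi_i^{\ast}J\to J_{\operatorname{FIK}}$ smoothly on compact sets.
\item Use that a neighborhood of a $(-1)$-curve is biholomorphic to a neighborhood of the zero section of $\mathcal{O}_{\P^1}(-1)$ (Grauert's criterion, or the blowup description from \cite{SW1}).
\item Alternatively, correct the embeddings by $\bar\partial$-methods on the nearly holomorphic neighborhoods, using the rigidity of $(-1)$-curves to solve the $\bar\partial$-equation with estimates.
\end{itemize}
Either route should yield exact holomorphic embeddings $\psi_i$ that are $C^\infty_{\operatorname{loc}}$-close to the original ones. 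I expect this step to be the main obstacle, because the corrected maps must still satisfy \eqref{eq:diffeoconvergence} on an exhausting sequence $U_i$, not just on a fixed compact set.

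\emph{Step 2: the incoming flow.} Once holomorphic approximation holds, Theorem \ref{thm:main1} applies at each $y_i=\pi(E_i)$ and gives $F$, $\widehat F$, $\widetilde\varphi$ and $\widehat\varphi$.
\begin{itemize}
\item For \ref{cor:2dim2}, set $F_i:=F$ and $\varphi_i:=\widetilde\varphi$. Since $\pi_{\operatorname{FIK}}^{-1}(o)=E_{\operatorname{FIK}}$ and $\pi^{-1}(y_i)=E_i$, we get $F_i(E_{\operatorname{FIK}})=E_i$. The bound in \ref{mainthm:4} carries the factor $|\log(r+\sqrt{|t|})|^{-1}\to 0$, which is exactly the claimed $o\big((r+\sqrt{|t|})^{2-k}\big)$.
\item The time interval $[-T,0)$ is recovered from $[-\min\{T,\tfrac14\},0)$ by smoothness of the flow on earlier time slices.
\item For \ref{cor:2dim3}: by \ref{mainthm:3} and \ref{mainthm:5}, near each $y_i$ the form $\pi_\ast\omega_0$ is $\omega_{\mathcal C}+\sqrt{-1}\partial\bar\partial\widehat\varphi$, with $\widehat\varphi$ decaying at rate $r^{2-k}|\log r|^{-1}$ in $C^k$. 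This is the local model required by \cite[Definition 1.1]{CHL}. Away from the $y_i$, $\pi_\ast\omega_0$ is smooth.
\item Since $(M',d)$ is the metric completion of $(M'\setminus\cup_i\pi(E_i),\pi_\ast\omega_0)$, it is a compact analytic space with isolated conical singularities modeled on $(\C^2,\omega_{\mathcal C})$.
\end{itemize}

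\emph{Step 3: the outgoing flow.} For \ref{cor:2dim1} and \ref{cor:2dim4}, I would invoke the main theorem of \cite{CHL}: the Kähler--Ricci flow out of a compact space with isolated conical singularities is unique and is locally modeled on the expander asymptotic to each cone. By uniqueness of the $U(2)$-invariant expander asymptotic to $\omega_{\mathcal C}$ \cite{CaoExpander}, that expander is $g_{\operatorname{Cao}}$. Uniqueness in \cite{CHL}, together with the canonical surgical construction of \cite{SW1}, identifies the flow from \cite{CHL} with $(g'_t)$.
\begin{itemize}
\item Take $F_i':=\widehat F$ in the holomorphic chart $\pi_{\operatorname{FIK}}(\mathcal U)\subseteq\C^2$; then $\pi\circ F_i=F_i'\circ\pi_{\operatorname{FIK}}$ is item \ref{mainthm:1}.
\item The expander-potential estimates of \cite{CHL} give $\psi_{i,t}$ with initial value $\widehat\varphi$, so $\pi_{\operatorname{FIK}}^{\ast}\psi_{i,0}=\varphi_{i,0}$ by \ref{mainthm:3}. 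They also give the $o\big((r+\sqrt t)^{2-k}\big)$ bounds.
\item The Type I bound \eqref{eq:TypeIcurvatureconjectureout} follows because the curvature of $\omega_{\operatorname{Cao},t}+\sqrt{-1}\partial\bar\partial\psi_{i,t}$ is at most $C/t$ near $y_i$, while away from the $y_i$ the flow is smooth up to $t=0$.
\end{itemize}
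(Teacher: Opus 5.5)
\paragraph{Steps 2 and 3.} These are exactly the paper's argument. Once holomorphic approximation by $M_{\operatorname{FIK}}$ is known at some $\widetilde\nu$ with $\widehat\pi(\widetilde\nu)=y_i$, items \ref{cor:2dim2} and \ref{cor:2dim3} are read off from Theorem \ref{thm:main1}. Items \ref{cor:2dim1} and \ref{cor:2dim4} come from \cite[Theorem A(iii)]{CHL} and \cite[Proposition 5.4]{CHL}.

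\paragraph{The gap is Step 1.} It is the only part of the corollary that is not a citation, and you leave it as a plan.

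\emph{The Grauert route does not work as stated.} A fixed biholomorphism from a neighborhood of $E_{\operatorname{FIK}}$ onto a neighborhood of $E_i$ says nothing about the Cheeger--Gromov maps $\psi_i$. These maps squeeze ever larger regions of $M_{\operatorname{FIK}}$ into shrinking neighborhoods of $E_i$. Even if $M$ were literally $M_{\operatorname{FIK}}$ near $E_i$, you would still have to show that the $\psi_i$ are close to holomorphic maps, which is the original problem.

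\emph{The $\bar\partial$ route points the right way but omits the mechanism.} What has to be solved is not a scalar $\bar\partial$-equation. Holomorphic functions near $E_{\operatorname{FIK}}$ are constant on it, so correcting functions alone cannot produce a map into $M_{\operatorname{FIK}}$. It is a nonlinear deformation problem for the complex structures $\psi_i^{\ast}J$, whose linearization is $\bar\partial$ on $T^{1,0}$-valued forms with obstruction space $H^1(W',T^{1,0}W')$.

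\emph{What the paper uses.} It invokes Hamilton's theorem \cite{hamilton} (Lemma \ref{lem:perturb}). On $\overline{W'}$ with strictly pseudoconvex boundary and $H^1(W',T^{1,0}W')=0$, every complex structure $C^\infty$-close to $J_{\sol}$ equals $\eta^{\ast}J_{\sol}$ for an embedding $\eta$ close to the inclusion. Hence $\psi_i\circ\eta_i^{-1}$ is holomorphic.

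The required vanishing is Example \ref{example:FIK}, namely $H^1(M_{\operatorname{FIK}},T^{1,0}M_{\operatorname{FIK}})=0$. It follows from
\begin{itemize}
\item the exact sequence $0\to\pi^{\ast}\mathcal{O}_{\P^1}(-1)\to T^{1,0}M_{\operatorname{FIK}}\to\pi^{\ast}T^{1,0}\P^1\to 0$,
\item affineness of the bundle projection,
\item $H^1(\P^1,\mathcal{O}_{\P^1}(j))=0$ for $j\geq -1$,
\item relative GAGA.
\end{itemize}
Ohsawa's theorem \cite{Ohsawa} then transfers the vanishing to the sublevel sets $\Omega(j)$ (Corollary \ref{cor:complexrigid}). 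This vanishing is the precise form of the ``rigidity of $(-1)$-curves'' you invoke.

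\paragraph{The obstacle you anticipated is not the real one.} Suppose that for each fixed $j$ you have holomorphic embeddings of $W_j$ that are $C^\infty$-close to $\psi_i$ for $i\geq\underline{i}(j)$. Then a diagonal argument gives holomorphic maps realizing \eqref{eq:diffeoconvergence} on an exhaustion.

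\paragraph{A missing step before citing \cite{CifarelliConlonDeruelleKsurface}.} That result only identifies \emph{nontrivial} tangent flows, so you must check that the tangent flow at $\widetilde\nu\in\widehat\pi^{-1}(y_i)$ is not the Gaussian. The paper does this as follows. If it were the Gaussian, \cite[Theorem 2.37]{Bam3} would make $\widetilde\nu$ the conjugate heat kernel based at $(x,0)$ for some $x\notin\cup_i E_i$. Then $\widehat\pi(\widetilde\nu)=\pi(x)\neq y_i$, a contradiction.
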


\begin{remark}
 By \cite{CHL,ZhangXu}, a similar description holds for K\"ahler-Ricci flows $(M^n,(\omega_t)_{t\in [-T,0)})$ in any dimension, under the following assumptions:
\begin{enumerate}
    \item the Type I curvature bound \eqref{eq:TypeIcurvatureconjecturein} holds,
    \item $\pi:M\to M'$ is the blowup of $M'$ at finitely many points, and $\pi^{\ast}[\omega_{M'}]=[\omega_0]$ for some K\"ahler metric $\omega_{M'}$ on $M'$.
\end{enumerate}
\end{remark}

\subsubsection{K\"ahler-Ricci flow under Calabi ansatz}

We also obtain a precise description of higher-dimensional K\"ahler-Ricci flows with Calabi ansatz that undergo divisorial contractions or flips by combining Theorem \ref{thm:main1} with \cite{CHL} and \cite[Corollary 2.8]{jiansongtian}. 

Fix integers $n > m \geq 0$. Following the setup of \cite{songyuan}, we consider the total space $M$ of the holomorphic $\mathbb{P}^{m+1}$-bundle $\sigma:\mathbb{P}(\mathcal{O}_{\mathbb{P}^n}\oplus \mathcal{O}_{\mathbb{P}^n}(-1)^{\oplus(m+1)})\to \mathbb{P}^n$ and the total space $M'$ of $\sigma':\mathbb{P}(\mathcal{O}_{\mathbb{P}^m}\oplus \mathcal{O}_{\mathbb{P}^m}(-1)^{\oplus(n+1)})\to \mathbb{P}^m$. Then there is a normal projective variety $Y$ along with birational morphisms $\pi:M\to Y$ and $\pi':M'\to Y$ which contract exactly the zero sections $E,E'$ of $\sigma,\sigma'$, respectively. Equip $M$ with the natural holomorphic $U(n+1)\times U(m+1)$-action for which $U(n+1)\times \{1\}$ lifts the standard action on $\mathbb{P}^n$ and $\{1\}\times U(m+1)$ preserves each fiber. Suppose $(M^{m+n+1},(g_t)_{t\in [-T,0)})$ is a K\"ahler-Ricci flow whose initial metric $g_{-T}$ is $U(n+1)\times U(m+1)$-invariant and which satisfies \eqref{eq:noncollapsing}. By \cite{JianSong}, the K\"ahler-Ricci flow performs a surgical metric flip at time $t=0$, hence there is a canonical K\"ahler-Ricci flow $(g_t')_{t\in (0,T']}$ on $M'$ and metric $d_Y$ on $Y$ such that $T'>0$ and
\begin{equation*}
    \lim_{t\nearrow 0} (M,d_{g_t}) = (Y,d_Y) = \lim_{t\searrow 0} (M',d_{g_t'})
\end{equation*}
in the Gromov-Hausdorff sense. By \cite{lisoliton}, there exists a unique (up to holomorphic isometries) $U(n+1)\times U(m+1)$-invariant AC K\"ahler-Ricci shrinker $(M_{\sol},J_{\sol},g_{\sol})$ whose underlying complex manifold $(M_{\sol},J_{\sol})$ is the total space of $\mathcal{O}_{\mathbb{P}^n}(-1)^{\oplus(m+1)}$ and which is asymptotic to a K\"ahler cone metric $(\mathcal{C},g_{\mathcal{C}})$ on the affine cone $\mathcal{C}$ over $\mathbb{P}^n \times \mathbb{P}^m$. Moreover, there is a unique (up to holomorphic isometries) $U(n+1)\times U(m+1)$-invariant AC K\"ahler-Ricci expander $(M_{\exp},J_{\exp},g_{\exp})$ whose underlying complex manifold $(M_{\exp},J_{\exp})$ is the total space of $\mathcal{O}_{\mathbb{P}^m}(-1)^{\oplus(n+1)}$ and which is asymptotic to $(\mathcal{C},g_{\mathcal{C}})$. Let $E_{\sol},E_{\exp}$ denote the zero sections of $\mathcal{O}_{\mathbb{P}^n}(-1)^{\oplus(m+1)}\to \mathbb{P}^n$ and $\mathcal{O}_{\mathbb{P}^m}(-1)^{\oplus(n+1)}\to \mathbb{P}^m$, respectively, and let $y_0 \in Y$ denote the unique singular point of $Y$.

By \cite[Theorem 2.7]{jiansongtian}, the pre-singularity flow satisfies the Type I bound \eqref{eq:TypeIcurvatureconjecturein}, and the unique nontrivial tangent flow is the shrinker $M_{\sol}$. As in Corollary \ref{cor:2dim}, the next result upgrades this tangent-flow statement to a fixed-gauge description modeled on $M_{\sol} \to \mathcal{C} \leftarrow M_{\exp}$. 

\begin{mcorollary} \label{cor:calabi} The flow satisfies a Type I curvature bound \eqref{eq:TypeIcurvatureconjectureout} emerging from the time $t=0$ singularity. Moreover, for some neighborhood $\mathcal{U} \subseteq \mathcal{C}$ of $o$, there are open holomorphic embeddings $F:\pi_{\sol}^{-1}(\mathcal{U}) \to M$, $F_0:\mathcal{U} \to Y$, and $F':\pi_{\exp}^{-1}(\mathcal{U})\to M'$ such that $F_0(o)=y_0$ and the following diagram commutes:
\begin{equation*} \begin{tikzcd}
	M && Y && {M'} \\
	\\
	{\pi_{\sol}^{-1}(\mathcal{U})} && {\mathcal{U}} && {\pi_{\exp}^{-1}(\mathcal{U})}
	\arrow["\pi", from=1-1, to=1-3]
	\arrow["{\pi'}"', from=1-5, to=1-3]
	\arrow["F", hook, from=3-1, to=1-1]
	\arrow["{\pi_{\operatorname{sol}}}"', from=3-1, to=3-3]
	\arrow["{F_0}", hook, from=3-3, to=1-3]
	\arrow["{F'}", hook, from=3-5, to=1-5]
	\arrow["{\pi_{\exp}}", from=3-5, to=3-3]
\end{tikzcd}.\end{equation*}
 Furthermore, there are 
 \begin{equation*}\varphi \in C^{\infty}(\pi_{\sol}^{-1}(\mathcal{U})\times [-T,0))\cap C^0(\pi_{\sol}^{-1}(\mathcal{U}) \times [-T,0]), \quad \psi \in C^{\infty}(\pi_{\exp}^{-1}(\mathcal{U})\times (0,T'])\cap C^0(\pi_{\exp}^{-1}(\mathcal{U})\times [0,T']),\end{equation*}
 and $\widehat{\varphi}_0 \in C^{\infty}(\mathcal{U}\setminus \{o\})\cap C^0(\mathcal{U})$ such that the following hold:
\begin{enumerate}

        \item \label{cor:calabi1} $F^{\ast}\omega_t = \omega_{\sol,t}+\sqrt{-1}\partial \overline{\partial} \varphi_{t}$, where for each $k\in \mathbb{N}$,
        \begin{equation*}
            |(\nabla^{g_{\sol,t}})^k \varphi_{t}|_{g_{\sol,t}}\leq o \left( (r(x)+\sqrt{|t|})^{2-k} \right)
        \end{equation*}
        for $(x,t)\in \pi_{\sol}^{-1}(\mathcal{U}) \times [-T,0)$ as $r(x)+\sqrt{|t|} \to 0$, where $r$ is the radius function of the cone metric $\omega_{\sol,0}$;

        \item \label{cor:calabi2} $(Y,d_Y)$ is a compact analytic space with isolated conical singularities in the sense of \cite[Definition 1.1]{CHL}, with a unique isolated cone singularity $y_0$ modeled on $(\mathcal{C},\omega_{\mathcal{C}})$,
        
        \item \label{cor:calabi3} $(F')^{\ast}\omega_t = \omega_{\exp,t}+\sqrt{-1}\partial \overline{\partial} \psi_t$, where for each $k\in \mathbb{N}$,
        \begin{equation*}
            |(\nabla^{g_{\exp,t}})^k \psi_t|_{g_{\exp,t}}\leq o \left( (r(x)+\sqrt{t})^{2-k} \right)
        \end{equation*}
        for $(x,t)\in \pi_{\exp}^{-1}(\mathcal{U})\times (0,T']$ as $r(x)+\sqrt{t} \to 0$,

        \item \label{cor:calabi4} $\pi_{\sol}^{\ast}\widehat{\varphi}_0=\varphi_0$ and $\pi_{\exp}^{\ast}\widehat{\varphi}_0=\psi_0$.
    \end{enumerate}
\end{mcorollary}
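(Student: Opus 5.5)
The plan is to derive Corollary \ref{cor:calabi} from Theorem \ref{thm:main1}, using the known pre-singularity results of \cite{jiansongtian} and the conical-singularity theory of \cite{CHL}.

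\textbf{Step 1: verify the hypothesis of Theorem \ref{thm:main1}.} By \cite[Theorem 2.7]{jiansongtian} the flow is Type I, and its unique nontrivial tangent flow at $y_0$ is the $U(n+1)\times U(m+1)$-invariant shrinker $M_{\sol}$ of \cite{lisoliton}. This shrinker is AC. It remains to check holomorphic approximability in the sense of Definition \ref{definition: holomorphic approximation}.

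Under the Calabi ansatz, a neighborhood of $E$ in $M$ is $U(n+1)\times U(m+1)$-equivariantly biholomorphic to a neighborhood of the zero section in $\mathcal{O}_{\mathbb{P}^n}(-1)^{\oplus(m+1)}$. Composing with the fiberwise dilations $v\mapsto \lambda v$, which are holomorphic, gives holomorphic maps $\psi_i$. These are the natural candidates for realizing the Cheeger--Gromov convergence.

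\cite[Corollary 2.8]{jiansongtian} describes the convergence of the rescaled ansatz potentials, and I would use it to show $\psi_i^{\ast}$ of the rescaled metrics converges to $g_{\sol,t}$. The point is that invariance reduces everything to ODE data in one radial variable, so the identification is forced up to the symmetry group. This gives $\psi_i^{\ast}J=J_{\sol}$ exactly.

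\textbf{Step 2: apply Theorem \ref{thm:main1} on the $M$ side.} Theorem \ref{thm:main1} then yields $F$, $F_0=\widehat F$, $\varphi$ and $\widehat\varphi_0$. Item \ref{cor:calabi1} follows from item \ref{mainthm:4} there, since the factor $|\log(r+\sqrt{|t|})|^{-1}\to 0$. On $[-T,-\min\{T,\frac14\}]$ the flow is smooth, so extending $\varphi$ there is routine.

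Item \ref{mainthm:3} and item \ref{mainthm:5} of Theorem \ref{thm:main1} say that $\pi_\ast\omega_0$ is, in the holomorphic chart $\widehat F$, a $\partial\overline\partial$-perturbation of $\omega_{\mathcal{C}}$ with $o(r^{2-k})$ decay. Away from $y_0$, the limit metric is smooth on $Y\setminus\{y_0\}$ by \cite{songyuan,JianSong}. Together these give item \ref{cor:calabi2} per \cite[Definition 1.1]{CHL}, and in particular compactness of $(Y,d_Y)$.

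\textbf{Step 3: the flow out of the singularity on the $M'$ side.} Here I would invoke the main theorem of \cite{CHL}. For a compact analytic space with isolated conical singularities, whose cone at $y_0$ admits the resolution $\pi_{\exp}:M_{\exp}\to\mathcal{C}$ carrying the expander $g_{\exp}$, \cite{CHL} constructs a flow on $M'$ emanating from $(Y,d_Y)$. That flow is Type I and has a fixed-gauge expansion $(F')^\ast\omega_t=\omega_{\exp,t}+\sqrt{-1}\partial\overline\partial\psi_t$ with the stated decay. The chart $F'$ is built by lifting $F_0$ through $\pi'$ and $\pi_{\exp}$, using that both are the small contractions of the zero sections.

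The key remaining issue is that this flow coincides with the canonical flow $(g_t')$ of \cite{JianSong}. I would argue this by uniqueness: both are invariant Kähler--Ricci flows on $M'$ starting from the same current $\pi'^\ast\pi_\ast\omega_0$ with bounded potentials, and \cite{songtian} gives uniqueness of such weak solutions with continuous potential. This yields items \ref{cor:calabi3} and \ref{cor:calabi4}, with $\pi_{\exp}^\ast\widehat\varphi_0=\psi_0$ coming from continuity of $\psi$ at $t=0$. It also gives the Type I bound \eqref{eq:TypeIcurvatureconjectureout}.

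\textbf{Main obstacle.} I expect the main obstacle to be Step 1: producing holomorphic $\psi_i$ rather than merely smooth ones. I would try first to exploit that the $U(n+1)\times U(m+1)$-equivariant biholomorphism type of neighborhoods of $E$ is rigid. Convergence of the one-variable ansatz functions then upgrades to smooth local convergence after composing with these explicit holomorphic dilations.
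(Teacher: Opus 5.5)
Your proposal is correct and follows the paper's route: holomorphic approximability comes from the Calabi-ansatz analysis of \cite{jiansongtian} (the paper simply cites the proof of Theorem 10.11 there, rather than re-deriving it via fiberwise dilations), items \ref{cor:calabi1} and \ref{cor:calabi2} then follow from Theorem \ref{thm:main1}, and item \ref{cor:calabi3}, item \ref{cor:calabi4} and the Type I bound \eqref{eq:TypeIcurvatureconjectureout} come from \cite[Theorem A]{CHL} and \cite[Proposition 5.4]{CHL}. The \cite{CHL} hypothesis the paper actually checks is that the relative canonical model of $\pi_{\sol}:M_{\sol}\to\mathcal{C}$ is the smooth manifold $\mathcal{O}_{\mathbb{P}^m}(-1)^{\oplus(n+1)}$, and this canonical-model description is also the cleanest way to justify lifting $F_0$ to $F'$.
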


We now use a minor extension of \cite{CHL} to treat the Calabi-ansatz flow of \cite{KRFonHirzebruchSurf,SW2}, whose post-singular continuation is an orbifold K\"ahler-Ricci flow. The outgoing model is an orbifold K\"ahler-Ricci expander on $\mathbb{C}^n/\mathbb{Z}_k$. 

Fix $n\geq 2$ and $1\leq k\leq n-1$, and follow the setup of \cite{KRFonHirzebruchSurf}. Let $M:=\mathbb{P}(\mathcal{O}_{\mathbb{P}^{n-1}}\oplus \mathcal{O}_{\mathbb{P}^{n-1}}(-k))$. We equip $M$ with the natural holomorphic action of $U(n)/\mathbb{Z}_k$ described in \cite[Section 2.2]{KRFonHirzebruchSurf}. Suppose $(M,(g_t)_{t\in [-T,0)})$ is a K\"ahler-Ricci flow whose initial metric is $U(n)/\mathbb{Z}_k$-invariant and which satisfies \eqref{eq:noncollapsing}. By \cite[Theorem 1.2]{SW2}, there is a K\"ahler orbifold $M'$ and a holomorphic map $\pi:M\to M'$ contracting a divisor $\mathbb{P}^{n-1} \cong E \subseteq M$ to the unique orbifold point of $M'$, and the flow $(g_t)_{t\in [-T,0)}$ can be canonically continued as an orbifold K\"ahler-Ricci flow $(g_t')_{t\in (0,T')}$ on $M'$. By \cite[Corollary 2.8]{jiansongtian}, the flow satisfies the Type I bound \eqref{eq:TypeIcurvatureconjecturein}, and the unique tangent flow based along $E$ is the AC K\"ahler-Ricci shrinker $g_{\sol}$ on the total space of $M_{\sol}:=\mathcal{O}_{\mathbb{P}^{n-1}}(-k)$ constructed in \cite{FIK}. The asymptotic cone $(\mathcal{C},\omega_{\mathcal{C}})$ of $M_{\sol}$ is a cone metric on $\mathbb{C}^n/\mathbb{Z}_k$, where $\mathbb{Z}_k$ acts with weight $(1,...,1)$, induced by a metric on $\mathbb{C}^n$ of the form $\sqrt{-1}\partial \overline{\partial}(\frac{1}{p_{n,k}}|z|^{p_{n,k}})$, where $p_{n,k}>0$. Let $\pi_{\sol}:M_{\sol}\to \mathcal{C}$ be the contraction map, with exceptional set the zero section $\mathbb{P}^{n-1}\cong E_{\sol}\subseteq M_{\sol}$. 

On the other hand, \cite[Proposition 5.1]{CaoExpander} gives a unique $U(n)/\mathbb{Z}_k$-invariant orbifold K\"ahler-Ricci expander $(M_{\exp},g_{\exp})$ on $\mathbb{C}^n/\mathbb{Z}_k$ asymptotic to $(\mathcal{C},\omega_{\mathcal{C}})$. By combining Theorem \ref{thm:main1} with \cite{CHL}, we conclude that this expander in fact models the flow emerging from the singularity at time $t=0$. 

\begin{mcorollary} \label{cor:orb} 
    The flow satisfies a Type I curvature bound \eqref{eq:TypeIcurvatureconjectureout} emerging from the time $t=0$ singularity. Moreover, for some neighborhood $\mathcal{U} \subseteq \mathcal{C}$ of $o$, there are open holomorphic embeddings $F:\pi_{\sol}^{-1}(\mathcal{U})\to M$ and $F_0:\mathcal{U}\to M'$, and there are $\varphi\in C^{\infty}(\pi^{-1}(\mathcal{U})\times [-T,0))\cap C^0(\pi^{-1}(\mathcal{U})\times [-T,0])$ and $\psi \in C^{\infty}(\mathcal{U}\times (0,T'])\cap C^0(\mathcal{U} \times [0,T'])$ such that the assertions \ref{cor:calabi1}-\ref{cor:calabi4} of Corollary \ref{cor:calabi} hold with $Y=M'$, $M_{\exp}=\mathcal{C}$, $F_0=F'$,  $\pi'=\operatorname{id}_{M'}$, $\pi_{\exp} = \operatorname{id}_{\mathcal{U}}$, and $\widehat{\varphi}_0=\psi_0$.

\end{mcorollary}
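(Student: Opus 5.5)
The plan is to follow the template of Corollary \ref{cor:calabi}: apply Theorem \ref{thm:main1} on the pre-singular side, then feed the resulting conical description of the singular time slice into the (orbifold-extended) existence and uniqueness theory of \cite{CHL} on the post-singular side.

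\textbf{Step 1: pre-singular side.} By \cite[Corollary 2.8]{jiansongtian}, the Type I bound \eqref{eq:TypeIcurvatureconjecturein} holds and the unique tangent flow along $E$ is the FIK shrinker on $\mathcal{O}_{\mathbb{P}^{n-1}}(-k)$. To apply Theorem \ref{thm:main1}, I must check holomorphic approximation in the sense of Definition \ref{definition: holomorphic approximation}. Under the $U(n)/\mathbb{Z}_k$-symmetry this should be explicit. A neighborhood of $E$ in $M$ is biholomorphic to a neighborhood of the zero section in $\mathcal{O}_{\mathbb{P}^{n-1}}(-k)$, equivariantly. The fiberwise $\mathbb{C}^*$-dilations $\delta_\lambda$ are holomorphic and commute with the action. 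The Calabi-ansatz flow is determined by a one-variable potential, and the convergence of \cite{jiansongtian} is obtained via such dilations. Hence the maps $\psi_i:=\delta_{\lambda_i}$ (restricted to exhausting sets) satisfy $\psi_i^*J=J_{\sol}$ together with \eqref{eq:diffeoconvergence}. Theorem \ref{thm:main1} then gives $F$, $F_0=\widehat F$, $\varphi=\widetilde\varphi$ and $\widehat\varphi_0$ with the estimates \ref{mainthm:4}--\ref{mainthm:5}. Since $|\log(r+\sqrt{|t|})|^{-1}\to 0$, these estimates yield assertion \ref{cor:calabi1}, and also $\pi_{\sol}^*\widehat\varphi_0=\varphi_0$. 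Here $Y=M'$, and $\pi^{-1}(\mathcal V)$ is identified via $\widehat F$ with $\mathcal U\subseteq\mathbb{C}^n/\mathbb{Z}_k$. The time interval $[-\min\{T,\frac14\},0)$ extends to $[-T,0)$ by compactness and smoothness away from $t=0$, after adjusting $\varphi$ on earlier times.

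\textbf{Step 2: the singular time slice.} Assertion \ref{mainthm:3} of Theorem \ref{thm:main1}, together with \ref{mainthm:5}, shows that the metric $\pi_*\omega_0$ near the orbifold point is $\omega_{\mathcal C}+\sqrt{-1}\partial\overline\partial\widehat\varphi_0$ in the holomorphic chart $F_0$. The decay of $\widehat\varphi_0$ is $r^{2-k}|\log r|^{-1}$ at the $k$-th derivative level. Away from the point, the metric is smooth by \cite{collinstosatti}. Hence $(M',d)$ is a compact analytic space with an isolated conical singularity in the sense of \cite[Definition 1.1]{CHL}, now in the orbifold setting, which gives assertion \ref{cor:calabi2}.

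\textbf{Step 3: post-singular side.} This is the main obstacle. I expect to extend \cite{CHL} to orbifold cones $\mathbb{C}^n/\mathbb{Z}_k$. I would try lifting locally to the $\mathbb{Z}_k$-cover $\mathbb{C}^n$: all objects there (the cone $\sqrt{-1}\partial\overline\partial(\frac1{p_{n,k}}|z|^{p_{n,k}})$, Cao's expander, and the potentials) are $\mathbb{Z}_k$-invariant, and the arguments of \cite{CHL} are local near the vertex. Their output is a flow emerging from $(M',d)$ of the form $\omega_{\exp,t}+\sqrt{-1}\partial\overline\partial\psi_t$ with $o((r+\sqrt t)^{2-k})$ estimates and $\psi_0=\widehat\varphi_0$. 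This gives assertions \ref{cor:calabi3}--\ref{cor:calabi4} with $\pi_{\exp}=\mathrm{id}$. The outgoing Type I bound \eqref{eq:TypeIcurvatureconjectureout} follows from the same estimates. Indeed, with $k=2$, $|\Rm|$ is comparable to $t^{-1}$ near the point by the scaling of $g_{\exp,t}$, and is bounded elsewhere.

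It remains to identify the CHL flow with the canonical orbifold continuation $(g'_t)$ of \cite{SW2}. Both are orbifold Kähler–Ricci flows from the same initial data. Both are $U(n)/\mathbb{Z}_k$-invariant: for the CHL flow this follows by uniqueness, since the data is invariant. Uniqueness of such solutions in \cite{CHL}, or the ODE uniqueness of Calabi-ansatz solutions, then identifies them, so the estimates transfer to $(g'_t)$.
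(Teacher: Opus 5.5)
Steps 1 and 2 follow the paper. The paper gets holomorphic approximation by citing the proof of \cite[Theorem 10.11]{jiansongtian}, which is in effect your fiber-dilation argument; Corollary \ref{cor:complexrigid} with an $H^1$ computation as in Example \ref{example:FIK} would also work. The architecture of Step 3 is also the paper's: extend \cite{CHL} to the orbifold cone, then identify the resulting flow with the continuation of \cite{SW2}. As written, though, Step 3 has two gaps.

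\textbf{First gap: the extension of \cite{CHL}.} Your claim that the arguments of \cite{CHL} are local near the vertex is not right. The outgoing flow there is a limit of K\"ahler--Ricci flows on the compact space $M'$, started from metrics with the expander glued in. Its estimates come from the maximum principle and pseudolocality applied to these global flows. Lifting to the local $\mathbb{Z}_k$-cover does handle part of this. It shows the orbifold expander, being a global $\mathbb{Z}_k$-quotient of a smooth AC expander on $\mathbb{C}^n$, has the model properties \cite{CHL} needs. The gluing itself happens in an annulus where everything is smooth, and the maximum principle holds verbatim on compact orbifolds. Pseudolocality, however, cannot be obtained from a local lift, because it requires a compact or complete flow. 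You need pseudolocality for compact orbifold K\"ahler--Ricci flows, for example \cite{wangorbifold}.

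\textbf{Second gap: identifying the flow with $(g'_t)$.} This is the missing idea. Getting $U(n)/\mathbb{Z}_k$-invariance of the \cite{CHL} flow ``by uniqueness'' presupposes the very uniqueness statement you then want to use. ODE uniqueness for the Calabi-ansatz reduction is not automatic either, since the initial data is conical at the orbifold point and the reduced equation degenerates there. Appealing to uniqueness in \cite{CHL} does not close the step, because that setting has a smooth canonical model. What works, and what the paper does, is this:
\begin{itemize}
\item An isolated smooth-orbifold singularity is log terminal \cite[Corollary 5.21]{KollarMori}.
\item So the uniqueness of weak K\"ahler--Ricci flows with continuous local potentials on log terminal spaces \cite[Theorem 4.6.8]{BoucksomEyssydieuxGuedj} applies.
\item Both flows start from the same current and have continuous local potentials, so they coincide. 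No symmetry argument is needed.
\end{itemize}

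\textbf{Minor point.} Curvature of $g'_t$ involves four derivatives of $\psi$. So your Type I bound needs the derivative estimates for $\psi$ up to order four, together with $|\Rm(g_{\exp,t})|\leq C(r+\sqrt{t})^{-2}$, not just order two. The paper reads this bound off from \cite[Theorem A]{CHL} and \cite[Proposition 5.4]{CHL}.
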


\subsubsection{General conjecture and an improvement of the compact case}
Based on ideas of \cite{SunZhang}, we now conjecture a necessary and sufficient condition for a compact K\"ahler-Ricci flow $(M,J,(\widetilde{g}_t)_{t\in [-T,0)})$ to be holomorphically approximated by a K\"ahler-Ricci shrinker. We suppose the flow develops a finite-time singularity at time $t=0$. By \cite{hacon}, there is a bimeromorphic Fano fibration to a normal K\"ahler space $(Y,\omega_Y)$ such that $[\widetilde{\omega}_0]=\pi^{\ast}[\omega_Y]$.  

\begin{conjecture} \label{conj:main1} Given $\widetilde{\nu} \in M_0$, $(M,J,(\widetilde{g}_t)_{t\in [-T,0)},\widetilde{\nu})$ can be holomorphically approximated by a K\"ahler-Ricci shrinker if and only if there exists a K\"ahler-Ricci shrinker $(M_{\sol},J_{\sol},g_{\sol})$ such that 
\begin{equation*}
    (\pi:M\to Y,\widehat{\pi}(\widetilde{\nu}))\cong (\pi_{\sol}:M_{\sol} \to Y_{\sol},o)
\end{equation*}
as Fano fibration germs, where $\pi_{\sol}:M_{\sol}\to Y_{\sol}$ is the polarized Fano fibration guaranteed by \cite{SunZhang}, with vertex $o$.
\end{conjecture}

In the special case where $Y$ is a point, Conjecture \ref{conj:main1} follows from \cite{ChenWangI,ChenwangII,bamscalar,CSW,DerSz,HanLiUniqueness,JunshengBPFree}. While Theorem \ref{thm:main1} does not directly apply in this setting, a similar (but simpler) argument yields the following strengthening of previously known convergence results. 

\begin{mtheorem} \label{thm:main3} Suppose $(M,J,g_{\sol})$ is a compact K\"ahler-Ricci shrinker. Then there exists $a>0$ such that for any K\"ahler metric $\omega \in c_1(M)$, the unique K\"ahler-Ricci flow solution $(\widetilde{\omega}_t)_{t\in [-1,0)}$ with $\widetilde{\omega}_{-1} = \omega$ can be written as $\zeta^{\ast}\widetilde{\omega}_t = \omega_{\sol,t}+\sqrt{-1}\partial \overline{\partial} \widetilde{\varphi}_t$ for some biholomorphism $\zeta:M\to M$ and some $\widetilde{\varphi} \in C^{\infty}(M\times [-1,0))$ satisfying 
\begin{equation*}
    \sup_M |(\nabla^{g_{\sol,t}})^k \widetilde{\varphi}_t|_{g_{\sol,t}} \leq C_k |t|^{1+a-\frac{k}{2}}
\end{equation*}
for all $k\in \mathbb{N}$. 
\end{mtheorem}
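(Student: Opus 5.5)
The plan is to pass to the normalized flow in logarithmic time, prove exponential convergence there in a fixed holomorphic gauge, and then undo the rescaling.

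\textbf{Normalized flow.} Set $s:=-\log|t|\in[0,\infty)$ and $\omega(s):=|t|^{-1}\widetilde{\omega}_t$. Then $\omega(s)\in c_1(M)$ solves the normalized flow $\partial_s\omega=-\Ric(\omega)+\omega$ with $\omega(0)=\omega$. The self-similar shrinker flow is $\omega_{\sol,t}=|t|\,\Phi_t^{\ast}\omega_{\sol}$, where $\Phi_t$ is the holomorphic flow of $\tfrac12\nabla f_{\sol}$, reparametrized in $s$. Since each $\Phi_t$ is a biholomorphism, I will absorb it into the gauge. It then suffices to find a fixed biholomorphism $\zeta$ and potentials $u(s)$ such that
\begin{equation*}
(\zeta\circ\Phi_{t})^{\ast}\omega(s)=\omega_{\sol}+\sqrt{-1}\partial\overline{\partial}u(s),\qquad \|u(s)\|_{C^k(g_{\sol})}\leq C_k e^{-as}.
\end{equation*}
Indeed, with $\widetilde{\varphi}_t:=|t|\,u(s)$, the relation $g_{\sol,t}=|t|\,\Phi_t^{\ast} g_{\sol}$ converts $C^k$ norms into $|(\nabla^{g_{\sol,t}})^k\widetilde{\varphi}_t|_{g_{\sol,t}}\leq C_k|t|^{1-k/2}\,|t|^{a}$, which is the asserted bound.

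\textbf{Convergence modulo automorphisms, and the linearization.} Since $M$ carries a KR shrinker, the results of Tian--Zhu (and Tian--Zhang--Zhang--Zhu, Dervan--Sz\'ekelyhidi) give that $\omega(s)$ converges smoothly to $\omega_{\sol}$ modulo $\operatorname{Aut}^0(M)$. Thus there are $\sigma_s\in\operatorname{Aut}^0(M)$ with $\sigma_s^{\ast}\omega(s)\to\omega_{\sol}$ smoothly. In the modified gauge, where one also pulls back by the soliton flow, the potential satisfies a parabolic Monge--Amp\`ere equation whose linearization at $u=0$ is $L=\Delta_{f_{\sol}}+1$, a drift Laplacian that is self-adjoint in $L^2(e^{-f_{\sol}}dV)$. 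Its nonnegative spectrum is $\{0\}$ (eigenvalue $\lambda=1$ of $-\Delta_f$), and the kernel consists exactly of the potentials of gradient holomorphic vector fields, by the Tian--Zhu/Matsushima-type decomposition for shrinkers. All remaining eigenvalues are bounded away from zero by a gap $2a>0$ depending only on $(M,g_{\sol})$. This is why $a$ will be uniform in $\omega$; only the constants $C_k$ depend on $\omega$.

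\textbf{Slice and ODE for the gauge.} For $s\geq s_0$, once $\sigma_s^{\ast}\omega(s)$ is close to $\omega_{\sol}$, I will choose $\zeta_s\in\operatorname{Aut}^0(M)$ near $\sigma_s$ by the implicit function theorem so that the potential $u(s)$ of $\zeta_s^{\ast}\omega(s)$ is $L^2(e^{-f_{\sol}})$-orthogonal to $\ker L$. Here I use that $\ker L$ is integrable, being tangent to the $\operatorname{Aut}^0$-orbit. Differentiating the slice condition gives two things:
\begin{enumerate}
\item $\zeta_s^{-1}\dot\zeta_s=O(\|u\|^2)$, since the projection of the equation onto $\ker L$ has no linear term;
\item on $(\ker L)^{\perp}$, the evolution $\partial_s u=Lu+Q(u)$ holds with $Q$ quadratic in $u$ and its derivatives.
\end{enumerate}
An energy estimate for $\int u^2e^{-f_{\sol}}$, together with the spectral gap, gives $\|u(s)\|_{L^2}\leq Ce^{-as}$. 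Parabolic Schauder estimates, available from the uniform $C^\infty$ bounds of the smooth convergence, then bootstrap this to $C^k$. Finally, (1) yields $\int^\infty|\dot\zeta_s|\,ds<\infty$ with an exponential tail, so $\zeta_s\to\zeta$ exponentially fast. Replacing $\zeta_s$ by the fixed $\zeta$ changes $u$ by $O(e^{-as})$ in every $C^k$. Over the finite interval $s\in[0,s_0]$ the bounds are trivial after enlarging $C_k$.

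\textbf{Main obstacle.} The hard step is controlling the automorphism gauge, i.e.\ showing that $\zeta_s$ converges rather than drifting along $\operatorname{Aut}^0(M)$, which may be non-reductive. The soliton vector field must be kept out of the slice: it is already accounted for by $\Phi_t$, and $J\nabla f_{\sol}$-invariance is preserved. My first attempt would be the slice argument above. If the non-reductive part obstructs the implicit function theorem, I would fall back on a \L ojasiewicz inequality with optimal exponent $\tfrac12$ for Perelman's $\mu$-functional near $\omega_{\sol}$, which holds because the critical set is the smooth $\operatorname{Aut}^0$-orbit. That inequality directly gives exponential decay of $\int_s^\infty\|\partial_s\omega\|\,ds$, hence convergence of the gauge and the rate $e^{-as}$.
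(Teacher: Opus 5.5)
Your skeleton matches the paper's: freeze the soliton field $X$ via the modified flow, use the spectral gap of $L=\Delta_{f_{\sol}}+1$ away from the gauge directions, and show the gauge velocity is integrable. You replace the paper's contradiction--compactness step (Proposition~\ref{prop:compactdecay}) and its iteration with an implicit-function slice and an energy estimate. Two steps fail as written, however.

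First, the spectral claim is wrong. Constants satisfy $-\Delta_{f_{\sol}}c=0$, hence $Lc=c$, which is an \emph{unstable} direction. Suppose $u$ solves the parabolic Monge--Amp\`ere equation, as your formula $\partial_s u=Lu+Q(u)$ presupposes. Then its $\nu_{\sol}$-mean $m(s)$ satisfies $\dot m=m+O(\|u\|_{C^2}^2)$ and generically grows like $e^{s}$. So the energy estimate cannot give $\|u(s)\|_{L^2}\leq Ce^{-as}$ until this mode is removed. You must either renormalize the potential at each time, accepting a time-dependent constant in the equation (harmless, since the theorem only asks for \emph{some} potential), or subtract the mode $be^{s-s_0}$. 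The paper does the latter via the $\lambda_0=0$ term.

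Second, the gauge argument is circular. The identities $\partial_s u=Lu+Q(u)$ and $\zeta_s^{-1}\dot\zeta_s=O(\|u\|^2)$ hold only if $\zeta_s$ commutes with $X$, for instance if it is generated by the fields $\frac12\nabla^{g_{\sol}}h$ with $h\in H$, as for the paper's model metrics $\omega_h$. For general $\zeta_s\in\operatorname{Aut}^0(M)$, pulling back the modified flow replaces $X$ by $\zeta_s^{\ast}X$. This adds a forcing term $\mathcal{L}_{\frac12(X-\zeta_s^{\ast}X)}$ that is linear, not quadratic.

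Once you restrict to the centralizer of $X$, though, you can no longer choose $\zeta_s$ ``near $\sigma_s$''. The gauges from convergence modulo $\operatorname{Aut}^0(M)$ are chosen time by time for the normalized flow. Nothing a priori keeps $\sigma_s\circ\Phi_t^{-1}$ near the centralizer orbit, so the uniform $C^\infty$ bounds you invoke (both for $Q(u)$ and for the Schauder bootstrap) are not available in your slice gauge. What is missing is a continuity argument. Use the a priori convergence once, to get closeness at a single time, absorbing $\sigma_{s_0}$ into the fixed gauge. Then propagate $C^k$-closeness in the slice gauge by parabolic estimates driven by the decaying $L^2$ norm. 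This is exactly the role of Proposition~\ref{prop:compactdecay}\ref{prop:compactdecayconclusion4} together with Lemma~\ref{lem:pseudoapplication}.

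With that continuity argument in place, the non-reductive part you worry about is harmless. Let $Y$ be a real-holomorphic field in a nonzero weight space of $\operatorname{ad}_X$. Then $\mathcal{L}_Y\omega_{\sol}=\sqrt{-1}\partial\overline{\partial}p_Y$ with $p_Y$ in eigenspaces of $-\Delta_{f_{\sol}}$ of eigenvalue strictly greater than $1$. This is because, by Lemma~\ref{lem:lineartheory}, eigenvalues $\leq 1$ carry only constants and the $JX$-invariant space $H$. Such directions are strictly stable and need no slice.

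Your {\L}ojasiewicz fallback does not apply as stated. Unless $X=0$, the normalized flow converges in no fixed gauge, since at the soliton $\partial_s\omega=\frac12\mathcal{L}_X\omega\neq0$. Moreover, the frozen-$X$ modified flow is not the gradient flow of $\mu$, whose natural gauge is generated by the non-holomorphic fields $\nabla f_{\omega(s)}$.
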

\begin{remark} In terms of the modified flow $(\omega_s)_{s\in [0,\infty)}$ defined in \eqref{eq:intro:modifiedKRF}, this implies exponentially fast convergence $\omega_s \to \omega_{\sol}$ as $s\to \infty$ at the level of K\"ahler potentials, improving the convergence rates of \cite[Corollary 4.3]{DerSz} and \cite[Theorem 0.5]{TZZdeformed}. Compared with \cite[Theorem 1]{PSSW11}, \cite[Theorem 1]{GPS}, and \cite[Theorem 0.1]{TZZZ}, no symmetry assumptions on the initial metric are required. This answers in the affirmative a question of \cite[p. 2722]{zhuICM}.
\end{remark}

\subsection{Outline of proof}
We briefly describe the proof of Theorem \ref{thm:main1}. By our assumption that the flow on $M$ can be holomorphically approximated by the AC K\"ahler-Ricci shrinker $(M_{\sol},J_{\sol},g_{\sol},f_{\sol})$, there is a sequence of holomorphic open embeddings $\psi_i:M_{\sol}\supseteq U_i \to V_i \subseteq M$ realizing the convergence 
\begin{equation*}
    |t_i|^{-1}\psi_i^{\ast}\widetilde{g}_{t_i} \to g_{\sol}, \qquad \psi_i^{\ast}\widetilde{f}_{t_i} \to f_{\sol}+W \quad \operatorname{in} \quad C_{\operatorname{loc}}^{\infty}(M_{\sol}).
\end{equation*}
Using this and the existence of $\pi:M\to Y$, we show in Lemma \ref{lem:deldelbar} that
\begin{equation*}
    |t_i|^{-1}\psi_i^{\ast} \widetilde{\omega}_{t_i}=\omega_{\sol}+\sqrt{-1}\partial \overline{\partial}\varphi_{i,-1}, \qquad \varphi_{i,-1}\to 0 \quad \operatorname{in} \quad C_{\operatorname{loc}}^{\infty}(M_{\sol}).
\end{equation*}
for some $\varphi_i$ defined on large open sets of $M_{\sol}$. We fix $i\in \mathbb{N}$ sufficiently large. On a spacetime domain growing exponentially fast in $s\in [0,\infty)$, there is then an induced solution $(\omega_s)_{s\in [0,\infty)}$ of the modified K\"ahler-Ricci flow
\begin{equation}
\label{eq:intro:modifiedKRF} \partial_s \omega_s =-\left( \Ric(\omega_s)+\mathcal{L}_{\frac{1}{2}X}\omega_s-\omega_s\right),
\end{equation}
where $X:= \nabla^{g_{\sol}}f_{\sol}$. Moreover, we can write $\omega_s = \omega_{\sol}+\sqrt{-1}\partial \overline{\partial}\varphi_s$, where $\varphi_0=\varphi_{i,-1}$ and where the potential satisfies
\begin{equation} \label{eq:intro:modifiedCMA}
\partial_s\varphi_s=
\log\left(\frac{(\omega_{\sol}+\sqrt{-1}\partial\overline{\partial}
\varphi_s)^n}{\omega_{\sol}^n}\right)
-\frac{1}{2}X\cdot\varphi_s+\varphi_s.
\end{equation}
Our main goal is to prove exponential decay for $\varphi_s$ on increasingly large open sets as $s\to \infty$, as such an estimate can then be combined with a pseudolocality argument to establish our main results.

To determine if such a bound is plausible, we first consider the linearization of \eqref{eq:intro:modifiedCMA} at $\varphi=0$, which is the following drift heat equation:

\begin{equation} \label{eq:intro:linear}
\partial_s\psi_s=
\Delta_{\omega_{\sol}}\psi_s-\frac{1}{2}X\cdot\psi_s+\psi_s.
\end{equation}
By analyzing the neutral and unstable modes of the operator in the linearization \eqref{eq:intro:linear}, we show in Section \ref{section:lineartheory} that they are precisely the directions coming from certain
pluriharmonic functions of sub-quadratic growth and from holomorphic vector fields that commute with the soliton vector field. In particular, they correspond to changes of gauge rather than to genuine instabilities of the metric, suggesting our desired estimate should hold if one modifies the potential appropriately and changes the reference metric $\omega_{\sol}$. We also prove sharp pointwise growth estimates for these eigenfunctions and their derivatives in Lemma \ref{lem:eigenfunctiongrowth}.

This leads to the construction, in Section \ref{section:modelmetrics},
of a finite-dimensional family of nearby model shrinkers
\begin{equation*} 
\omega_h=\omega_{\sol}+\sqrt{-1}\partial\overline{\partial}u_h,
\qquad h\in H,
\end{equation*}
obtained from $\omega_{\sol}$ by biholomorphisms preserving $X$. Changing the reference model from one metric $\omega_h$ to another absorbs the non-pluriharmonic neutral directions of the
linearized equation. Our construction follows the strategy of \cite{ChiuSzek}. We also prove sharp growth estimates for the relative K\"ahler potentials of model metrics and their derivatives in Proposition \ref{prop:modelmetrics}.

The next goal is to make precise in what sense the analysis of \eqref{eq:intro:linear} passes to the nonlinear equation \eqref{eq:intro:modifiedCMA}. As a first step, in Lemma \ref{lem:unnormalizedpseudolocality},  we prove local persistence estimates showing
that, once the flow is close to a model shrinker on a large region at
one scale, it remains reasonably close on the corresponding expanding parabolic region for a definite amount of modified time. In Lemmas \ref{lem:comparabilityofpotentials} and \ref{lem:improvementofcomparability}, we also record and slightly extend some key estimates for conjugate heat kernels and their parabolic regularizations established in \cite{FangLi}. 

Using the technical results of Section \ref{section:KRFnearAC}, we proceed to prove the non-concentration estimate, Lemma \ref{lem:nonconcentration}. Roughly
speaking, it says that if $|\varphi_{s_0}|$ is small in weighted
$L^2$ on a large sublevel set at one time, then $|\varphi_{s_0+1}|$ is comparably small in $L^{2+\epsilon}$ on a larger sublevel set. This prevents the integral from concentrating at
spatial infinity. The bound for the negative part $(\varphi_{s_0+1})_-$ uses the rough strategy of \cite{LSS,LiSzek,ghosh}, but the proof for the positive part must modify this strategy considerably, making crucial use of the precise heat kernel estimates from \cite{FangLi}.

In Proposition \ref{prop:decay}, we show that any solution of \eqref{eq:intro:modifiedCMA} which is small in a large region must have a definite decay in its $L^2$-norm on appropriate regions, after gauge modifications to eliminate the neutral and unstable modes. This is derived from our linear theory by a contradiction-compactness argument, using a blow-up strategy analogous to that used in the K\"ahler-Einstein setting \cite{ChiuSzek} (see also  Simon \cite[p. 270]{SimonNotes}, \cite{Cheeger-Tian} for earlier incarnations of this argument).

In Section \ref{section:mainproofs}, we iterate the decay proposition. The iteration produces a rapidly convergent sequence of model parameters $h_j\to h_\infty$ and hence a
limiting model shrinker $\omega_{h_\infty}$. With respect to this
limiting model, the sequence of modified potentials decays as $s\to \infty$. In order to obtain the same estimates for a fixed potential with these properties, we combine the exponential decay of the correction terms from Proposition \ref{prop:decay}, the pointwise bounds for eigenfunctions of the drift Laplacian from Lemma \ref{lem:eigenfunctiongrowth}, and the pointwise bounds of the relative K\"ahler potentials from Proposition \ref{prop:modelmetrics}. Returning to the original un-normalized time variable gives fast convergence
of the K\"ahler--Ricci flow to the self-similar shrinker in a fixed
holomorphic gauge on a region of size approximately $\sqrt{|t|\log\frac{1}{|t|}}$ as $t\nearrow 0$. A final pseudolocality and interpolation argument
converts the integral decay into the pointwise estimates for the
potential and its derivatives stated in Theorem \ref{thm:main1}. In
particular, the limiting shrinker and the holomorphic gauge are
independent of the rescaling sequence, giving uniqueness of the tangent
flow.

The corollaries follow by combining Theorem \ref{thm:main1} with the known tangent flow information in these settings. The proof of Theorem \ref{thm:main3} is a
compact analogue of the same stability argument, where no
non-concentration at infinity is needed.

\subsection{Acknowledgments} The authors are grateful to Richard Bamler, Ronan Conlon, Alix Deruelle, Hajo Hein, James Hotchkiss, Kimoi Kemboi, Jiewon Park, Felix Schulze, Jian Song, Gang Tian, Hung Tran, Lu Wang, and Xiaohua Zhu for helpful discussions and comments. LL is funded by the German Research Foundation (DFG) – Project-ID 427320536 – SFB 1442, and by Germany’s Excellence Strategy EXC 2044/2 390685587, Mathematics Münster: Dynamics–Geometry–Structure.

\subsection{AI Disclosure} The writing and mathematical content of this paper is due to the authors, with the small exception of a strategy suggested by ChatGPT to simplify the proof of Proposition \ref{prop:modelmetrics} \ref{modelmetrics3}. ChatGPT was otherwise used only to assist with literature searches and to identify inconsistencies in notation and minor errors.

\section{Preliminaries}

\subsection{Notation and conventions}
\label{subsection:notation}

We collect here the basic notation and conventions used throughout the paper. For more specialized notation, we instead indicate the location in the paper where these definitions are given.

We let $\mathbb{N}$ denote the set of nonnegative integers.\\

\paragraph{\emph{K\"ahler geometry.}}
Unless otherwise stated, all manifolds are smooth and connected, and
$n$ denotes their complex dimension. If $(M,J,g)$ is K\"ahler, we denote its K\"ahler form
by
\begin{equation*}
\omega(\cdot,\cdot)=g(J\cdot,\cdot)
\end{equation*}
and its Riemannian volume form by
\begin{equation*}
dg=\frac{\omega^n}{n!}.
\end{equation*}
We use $\nabla^g$ for the Levi--Civita connection, and write
$\langle\cdot,\cdot\rangle_g$ and $|\cdot|_g$ for the corresponding
inner product and norm. When the metric is clear from the context, the
superscript or subscript $g$ may be omitted. The musical
isomorphisms corresponding to $g$ are denoted by $\sharp$ and $\flat$.

Our Laplacian is the K\"ahler Laplacian
\begin{equation*}
\Delta_\omega u
 :=
 \operatorname{tr}_{\omega}
 \bigl(\sqrt{-1}\partial\overline{\partial}u\bigr)
 =
 g^{i\overline{j}}
 \partial_i\partial_{\overline{j}}u,
\end{equation*}
which is equal to half of the Riemannian Laplace-Beltrami operator. We also write $\Delta_g$ for this operator when $g$ is the metric
associated with $\omega$. 
In particular, $-\Delta_g$ has
nonnegative spectrum on a compact manifold. 
We set $d^c:=\frac{\sqrt{-1}}{2}
(\overline{\partial}-\partial)$, so that
$dd^c=\sqrt{-1}\partial\overline{\partial}$.
We use $\mathcal{A}^k(M)$ and $\mathcal{A}^{p,q}(M)$ for the spaces of
smooth real $k$-forms and smooth complex $(p,q)$-forms, respectively. We let $J$ act on $\alpha \in \mathcal{A}^1(M)$ by $J\alpha=\alpha \circ J$.

The Ricci form and K\"ahler scalar curvature are
\begin{equation*}
\Ric(\omega)
 =
 -\sqrt{-1}\partial\overline{\partial}
 \log\omega^n,
\qquad
\operatorname{R}_\omega = \operatorname{R}_g
 =
 \operatorname{tr}_{\omega}\Ric(\omega).
\end{equation*}
We write $\Ric(g)$ and $\Rm(g)$ for the Ricci and Riemann curvature
tensors. Scalar curvature is always taken in the K\"ahler
normalization above; it is one half of the Riemannian scalar curvature.\\

\paragraph{\emph{Metric notation.}}
For a Riemannian metric $g$, we let $d_g$ denote the corresponding length metric and $B_g(x,r)$ the open geodesic ball. We similarly let
\begin{equation*}
\operatorname{Vol}_{g},
\qquad
\operatorname{diam}_{g},
\qquad
\operatorname{inj}_{g}
\end{equation*}
denote volume, diameter, and injectivity radius with respect to $g$.

For a tensor $T$ on an open set $U\subset M$, we define 
\begin{equation*}
\|T\|_{C^k(U,g)}
 :=
 \sum_{j=0}^k
 \sup_U
 \bigl|(\nabla^g)^jT\bigr|_g.
\end{equation*}
\\

\paragraph{\emph{Singular-time and heat-kernel notation.}} The metric space $(M_0,d_0)$ is defined in Section \ref{sec-basic setup for polarized KRF}, and $d_{W_1}$ denotes the 1-Wasserstein distance.\\

\paragraph{\emph{Shrinking solitons.}}

The notation $g_{\sol},f_{\sol},b_{\sol},\Omega(D),\phi_s,g_{\sol,t},f_{\sol,t},b_{\sol,t},\Delta_{f_{\sol}},\nu_{\sol},W,X$ is established in subsection \ref{subsection:ACshrinkers}.\\

\paragraph{\emph{Model metrics.}}
The finite-dimensional vector space $H$, the biholomorphisms
$\zeta_s^h$, and the associated objects
\[
g_h,\quad \omega_h,\quad f_h,\quad u_h, \quad b_h, \quad \Omega_{h}(D),
\qquad h\in H,
\]
and the corresponding time-dependent quantities are introduced in Section \ref{section:modelmetrics}. \\

\paragraph{\emph{Constants and asymptotic notation.}}
The letters $C,C_k,\ldots$ denote positive constants whose values may
change from line to line. Dependence on the complex dimension
$n$ is suppressed. When we say that a statement holds if
\begin{equation*}
\epsilon\leq\overline{\epsilon}(b_1,\ldots,b_\ell),
\end{equation*}
means that the statement holds whenever $\epsilon$ is bounded above by
a sufficiently small positive constant depending only on the displayed parameters.
Analogously,
\begin{equation*}
A\geq\underline{A}(b_1,\ldots,b_\ell)
\end{equation*}
means that $A$ is required to be sufficiently large in terms of those parameters.
The notation
\begin{equation*}
\Psi(a_1,\ldots,a_k\mid b_1,\ldots,b_\ell)
\end{equation*}
denotes a nonnegative quantity depending on the displayed parameters
such that
\begin{equation*}
\lim_{(a_1,\ldots,a_k)\to(0,\ldots,0)}
\Psi(a_1,\ldots,a_k\mid b_1,\ldots,b_\ell)
=0
\end{equation*}
for every fixed choice of $b_1,\ldots,b_\ell$.

\subsection{K\"ahler-Ricci shrinkers}

\label{subsection:ACshrinkers}

By a K\"ahler-Ricci shrinker, we mean a connected and complete non-flat gradient shrinking K\"ahler-Ricci soliton. We usually denote a K\"ahler-Ricci shrinker by $(M_{\sol},J_{\sol},g_{\sol},f_{\sol})$, subject to the normalizations
\begin{equation}\label{eq:solitonnormalization}
    \Ric(\omega_{\sol}) + \mathcal{L}_{\frac{1}{2}X}\omega_{\sol} = \omega_{\sol}, \qquad \operatorname{R}_{g_{\sol}} + |\partial f_{\sol}|_{g_{\sol}}^2 = f_{\sol},
\end{equation}
where $X:=\nabla^{g_{\sol}}f_{\sol}$ denotes the (real-holomorphic) complete soliton vector field \cite{ZhuhongZhang}. The entropy of $(M_{\sol},g_{\sol})$ is
\begin{equation*}
    W:= \log \left( \frac{1}{(2\pi)^n} \int_{M_{\sol}} e^{-f_{\sol}} dg_{\sol} \right),
\end{equation*}
which is the constant such that
\begin{equation*}
    d\nu_{\sol} := \frac{1}{(2\pi)^n} e^{-(f_{\sol}+W)}dg_{\sol}
\end{equation*}
is a probability measure on $M_{\sol}$. Moreover, since $f_{\sol}\geq \operatorname{R}_{g_{\sol}} >0$ \cite{BinglongChen} on $M_{\sol}$, we may define 
\begin{equation*}
    b_{\sol}:= \sqrt{2f_{\sol}}, \qquad \Omega(D):= \{x\in M_{\sol} \: | \: b_{\sol}(x) \leq D\}
\end{equation*}
for all $D >0$. We also define the drift Laplacian $\Delta_{f_{\sol}}$ of the shrinker by
\begin{equation*}
    \Delta_{f_{\sol}}u:= \Delta_{g_{\sol}}u-\frac{1}{2}X\cdot u = \Delta_{g_{\sol}}u -\frac{1}{2} \langle \nabla^{g_{\sol}}f_{\sol},\nabla^{g_{\sol}}u\rangle_{g_{\sol}}. 
\end{equation*}
This is a densely defined formally self-adjoint operator on $L^2(M_{\sol},\nu_{\sol})$. We let $(\phi_s)_{s\in \mathbb{R}}$ be the flow of $\frac{1}{2}X$, so that the self-similar ancient solution of the K\"ahler-Ricci flow associated to the shrinker is
\begin{equation*}
g_{\sol,t} = |t|\,\phi_{\log \frac{1}{|t|}}^*g_{\sol},\qquad t<0.
\end{equation*}
We similarly extend the above definitions to time-dependent objects:
\begin{equation*}
    f_{\sol,t} := \phi_{\log \frac{1}{|t|}}^{\ast}f_{\sol}, \qquad b_{\sol,t}:=\sqrt{2|t|f_{\sol,t}}, \qquad \Omega_t(D):= \{x \in M_{\sol} \: | \: b_{\sol,t}(x)\leq D\}.
\end{equation*}

We now recall some well-known facts concerning Ricci shrinkers.

\begin{theorem} \label{thm:basicKRSfacts}
    \begin{enumerate}
        \item \label{thm:basicKRSfacts1} For any $x_0 \in \operatorname{argmin}_{M_{\sol}}f_{\sol}$, there is a constant $C(g_{\sol})>0$ such that
        \begin{equation*}
   \frac{1}{2}(d_{g_{\sol}}(x_0,x)-C(g_{\sol}))_+^2 \le f_{\sol}(x)\le \frac{1}{2}(d_{g_{\sol}}(x_0,x)+C(g_{\sol}))^2,
\end{equation*}
holds for all $x\in M_{\sol}$. 

    \item \label{thm:basicKRSfacts2} For any $\rho \in (0,1)$, there exists $D_{g_{\sol}}(\rho) \in (1,\infty)$ such that on $M_{\sol}\setminus \Omega (D_{g_{\sol}}(\rho))$, we have 
    \begin{equation*}
        (1-\rho)d_{g_{\sol}}(x_0,\cdot) \leq b_{\sol} \leq (1+\rho)d_{g_{\sol}}(x_0,\cdot).
    \end{equation*}
    \end{enumerate}
\end{theorem}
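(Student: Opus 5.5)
The statement is Theorem~\ref{thm:basicKRSfacts}, the standard growth estimates for the potential of a gradient shrinker. These are classical results of Cao--Zhou and Haslhofer--Müller. I would reprove them directly from the normalization \eqref{eq:solitonnormalization}.

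\textbf{Upper bound in \ref{thm:basicKRSfacts1}.} Since $\operatorname{R}_{g_{\sol}}>0$, the identity $\operatorname{R}+|\partial f_{\sol}|^2=f_{\sol}$ gives $|\partial f_{\sol}|^2\le f_{\sol}$. In Riemannian terms, using the Kähler normalizations above (where $|\partial f|^2$ is half of $|\nabla f|^2$ up to the fixed constant conventions), this reads $|\nabla \sqrt{2f_{\sol}}|\le 1$, i.e.\ $|\nabla b_{\sol}|\le 1$. Integrating along a minimal geodesic from $x_0$ gives
\[
b_{\sol}(x)\le b_{\sol}(x_0)+d_{g_{\sol}}(x_0,x).
\]
This is the upper bound with $C(g_{\sol})\ge b_{\sol}(x_0)$. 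I will check the factor conventions carefully, but they only affect the harmless normalization of $b_{\sol}$.

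\textbf{Lower bound in \ref{thm:basicKRSfacts1}.} I would follow Cao--Zhou. Take a unit-speed minimal geodesic $\gamma:[0,L]\to M_{\sol}$ from $x_0$ to $x$, with $L$ large.
\begin{enumerate}
\item Apply the second variation formula with a cutoff test field that equals $1$ on $[1,L-1]$ and is linear on the two end segments. This gives $\int_0^L \Ric(\gamma',\gamma')\,ds\le C$, with $C$ depending only on $n$ and on curvature bounds on the unit balls around $x_0$ and $x$.
\item Write the soliton equation as $\Ric+\frac12\nabla^2 f_{\sol}=\frac12 g$, in the Riemannian normalization consistent with $\mathcal{L}_{\frac12X}\omega$. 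Along $\gamma$ this gives $\frac12(f_{\sol}\circ\gamma)''=\frac12-\Ric(\gamma',\gamma')$.
\item Integrate once to get $\frac12\langle\nabla f_{\sol},\gamma'\rangle(L)-\frac12\langle\nabla f_{\sol},\gamma'\rangle(0)\ge \frac{L}{2}-C$.
\item Handle the endpoint regions with the weighted cutoff trick, using $|\nabla f_{\sol}|\le C\sqrt{f_{\sol}}$, which avoids needing curvature bounds near $x$. Then $|\nabla f_{\sol}|(x)\gtrsim L-C$, and the upper estimate for $|\nabla f_{\sol}|$ gives $\sqrt{f_{\sol}(x)}\ge \frac{1}{\sqrt2}(L-C)$.
\end{enumerate}
The main obstacle is this endpoint estimate at $x$. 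Near $x_0$ I would use bounded geometry; near $x$ I would use $\int_{L-1}^{L}\phi^2\Ric(\gamma',\gamma')$ rewritten through $f''$ and integrated by parts, as in Cao--Zhou. That integration by parts moves the terms onto $\langle\nabla f_{\sol},\gamma'\rangle$, which is controlled by $\sqrt{f_{\sol}}$.

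\textbf{Part \ref{thm:basicKRSfacts2}.} Part \ref{thm:basicKRSfacts1} gives
\[
\bigl(d_{g_{\sol}}(x_0,x)-C\bigr)_+\le b_{\sol}(x)\le d_{g_{\sol}}(x_0,x)+C.
\]
Given $\rho\in(0,1)$, restrict to $b_{\sol}\ge D$. There $d\ge D-C$, so $C\le \rho\, d$ as soon as $D\ge C(1+\rho^{-1})$. Hence
\[
(1-\rho)\,d\le b_{\sol}\le (1+\rho)\,d \quad \text{on } M_{\sol}\setminus\Omega(D),
\]
and we may take $D_{g_{\sol}}(\rho):=\max\{2,\,C(1+2\rho^{-1})\}$.
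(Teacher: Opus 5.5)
Your proposal is correct and follows the paper's route: the paper proves (i) by citing \cite[Theorem 1.1]{CaoZhou}, whose second-variation argument (including the integration by parts on the last unit segment of $\gamma$, which avoids any curvature bound near $x$) is what you reproduce, and it deduces (ii) from (i) exactly as you do. Two harmless slips. First, in the paper's conventions the Riemannian soliton equation is $\Ric(g_{\sol})+\nabla^2 f_{\sol}=g_{\sol}$, so $(f_{\sol}\circ\gamma)''=1-\Ric(\gamma',\gamma')$. Second, the argument controls $\max_{s\in[L-1,L]}|\nabla f_{\sol}|(\gamma(s))\ge L-C$ rather than $|\nabla f_{\sol}|(x)$ directly, and $|\nabla b_{\sol}|\le 1$ then transfers the bound to $x$; neither point changes your constant $\tfrac12$.
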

\begin{proof}
\ref{thm:basicKRSfacts1} This is \cite[Theorem 1.1]{CaoZhou}.

\ref{thm:basicKRSfacts2} This is immediate from \ref{thm:basicKRSfacts1}.
\end{proof}

\begin{lemma} \label{lem:primitivecomparingbandd} If $(M_{\sol},g_{\sol})$ has bounded scalar curvature, then there exists $C_1 =C_1(g_{\sol})\in \mathbb{R}$ such that for all $t\in [-1,0)$ the following holds on $M_{\sol}$:  \begin{equation*}
    b_{\sol}\le d_{g_{\sol,t}}(x_0,\cdot)+C_1.
\end{equation*}
\end{lemma}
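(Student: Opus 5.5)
The idea is to pass from the time $-1$ slice to the time $t$ slice through the time-dependent potential $b_{\sol,t}=\sqrt{2|t|f_{\sol,t}}$. Its time derivative is controlled by the scalar curvature, which is bounded by hypothesis. Its spatial gradient is controlled by the soliton identity $\operatorname{R}_{g_{\sol}}+|\partial f_{\sol}|^2=f_{\sol}$ in \eqref{eq:solitonnormalization}.

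\emph{Step 1: monotonicity in time.} Set $\tau=\log\frac{1}{|t|}$. Because $f_{\sol,t}=\phi_\tau^{\ast}f_{\sol}$ and $\phi_\tau$ is the flow of $\frac12 X=\frac12\nabla^{g_{\sol}}f_{\sol}$, we get
\begin{equation*}
\partial_t f_{\sol,t}=\tfrac{1}{|t|}\,\phi_\tau^{\ast}\bigl(\tfrac12 X\cdot f_{\sol}\bigr),
\end{equation*}
with the normalization conventions of the paper. From this we compute
\begin{equation*}
\partial_t\bigl(|t| f_{\sol,t}\bigr)=-f_{\sol,t}+\phi_\tau^{\ast}\bigl(c\,|\nabla f_{\sol}|^2_{g_{\sol}}\bigr)=-\phi_\tau^{\ast}\operatorname{R}_{g_{\sol}},
\end{equation*}
where the constant $c$ is exactly the one for which the soliton identity \eqref{eq:solitonnormalization} applies. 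I will check this constant carefully against the paper's conventions: the K\"ahler normalization, with $|\partial f|^2$ equal to half of $|\nabla f|^2$. That bookkeeping is the only delicate point in the argument.

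Since $0<\operatorname{R}_{g_{\sol}}\le C_0$, integrating from $-1$ to $t$ gives $2|t|f_{\sol,t}\ge 2f_{\sol}-2C_0$. Hence
\begin{equation*}
b_{\sol}\le\sqrt{b_{\sol,t}^2+2C_0}\le b_{\sol,t}+\sqrt{2C_0}.
\end{equation*}

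\emph{Step 2: Lipschitz bound at time $t$.} The soliton identity gives $|\partial f_{\sol}|^2\le f_{\sol}$. It follows that $b_{\sol}=\sqrt{2f_{\sol}}$ is $1$-Lipschitz with respect to $d_{g_{\sol}}$; I will verify that the factor of $2$ in $b_{\sol}$ is exactly the one matching the K\"ahler-normalized gradient. This property is invariant under the parabolic rescaling and pullback that produce $(g_{\sol,t},b_{\sol,t})$ from $(g_{\sol},b_{\sol})$. Therefore
\begin{equation*}
b_{\sol,t}(x)\le b_{\sol,t}(x_0)+d_{g_{\sol,t}}(x_0,x).
\end{equation*}

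\emph{Step 3: the base point.} Since $x_0$ minimizes $f_{\sol}$, we have $X(x_0)=0$, so $\phi_\tau(x_0)=x_0$. Consequently
\begin{equation*}
b_{\sol,t}(x_0)=\sqrt{2|t|f_{\sol}(x_0)}\le\sqrt{2f_{\sol}(x_0)}.
\end{equation*}
Combining the three steps yields the claim with $C_1=\sqrt{2C_0}+\sqrt{2f_{\sol}(x_0)}$. No genuine obstacle is expected beyond the normalization check in Steps 1 and 2.
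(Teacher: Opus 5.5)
Your proof is correct and follows essentially the same route as the paper: both combine the $1$-Lipschitz bound $|\nabla^{g_{\sol}}b_{\sol}|_{g_{\sol}}\le 1$, transported to time $t$ using $\phi_\tau(x_0)=x_0$, with the integrated identity $\partial_t(|t|f_{\sol,t})=-|t|\operatorname{R}_{g_{\sol,t}}\in[-C,0]$. The differences are cosmetic. You integrate in time first and work with $b$ rather than with $f$ and a squared distance, and your normalization checks come out as you expect: the constant is $c=\tfrac12$, i.e.\ $|\partial f|^2=\tfrac12|\nabla f|^2$, which gives $|\nabla b_{\sol}|\le 1$.
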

\begin{proof}
Recalling that $|\nabla^{g_{\sol}}b_{\sol}|_{g_{\sol}}\le 1$ so that $b_{\sol} \leq d_{g_{\sol}}(x_0,\cdot)+b_{\sol}(x_0)$, 
\begin{equation} \label{eq:primitivefboundinclaim}
    f_{\sol,t} \leq \frac{1}{2} \left(\frac{d_{g_{\sol,t}}(x_0,\cdot)}{\sqrt{|t|}}+b_{\sol}(x_0) \right)^2
\end{equation}
for all $t\in [-1,0]$. By Theorem \ref{thm:basicKRSfacts}, we have \begin{equation} \label{eq:primitivetimederivative}
    \partial_t(|t|f_{\sol,t})=-f_{\sol,t}+|t|\cdot |\partial f_{\sol,t}|_{g_{\sol,t}}^2 = -|t|\operatorname{R}_{g_{\sol,t}} \in [-C(g_{\sol}),0].
\end{equation}
Integrating this from time $-1$ to time $t\in [-1,0)$ and combining with \eqref{eq:primitivefboundinclaim} yields
\begin{equation*}
    f_{\sol} \leq C(g_{\sol})+\frac{1}{2}(d_{g_{\sol,t}}(x_0,\cdot)+C(g_{\sol}))^2,
\end{equation*}
from which the claim follows.
\end{proof}

We now specialize to the case where $M_{\sol}$ is AC. 

\begin{theorem} \label{thm:AC} \cite[Theorem A]{ConlonDeruelleSun} If $(M_{\sol},J_{\sol},g_{\sol},f_{\sol})$ is an AC K\"ahler-Ricci shrinker, then there is a K\"ahler cone metric $g_{\mathcal{C}}$ on the Remmert reduction $\pi_{\sol}:M_{\sol}\to \mathcal{C}$ of $M_{\sol}$, such that the following properties hold:
\begin{enumerate}
    \item $|(\pi_{\sol})_*g_{\sol}-g_\mathcal{C}|_{g_{\mathcal{C}}}=O(r^{-2})$ on $\mathcal{C}\setminus \{o\}$ as $r\to \infty$, where $r$ denotes the radial function of the K\"ahler cone $(\mathcal{C},g_\mathcal{C})$ and $o$ is the vertex;
    \item $d\pi_{\sol}(X)=r\partial_r$;
    \item The exceptional set $\pi_{\sol}^{-1}(o)$ is a compact analytic subset.
\end{enumerate}
\end{theorem}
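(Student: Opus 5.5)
This is \cite[Theorem A]{ConlonDeruelleSun}, and in the paper we would simply cite it. If we had to prove it, we would proceed in three stages: construct the asymptotic cone from the self-similar flow, transfer the complex structure to it, and then identify the cone with the Remmert reduction.

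\emph{Stage 1: the metric cone.} Use the quadratic curvature decay \eqref{eq:quadraticcurvaturedecay} together with Theorem \ref{thm:basicKRSfacts}, which says $f_{\sol}\sim \frac12 d_{g_{\sol}}(x_0,\cdot)^2$.
\begin{itemize}
\item Consider the self-similar flow $g_{\sol,t}=|t|\phi_{\log\frac{1}{|t|}}^{\ast}g_{\sol}$. On the region $\{b_{\sol,t}\ge \delta\}$ its curvature is bounded by $C\delta^{-2}$, uniformly in $t\in[-1,0)$.
\item Shi's estimates then give bounds on all derivatives there. Since $|\partial_t g_{\sol,t}|=|\Ric|=O(\delta^{-2})$, the metrics $g_{\sol,t}$ converge smoothly as $t\nearrow 0$ on this region, along with $J$ and $b_{\sol,t}$.
\item Taking $\delta\to 0$, the end $M_{\sol}\setminus\Omega(D)$ acquires a limit metric $g_{\mathcal C}$.
\item Differentiating the soliton identities \eqref{eq:solitonnormalization} shows that $\frac12 b^2$ is a potential for this metric, with $\nabla^{g_{\mathcal C}}\tfrac12 b^2=X$ and $\mathcal{L}_{X}g_{\mathcal C}=2g_{\mathcal C}$. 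Hence $g_{\mathcal C}$ is a metric cone with radial function $r=b$, and $X$ corresponds to $r\partial_r$.
\item The rate $O(r^{-2})$ comes from integrating $|\Ric_{g_{\sol,t}}|\le C/(b_{\sol,t}^2)$ in $t$ along the flow lines of $X$ and rescaling back to $t=-1$.
\end{itemize}

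\emph{Stage 2: the complex structure.} Since $X$ is real-holomorphic, $\phi_s^{\ast}J=J$ for all $s$. The limiting cone is therefore Kähler for the same $J$ on the end, and $JX$ is a Killing Reeb field of $g_{\mathcal C}$. So the end of $M_{\sol}$ is biholomorphic to the end of a Kähler cone $(C(L),J,g_{\mathcal C})$, and the map is the identity on the end.

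\emph{Stage 3: the Remmert reduction.} This is the step we expect to be the main obstacle.
\begin{itemize}
\item Show that $M_{\sol}$ is holomorphically convex. The exhaustion $f_{\sol}$ satisfies $\sqrt{-1}\partial\overline{\partial}f_{\sol}=\omega_{\sol}-\Ric(\omega_{\sol})$, which is positive outside a compact set by the curvature decay. So $M_{\sol}$ is $1$-convex, and its maximal compact analytic subset $E$ is the exceptional set (item (3)).
\item Show that the Kähler cone $C(L)\cup\{o\}$ is a normal affine variety. We would first try the approach via holomorphic functions of polynomial growth, which are spanned by $JX$-eigenfunctions and separate points. Cone holomorphic functions on the end then extend across $E$ to $M_{\sol}$ by Hartogs-type extension, using $n\ge 2$ and $1$-convexity.
\item The resulting map $M_{\sol}\to\mathcal C$ is proper, holomorphic, contracts exactly $E$, and is a biholomorphism off $E$. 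By uniqueness of the Remmert reduction, it coincides with $\pi_{\sol}$.
\item It follows that $(\pi_{\sol})_{\ast}g_{\sol}$ is compared with $g_{\mathcal C}$ exactly as in Stage 1, and $d\pi_{\sol}(X)=r\partial_r$.
\end{itemize}
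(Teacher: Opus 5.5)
Your proposal matches the paper, which gives no proof and simply cites \cite[Theorem A]{ConlonDeruelleSun}. Your three-stage outline is a sound sketch of that result, except that the affineness of the K\"ahler cone in Stage 3 is imported (a theorem of van Coevering) rather than proved. You could also sidestep it: the flow of $X$ identifies all of $M_{\sol}\setminus\{b_{\sol,0}=0\}$, not just the end, with the punctured cone. So you can take the Remmert reduction of the $1$-convex manifold $M_{\sol}$ and use $\phi_s$-equivariance to see that it contracts exactly $\{b_{\sol,0}=0\}$ to a single point, which closes the cone up at $o$.

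One correction in Stage 1: the radial function is the limit $b_{\sol,0}=\lim_{t\nearrow 0}b_{\sol,t}$ (cf.\ Lemma \ref{lem:basicACfacts}\ref{lem:basicACfacts1}), not $b_{\sol}$ itself. Indeed $b_{\sol}$ is positive everywhere and only satisfies $0\le b_{\sol}^2-b_{\sol,0}^2\le C$, so $\nabla^{g_{\mathcal{C}}}\tfrac12 r^2=X$ holds for $r=b_{\sol,0}$ only.
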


For an AC K\"ahler-Ricci shrinker, $(\pi_{\sol})_{\ast}g_{\sol,t}$ converges locally smoothly to $g_{\mathcal{C}}$ on $\mathcal{C}\setminus \{o\}$ as $t \nearrow 0$. Under this identification, $\pi_{\sol}\circ\phi_s\circ\pi_{\sol}^{-1}$ multiplies the radial coordinate by $e^{\frac{s}{2}}$ for every $s\in\R$.

\begin{lemma} \label{lem:basicACfacts}
    \begin{enumerate}
        \item \label{lem:basicACfacts1} $b_{\sol,t}$ converge uniformly on $M_{\sol}$ as $t\nearrow 0$ to $\pi_{\sol}^{\ast}r$.

        \item \label{lem:basicACfacts2} For all $(x,t)\in M_{\sol} \times (-\infty,0)$, we have
        \begin{equation*}
            C(g_{\sol})^{-1}(b_{\sol,t}(x)+\sqrt{|t|})\leq b_{\sol,0}(x)+\sqrt{|t|} \leq (b_{\sol,t}(x)+\sqrt{|t|}).
        \end{equation*}

        \item \label{lem:basicACfacts3} For any $k\in \mathbb{N}$, there exists $C_k =C_k(g_{\sol})$ such that for all $(x,t)\in M_{\sol}\times (-\infty,0)$, we have
        \begin{equation*}
            |(\nabla^{g_{\sol,t}})^k\Rm(g_{\sol,t})|_{g_{\sol,t}}(x) \leq \frac{C_k}{(b_{\sol,t}(x)+\sqrt{|t|})^{k+2}}.
        \end{equation*}

        \item \label{lem:basicACfacts4}
        There exists $C(g_{\sol}) \in (1,\infty)$ such that for any $x\in M_{\sol}$ and $r\in (0,\infty)$,
        \begin{equation*}
            \operatorname{Vol}_{g_{\sol}}(B_{g_{\sol}}(x,r))\geq C(g_{\sol})^{-1}r^{2n}.
        \end{equation*}
    \end{enumerate}
\end{lemma}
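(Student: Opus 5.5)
The plan is to derive all four items from the self-similar structure $g_{\sol,t}=|t|\phi_{\log\frac{1}{|t|}}^{\ast}g_{\sol}$, the asymptotics of Theorem \ref{thm:AC}, and the quadratic curvature decay \eqref{eq:quadraticcurvaturedecay}, reducing each time-$t$ statement to a time-$(-1)$ statement at the point $\phi_{\log\frac{1}{|t|}}(x)$.

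\emph{Item \ref{lem:basicACfacts1}.} Note first that $b_{\sol,t}=\sqrt{|t|}\,\phi_{\log\frac1{|t|}}^{\ast}b_{\sol}$. By Theorem \ref{thm:AC}, $d\pi_{\sol}(X)=r\partial_r$, and $\pi_{\sol}\circ\phi_s\circ\pi_{\sol}^{-1}$ scales $r$ by $e^{s/2}$. Hence $\pi_{\sol}^{\ast}r$ is exactly invariant under the rescaling $u\mapsto \sqrt{|t|}\phi^{\ast}_{\log\frac1{|t|}}u$. It therefore suffices to show
\[
|b_{\sol}-\pi_{\sol}^{\ast}r|\leq C
\]
on $M_{\sol}$; then $|b_{\sol,t}-\pi_{\sol}^{\ast}r|\le C\sqrt{|t|}$, which gives uniform convergence. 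To prove this bound, combine $|\nabla b_{\sol}|\le1$, the identity $|\nabla f|^2=f-\operatorname{R}$ (so $|\nabla b_{\sol}|^2=1-\operatorname{R}/f=1-O(f^{-2})$), and $X\cdot \pi^{\ast}_{\sol}r=r$ on the end. Since $X\cdot b_{\sol}=|\nabla f|^2/b_{\sol}=b_{\sol}(1-O(b^{-4}))$, integrating along the flow lines of $X$ shows that $b_{\sol}/\pi_{\sol}^{\ast} r$ tends to a limit along each line, with summable error. The metric asymptotics $O(r^{-2})$ identify this limit as $1$, and the error in $b-r$ is $O(r^{-1})$. On the compact region $\{r\le R\}$ both functions are bounded.

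\emph{Item \ref{lem:basicACfacts2}.} This follows from item \ref{lem:basicACfacts1}. The refined estimate gives $|b_{\sol,t}-b_{\sol,0}|\le C\sqrt{|t|}$, where $b_{\sol,0}:=\pi_{\sol}^{\ast}r$, so $b_{\sol,0}+\sqrt{|t|}\le b_{\sol,t}+(C+1)\sqrt{|t|}$, and symmetrically in the other direction. This yields comparability up to a constant $C(g_{\sol})$.

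\emph{Item \ref{lem:basicACfacts3}.} By scaling, $|(\nabla^{g_{\sol,t}})^k\Rm(g_{\sol,t})|(x)=|t|^{-1-k/2}|\nabla^k\Rm(g_{\sol})|(\phi_{\log\frac1{|t|}}(x))$. We then need
\[
|\nabla^k\Rm(g_{\sol})|\le C_k(b_{\sol}+1)^{-2-k},
\]
which is the main technical input. The $k=0$ case is exactly \eqref{eq:quadraticcurvaturedecay}. For $k\ge1$, I would use Shi's local derivative estimates on the self-similar flow: on a parabolic ball of radius $\sim b_{\sol}(y)$ around $y$ at time $-1$, the curvature is bounded by $C b^{-2}$, because $b$ is comparable along the backward flow there. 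Shi's estimates then give $|\nabla^k\Rm|\le C_k b^{-2-k}$. Alternatively, this can be read off from the smooth cone asymptotics of Theorem \ref{thm:AC} with derivatives, as in \cite{ConlonDeruelleSun}. Substituting, and using $\sqrt{|t|}\,b_{\sol}(\phi(x))=b_{\sol,t}(x)$, gives $C_k(b_{\sol,t}+\sqrt{|t|})^{-2-k}$.

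\emph{Item \ref{lem:basicACfacts4}.} For $r\le1$, use the bounded curvature from item \ref{lem:basicACfacts3} with $t=-1$, together with a uniform lower bound on the injectivity radius or noncollapsing. The latter follows from the entropy lower bound (Perelman's no-local-collapsing for shrinkers) and gives $\operatorname{Vol}B(x,r)\ge c r^{2n}$. For large $r$, use the AC structure: $B(x,r)$ contains a ball of comparable radius in the end, of the form $B(x_0,r/2)$ or an annular region. Since the metric is $O(r^{-2})$-close to the cone, whose volume growth is $\operatorname{vol}(\text{link})\, r^{2n}/(2n)$, this gives the bound. At intermediate scales, use relative volume comparison for $f$-weighted measures, or simply continuity combined with the compactness of the range of scales after the first two regimes.

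The main obstacle is item \ref{lem:basicACfacts1}, where the difference $b_{\sol}-\pi_{\sol}^{\ast}r$ must be shown to be uniformly bounded (in fact $o(1)$) rather than merely $o(r)$. If the flow-line integration is delicate, I would instead cite the precise asymptotics $f_{\sol}=\frac{r^2}{2}+O(1)$ from \cite{ConlonDeruelleSun}.
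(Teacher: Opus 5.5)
Your treatment of items \ref{lem:basicACfacts3} and \ref{lem:basicACfacts4} matches the paper's. For the first, you get decay at $t=-1$ (from \cite[Section 2.2.3]{ConlonDeruelleSun} or Shi's estimates) and then rescale parabolically. For the second, you transfer the cone's Euclidean-type volume growth to $M_{\sol}$ outside a compact set.

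The one genuine shortfall is in item \ref{lem:basicACfacts2}. Its right-hand inequality has constant $1$, so it asserts $b_{\sol,0}\le b_{\sol,t}$ pointwise. Your two-sided bound $|b_{\sol,t}-b_{\sol,0}|\le C\sqrt{|t|}$ only gives $b_{\sol,0}+\sqrt{|t|}\le (C+1)(b_{\sol,t}+\sqrt{|t|})$. The missing input is the sign $\operatorname{R}_{g_{\sol}}>0$ \cite{BinglongChen}, which you discarded when you wrote the error as $O(b^{-4})$. If you keep it, your own computation reads $X\cdot\log(b_{\sol}/\pi_{\sol}^{\ast}r)=-\operatorname{R}_{g_{\sol}}/f_{\sol}\le 0$ off $\pi_{\sol}^{-1}(o)$. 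This function tends to $0$ along every forward flow line of $X$, so it is nonnegative. Hence $b_{\sol}\ge\pi_{\sol}^{\ast}r$ everywhere, and by homogeneity $b_{\sol,t}\ge b_{\sol,0}$ for all $t<0$.

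Apart from this, your route to items \ref{lem:basicACfacts1}--\ref{lem:basicACfacts2} differs from the paper's. You bound $|b_{\sol}-\pi_{\sol}^{\ast}r|$ using the AC asymptotics, then transport the bound with the exact homogeneity $\sqrt{|t|}\,\phi_{\log\frac{1}{|t|}}^{\ast}\pi_{\sol}^{\ast}r=\pi_{\sol}^{\ast}r$. The paper instead uses \eqref{eq:primitivetimederivative}: $\partial_t b_{\sol,t}^2=-2|t|\operatorname{R}_{g_{\sol,t}}$ with $|t|\operatorname{R}_{g_{\sol,t}}\in[0,C]$, so $0\le b_{\sol,t}^2-b_{\sol,0}^2\le C|t|$ immediately. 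That argument needs only bounded nonnegative scalar curvature. It gives uniform convergence by a Cauchy argument, contains the sign needed above, and yields the quadratic form of the estimate that reappears in the proof of Theorem \ref{thm:main1}.

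Your approach, in turn, makes explicit the identification of the limit with $\pi_{\sol}^{\ast}r$. The paper leaves this implicit, and it requires the same input $b_{\sol}/\pi_{\sol}^{\ast}r\to 1$ at infinity. Your bound also does not need the flow-line integration. Theorem \ref{thm:basicKRSfacts} gives $|b_{\sol}-d_{g_{\sol}}(x_0,\cdot)|\le C$, and the $O(r^{-2})$ metric asymptotics give $d_{g_{\sol}}(x_0,\cdot)=\pi_{\sol}^{\ast}r+O(1)$.
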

\begin{remark} \label{rem:alsocompact}
Lemma \ref{lem:basicACfacts} \ref{lem:basicACfacts3} also holds if $M_{\sol}$ is compact instead of AC, which is a consequence of Shi's derivative estimates. 
\end{remark}
\begin{proof}
    By \eqref{eq:primitivetimederivative}, we have
    \begin{equation*}
        0\leq b_{\sol,t}^2(x)-b_{\sol,0}^2(x) \leq C(g_{\sol})|t|,
    \end{equation*}
by which \ref{lem:basicACfacts1} and \ref{lem:basicACfacts2} are immediate. \ref{lem:basicACfacts3} holds when $t=-1$ by \cite[Section 2.2.3]{ConlonDeruelleSun}, so the claim for general $t<0$ follows by parabolic rescaling. \ref{lem:basicACfacts4} holds for any Riemannian cone $\mathcal{C}$ with smooth link by a basic computation. For an AC K\"ahler-Ricci shrinker, we use the fact that $M_{\sol}$ and $\mathcal{C}$ are quasi-isometric outside a compact set. 
\end{proof}

\subsection{Singular time-slice and tangent flows}

Suppose $(M,(g_t)_{t\in [-T,0)})$ is a compact K\"ahler-Ricci flow developing a finite-time singularity at $t=0$. For $x,y\in M, s<t$, one can define the heat kernel $K(x,t;y,s)$ of the Ricci flow by
\begin{equation}
    \begin{cases}
        &(\partial_t -\Delta_{\omega_t}) K(\cdot,\cdot;y,s)=0,\\
        &(-\partial_s - \Delta_{\omega_s}+\operatorname{R}_{\omega_s}) K(x,t;\cdot,\cdot)=0,\\
        &\lim _{s\to t} K(x,t;\cdot,s)=\delta_x,\\
        &\lim_{t\to s}K(\cdot,t;y,s)=\delta_y.\\     
    \end{cases}
\end{equation}
We define the conjugate heat kernel measure $\nu_s$ based at point $(x,t) \in M\times [-T,0)$ by
\begin{equation*}
    d{\nu}_{x,t;s}=K(x,t;\cdot,s)dg_s
\end{equation*}
for all $s\in [-T,t)$. Letting $n=\dim_\C M$, we define the corresponding \textit{potential function} $f_s = f_{x,t;s}$ by 
\begin{equation*}
     d\nu_{x,t;s}=(2\pi (t-s))^{-n}e^{- f_s}dg_s.
\end{equation*}
The \textit{pointed Nash entropy} at $(x,t)$ is
given by 
\begin{equation*} \mathcal{N}_{x,t}(\tau):= \int_M f_{t-\tau}\,d\nu_{x,t;t-\tau}-n. \end{equation*}

Following \cite{Bam2,Bam3}, we define $M_0$, the metric space at the singular time 0, as the set consisting of conjugate heat flows with variance going to 0 as time approaches 0. 

\begin{definition}[\cite{Bam3}]\label{conjugate kernel based at sing time}
    A conjugate heat flow based at the singular time is a conjugate heat flow $\left(\mu_t\right)_{t \in [-T, 0)}$ on $(M,(g_t)_{t\in [-T,0)})$ with the property that
\begin{equation}
\lim _{t \nearrow 0} \operatorname{Var}\left(\mu_t\right)=0.    
\end{equation}Let $M_0$ denote the set of all conjugate heat flows based at the singular time $0$. A distance function $d_0$ on $M_0$ can be defined as follows  
\begin{equation}
    d_0(\mu_1,\mu_2) := \lim_{t \nearrow 0} d_{W_1}^{g_t}(\mu_{1,t},\mu_{2,t}).
\end{equation}
\end{definition}
We extend the pointed Nash entropy to $\mu=(\mu_t)_{t\in [-T,0)} \in M_0$ by writing $d\mu_t = (2\pi|t|)^{-n}e^{-f_t}dg_t$, and setting
\begin{equation*}
    \mathcal{N}_{\mu}(\tau):= \int_{M} f_{-\tau} d\mu_{-\tau} -n.
\end{equation*}
\begin{definition} For $\delta>0$ and $r\in (0,\infty)$, we say $(M,(g_t)_{t\in [-T,0)})$ is $(\delta,r)$-self-similar at $\mu \in M_0$ if $10r^2 \leq T$ and
\begin{equation*}
   \mathcal{N}_{\mu}(10^{-1}r^2)- \mathcal{N}_{\mu}(10r^2) < \delta.
\end{equation*}
\end{definition}

\subsection{Convergence of \texorpdfstring{$H_{2n}$}{H\_2n}-centers for polarized K\"ahler-Ricci flow}\label{sec-basic setup for polarized KRF} Let $(M,(\widetilde{g}_t)_{t\in [-T,0)})$ be a K\"ahler-Ricci flow developing a finite-time singularity at time $0$, and assume that there exists a surjective holomorphic map $\pi:M\to Y$ to a normal K\"ahler variety $(Y,\omega_Y)$ such that $\pi^{\ast}[\omega_Y]=[\widetilde{\omega}_{-T}]-Tc_1(M)$. 
Noting that the parabolic Schwarz lemma \cite[Lemma 3.7.3]{SongWeinkove} still applies for holomorphic maps to K\"ahler spaces (cf. \cite[p. 115]{TosattiZhang3folds}), there exists a constant $C>0$ such that for any $t \in [-T,0)$, 
\begin{equation}
    \widetilde{\omega}_t \;\geq\; \tfrac{1}{C}\,\pi^*\omega_Y.
\end{equation}
Given this, the proof of \cite[Lemma 2.10]{CHM} goes through verbatim in our situation, and we use its consequences in the proof of the following.

\begin{lemma}\label{limit on the base}
 There exists a surjective Lipschitz map $\widehat{\pi}:(M_0,d_0)\rightarrow (Y,d_{g_Y})$ satisfying the following property for some constant $C \in (0,\infty)$. For any $\mu\in M_0$, $\widehat{\pi}(\mu)$ is the unique point $y\in Y$ such that 
\begin{equation}\label{schwarz lemma} \pi(x_t)\in B_{g_Y}(y, C\sqrt{|t|}), \end{equation}
holds for every $t\in [-T,0)$ and any $H_{2n}$-center $(x_t,t)$ of $\mu$.
\end{lemma}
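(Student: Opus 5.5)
Given $\mu\in M_0$, I pick for each $t\in[-T,0)$ an $H_{2n}$-center $(x_t,t)$ of $\mu$. The plan is to show that $(\pi(x_t))_t$ is Cauchy in $(Y,d_{g_Y})$ as $t\nearrow 0$ at rate $C\sqrt{|t|}$, and to take $\widehat\pi(\mu)$ to be its limit.

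\textbf{Step 1: displacement of centers is controlled by $\sqrt{|t|}$.} Since $\widetilde\omega_t\ge C^{-1}\pi^*\omega_Y$, the map $\pi$ is uniformly Lipschitz from $(M,d_{\widetilde g_t})$ to $(Y,d_{g_Y})$, with a constant independent of $t$. The consequences of \cite[Lemma 2.10]{CHM} give more: for $s<t<0$, the $d_{W_1}$-distance of the pushforwards $\pi_*\mu_s$ and $\pi_*\mu_t$ is bounded by $C\sqrt{t-s}$. The mechanism is that the functions $\pi^*h$, with $h$ Lipschitz on $Y$, have controlled evolution against conjugate heat flows, because $|\nabla \pi^*h|_{\widetilde g_t}$ is bounded. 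Also, $\mu_t$ concentrates within $\sqrt{2n\operatorname{Var}}$-balls around $x_t$ with $\operatorname{Var}(\mu_t)\le H_{2n}|t|$. Combining these, $d_Y(\pi(x_s),\pi(x_t))\le C\sqrt{|s|}$ for $s<t$. Hence $\pi(x_t)$ converges to some $y$ with $d_Y(\pi(x_t),y)\le C\sqrt{|t|}$. Any other choice of centers lies within distance $C\sqrt{|t|}$ of $x_t$, so it satisfies \eqref{schwarz lemma} as well. Uniqueness of $y$ is immediate by letting $t\to0$.

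\textbf{Step 2: Lipschitz property.} For $\mu^1,\mu^2\in M_0$, I use $d_{g_Y}(\pi(x^1_t),\pi(x^2_t))\le C d_{\widetilde g_t}(x^1_t,x^2_t)$. In turn, $d_{\widetilde g_t}(x^1_t,x^2_t)\le d_{W_1}^{\widetilde g_t}(\mu^1_t,\mu^2_t)+C\sqrt{|t|}$, using that each center is $W_1$-close to its measure. Letting $t\nearrow0$ and applying \eqref{schwarz lemma} gives $d_Y(\widehat\pi(\mu^1),\widehat\pi(\mu^2))\le C d_0(\mu^1,\mu^2)$.

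\textbf{Step 3: surjectivity.} For $y\in Y$, I pick $x\in\pi^{-1}(y)$ and consider the conjugate heat kernels $\nu_{x,t_0;\cdot}$ as $t_0\nearrow0$. By Bamler's compactness, or the construction in \cite[Section 2.6]{Bam3}, a subsequence converges to some $\mu\in M_0$. The basepoint $x$ is itself an $H_{2n}$-center at time $t_0$, so Step 1, applied on $[-T,t_0]$ with constants independent of $t_0$, gives $d_Y(\pi(x_t),y)\le C\sqrt{t_0-t}$. Passing to the limit yields $\widehat\pi(\mu)=y$.

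The main obstacle I expect is Step 1, namely transferring the $W_1$-control of $\pi_*\mu_t$ uniformly up to the singular time. This is where the Schwarz lemma bound and the argument of \cite[Lemma 2.10]{CHM} carry the weight. In the limiting argument of Step 3, I also need that the constants are independent of $t_0$.
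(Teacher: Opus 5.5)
Your proposal is correct and follows essentially the paper's route. Steps 2 and 3 coincide with the paper's Lipschitz and surjectivity arguments. In Step 1 the paper applies the estimate of \cite[Lemma 2.10]{CHM} only at the single point $(x_{t_2},t_2)$, using $W_1$-monotonicity to compare $x_{t_1}$ with an $H_{2n}$-center of $\nu_{x_{t_2},t_2;t_1}$. Your bound $d_{W_1}(\pi_*\mu_s,\pi_*\mu_t)\le C\sqrt{t-s}$ instead follows from the same estimate by disintegrating $\mu_s=\int\nu_{y,t;s}\,d\mu_t(y)$ and applying it uniformly in $y$, which is also valid.

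The heuristic you give for that bound is not sufficient on its own. A gradient bound on $\pi^*h$ only compares $\int\pi^*h\,d\nu_{y,t;s}$ with $\pi^*h$ evaluated at an $H_{2n}$-center of $\nu_{y,t;s}$, not at $y$. So the actual content of \cite[Lemma 2.10]{CHM} is genuinely needed: that $\pi$ of this center stays $C\sqrt{t-s}$-close to $\pi(y)$.
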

\begin{proof}
Let $(x_{t_1},t_1)$ and $(x_{t_2},t_2)$ be two $H_{2n}$-centers of $\mu\in M_0$ with $-T\leq t_1<t_2<0$. Then by the property of $H_{2n}$-centers and monotonicity of $d_{W_1}$-distance, we obtain
\begin{equation}\label{2.6}
\begin{aligned}
        d_{W_1}^{\widetilde{g}_{t_1}}(\delta_{x_{t_1}},\nu_{x_{t_2},t_2;t_1})&\leq d_{W_1}^{\widetilde{g}_{t_1}}(\delta_{x_{t_1}},\mu_{t_1})+d_{W_1}^{\widetilde{g}_{t_1}}(\mu_{t_1},\nu_{x_{t_2},t_2;t_1})\leq \sqrt{H_{2n}|t_1|}+\sqrt{H_{2n}|t_2|}
\end{aligned}
\end{equation} Let $(z_{t_1},t_1)$ be an $H_{2n}$-center of $(x_{t_2},t_2)$. Then, using \eqref{2.6} and the triangle inequality, we can estimate the distance between $z_{t_1}$ and $x_{t_1}$: 
\begin{equation}\label{different Hn centers}
    d_{\widetilde{g}_{t_1}}(z_{t_1}, x_{t_1})\leq d_{W_1}^{\widetilde{g}_{t_1}}(\delta_{z_{t_1}}, \nu_{x_{t_2},t_2;t_1})+d_{W_1}^{\widetilde{g}_{t_1}}(\nu_{x_{t_2},t_2;t_1}, \delta_{x_{t_1}})\leq \sqrt{H_{2n}}(\sqrt{t_2-t_1}+\sqrt{|t_2|}+\sqrt{|t_1|}).
\end{equation} Building on work of \cite{jiansongtian},  it has been proved in \cite[Lemma 2.10]{CHM} that for some constant $C \in (0,\infty)$ depending only on $\widetilde{g}_{-T}$, 
\begin{equation}\label{estimate in CHM}
    d_{g_Y}(\pi(z_{t_1}),\pi(x_{t_2}))\leq C\sqrt{H_{2n}(t_2-t_1)}.
\end{equation}
Combining \eqref{2.6}, \eqref{different Hn centers} and  \eqref{estimate in CHM}, we obtain 
\begin{equation}\label{estimate of distance on Y}
    d_{g_Y}(\pi(x_{t_1}),\pi(x_{t_2}))\leq C\sqrt{H_{2n}} (\sqrt{t_2-t_1}+\sqrt{|t_2|}+\sqrt{|t_1|}).
\end{equation}
From \eqref{estimate of distance on Y}, it follows that the sequence $\pi(x_t)$ converges to a unique limit with respect to the distance $d_{g_Y}$; we denote this limit by $\widehat{\pi}(\mu)$. By letting $t_2 \nearrow 0$ in \eqref{estimate of distance on Y}, we then obtain \eqref{schwarz lemma}. To show that $\widehat{\pi}$ is Lipschitz, suppose $\mu^1,\mu^2 \in M_0$, and let $(z_{t}^i,t)$ be an $H_{2n}$-center of $\mu^i$ for $i=1,2$. Then we can estimate
\begin{align*}
    d_{g_Y}(\widehat{\pi}(\mu^1),\widehat{\pi}(\mu^2)) \leq & d_{g_Y}(\widehat{\pi}(\mu^1),\pi(z_t^1))+d_{g_Y}(\pi(z_t^1),\pi(z_t^2))+d_{g_Y}(\pi(z_t^2),\widehat{\pi}(\mu^2))\\
    \leq & C\sqrt{|t|}+Cd_{\widetilde{g}_t}(z_t^1,z_t^2)\\
    \leq & C\sqrt{|t|}+Cd_{W_1}^{\widetilde{g}_t}(\delta_{z_t^1},\mu_t^1)+Cd_{W_1}^{\widetilde{g}_t}(\mu_t^1,\mu_t^2)+Cd_{W_1}^{\widetilde{g}_t}(\mu_t^2,\delta_{z_t^2})\\
    \leq & C\sqrt{|t|}+Cd_0(\mu^1,\mu^2).
\end{align*}
Taking $t\nearrow 0$, we conclude that $\widehat{\pi}$ is Lipschitz. Finally, for any $y\in Y$, choose $x\in \pi^{-1}(y)$, and let $\mu \in M_0$ be a conjugate heat kernel based at $(x,0)$, in the sense of \cite[Definition 2.2]{CHM}. Then $\widehat{\pi}(\mu)=\pi(x)=y$ by \cite[Lemma 2.10]{CHM}, hence $\widehat{\pi}$ is surjective.
\end{proof}

\subsection{Elliptic estimates and interpolation inequalities}
We now recall a weak version of the standard interior Schauder estimate with scalings for elliptic equations, which will be used throughout this paper. Throughout this section, we let $\Delta_g$ denote the Riemannian Laplace-Beltrami operator of $(M,g)$.

\begin{lemma}[Interior elliptic derivative estimate]\label{lem: schauder estimates}
    Let $(M^m,g)$ be a complete Riemannian manifold, and suppose that for some $x\in M$ and $r \in (0,\infty)$, there exist $(\Lambda_k)_{k\in \mathbb{N}}$ such that
\begin{equation*}\inf_{B_g(x,r)}\operatorname{inj}_g(\cdot)\geq \Lambda_0^{-1} r, \qquad 
\sup_{B_g(x,r)}\big|(\nabla^g)^k\Rm(g)\big|_g\le \Lambda_k\,r^{-2-k}.
\end{equation*}
   If $u\in C^\infty\big(B_g(x,r)\big)$ satisfies $\Delta_g u=f$, then for each $k\in \mathbb{N}$,
there exists 
$C_k=C_k(\Lambda_0,\dots,\Lambda_k) \in (1,\infty)$ such that 
\begin{equation*}
\|u\|_{C^{k+2}_r\left(B_g(x,\frac{r}{2})\right)}
\le C_k\Big(r^2\,\|f\|_{C^{k+1}_r\left(B_g(x,r)\right)}
      +\|u\|_{L^\infty\left(B_g(x, r)\right)}\Big),
\end{equation*}
where the scale-invariant norm $\|\cdot\|_{C_r^k}$ is defined by
\begin{equation*}
\|v\|_{C^{k}_r\left(B_g(x,\rho)\right)}
:=\sum_{i=0}^{k} r^{i}\sup_{B_g(x,\rho)}\big|(\nabla^g)^i v\big|_g .
\end{equation*}
\end{lemma}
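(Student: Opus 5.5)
The plan is to reduce to the Euclidean Schauder theory by passing to harmonic (or normal) coordinates at a uniform scale, and then to rescale. First I normalize: replacing $g$ by $\widetilde g := r^{-2}g$ and $f$ by $r^2 f$, the equation $\Delta_{\widetilde g} u = r^2 f$ holds. The hypotheses become $\operatorname{inj}_{\widetilde g}\ge \Lambda_0^{-1}$ and $|(\nabla^{\widetilde g})^k\Rm(\widetilde g)|\le \Lambda_k$ on $B_{\widetilde g}(x,1)$. Both sides of the desired inequality are invariant under this rescaling, since $\|v\|_{C^k_r}$ with respect to $g$ equals $\|v\|_{C^k_1}$ with respect to $\widetilde g$. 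It therefore suffices to treat $r=1$.

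With $r=1$, I fix $\rho=\rho(\Lambda_0,\Lambda_1)\in(0,\tfrac14)$ small. At each $y\in B_g(x,\tfrac12)$, I use the bounds on injectivity radius and curvature (Jones–Kenig–Ritter/Anderson harmonic radius estimates, or simply normal coordinates $\exp_y$ on $B(0,\rho)$ together with the curvature-derivative bounds) to obtain coordinates on $B_g(y,\rho)\subseteq B_g(x,1)$. In these coordinates the metric coefficients satisfy $\tfrac12\delta\le g_{ij}\le 2\delta$ and $\|g_{ij}\|_{C^{k+1}}\le C(\Lambda_0,\dots,\Lambda_{k})$. Normal coordinates lose derivatives relative to the curvature bounds, but since all $\Lambda_k$ are available this loss is harmless: the Jacobi field estimates bound $\partial^j g_{ij}$ in terms of $\Lambda_0,\dots,\Lambda_{j}$.

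In such a chart, $\Delta_g u = g^{ij}\partial_i\partial_j u - g^{ij}\Gamma^k_{ij}\partial_k u = f$ is a uniformly elliptic operator whose coefficients are controlled in $C^{k+1}$. The standard interior estimates (Gilbarg–Trudinger Theorem 6.2 together with the higher-order Schauder estimates, using that $C^{k+1}\subset C^{k,\alpha}$) then give
\[
\|u\|_{C^{k+2}(B(0,\rho/2))}\le C\big(\|f\|_{C^{k+1}(B(0,\rho))}+\|u\|_{L^\infty(B(0,\rho))}\big).
\]
Here Euclidean coordinate derivatives are converted to covariant derivatives $\nabla^j u$, $j\le k+2$, and $\nabla^i f$, $i\le k+1$, by means of the bounds on the Christoffel symbols and their derivatives. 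Because the estimate is uniform in $y$, covering $B_g(x,\tfrac12)$ by such chart balls yields the claim at $r=1$.

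The main obstacle is the step producing coordinates with quantitative $C^{k+1}$ control of the metric from the assumptions. Once the harmonic radius lower bound $\rho(\Lambda_0,\Lambda_1)$ is in hand (I would cite Anderson's result, or Hebey's book for $C^{k,\alpha}$ harmonic coordinates under bounds on $\nabla^j\Rm$ and a lower bound on the injectivity radius), the rest is routine. The fact that we use $\|f\|_{C^{k+1}}$ rather than the sharper Hölder norm is exactly what makes this weak version of the Schauder estimate follow directly.
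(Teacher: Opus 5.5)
Your proposal is correct and is essentially the paper's proof: rescale to $r=1$, use Hebey's lower bound on the $C^{k+1,\alpha}$-harmonic radius to get uniformly controlled harmonic charts around each $y\in B_g(x,\frac{1}{2})$, apply the Euclidean interior Schauder estimate using $\|f\|_{C^{k,\alpha}}\leq C\|f\|_{C^{k+1}}$, and cover $B_g(x,\frac{1}{2})$ by such charts. One caveat: only the harmonic-coordinate option yields the stated dependence $C_k(\Lambda_0,\dots,\Lambda_k)$ (there the first-order coefficients $g^{ij}\Gamma^l_{ij}$ vanish), whereas your normal-coordinate alternative loses derivatives — bounding $\partial^{k+1}g_{ij}$ needs $\Lambda_{k+1}$, and H\"older control of $g^{ij}\Gamma^l_{ij}$ needs more — which does not matter for the paper's applications but does matter for the lemma as written.
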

\begin{proof}
By rescaling, we may assume $r=1$. 
The hypotheses ensure a lower bound on the
$C^{k+1,\frac{1}{2}}$-harmonic radius \cite[Theorem 6]{Hebeyharmonic}.
Thus there exists $\delta=\delta(\Lambda_0)\in(0,\frac{1}{2}]$ such that any $y\in B_g(x,\frac{1}{2})$ admits harmonic coordinates on $B_{g}(y,\delta)$ in which $g_{ij}$ is
uniformly $C^{k+1,\frac{1}{2}}$-controlled in the sense of \cite[Definition 5]{Hebeyharmonic}. Applying the Euclidean interior Schauder
estimate \cite[Corollary 11.2.3]{PetersenRmG} 
in these coordinates gives
\begin{align*}
\|u\|_{C^{k+2}(B_{g}(y,\frac{\delta}{2}),\, g)}
&\le C_k\Big(\big\|f\big\|_{C^{k,\frac{1}{2}}(B_{g}(y,\delta),\,g) }
      +\|u\|_{L^\infty(B_{g}(y,\delta))}\Big)\\
&\leq C_k \left( \|f\|_{C^{k+1}(B_g(x,1)),\,g)} +\|u\|_{C^0(B_g(x,1))} \right),
\end{align*}
from which the claim follows.
\end{proof}

Lemma \ref{lem: schauder estimates} will often be combined with the following standard  $L^2-L^{\infty}$ estimate.

\begin{lemma} \label{lem:L2toLinfty}
Let $(M^m,g)$ be a complete Riemannian manifold, and suppose that for some $x\in M$ and $r \in (0,\infty)$, there exists $\Lambda \in (1,\infty)$ such that
\begin{equation*}\inf_{B_g(x,r)}\operatorname{inj}_g(\cdot)\geq \Lambda^{-1} r, \qquad 
\sup_{B_g(x,r)}|\Rm(g)|_g\le \Lambda\,r^{-2}.
\end{equation*}
Then there exists $C=C(\Lambda) \in (1,\infty)$ such that if $u\in C^\infty\big(B_g(x,r)\big)$ satisfies $\Delta_g u=f$, then
\begin{equation*}
    \sup_{B_g(x,\frac{r}{2})} |u| \leq C(\Lambda)\left(\left( \frac{1}{r^m} \int_{B_g(x, r)} u^2 dg \right)^{\frac{1}{2}} + r^2 \sup_{B_g(x,r)}|f| \right).
\end{equation*}
\end{lemma}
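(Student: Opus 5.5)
The plan is to reduce to the Euclidean case by working in harmonic coordinates and then apply a Moser iteration (or equivalently the mean value inequality) for the Poisson equation. First I rescale so that $r=1$. The estimate is then stated for $r=1$, and the general case follows because both sides scale identically. Under $g\mapsto r^{-2}g$, the averaged $L^2$ term $r^{-m}\int u^2\,dg$ is invariant, and $r^2\sup|f|$ transforms correctly since $\Delta_{r^{-2}g}=r^2\Delta_g$.

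With $r=1$, the hypotheses $\operatorname{inj}_g\ge\Lambda^{-1}$ and $|\Rm(g)|_g\le\Lambda$ on $B_g(x,1)$ give a uniform lower bound $\delta=\delta(\Lambda)\in(0,\tfrac14]$ on the $C^{0,\alpha}$ (indeed $W^{1,p}$) harmonic radius at every point of $B_g(x,\tfrac12)$. This is the same input from \cite[Theorem 6]{Hebeyharmonic} used in the proof of Lemma \ref{lem: schauder estimates}; only a curvature bound is needed because we only want $C^{0,\alpha}$ control. So for each $y\in B_g(x,\tfrac12)$ there are harmonic coordinates on $B_g(y,\delta)$ with
\begin{equation*}
\tfrac12\delta_{ij}\le g_{ij}\le 2\delta_{ij}, \qquad \|g_{ij}\|_{C^{0,\alpha}}\le C(\Lambda).
\end{equation*}
In these coordinates the equation $\Delta_g u=f$ reads
\begin{equation*}
\partial_i\bigl(\sqrt{\det g}\,g^{ij}\partial_j u\bigr)=\sqrt{\det g}\,f.
\end{equation*}
This is a divergence-form uniformly elliptic equation with bounded measurable coefficients and ellipticity constants depending only on $\Lambda$.

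The De Giorgi–Nash–Moser local maximum principle (\cite[Theorem 8.17]{GT}-type) gives
\begin{equation*}
\sup_{B_{\delta/2}}|u|\le C(\Lambda)\Bigl(\delta^{-m/2}\|u\|_{L^2(B_\delta)}+\delta^{2}\sup|f|\Bigr).
\end{equation*}
Here the $L^2$ and sup norms of $f$ are taken in coordinates, and they compare to the $g$-norms with constants depending on $\Lambda$. Since $\delta=\delta(\Lambda)$ and $B_g(y,\delta)\subseteq B_g(x,1)$, this bounds $|u(y)|$ by $C(\Lambda)\bigl((\int_{B_g(x,1)}u^2\,dg)^{1/2}+\sup_{B_g(x,1)}|f|\bigr)$, uniformly in $y\in B_g(x,\tfrac12)$, which is the claim.

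The only delicate point is getting uniform coordinates from merely $C^0$ curvature bounds plus the injectivity radius bound. I would rely on the harmonic radius estimate (Anderson/Jost–Karcher) rather than normal coordinates. Alternatively, one can avoid coordinates entirely. A Sobolev inequality holds on small balls, since the curvature bound gives volume comparison and the injectivity radius bound gives a noncollapsing volume lower bound, so a local Sobolev constant depends only on $\Lambda$ (Croke/Saloff-Coste). Given that, Moser iteration applied directly to $u_\pm$, which satisfy $\Delta_g u_\pm\ge -|f|$, yields the same estimate.
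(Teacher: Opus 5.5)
Your proposal is correct and matches the paper's one-line proof: it likewise rescales to $r=1$ and passes to the uniformly controlled harmonic coordinates from \cite[Theorem 6]{Hebeyharmonic}, as in the proof of Lemma \ref{lem: schauder estimates}. It then applies the local maximum principle \cite[Theorem 8.17]{GilbargTrudinger} to the resulting divergence-form equation. Your alternative route, a local Sobolev inequality followed by Moser iteration, would also work but is not needed.
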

\begin{proof} By choosing local coordinates as in Lemma \ref{lem: schauder estimates}, this follows from \cite[Theorem 8.17]{GilbargTrudinger}.
\end{proof}

We will also require the following standard interpolation inequality.

\begin{lemma} \label{lem:interpolation}  Let $(M^m,g)$ be a complete Riemannian manifold, and suppose that for some $x\in M$ and $r \in (0,\infty)$, there exist $(\Lambda _k)_{k\in \mathbb{N}}$ such that
\begin{equation*}\inf_{B_g(x,r)}\operatorname{inj}_g(\cdot)\geq \Lambda_0^{-1} r, \qquad 
\sup_{B_g(x,r)}\big|(\nabla^g)^k\Rm(g)\big|_g\le \Lambda_k\,r^{-2-k}.
\end{equation*}
Then there exist $C_k=C_k(\Lambda_0,...,\Lambda_{2k+2}) \in (1,\infty)$ such that for any $u\in C^{\infty}(B_g(x,r))$, we have
\begin{equation*}
    \| \nabla^k u\|_{C^0(B_{g}(x,\frac{r}{2}),g)} \leq C_k \left(\|\nabla^{2k}u\|_{C^{0}(B_{g}(x,r),g)}^{\frac{1}{2}}\|u\|_{C^0(B_{g}(x, r),g)}^{\frac{1}{2}}+r^{-k}\|u\|_{C^0(B_{g}(x,r),g)} \right).
\end{equation*}
\end{lemma}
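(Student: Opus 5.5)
The plan is to reduce to the Euclidean interpolation inequality by working in harmonic coordinates, as in the proof of Lemma \ref{lem: schauder estimates}. By scaling ($g\mapsto r^{-2}g$, under which $|\nabla^k u|_g$ scales like $r^{-k}$ and both sides of the inequality scale the same way) we may assume $r=1$. The curvature bounds on $(\nabla^g)^j\Rm$ for $j\le 2k+2$ and the injectivity radius lower bound give, by \cite[Theorem 6]{Hebeyharmonic}, a radius $\delta=\delta(\Lambda_0,\dots,\Lambda_{2k+2})\in(0,\frac14]$ such that every $y\in B_g(x,\frac12)$ admits harmonic coordinates on $B_g(y,\delta)$. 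In these coordinates $g_{ij}$ is uniformly equivalent to $\delta_{ij}$ and bounded in $C^{2k,\frac12}$. Consequently the Christoffel symbols and their coordinate derivatives up to order $2k-1$ are uniformly bounded. The tensor norms $|\nabla^j u|_g$ for $j\le 2k$ are then comparable, up to lower-order terms, to the Euclidean quantities $\sum_{|\alpha|=j}|\partial^\alpha u|$. Concretely, $\nabla^j u=\partial^j u+\sum_{i<j}P_{ij}\,\partial^i u$, where the coefficients $P_{ij}$ are uniformly controlled, and conversely.

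Next, on a Euclidean ball $B$ of fixed radius $\rho\sim\delta$ I invoke the classical Gagliardo--Nirenberg/Landau--Kolmogorov interpolation
\[
\|\partial^j v\|_{C^0(B/2)}\le C\left(\|\partial^{m} v\|_{C^0(B)}^{j/m}\|v\|_{C^0(B)}^{1-j/m}+\|v\|_{C^0(B)}\right), \qquad 0<j<m .
\]
This can be proved one direction at a time, from the one-variable inequality $|h'|\le \frac{2}{\epsilon}\sup|h|+\epsilon\sup|h''|$ optimized in $\epsilon$ and iterated, or quoted from standard references. With $j=k$ and $m=2k$ this is exactly the desired form.

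The remaining step, which I expect to be the only real bookkeeping obstacle, is converting $\|\partial^{2k}u\|$ back to $\|\nabla^{2k}u\|$. The difference consists of lower-order terms $\partial^i u$ with $i<2k$. I will absorb these by applying the Euclidean interpolation again to each $i$, which gives $\|\partial^i u\|\le \epsilon\|\partial^{2k}u\|+C_\epsilon\|u\|$, and hence $\|\partial^{2k}u\|_{C^0(B')}\le C(\|\nabla^{2k}u\|_{C^0(B)}+\|u\|_{C^0(B)})$. To make the absorption legitimate across nested balls, I will use the standard device of weighted seminorms $\sup_{B}d(\cdot,\partial B)^i|\partial^i u|$. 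Substituting then yields
\[
\|\partial^k u\|\le C\bigl((\|\nabla^{2k}u\|+\|u\|)^{1/2}\|u\|^{1/2}+\|u\|\bigr)\le C\bigl(\|\nabla^{2k}u\|^{1/2}\|u\|^{1/2}+\|u\|\bigr),
\]
using $\sqrt{a+b}\le\sqrt a+\sqrt b$.

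Finally, $|\nabla^k u|_g(y)$ is bounded by the same combination applied to $\partial^i u$, $i\le k$, and each of these is controlled by the above. Covering $B_g(x,\frac12)$ by the balls $B_g(y,\delta/2)$ and taking suprema, with all balls contained in $B_g(x,1)$ since $\delta\le\frac14$, gives the claim with $C_k$ depending only on $\Lambda_0,\dots,\Lambda_{2k+2}$. The extra two derivatives of curvature are needed so that the harmonic radius controls $g$ in $C^{2k,\alpha}$.
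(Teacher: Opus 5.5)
Your proposal is correct and follows the paper's proof: rescale to $r=1$, pass to harmonic coordinates via \cite[Theorem 6]{Hebeyharmonic}, and apply the Euclidean Gagliardo--Nirenberg interpolation with $j=k$, $m=2k$, $p=q=r=\infty$, $a=\frac{1}{2}$, which the paper quotes from \cite[Theorem 2.2 and Remark 5]{Nirenberg}. Your weighted-seminorm absorption, which converts between $\partial^{2k}u$ and $\nabla^{2k}u$ and controls the lower-order Christoffel terms, spells out a step the paper leaves implicit in the phrase ``it suffices to prove this in a harmonic coordinate chart.''
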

\begin{proof}
    By arguing as in Lemma \ref{lem: schauder estimates}, it suffices to prove this when $r=1$ and in a harmonic coordinate chart. In this case, the claim follows from \cite[Theorem 2.2 and Remark 5]{Nirenberg} with $m=2k$, $j=k$, $p=q=r=\infty$, and $a=\frac{1}{2}$.
\end{proof}

\subsection{Holomorphic approximation}

In this subsection, we prove a sufficient criterion for a K\"ahler-Ricci flow to be holomorphically approximated by a K\"ahler-Ricci shrinker. We then verify several examples where this criterion is satisfied, and lastly show that if the shrinker is AC, the convergence also holds at the level of K\"ahler potentials.

\begin{lemma} \label{lem:perturb} Suppose $(M,J,(\widetilde{\omega}_t)_{t\in [-T,0)})$ is a compact K\"ahler-Ricci flow and $\nu=(\nu_t)_{t\in [-T,0)}$ is a conjugate heat kernel based at the singular time, such that some tangent flow is a smooth K\"ahler-Ricci shrinker $(M_{\sol},J_{\sol},g_{\sol},f_{\sol})$ admitting an exhaustion by strictly pseudoconvex precompact open sets $W_j \subseteq M_{\sol}$ with smooth boundary satisfying
\begin{equation*}
    H^1(W_j,T^{1,0}W_j)=0.
\end{equation*}
Then $(M,J,(\widetilde{\omega}_t)_{t\in [-T,0)})$ can be holomorphically approximated by $(M_{\sol},J_{\sol},g_{\sol},f_{\sol})$ at $\nu$.
\end{lemma}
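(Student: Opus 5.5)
We start from the smooth convergence \eqref{eq:diffeoconvergence}: there are embeddings $\psi_i:U_i\to M$ with $\psi_i^\ast J\to J_{\sol}$ in $C^\infty_{\rm loc}$, together with convergence of metrics and potentials. The plan is to correct each $\psi_i$, on a fixed compact piece of $M_{\sol}$ and for large $i$, by a small diffeomorphism so that it becomes holomorphic. The correction comes from the deformation-theoretic rigidity $H^1(W_j,T^{1,0}W_j)=0$. A diagonal argument in $j$ then yields embeddings $\psi_i'$ with $(\psi_i')^\ast J=J_{\sol}$ and $\psi_i'\circ\psi_i^{-1}\to \mathrm{id}$ in the appropriate sense. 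The metric and potential convergence in \eqref{eq:diffeoconvergence} then persists for $\psi_i'$.

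\textbf{Step 1: integrability and the local rigidity statement.} Fix $j$ and choose $W_j\Subset W_{j+1}$. For large $i$, $J_i:=\psi_i^\ast J$ is an integrable complex structure on a neighborhood of $\overline{W_{j+1}}$ that is $C^k$-close to $J_{\sol}$. We invoke a stability theorem for strictly pseudoconvex domains with vanishing $H^1(\cdot,T^{1,0})$. This is a Kuranishi-type result, available e.g. via Hamilton's deformation theory on manifolds with boundary, or via Kiremidjian/Kuranishi for strongly pseudoconvex domains. It gives a diffeomorphism $\Phi_i$ from a neighborhood of $\overline{W_j}$ into $W_{j+1}$ such that:
\begin{itemize}
\item $\Phi_i^\ast J_i=J_{\sol}$;
\item $\Phi_i\to\mathrm{id}$ in $C^k(\overline{W_j})$ as $\|J_i-J_{\sol}\|_{C^{k'}(W_{j+1})}\to0$.
\end{itemize}
Concretely, we write $J_i$ as a Beltrami differential $\mu_i\in \mathcal{A}^{0,1}(T^{1,0})$ satisfying the Maurer–Cartan equation $\overline\partial\mu_i+\frac12[\mu_i,\mu_i]=0$. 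We then seek a vector field $v_i$ with $\Phi_i=\exp(v_i)$ killing $\mu_i$. At the linear level this requires solving $\overline\partial v=\mu$ with estimates. Since $\overline\partial\mu\approx0$ and $H^1=0$, this is solvable via the $\overline\partial$-Neumann problem on the strictly pseudoconvex $W_{j+1}$, which has estimates with a loss of derivatives (Kohn). The nonlinear problem is then closed by a Nash–Moser or Newton iteration in the spirit of Hamilton, shrinking the domain slightly at each step. This step is the main obstacle: the loss of derivatives near the boundary forces tame estimates and nested domains. Because we only need interior conclusions on $W_j$, we may shrink through a sequence of intermediate strictly pseudoconvex domains between $W_j$ and $W_{j+1}$, using interior regularity for $\overline\partial$ with $L^2$ Hörmander estimates where possible.

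\textbf{Step 2: diagonal argument.} For each $j$ pick $i_j$ increasing such that for $i\geq i_j$ the map $\Phi_{i}^{(j)}$ exists on $W_j$ with $\|\Phi_i^{(j)}-\mathrm{id}\|_{C^j(W_j)}<1/j$. We then set $\psi'_i:=\psi_i\circ\Phi_i^{(j(i))}$, where $j(i)$ is the largest $j$ with $i_j\le i$. These maps are holomorphic open embeddings, being compositions of embeddings, defined on an exhaustion $U_i':=W_{j(i)}$. Since $\Phi_i^{(j(i))}\to\mathrm{id}$ in $C^\infty_{\rm loc}$, pulling back \eqref{eq:diffeoconvergence} gives
\[
|t_i|^{-1}(\psi'_i)^\ast\widetilde g_{|t_i|t}\to g_{\sol,t},\qquad (\psi'_i)^\ast\widetilde f_{|t_i|t}\to f_{\sol,t}+W
\]
in $C^\infty_{\rm loc}(M_{\sol}\times(-\infty,0))$. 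We also have $(\psi_i')^\ast J=J_{\sol}$ exactly.

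\textbf{Step 3: conclusion.} By Step 2, the embeddings $\psi_i'$ satisfy both \eqref{eq:diffeoconvergence} and the holomorphicity condition $(\psi_i')^\ast J=J_{\sol}$. This is exactly Definition \ref{definition: holomorphic approximation}, so the flow is holomorphically approximated by the shrinker at $\nu$.
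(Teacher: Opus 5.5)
Your proposal is correct and is essentially the paper's proof. Hamilton's stability theorem on $W_{j+1}$, using $H^1(W_{j+1},T^{1,0}W_{j+1})=0$ and $\psi_i^{\ast}J\to J_{\sol}$, gives a near-identity map intertwining $J_{\sol}$ and $\psi_i^{\ast}J$ (the paper's $\eta_i$ with $\eta_i^{\ast}J_{\sol}=\psi_i^{\ast}J$ is just the inverse of your $\Phi_i$), which is then composed with $\psi_i$, followed by a diagonal argument; your Nash--Moser sketch of the stability theorem is unnecessary, and its idea of shrinking through intermediate domains would need $H^1=0$ on those domains, which is not assumed, so you should simply cite Hamilton's Main Theorem on $W_{j+1}$.
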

\begin{proof} Without loss of generality, we may assume $\overline{W}_j \subseteq W_{j+1}$ for each $j\in \mathbb{N}$. Fix $j\in \mathbb{N}$, and write $W:=W_j$ and $W':=W_{j+1}$. For $i\geq \underline{i}(j)$, $\overline{W'}$ is contained in the domain of $\psi_i$. Because $H^1(W',T^{1,0}W')=0$ and $\psi_{i}^{\ast}J \to J_{\sol}$ (see \cite[Theorem 2.5]{HallgrenJian}) in $C^{\infty}(\overline{W'})$, \cite[Main Theorem]{hamilton} gives embeddings $\eta_i:\overline{W'} \to M_{\sol}$ satisfying the following:
\begin{align*}
    \eta_i^{\ast}J_{\sol} =  \psi_{i}^{\ast}J, \qquad \eta_i \to \operatorname{id}_{W'} \text{ in } C^{\infty}(\overline{W'}).
\end{align*}
For $i\geq \underline{i}(j)$, it follows that $\eta_i(W')\supseteq W$, hence $\widehat{F}_i := \psi_{i} \circ \eta_i^{-1} : W \to \psi_{i}(W')$ are open holomorphic embeddings satisfying $|t_i|^{-1}\widehat{F}_i^{\ast}\widetilde{g}_{|t_i| t} \to g_{\sol,t}$ in $C_{\operatorname{loc}}^{\infty}(W)$. The claim then follows by a diagonal argument. 
\end{proof}

\begin{corollary} \label{cor:complexrigid} Suppose $(M,J,(\widetilde{\omega}_t)_{t\in [-T,0)})$ is a compact K\"ahler-Ricci flow and $\widetilde{\nu}=(\widetilde{\nu}_t)_{t\in [-T,0)}$ is a conjugate heat kernel based at the singular time, such that some tangent flow is a smooth AC K\"ahler-Ricci shrinker $(M_{\sol},J_{\sol},g_{\sol},f_{\sol})$ satisfying $H^1(M_{\sol},T^{1,0}M_{\sol})=0$. Then $(M,J,(\widetilde{\omega}_t)_{t\in [-T,0)},\widetilde{\nu})$ can be holomorphically approximated by $(M_{\sol},J_{\sol},g_{\sol},f_{\sol})$.    
\end{corollary}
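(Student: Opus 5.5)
The plan is to reduce Corollary \ref{cor:complexrigid} to Lemma \ref{lem:perturb}. It suffices to produce an exhaustion of $M_{\sol}$ by strictly pseudoconvex, precompact open sets $W_j$ with smooth boundary such that $H^1(W_j,T^{1,0}W_j)=0$.

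For the exhaustion we use the AC structure from Theorem \ref{thm:AC}. The cone $\mathcal{C}$ carries the function $r^2$, which is strictly plurisubharmonic on $\mathcal{C}\setminus\{o\}$, since $\omega_{\mathcal{C}}=\frac{1}{2}dd^c r^2$ (up to normalization). Its pullback $\pi_{\sol}^{\ast}r^2$ is therefore strictly psh outside the compact exceptional set $\pi_{\sol}^{-1}(o)$. Alternatively, we may use $f_{\sol}$ directly: the soliton equation gives
\begin{equation*}
\sqrt{-1}\partial\overline{\partial}f_{\sol}=\omega_{\sol}-\Ric(\omega_{\sol}),
\end{equation*}
and by \eqref{eq:quadraticcurvaturedecay} this is positive wherever $f_{\sol}$ is large. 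Moreover $|\nabla f_{\sol}|^2=f_{\sol}-\operatorname{R}\neq0$ for $f_{\sol}$ large. Hence for $D\geq \underline{D}(g_{\sol})$, the sets $W_j:=\{f_{\sol}<D_j\}$ with $D_j\nearrow\infty$ are precompact by Theorem \ref{thm:basicKRSfacts}, have smooth boundary, and have strictly pseudoconvex boundary.

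The main point is the vanishing $H^1(W_j,T^{1,0}W_j)=0$ given the global vanishing $H^1(M_{\sol},T^{1,0}M_{\sol})=0$. We argue via the restriction map. Because $M_{\sol}$ is $1$-convex (it is the Remmert reduction of a Stein space $\mathcal{C}$ modulo a compact exceptional set), the Andreotti--Grauert theory gives that for a strictly pseudoconvex sublevel set $W=\{f_{\sol}<D\}$ with $D$ large, the restriction
\begin{equation*}
H^1(M_{\sol},\mathcal{F})\to H^1(W,\mathcal{F})
\end{equation*}
is an isomorphism for any coherent sheaf $\mathcal{F}$, here $\mathcal{F}=\mathcal{O}(T^{1,0}M_{\sol})$. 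Both groups are finite-dimensional. The isomorphism holds because the region $\{D\leq f_{\sol}<D'\}$ is swept out by strictly psh level sets with non-vanishing gradient, so Andreotti--Grauert's bumping lemma applies. A cleaner route is to pass through the Stein cone: the Leray spectral sequence for $\pi_{\sol}$ reduces $H^1$ to $H^0(\mathcal{C}, R^1\pi_{\sol\ast}\mathcal{F})\oplus H^1(\mathcal{C},\pi_{\sol\ast}\mathcal{F})$. The second term vanishes on Stein subsets. The first is supported at $o$, and its contributions on $M_{\sol}$ and on $\pi_{\sol}^{-1}(\{r<R\})$ coincide. Here one takes $W_j=\pi_{\sol}^{-1}(\{r<R_j\})$, which is strictly pseudoconvex with smooth boundary because $r$ is smooth with $dr\neq0$ away from $o$. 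This second route is the one I would use, since it avoids boundary bumping entirely: $\{r<R_j\}$ is Stein, being a sublevel set of a psh exhaustion of the Stein space $\mathcal{C}$. I expect the only delicate step to be this identification of $H^1(W_j)$ with $H^1(M_{\sol})$.

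With the exhaustion and the vanishing in hand, Lemma \ref{lem:perturb} applies and gives the holomorphic approximation.
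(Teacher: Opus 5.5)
Your proposal is correct. Your first route is essentially the paper's proof: the paper takes $W_j=\Omega(j)$, notes these sublevel sets have strictly pseudoconvex boundary for large $j$, and cites \cite[Theorem 4.6]{Ohsawa} (an Andreotti--Grauert type restriction isomorphism) for $H^1(W_j,T^{1,0}W_j)\cong H^1(M_{\sol},T^{1,0}M_{\sol})=0$, before invoking Lemma~\ref{lem:perturb}.

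The route you say you would actually use is genuinely different in two ways. First, you exhaust by $W_j=\pi_{\sol}^{-1}(\{r<R_j\})$ instead of by $\Omega(j)$. Second, you compute cohomology through the Remmert reduction rather than by bumping. The argument runs as follows.
\begin{itemize}
\item Properness of $\pi_{\sol}$ makes each $R^q\pi_{\sol\ast}\mathcal{F}$ coherent (Grauert).
\item Cartan's Theorem B on the Stein bases $U=\mathcal{C}$ and $U=\{r<R_j\}$ kills every $E_2^{p,q}$ with $p\geq 1$. The Leray spectral sequence therefore gives $H^1(\pi_{\sol}^{-1}(U),\mathcal{F})\cong H^0(U,R^1\pi_{\sol\ast}\mathcal{F})$. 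This is an isomorphism from the degenerate spectral sequence, not literally the direct sum you wrote, but the discrepancy is harmless.
\item Since $\pi_{\sol}$ is biholomorphic off $\pi_{\sol}^{-1}(o)$, the sheaf $R^1\pi_{\sol\ast}\mathcal{F}$ is a skyscraper at $o$. Both groups are then the stalk $(R^1\pi_{\sol\ast}\mathcal{F})_o$.
\end{itemize}

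What your route buys: it avoids any bumping or exhaustion-function argument. It uses only standard coherent-sheaf theory. It also proves more, namely that the preimage of \emph{any} Stein neighbourhood of $o$ has the same $H^1$ as $M_{\sol}$.

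What it costs: it uses more of the Conlon--Deruelle--Sun structure than Theorem~\ref{thm:AC} displays. You need that $\mathcal{C}$ is Stein (affine), that $\{r<R\}$ is Stein, and that $\pi_{\sol}$ is an isomorphism away from the exceptional set.

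The paper's route needs only the identity $\sqrt{-1}\partial\overline{\partial}f_{\sol}=\omega_{\sol}-\Ric(\omega_{\sol})$, quadratic curvature decay, and one citation. It also keeps the sets $\Omega(j)$ used throughout the rest of the paper, for instance in Lemma~\ref{lem:deldelbar}.
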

\begin{proof} Because $W_j := \Omega(j)$ has strictly pseudoconvex boundary for $j\in \mathbb{N}$ sufficiently large, \cite[Theorem 4.6]{Ohsawa} gives $H^1(W_j ,T^{1,0}W_j)\cong H^1(M_{\sol},T^{1,0}M_{\sol})=0$, and the claim then follows from Lemma \ref{lem:perturb}.
\end{proof}

\begin{example} \label{example:FIK} Let $M_{\operatorname{FIK}}$ denote the total space of $\mathcal{O}_{\mathbb{P}^1}(-1)$, which is the underlying complex manifold of an AC K\"ahler-Ricci shrinker constructed in \cite{FIK}. We will show that $H^1(M_{\operatorname{FIK}},T^{1,0}M_{\operatorname{FIK}})=0$, thereby ensuring that the hypotheses of Corollary \ref{cor:complexrigid} are satisfied. Next, we let $\pi:M_{\operatorname{FIK}}\to \mathbb{P}^1$ be the bundle map, and recall the short exact sequence (see \cite[Section 3]{Aikou})
\begin{equation*}  
0\to \pi^{\ast}\mathcal{O}_{\mathbb{P}^1}(-1) \to T^{1,0}M_{\operatorname{FIK}}\to \pi^{\ast}T^{1,0}\mathbb{P}^1 \to 0
\end{equation*}
of holomorphic vector bundles. This induces a long exact sequence of cohomology, which includes
\begin{equation}
    \cdots \to H^1(M_{\operatorname{FIK}},\pi^{\ast}\mathcal{O}_{\mathbb{P}^1}(-1))\to H^1(M_{\operatorname{FIK}},T^{1,0}M_{\operatorname{FIK}}) \to H^1(M_{\operatorname{FIK}},\pi^{\ast}T^{1,0}\mathbb{P}^1) \to \cdots.
\end{equation}
Because $\pi$ is affine, for any holomorphic line bundle $L$ on $\mathbb{P}^1$, the algebraic sheaf cohomology groups $H_{\operatorname{alg}}^1$ satisfy 
\begin{equation*}
    H_{\operatorname{alg}}^1(M_{\operatorname{FIK}},\pi^{\ast}L)\cong H_{\operatorname{alg}}^1(\mathbb{P}^1,L\otimes \pi_{\ast}\mathcal{O}_{M_{\operatorname{FIK}}})= H_{\operatorname{alg}}^1 \left( \mathbb{P}^1,L\otimes \operatorname{Sym}(\mathcal{O}_{\mathbb{P}^1}(1)) \right) \cong \bigoplus_{k=0}^{\infty} H_{\operatorname{alg}}^1(\mathbb{P}^1,L\otimes \mathcal{O}_{\mathbb{P}^1}(k)).
\end{equation*}
Taking $L=T^{1,0}\mathbb{P}^1 \cong \mathcal{O}_{\mathbb{P}^1}(2)$ and $L=\mathcal{O}_{\mathbb{P}^1}(-1)$, and using the fact that $H_{\operatorname{alg}}^1(\mathbb{P}^1,\mathcal{O}_{\mathbb{P}^1}(j))=0$ whenever $j\geq -1$, we conclude that 
\begin{equation*}
H_{\operatorname{alg}}^1(M_{\operatorname{FIK}},\pi^{\ast}\mathcal{O}_{\mathbb{P}^1}(-1))=0=H_{\operatorname{alg}}^1(M_{\operatorname{FIK}},\pi^{\ast}T^{1,0}\mathbb{P}^1).
\end{equation*}
Because the blowdown map $M_{\operatorname{FIK}}\to \mathbb{C}^2$ exhibits $M_{\operatorname{FIK}}$ as a complex analytic space admitting a proper morphism to an affine variety, the relative GAGA principle \cite[Exp XII, Th\'eor\`eme 4.2]{SGA} implies 
\begin{equation*}
H^1(M_{\operatorname{FIK}},\pi^{\ast}\mathcal{O}_{\mathbb{P}^1}(-1))=0=H^1(M_{\operatorname{FIK}},\pi^{\ast}T^{1,0}\mathbb{P}^1),
\end{equation*}
from which the claim follows.
\end{example}

\begin{lemma} \label{lem:deldelbar} Suppose
    $(M,J,(\widetilde{g}_t)_{t\in [-T,0)},(\widetilde{\nu}_t)_{t\in [-T,0)})$ can be holomorphically approximated by an AC K\"ahler-Ricci shrinker $(M_{\sol},J_{\sol},g_{\sol},f_{\sol})$.
    Then there are $t_i \nearrow 0$, $A_i \to \infty$, solutions 
    $\varphi_{i} \in C^{\infty}(\Omega(A_i) \times [-A_i,0))$ to
    \begin{equation*}
        \partial_t \varphi_{i,t} = \log \left( \frac{(\omega_{\sol,t}+\sqrt{-1}\partial \overline{\partial} \varphi_{i,t})^n}{\omega_{\sol,t}^n} \right),
    \end{equation*}
    and open holomorphic embeddings $F_i:\Omega(A_i)\to M$ such that $|t_i|^{-1}F_i^{\ast}\omega_{|t_i|t} = \widetilde{\omega}_{\sol,t} + \sqrt{-1} \partial \overline{\partial} \varphi_{i,t}$ and
    \begin{align*} F_i^{\ast}\widetilde{f}_{|t_i|t}\to f_{\sol,t}+W, \qquad \varphi_{i} \to 0  
    \end{align*}
    in $C_{\operatorname{loc}}^{\infty}(M_{\sol} \times (-\infty,0))$, where we write $d\widetilde{\nu}_t=(2\pi|t|)^{-n}e^{-\widetilde{f}_t}d\widetilde{g}_t$. 
\end{lemma}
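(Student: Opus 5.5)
Our plan is to start from the holomorphic embeddings $\psi_i$ of Definition \ref{definition: holomorphic approximation}, restrict them to sublevel sets $\Omega(A_i)$ with $A_i\to\infty$ slowly (via a diagonal argument), and produce the potentials by a $\partial\overline{\partial}$-lemma on $\Omega(A_i)$. Set $F_i:=\psi_i|_{\Omega(A_i)}$ and $\omega^i_t:=|t_i|^{-1}F_i^{\ast}\widetilde{\omega}_{|t_i|t}$; by \eqref{eq:diffeoconvergence}, $\omega^i_t\to\omega_{\sol,t}$ and $F_i^{\ast}\widetilde f_{|t_i|t}\to f_{\sol,t}+W$ in $C^\infty_{\operatorname{loc}}(M_{\sol}\times(-\infty,0))$. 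It then suffices to find $\varphi_i$ at one time and propagate it by the flow.

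\textbf{Cohomology.} The difference $\omega^i_t-\omega_{\sol,t}$ is a closed real $(1,1)$-form on $\Omega(A_i)$. To make it $\partial\overline{\partial}$-exact, we use the structure of $\pi$ and $\pi_{\sol}$. Along the flow, $[\widetilde\omega_s]=\pi^{\ast}[\omega_Y]+|s|c_1(M)$, so $F_i^{\ast}[\widetilde\omega_s]=F_i^{\ast}\pi^{\ast}[\omega_Y]+|s|F_i^{\ast}c_1(M)$. On the shrinker side, $[\omega_{\sol,t}]=|t|c_1(M_{\sol})$ on sublevel sets, because $\Ric(\omega_{\sol})-\omega_{\sol}=-\mathcal L_{X/2}\omega_{\sol}=-dd^c(f_{\sol}/2)$ is exact. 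Since $F_i$ is holomorphic, $F_i^{\ast}c_1(M)=c_1(\Omega(A_i))$, and this matches after rescaling. It remains to kill $F_i^{\ast}\pi^{\ast}[\omega_Y]$. The points $F_i(\Omega(A_i))$ lie within distance $C\sqrt{|t_i|}A_i$ of an $H_{2n}$-center, so by Lemma \ref{limit on the base} and the Schwarz lemma their image under $\pi$ lies in a small ball around $y_0$, where $\omega_Y$ has a local potential. Hence $F_i^{\ast}\pi^{\ast}\omega_Y=dd^c(\rho\circ\pi\circ F_i)$. Choosing $A_i\to\infty$ slowly enough guarantees this containment.

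\textbf{Converting closed forms to $\partial\overline\partial$-exact ones.} Once the form is exact, we still need the $\partial\overline\partial$-lemma on $\Omega(A)$. We plan to use that for large $A$, $\Omega(A)$ is strictly pseudoconvex and holomorphically convex, with $\pi_{\sol}$ mapping it onto a Stein neighborhood of $o$ in $\mathcal C$. Concretely, we write
\[
\omega^i_{-1}-\omega_{\sol,-1}=dd^c(\text{explicit term})+\eta_i,
\]
with $\eta_i$ exact, and solve $\eta_i=d\beta$. Taking the $(0,1)$-part $\beta^{0,1}$ gives $\overline\partial\beta^{0,1}=0$. We then solve $\beta^{0,1}=\overline\partial u$ using $H^{0,1}(\Omega(A))\cong H^1(\Omega(A),\mathcal O)=0$. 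The latter vanishing follows from Leray for $\pi_{\sol}$ onto a Stein set, together with $R^1\pi_{\sol\ast}\mathcal O=0$, which holds because $\pi_{\sol}$ is a resolution of a rational (klt) cone singularity; Ohsawa's comparison \cite{Ohsawa} can be used as in Corollary \ref{cor:complexrigid}. Then $\eta_i=2\sqrt{-1}\partial\overline\partial\,\mathrm{Im}\,u$ up to normalization. To obtain $C^\infty_{\operatorname{loc}}$ smallness, we use Hörmander $L^2$ estimates with a weight given by a psh exhaustion, followed by elliptic regularity (Lemmas \ref{lem: schauder estimates} and \ref{lem:L2toLinfty}) applied to $\Delta\varphi=\operatorname{tr}(\omega^i-\omega_{\sol})$. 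Convergence $\varphi_{i,-1}\to0$ then comes from smallness of the form, after subtracting pluriharmonic parts; pluriharmonic functions are real parts of holomorphic functions and may be absorbed freely.

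\textbf{Time dependence.} On $\Omega(A_i)\times[-A_i,0)$ we have $\partial_t\omega^i_t=-\Ric(\omega^i_t)$ and $\partial_t\omega_{\sol,t}=-\Ric(\omega_{\sol,t})$. Hence $\partial_t(\omega^i_t-\omega_{\sol,t})=dd^c\log(\omega^{i,n}_t/\omega_{\sol,t}^n)$, so defining
\[
\varphi_{i,t}=\varphi_{i,-1}+\int_{-1}^t\log\frac{(\omega^i_s)^n}{\omega_{\sol,s}^n}\,ds
\]
gives the stated equation for every $t$, including $t<-1$. The integrand tends to $0$ locally smoothly, so $\varphi_i\to0$ in $C^\infty_{\operatorname{loc}}$.

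\textbf{Main obstacle.} The hardest step is the uniform $\partial\overline\partial$-lemma with estimates on exhausting domains, i.e.\ showing $\varphi_{i,-1}\to0$ rather than merely that it exists. Our first attempt will be to fix $A$, solve on $\Omega(2A)$ with a fixed Hörmander constant, restrict to $\Omega(A)$, and only then diagonalize in $A$.
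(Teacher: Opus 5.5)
Your outline is the paper's. You certify exactness of $|t_i|^{-1}F_i^{\ast}\widetilde{\omega}_{t_i}-\omega_{\sol}$ on $\Omega(A)$ by matching the $c_1$-terms and using a local potential of $\omega_Y$ near $y_0$; the paper does this with volume forms $\Theta_i$ satisfying $\Ric(\Theta_i)=|t_i|^{-1}(\widetilde{\omega}_{t_i}-\pi^{\ast}\omega_Y)$. You then take the $(0,1)$-part of a primitive, solve $\overline{\partial}u_i=\beta_i^{0,1}$, set $\varphi_{i,-1}=2\operatorname{Im}u_i$, use elliptic regularity and a diagonal argument, and integrate the log volume ratio in time.

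Two smaller points first. The explicit $dd^c$-potential, built from $|t_i|^{-1}\rho\circ\pi\circ F_i$ and volume forms, has no reason to be small. It should only be used to certify exactness, and all estimates must be run on the whole difference form, which does tend to $0$ in $C^\infty_{\operatorname{loc}}$. You also need a primitive $\beta_i$ bounded by that form on the fixed compact manifold with boundary $\Omega(A)$. The paper takes this from Hodge theory for manifolds with boundary; your plan leaves it implicit.

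The genuine gap is the $\overline{\partial}$-estimate, which does not work as you describe it. For $(0,1)$-forms, viewed as $K^{-1}$-valued $(n,1)$-forms, H\"ormander's estimate with weight $e^{-\phi}$ needs $\Ric(\omega_{\sol})+\sqrt{-1}\partial\overline{\partial}\phi\geq c\,\omega_{\sol}$. A psh $\phi$ is constant on every compact curve in $\pi_{\sol}^{-1}(o)$, so its Levi form vanishes along such curves. You would then need pointwise positivity of $\Ric(\omega_{\sol})$ there, which is not available, so a psh exhaustion weight gives no ``fixed H\"ormander constant''.

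The missing idea is to take $\phi=f_{\sol}$. It is not psh, but it satisfies $\Ric(\omega_{\sol})+\sqrt{-1}\partial\overline{\partial}f_{\sol}=\omega_{\sol}$. On $\operatorname{int}\Omega(A-1)$, which is weakly pseudoconvex since its boundary is strictly pseudoconvex for large $A$, this gives $u_i$ with $\int|u_i|^2e^{-f_{\sol}}\omega_{\sol}^n\leq\int|\beta_i|^2_{g_{\sol}}e^{-f_{\sol}}\omega_{\sol}^n$. That yields existence and the estimate at once, and makes your Leray/rational-singularity argument for $H^{0,1}=0$ unnecessary (you assert that argument rather than prove it).

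If you prefer to keep the vanishing route, the estimate can be obtained from the open mapping theorem instead. Apply it to $\overline{\partial}$ from $C^\infty(\operatorname{int}\Omega(2A))$ onto the Fr\'echet space of $\overline{\partial}$-closed $(0,1)$-forms, then restrict to $\Omega(A)$. The psh-weight H\"ormander argument itself does not supply it.
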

\begin{proof} By \cite[Theorem 1.4]{hacon}, there exists a holomorphic map $\pi:M \to Y$ with connected fibers to a normal K\"ahler space $(Y,\omega_Y)$ with $\pi^{\ast}[\omega_Y]=[\widetilde{\omega}_{-T}]-Tc_1(M)$. By Definition \ref{definition: holomorphic approximation}, there exist $A_i \to \infty$, $t_i \nearrow 0$, and holomorphic open embeddings $F_i:\Omega(A_i)\to M$ such that
\begin{equation*}
    |t_i|^{-1}F_i^{\ast} \widetilde{g}_{|t_i|t} \to g_{\sol,t}, \qquad F_i^{\ast}\widetilde{f}_{|t_i|t} \to f_{\sol,t}+W
\end{equation*}
in $C_{\operatorname{loc}}^{\infty}(M_{\sol}\times (-\infty,0))$ as $i\to \infty$. By the local $\partial \overline{\partial}$-lemma for complex spaces, there exists a neighborhood $\mathcal{W}$ of $\widehat{\pi}(\widetilde{\nu})$ such that $\omega_Y = \sqrt{-1}\partial \overline{\partial} \psi$ for some $\psi \in C^{\infty}(\mathcal{W})$. By \cite[proof of Lemma 2.11]{CHM}, there exists a neighborhood $\mathcal{V} \subseteq M$ of $\pi^{-1}(\widehat{\pi}(\widetilde{\nu}))$ such that $\pi(\mathcal{V}) \subseteq \mathcal{W}$, and such that for any $A \in (1,\infty)$, we have $F_i(\Omega(A)) \subseteq \mathcal{V}$ for sufficiently large $i=i(A)\in \mathbb{N}$. In particular, we can write $\omega_Y=\sqrt{-1}\partial \overline{\partial}\psi$ for some smooth function $\psi$ defined in a neighborhood of $\pi(\mathcal{V})$. Because $[\widetilde{\omega}_{t_i}] = \pi^{\ast}[\omega_Y]+|t_i|c_1(M)$, we can find smooth volume forms $\Theta_i \in \mathcal{A}^{n,n}(M)$ satisfying
$\Ric(\Theta_i) = \frac{1}{|t_i|}(\widetilde{\omega}_{t_i}-\pi^{\ast}\omega_Y)$. 
It follows that
\begin{equation*}
    \widehat{\varphi}_{i,-1}:=\log \left( \frac{e^{-f_{\sol}}\omega_{\sol}^n}{F_i^{\ast}(e^{-\frac{1}{|t_i|}\pi^{\ast}\psi} \Theta_i)} \right)
\end{equation*}
satisfies
\begin{equation*}
    \sqrt{-1}\partial \overline{\partial} \widehat{\varphi}_{i,-1}= \frac{1}{|t_i|}F_i^{\ast}(\widetilde{\omega}_{t_i}-\pi^{\ast}\omega_Y)-\omega_{\sol}+\frac{1}{|t_i|}F_i^{\ast}(\pi^{\ast}\omega_Y) = \frac{1}{|t_i|}F_i^{\ast} \widetilde{\omega}_{t_i} - \omega_{\sol}.
\end{equation*}
By applying \cite[Lemma 3.2.1]{Hodge} to the compact manifold with boundary $\Omega(A)$, we therefore conclude that there exists $\beta_i \in \mathcal{A}^1(\Omega(A))$ such that $d\beta_i = \frac{1}{|t_i|}F_i^{\ast}\widetilde{\omega}_{t_i}-\omega_{\sol}$ and $\beta_i \to 0$ in $C_{\operatorname{loc}}^{\infty}(\Omega(A))$. We write $\beta_i = \beta_i^{0,1}+\overline{\beta_i^{0,1}}$, where $\beta_i^{0,1} \in \mathcal{A}^{0,1}(\Omega(A-1))$ satisfy the following:
\begin{equation*}
    \overline{\partial}\beta_i^{0,1} =0, \qquad 2\operatorname{Re}( \partial \beta_i^{0,1})=\frac{1}{|t_i|}F_i^{\ast}\widetilde{\omega}_{t_i}-\omega_{\sol}, \qquad \beta_i^{0,1} \to 0 \text{ in } C^{\infty}(\Omega(A-1)).
\end{equation*}
   Since $\Omega(A-1)$ has smooth, strictly pseudoconvex boundary for sufficiently large $A$, and because
   \begin{equation*}
       \Ric(\omega_{\sol})+ \sqrt{-1}\partial \overline{\partial}f_{\sol} = \omega_{\sol},
   \end{equation*}
    we can apply H\"ormander's $L^2$ estimates to find $u_i \in L^2(\Omega(A-1))$ satisfying $\overline{\partial}u_i = \beta_i^{0,1}$ and
   \begin{align*}
        \int_{\Omega(A-1)} |u_i|^2 e^{-f_{\sol}}\omega_{\sol}^n \leq \int_{\Omega(A-1)} |\beta_i|_{g_{\sol}}^2 e^{-f_{\sol}} \omega_{\sol}^n.
   \end{align*}
   Then $\varphi_{i,-1}:= 2\operatorname{Im}(u_i)$ satisfies similar estimates, as well as $\sqrt{-1} \partial \overline{\partial} \varphi_{i,-1} = \frac{1}{|t_i|}F_i^{\ast}\widetilde{\omega}_{t_i}-\omega_{\sol}$. Local elliptic regularity then implies $\varphi_{i,-1} \to 0$ in $C_{\operatorname{loc}}^{\infty}(\Omega(A-1))$, and the claim for $t=-1$ follows from a diagonal argument. Setting
\begin{equation*}
    \varphi_{i,t}:= \varphi_{i,-1} + \int_{-1}^t \log \left( \frac{(|t_i|^{-1}F_i^{\ast}\widetilde{\omega}_{|t_i|s})^n}{\omega_{\sol,s}^n} \right)ds,
\end{equation*}
one may compute
\begin{align*}
    \partial_t(|t_i|^{-1}F_i^{\ast}\widetilde{\omega}_{|t_i|t}-\omega_{\sol,t}-\sqrt{-1}\partial \overline{\partial} \varphi_{i,t})=0,
\end{align*}
and the claim for general $t$ follows.
\end{proof}

\section{The drift Laplacian of K\"ahler-Ricci shrinkers}

\label{section:lineartheory}

The purpose of this section is to analyze the spectrum of the drift Laplacian $\Delta_{f_{\sol}}$ corresponding to a fixed K\"ahler-Ricci shrinker $(M_{\sol}^{n},J_{\sol},g_{\sol},f_{\sol})$. As a first step, we note the following sufficient criterion for a Killing field on a K\"ahler-Ricci soliton to be real-holomorphic. 

\begin{prop}\label{prop: killing is real-holomorphic}
    If $(M_{\sol},g_{\sol})$ is not a metric product of the form $M'\times\mathbb{C}^{k}$, then
any Killing field of $M_{\sol}$ is real-holomorphic. 
\begin{proof} We recall that $M_{\sol}$ is simply connected \cite{esparzafundamental,SunZhang}. If some Killing field of $(M_{\sol},g_{\sol})$ is not real-holomorphic, then by \cite[Th\'eor\`eme 1]{Lich}, $M_{\sol}$ splits as a metric product $(M_{\sol},g_{\sol})=(M'\times M'',g'+g'')$, where $(M'',g'')$ is a Ricci flat manifold of positive dimension. We have rewritten the proof of this fact in the Appendix as Theorem \ref{thm:lich} for the reader's convenience. It follows by \cite[Lemma 2.5]{ChowLowDim} that $(M'',g'')$ is a Ricci-flat Ricci shrinker, which is therefore isometric to $\mathbb{R}^j$ for some $j \in \{1,...,2n\}$. By \cite[p.3]{HallgrenJian}, it follows that $(M_{\sol},g_{\sol})$ splits a factor of the flat Gaussian shrinker on $\mathbb{C}^k$ for some $k\in \{1,...,n\}$.
\end{proof}
\end{prop}

Let $d_{f_{\sol}}^*$ be the formal adjoint of $d: \mathcal{A}^k(M_{\sol})\mapsto \mathcal{A}^{k+1}(M_{\sol})$ with respect to the inner product given by
\begin{equation*}
    \langle\alpha,\beta\rangle=\int_{M_{\sol}}  \langle\alpha,\beta\rangle_{g_{\sol}}d\nu_{\sol}
\end{equation*}
for all $\alpha,\beta\in\mathcal{A}^k(M_{\sol})$. The following is needed to establish a certain $L^2$-orthogonal decomposition of $L^2(M_{\sol},\nu_{\sol})$.

\begin{lemma} \label{lem:closedrange} The bounded linear operator
\begin{equation*}
    d:W^{1,2}(M_{\sol},\nu_{\sol})\to L^2(M_{\sol},\nu_{\sol}; T^{\ast}M_{\sol}), \qquad u\mapsto du 
\end{equation*}
has closed image.
\end{lemma}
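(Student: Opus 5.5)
The plan is to reduce closedness of the image to a weighted Poincaré inequality on $(M_{\sol},\nu_{\sol})$. Concretely, I aim to show there is $C=C(g_{\sol})$ with
\begin{equation*}
    \int_{M_{\sol}} \Big(u-\int_{M_{\sol}} u\,d\nu_{\sol}\Big)^2 d\nu_{\sol} \leq C \int_{M_{\sol}} |du|_{g_{\sol}}^2\, d\nu_{\sol} \qquad \text{for all } u\in W^{1,2}(M_{\sol},\nu_{\sol}).
\end{equation*}
Granting this, closedness is immediate. Suppose $du_k\to\alpha$ in $L^2(M_{\sol},\nu_{\sol};T^{\ast}M_{\sol})$. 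Since $\nu_{\sol}$ is a probability measure, replacing $u_k$ by $u_k-\int u_k\,d\nu_{\sol}$ keeps $u_k$ in $W^{1,2}$ and does not change $du_k$. The Poincaré inequality then makes $(u_k)$ Cauchy in $L^2(\nu_{\sol})$, hence Cauchy in $W^{1,2}(\nu_{\sol})$. Its limit $u$ satisfies $du=\alpha$ by continuity of $d$.

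To prove the Poincaré inequality, I would use the Bakry--Émery lower bound coming from the soliton equation. In the normalization \eqref{eq:solitonnormalization}, the real form reads $\Ric(g_{\sol})+\frac12\nabla^2 f_{\sol} = g_{\sol}$ (up to the Kähler factor conventions, which I would track carefully). Hence the drift Laplacian $\Delta_{f_{\sol}}$ has Bakry--Émery Ricci curvature bounded below by a positive constant.

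For $u\in C_c^\infty(M_{\sol})$, integrating the weighted Bochner formula against $\nu_{\sol}$ gives
\begin{equation*}
    \int (\Delta_{f_{\sol}}u)^2\,d\nu_{\sol} \geq c\int |du|^2\,d\nu_{\sol}.
\end{equation*}
The standard spectral argument then yields $\lambda_1(-\Delta_{f_{\sol}})\geq c>0$ on the orthogonal complement of constants. Alternatively, I could cite the known result (Cheng--Zhou, or Hein--Naber's log-Sobolev inequality on shrinkers) that the spectrum of $-\Delta_{f_{\sol}}$ on any complete shrinker is discrete with a gap above $0$.

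The main obstacle is justifying this on the noncompact $M_{\sol}$. First, $C_c^\infty$ must be dense in $W^{1,2}(\nu_{\sol})$, and $\Delta_{f_{\sol}}$ must be essentially self-adjoint there. Second, the spectrum near $0$ must be discrete, so the bottom of the spectrum on mean-zero functions is an eigenvalue and the Bochner argument applies to a genuine eigenfunction.

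For both points I would use Theorem \ref{thm:basicKRSfacts}: $f_{\sol}$ grows quadratically in distance and $|\nabla f_{\sol}|^2\leq f_{\sol}$. Cutoffs $\eta_R(b_{\sol})$ then have $|d\eta_R|\leq C/R$. Polynomial volume growth combined with the Gaussian weight $e^{-f_{\sol}}$ makes the error terms vanish as $R\to\infty$, which gives density. For discreteness, the substitution $v=ue^{-f_{\sol}/2}$ conjugates $-\Delta_{f_{\sol}}$ to a Schrödinger operator with potential $\frac1{16}|\nabla f_{\sol}|^2-\frac14\Delta f_{\sol}\to\infty$, because $\Delta f_{\sol}$ grows at most linearly in $f_{\sol}$. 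This gives compactness of $W^{1,2}(\nu_{\sol})\hookrightarrow L^2(\nu_{\sol})$.

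Compactness of the embedding alone already proves the Poincaré inequality by contradiction. Take $\|u_k\|_{L^2}=1$, mean zero, and $\|du_k\|_{L^2}\to0$. A subsequence converges in $L^2(\nu_{\sol})$ to some $u$ with $du=0$, so $u$ is constant; it also has mean zero and unit norm, a contradiction since $M_{\sol}$ is connected. I would therefore present the compact-embedding route as the main argument, and the Bakry--Émery bound only as a remark giving an explicit constant.
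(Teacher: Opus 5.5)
Your argument is correct, and its skeleton is the paper's. You prove a weighted Poincar\'e inequality for mean-zero functions and extend it from $C_c^\infty(M_{\sol})$ to $W^{1,2}(M_{\sol},\nu_{\sol})$ by density. The paper cites Grigor'yan for the density step; your cutoff argument also works, and the error term $CR^{-1}\|u\|_{L^2(\nu_{\sol})}$ vanishes without any volume-growth input. You then subtract means so that the sequence becomes Cauchy in $W^{1,2}$, exactly as the paper does.

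The difference lies only in how the Poincar\'e inequality is justified. The paper takes it as the known Bakry--\'Emery inequality coming from the soliton equation. That inequality holds for compactly supported functions, has an explicit constant, and needs no spectral input.

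Your main route instead proves compactness of $W^{1,2}(M_{\sol},\nu_{\sol})\hookrightarrow L^2(M_{\sol},\nu_{\sol})$ via the ground-state transform, then argues by contradiction using connectedness. This is more work and gives a non-explicit constant. On the other hand, it is self-contained and also yields the discreteness of the spectrum that the paper later imports from \cite{XZ}.

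One step needs fixing. You say the potential tends to infinity because ``$\Delta f_{\sol}$ grows at most linearly'', combined only with an upper bound on $|\nabla f_{\sol}|^2$. That does not suffice, since a linearly growing $\Delta f_{\sol}$ could cancel the gradient term. Use instead the traced soliton equation $\operatorname{R}_{g_{\sol}}+\Delta_{g_{\sol}}f_{\sol}=n$ together with $\operatorname{R}_{g_{\sol}}+|\partial f_{\sol}|_{g_{\sol}}^2=f_{\sol}$. In the paper's normalization, conjugating $-\Delta_{f_{\sol}}$ by $e^{-f_{\sol}/2}$ gives $-\Delta_{g_{\sol}}+V$ with
\[
V=\tfrac14|\partial f_{\sol}|_{g_{\sol}}^2-\tfrac12\Delta_{g_{\sol}}f_{\sol}=\tfrac14\bigl(f_{\sol}+\operatorname{R}_{g_{\sol}}\bigr)-\tfrac n2\geq \tfrac14 f_{\sol}-\tfrac n2 .
\]
Hence $V\to\infty$ by Theorem \ref{thm:basicKRSfacts} on any complete shrinker. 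Moreover $V$ is bounded below, which is exactly what your tail estimate requires.
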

\begin{proof}
For any $u \in C_c^{\infty}(M_{\sol})$, the weighted Poincare inequality yields
\begin{equation} \label{eq:consequenceofpoincare}
    2\int_{M_{\sol}} |du|_{g_{\sol}}^2 d\nu_{\sol} \geq \int_{M_{\sol}} |u|^2 d\nu_{\sol} - \left| \int_{M_{\sol}} u d\nu_{\sol} \right|^2.
\end{equation}
By \cite[Theorem 1.7]{Grigoryan}, $C_c^{\infty}(M_{\sol})$ is dense in $W^{1,2}(M_{\sol},\nu_{\sol})$, so \eqref{eq:consequenceofpoincare} holds more generally for all $u\in W^{1,2}(M_{\sol},\nu_{\sol})$. Suppose $(u_j)_{j\in \mathbb{N}}$ is a sequence in $W^{1,2}(M_{\sol},\nu_{\sol})$ such that $(du_j)_{j\in \mathbb{N}}$ is a Cauchy sequence in $L^2(M_{\sol},\nu_{\sol};T^{\ast}M_{\sol})$. Setting $v_j := u_j -\int_{M_{\sol}} u_j d\nu_{\sol}$, we take $u=v_j-v_k$ in \eqref{eq:consequenceofpoincare} to obtain
\begin{equation*}
    \int_{M_{\sol}} |v_j - v_k|^2 d\nu_{\sol} \leq 2\int_{M_{\sol}} |du_j -du_k|^2 d\nu_{\sol}, 
\end{equation*}
hence $(v_j)_{j\in \mathbb{N}}$ is a Cauchy sequence in $W^{1,2}(M_{\sol},\nu_{\sol})$. It follows that $v_j \to v$ for some $v\in W^{1,2}(M_{\sol},\nu_{\sol})$ satisfying $\lim_{j\to \infty}\|du_j-dv\|_{L^2(M_{\sol},\nu_{\sol})}=0$.
\end{proof}

We now define a family of holomorphy potentials for holomorphic vector fields commuting with $\nabla^{g_{\sol}}f_{\sol}$.

\begin{definition}\label{definition: model metrics}
Let $H$ be the vector space of $h\in C^{\infty}(M_{\sol})\cap L^2(M_{\sol},\nu_{\sol})$ satisfying the following:
\begin{enumerate}
    \item \label{def:H1} $-\Delta_{f_{\sol}}h=h$,
    \item \label{def:H2} $(J_{\sol}\nabla^{g_{\sol}}f_{\sol})\cdot h=0$,
    \item \label{def:H3} $J_{\sol}\nabla^{g_{\sol}}h$ is a real-holomorphic Killing field.
\end{enumerate}
We equip $H$ with the norm $\|\cdot\|_{H}$ defined by
\begin{equation*}
\|h\|_{H}^{2}:=\int_{M_{\sol}}h^{2}d\nu_{\sol}
\end{equation*}
for $h\in H$. 
\end{definition}

\begin{remark}
    Note that $f_{\sol}-n \in H$, hence $H\neq \{0\}$ if $M_{\sol}$ is not K\"ahler-Einstein.
\end{remark}

\begin{remark} \label{rem:vfieldscommute}
For any $h \in H$, because $\mathcal{L}_{J_{\sol}\nabla^{g_{\sol}}f_{\sol}}g_{\sol}=0$ and $\mathcal{L}_{J_{\sol}\nabla^{g_{\sol}}f_{\sol}}h=0$, we also have 
\begin{equation*} 0=\mathcal{L}_{J_{\sol}\nabla^{g_{\sol}}f_{\sol}}\nabla^{g_{\sol}}h=[J_{\sol}\nabla^{g_{\sol}}f_{\sol},\nabla^{g_{\sol}}h], \end{equation*}
hence also $\nabla^{g_{\sol}}h$ is a holomorphic vector field satisfying $[\nabla^{g_{\sol}}h,\nabla^{g_{\sol}}f_{\sol}]=0$.
\end{remark}

In the following lemma, we show that an eigenvalue $\lambda$ of $-\Delta_{f_{\sol}}$ corresponds to an $f_{\sol}$-homogeneous pluriharmonic function $u_{PH}$ if $\lambda<1$. Moreover, the $\lambda=1$ eigenspace is spanned by homogeneous pluriharmonic functions and $H$.

\begin{lemma} \label{lem:lineartheory}
Assume that $M_{\sol}$ does not split a factor of $\mathbb{C}^{k}$ isometrically. 
\begin{enumerate}
    \item  \label{lem:lineartheory-assertion (i)} If $\eta\in\mathcal{A}^{1}(M_{\sol})$ is nonzero and satisfies $\int_{M_{\sol}}|\eta|_{g_{\sol}}^{2}d\nu_{\sol}<\infty$
and $(dd_{f_{\sol}}^{\ast}+d_{f_{\sol}}^{\ast}d)\eta=\lambda\eta$ for some $\lambda\in\mathbb{R}$,
then $\lambda\geq1$. 

    \item\label{lem:lineartheory-assertion (ii)} If in addition $d_{f_{\sol}}^{\ast}\eta=0$, then $\lambda\geq2$,
with equality only if $\eta^{\sharp}$ is a real-holomorphic Killing
field, in which case we also have $\eta(\nabla^{g_{\sol}} f_{\sol})=0$.

    \item \label{lem:lineartheory-assertion (iii)}If $u\in C^{\infty}(M_{\sol}) \cap L^2(M_{\sol},\nu_{\sol})$ is not pluriharmonic and satisfies 
\begin{equation*} -\Delta_{f_{\sol}}u=\lambda u\end{equation*} 
for some $\lambda\in\mathbb{R}$, then
$\lambda\geq 1,$ with equality if and only if $u=u_{\text{PH}}+h$ with
\begin{equation*}
    -\Delta_{f_{\sol}} u_{PH}=u_{PH},\quad  -\Delta_{f_{\sol}} h=h,
\end{equation*}
where $u_{\text{PH}}$ is pluriharmonic and $h\in H$.

\item \label{lem:lineartheory-assertion (iv)} If $u_{PH} \in C^{\infty}(M_{\sol})\cap L^2(M_{\sol},\nu_{\sol})$ is pluriharmonic and $\Delta_{f_{\sol}}u_{PH}=-u_{PH}$, and if $h\in H$, then
\begin{equation*}
    \int_{M_{\sol}} u_{PH}h d\nu_{\sol}=0.
\end{equation*}

\end{enumerate}
\end{lemma}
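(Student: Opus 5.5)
The plan is to derive everything from weighted Bochner formulas for the drift Hodge Laplacian $\Delta_{H,f}:=dd_{f_{\sol}}^{\ast}+d_{f_{\sol}}^{\ast}d$, combined with the soliton equation $\Ric(g_{\sol})+\nabla^2 f_{\sol}=\frac12 g_{\sol}$ in its Riemannian form. Here $\frac12$ is the Riemannian normalization of \eqref{eq:solitonnormalization}, since our $\Delta$ is half the Laplace--Beltrami operator. Under that normalization the drift Hodge Laplacian on 1-forms has Weitzenböck formula $\Delta_{H,f}\eta=\nabla_f^{\ast}\nabla\eta+(\Ric+\nabla^2 f)(\eta)=\nabla_f^{\ast}\nabla\eta+\frac12\eta$, up to the factor-of-two convention that makes $1$ the threshold. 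Integrability by parts against $\nu_{\sol}$ is justified by the cutoff argument of \cite{Grigoryan} together with the Gaussian decay of $\nu_{\sol}$. In these conventions, pairing with $\eta$ gives $\lambda\|\eta\|^2=\|\nabla\eta\|^2+\|\eta\|^2$, which proves \ref{lem:lineartheory-assertion (i)}. If equality held, $\eta$ would be parallel, and then $M_{\sol}$ would split off a line. Passing to the holomorphic setting, this produces a $\mathbb{C}$-factor as in the proof of Proposition \ref{prop: killing is real-holomorphic}, contradicting the hypothesis. So strict inequality can be recorded where needed.

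For \ref{lem:lineartheory-assertion (ii)}, I would decompose $\nabla\eta$ into its symmetric and antisymmetric parts, and further into $J$-invariant and $J$-anti-invariant pieces. Condition $d_{f}^{\ast}\eta=0$, combined with the weighted Bochner formula applied to $\eta^\sharp$ viewed as a vector field, should yield $\lambda\|\eta\|^2=\|\nabla\eta\|^2+\|\eta\|^2\geq 2\|\eta\|^2$. The idea is to use the drift version of the Killing/conformal splitting: $\|\nabla\eta\|^2\geq \frac12\|d\eta\|^2=\frac12\lambda\|\eta\|^2$ when $d_f^\ast\eta=0$. This gives $\lambda\ge 1+\lambda/2$, hence $\lambda\geq2$, with equality iff the symmetric part $\mathcal{L}_{\eta^\sharp}g$ vanishes, i.e. $\eta^\sharp$ is Killing. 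Proposition \ref{prop: killing is real-holomorphic} then makes it real-holomorphic. For $\eta(\nabla f)=0$, I would use that a Killing field preserves $f$ up to a constant and $\nabla f$. The pairing $\int \eta(\nabla f)\,d\nu=\int (d_f^{\ast}\eta)\cdot(\text{const})\,d\nu$, together with $\eta^\sharp f$ being a Killing-invariant quantity satisfying a drift eigen-equation, forces it to vanish.

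For \ref{lem:lineartheory-assertion (iii)}, I would set $\eta=Jdu$. Because $J$ is parallel and $X$ is holomorphic, $\Delta_{H,f}$ commutes with $J$ on exact forms up to the term $\sqrt{-1}\partial\overline\partial u$. Then $d_f^{\ast}(Jdu)=0$ automatically, and $Jdu$ is closed iff $u$ is pluriharmonic. Writing $Jdu=d a+\beta$ via the Hodge decomposition made possible by Lemma \ref{lem:closedrange}, with $\beta$ coclosed, the non-pluriharmonic component $\beta\neq0$ satisfies $\Delta_{H,f}\beta=(\lambda+1)\beta$ (the shift $+1$ coming from commuting $d$ past $\Delta_f$ on functions: $\Delta_{H,f}du=d(-\Delta_f u)$, with the Ricci-drift term contributing the identity after applying $J$). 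Then (ii) gives $\lambda+1\geq2$. In the equality case $\beta^\sharp$ is a real-holomorphic Killing field orthogonal to $\nabla f$. I would then produce $h$ with $J\nabla h=\beta^\sharp$, normalized to be an eigenfunction, and check the conditions \ref{def:H1}--\ref{def:H3}, leaving $u-h$ pluriharmonic.

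For \ref{lem:lineartheory-assertion (iv)}, I would integrate by parts: $\int u_{PH}h\,d\nu=\int \langle du_{PH},dh\rangle d\nu=\int\langle Jdu_{PH},Jdh\rangle d\nu$. Since $Jdu_{PH}$ is closed, and $Jdh$ is dual to a Killing field and hence $d_f^\ast$-coclosed, the pairing of a closed (hence, on the simply connected $M_{\sol}$, exact) form against a coclosed one vanishes. The main obstacle is (ii)/(iii): getting the exact constants in the drift Weitzenböck identity consistent with the halved Laplacian convention, and justifying the splitting into exact and coclosed parts in $L^2(\nu_{\sol})$.
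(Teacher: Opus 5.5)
\textbf{Verdict.} For (i)--(iii) your architecture is the paper's: the weighted Weitzenb\"ock formula, then $|\nabla\eta|^2=\frac12|d\eta|^2+\frac14|\mathcal{L}_{\eta^\sharp}g_{\sol}|^2$, then a Lemma~\ref{lem:closedrange} decomposition of the $J$-rotated differential whose coclosed part is fed into (ii). However, the core computation in (iii) is wrong as written, and the decisive step is missing.

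\textbf{First error: $d_{f_{\sol}}^\ast(Jdu)$ is not zero.} On a K\"ahler manifold $d^\ast(Jdu)=0$ for every $u$, but the drift term gives $d_{f_{\sol}}^\ast(Jdu)=du(JX)=(JX)\cdot u$. If this vanished, $Jdu$ would be a nonzero coclosed eigenform, and (ii) would force $\lambda\geq 1$ for every nonconstant eigenfunction. That is false. On the FIK shrinker, $u=\pi_{\operatorname{FIK}}^\ast\operatorname{Re}z_1$ is pluriharmonic with $-\Delta_{f_{\sol}}u=\frac{1}{\sqrt2}u$ and $(JX)\cdot u\neq 0$. For pluriharmonic $u$, $(JX)\cdot u$ is a multiple of the pluriharmonic conjugate, and that is exactly what the exact part $da$ of your decomposition carries.

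\textbf{Second error: the coclosed part has eigenvalue $2\lambda$, not $\lambda+1$.} In this paper's normalization $\Ric(g_{\sol})+\nabla^2 f_{\sol}=g_{\sol}$, not $\frac12 g_{\sol}$. So $\Delta_{H,f}\eta=\nabla_f^\ast\nabla\eta+\eta$ holds exactly. The halved Laplacian enters only through $d_{f_{\sol}}^\ast du=-2\Delta_{f_{\sol}}u$, which gives $\Delta_{H,f}du=2\lambda\,du$ with no curvature term. Moreover $\Delta_{H,f}=dd^\ast+d^\ast d+\mathcal{L}_X$ commutes exactly with $J$, because $g_{\sol}$ is K\"ahler and $X$ is real-holomorphic. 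Hence $\Delta_{H,f}(Jdu)=2\lambda\,Jdu$. Your threshold comes out right only because $2\lambda$ and $\lambda+1$ agree at $\lambda=1$.

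\textbf{Missing step: you never show that $\beta$ is an eigenform.}
\begin{itemize}
\item From $\Delta_{H,f}(da+\beta)=2\lambda(da+\beta)$, applying $d_{f_{\sol}}^\ast$ gives $\Delta_{f_{\sol}}\bigl(d_{f_{\sol}}^\ast da-2\lambda a\bigr)=0$.
\item You must show $d_{f_{\sol}}^\ast da=d_{f_{\sol}}^\ast(Jdu)\in L^2(M_{\sol},\nu_{\sol})$, by the cutoff integration by parts from (i).
\item You must then use that $L^2$ $\Delta_{f_{\sol}}$-harmonic functions are constant, so that $d_{f_{\sol}}^\ast da=2\lambda a$ after shifting $a$ by a constant.
\item Only then does $d_{f_{\sol}}^\ast d\beta=2\lambda\beta$ follow, and (ii) applies.
\end{itemize}

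\textbf{What works.} Once this is repaired, your equality case is more direct than the paper's. The identity $d(\iota_{\beta^\sharp}\omega_{\sol})=\mathcal{L}_{\beta^\sharp}\omega_{\sol}=0$ and simple connectivity make $\beta^\sharp$ Hamiltonian. Then $dJd(u-ch)=d(da)=0$ for the appropriate constant $c$ gives pluriharmonicity of $u-ch$. Finally, $J$-commutation turns $\Delta_{H,f}\beta=2\beta$ into $d(\Delta_{f_{\sol}}h+h)=0$.

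\textbf{Smaller points.}
\begin{itemize}
\item In (ii), $\eta(\nabla^{g_{\sol}}f_{\sol})=0$ follows pointwise from $0=d_{f_{\sol}}^\ast\eta=-\operatorname{div}\eta^\sharp+\eta(X)$, since Killing fields are divergence free. No integration argument is needed.
\item In (iv), your exact-versus-coclosed argument is a valid alternative to the paper's Bochner identity $\Delta_{f_{\sol}}\langle\nabla u_{PH},\nabla h\rangle=-\langle\nabla u_{PH},\nabla h\rangle$. But $d_{f_{\sol}}^\ast(Jdh)=(JX)\cdot h=0$ comes from the condition $(J_{\sol}\nabla^{g_{\sol}}f_{\sol})\cdot h=0$ in the definition of $H$, not from the Killing property. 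You also need the conjugate $w$ with $dw=Jdu_{PH}$ to lie in $L^2(\nu_{\sol})$ (weighted Poincar\'e) to justify the integration by parts.
\end{itemize}
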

\begin{proof}
\ref{lem:lineartheory-assertion (i)} Observe that $d_{f_{\sol}}^*=d^*+\iota_{X}$, where $X:=\nabla^{g_{\sol}}f_{\sol}$ and $d^*$ is the formal adjoint of $d$ with respect to the (unweighted) Riemannian volume measure. For any $\eta \in \mathcal{A}^1(M_{\sol})$, we compute
\begin{equation*}
    \begin{split}
       ( dd_{f_{\sol}}^*+d^*_{f_{\sol}}d)\eta&=d(d^*+\iota_{X})\eta+(d^*+\iota_{X})d\eta\\
       &=(dd^*+d^*d)\eta+d\iota_{X}\eta+\iota_{X}d\eta\\
       &=(dd^*+d^*d)\eta+\mathcal{L}_X\eta.
    \end{split}
\end{equation*}
For any vector field $Y$ on $M_{\sol}$, we can compute
\begin{align*}
    (\mathcal{L}_X\eta)(Y) & = (\nabla^{g_{\sol}}_X \eta)(Y)+\eta(\nabla^{g_{\sol}}_Y X)\\ &= (\nabla_X^{g_{\sol}}\eta)(Y)+(\nabla^{g_{\sol}} \nabla^{g_{\sol}}f_{\sol})(\eta^{\sharp},Y),
\end{align*}
so combining expressions gives
\begin{equation*}
    ( dd_{f_{\sol}}^*+d^*_{f_{\sol}}d)\eta=(dd^*+d^*d)\eta+(\nabla^{g_{\sol}}\nabla^{g_{\sol}}f_{\sol})(\eta^\sharp)+\nabla^{g_{\sol}}_{X}\eta.
\end{equation*}
Further combining this with the Weitzenb\"ock formula 
\begin{equation*}
    (dd^*+d^*d)\eta=(\nabla^{g_{\sol}})^*\nabla^{g_{\sol}}\eta+\Ric_{g_{\sol}}(\eta^\sharp)
\end{equation*}
yields the following weighted Weitzenb\"ock formula:
\begin{equation} \label{eq:weightedweitzenbock}
     ( dd_{f_{\sol}}^*+d^*_{f_{\sol}}d)\eta=(\nabla_{f_{\sol}}^{g_{\sol}})^*\nabla^{g_{\sol}}\eta+(\Ric_{g_{\sol}}+\nabla^{g_{\sol}}\nabla^{g_{\sol}}f_{\sol})(\eta^\sharp) = (\nabla_{f_{\sol}}^{g_{\sol}})^*\nabla^{g_{\sol}}\eta+\eta^{\sharp},
\end{equation}
where $(\nabla^{g_{\sol}})_{f_{\sol}}^{\ast} = (\nabla^{g_{\sol}})^{\ast} + \iota_X$ is formal adjoint of $\nabla^{g_{\sol}}$
with respect to the measure $\nu_{\sol}$. 

Fix $x_{0}\in M_{\sol}$, and let $\phi_{r}\in C^{\infty}(M_{\sol})$ satisfy $\phi_{r}|_{B_{g_{\sol}}(x_{0},r)}\equiv 1$,
$\operatorname{supp}(\phi_{r})\subseteq B_{g_{\sol}}(x_{0},2r)$, and $|\nabla^{g_{\sol}}\phi_{r}|_{g_{\sol}}\leq10r^{-1}$.
If $\eta\in\mathcal{A}^{1}(M_{\sol})$ satisfies 
\begin{equation*}
   (dd_{f_{\sol}}^{\ast}+d_{f_{\sol}}^{\ast}d)\eta =\lambda \eta, \qquad \int_{M_{\sol}} |\eta|_{g_{\sol}}^2 d\nu_{\sol}<\infty
\end{equation*}
for some $\lambda\in\mathbb{R}$,
then we can integrate by parts \eqref{eq:weightedweitzenbock} against $\phi_r^2 \eta$ and use Cauchy's inequality to obtain
\begin{align*}
    (\lambda-1)\int_{M_{\sol}} |\eta|_{g_{\sol}}^2 \phi_r^2d\nu_{\sol} &= \int_{M_{\sol}} \langle (\nabla_{f_{\sol}}^{g_{\sol}})^{\ast}\nabla^{g_{\sol}}\eta,\phi_r^2\eta\rangle_{g_{\sol}}d\nu_{\sol}\\
    &= \int_{M_{\sol}}|\nabla^{g_{\sol}}\eta|_{g_{\sol}}^2 \phi_r^2 d\nu_{\sol} - \frac{C}{r}\int_{M_{\sol}}|\nabla^{g_{\sol}}\eta|_{g_{\sol}}\phi_r |\eta|_{g_{\sol}} d\nu_{\sol}\\
    &\geq (1-\epsilon)\int_{M_{\sol}}|\nabla^{g_{\sol}}\eta|_{g_{\sol}}^2 \phi_r^2 d\nu_{\sol} -\frac{C(\epsilon)}{r^2}\int_{M_{\sol}} |\eta|_{g_{\sol}}^2 d\nu_{\sol}
\end{align*}
for any $\epsilon>0$. Taking $r\to \infty$, using the monotone convergence theorem, and then taking $\epsilon \searrow 0$ gives
\begin{equation*}
    (\lambda-1)\int_{M_{\sol}} |\eta|_{g_{\sol}}^2 d\nu_{\sol} \geq \int_{M_{\sol}} |\nabla^{g_{\sol}}\eta|_{g_{\sol}}^2 d\nu_{\sol},
\end{equation*}
from which it follows that $\lambda \geq 1$. 

\ref{lem:lineartheory-assertion (ii)} By arguing as in \ref{lem:lineartheory-assertion (i)}, we may integrate by parts to obtain
\begin{equation*}
\lambda\int_{M_{\sol}}|\eta|_{g_{\sol}}^{2}d\nu_{\sol}=\int_{M_{\sol}}\left(|d\eta|_{g_{\sol}}^{2}+(d_{f_{\sol}}^{\ast}\eta)^{2}\right)d\nu_{\sol} = \int_{M_{\sol}} |d\eta|_{g_{\sol}}^2 d\nu_{\sol}.
\end{equation*}
Next, because 
\begin{equation*} |\nabla\eta|_{g_{\sol}}^{2}=\frac{1}{2}|d\eta|_{g_{\sol}}^{2}+\frac{1}{4}|\mathcal{L}_{\eta^{\sharp}}g_{\sol}|_{g_{\sol}}^{2}, \end{equation*}
we can combine the above inequalities to obtain
\begin{equation*}
(\lambda-1)\int_{M_{\sol}}|\eta|_{g_{\sol}}^{2}d\nu_{\sol}=\int_{M_{\sol}}\left(\frac{1}{2}|d\eta|_{g_{\sol}}^{2}+\frac{1}{4}|\mathcal{L}_{\eta^{\sharp}}g_{\sol}|_{g_{\sol}}^{2}\right)d\nu_{\sol}=\frac{\lambda}{2}\int_{M_{\sol}}|\eta|_{g_{\sol}}^{2}d\nu_{\sol}+\frac{1}{4}\int_{M_{\sol}}|\mathcal{L}_{\eta^{\sharp}}g_{\sol}|_{g_{\sol}}^{2}d\nu_{\sol},
\end{equation*}
or equivalently 
\begin{equation*}
\left(\frac{\lambda}{2}-1\right)\int_{M_{\sol}}|\eta|_{g_{\sol}}^{2}d\nu_{\sol}=\frac{1}{4}\int_{M_{\sol}}|\mathcal{L}_{\eta^{\sharp}}g_{\sol}|^{2}d\nu_{\sol}.
\end{equation*}
In particular, we have $\lambda\geq 2$, with equality if and
only if $\eta^{\sharp}$
is a Killing field. If $\eta^{\sharp}$ is a Killing field, then combining this with our assumption $d_{f_{\sol}}^{\ast}\eta=0$ implies
\begin{equation*}
    0=d_{f_{\sol}}^{\ast}\eta= -\operatorname{tr}_{g_{\sol}}(\nabla^{g_{\sol}}\eta)+\iota_X \eta = -\frac{1}{2}\operatorname{tr}_{g_{\sol}}(\mathcal{L}_{\eta^{\sharp}}g)+\iota_X \eta = \iota_X\eta.
\end{equation*}

For assertion \ref{lem:lineartheory-assertion (iii)}, suppose $u\in C^{\infty}(M_{\sol})\cap L^2(M_{\sol},\nu_{\sol})$ satisfies $-\Delta_{f_{\sol}}u=\lambda u$. Then an integration by parts
argument as in \ref{lem:lineartheory-assertion (i)} yields 
\begin{equation*}
\int_{M_{\sol}}|\partial  u|_{g_{\sol}}^{2}d\nu_{\sol}=\lambda\int_{M_{\sol}}u^{2}d\nu_{\sol}.
\end{equation*}
By Lemma \ref{lem:closedrange}, if $T:W^{1,2}(M_{\sol},\nu_{\sol})\to L^2(M_{\sol},\nu_{\sol};T^{\ast}M_{\sol})$ denotes the map $w\mapsto dw$, and $T^{\ast}$ is the Hilbert space adjoint, then
\begin{equation*}
L^2(M_{\sol},\nu_{\sol};T^{\ast}M_{\sol})=T\left( W^{1,2}(M_{\sol},\nu_{\sol}) \right) \oplus \ker(T^{\ast}),
\end{equation*}
so we can write 
\begin{equation*}
d^{c}u=dv+\alpha
\end{equation*}
for some $v\in W^{1,2}(M_{\sol},\nu_{\sol})$ and some $\alpha \in \ker(T^{\ast})$. For any $w\in C_c^{\infty}(M_{\sol})$, we then have
\begin{equation*}
    0 = \langle T^{\ast}\alpha,w\rangle_{W^{1,2}(M_{\sol},\nu_{\sol})} = \langle \alpha,Tw\rangle_{L^2(M_{\sol},\nu_{\sol})} = \int_{M_{\sol}} \langle \alpha,dw\rangle_{g_{\sol}}d\nu_{\sol},
\end{equation*}
where $T^{\ast}$ is the Hilbert-space adjoint, so that $d_{f_{\sol}}^{\ast}\alpha=0$ in the sense of distributions. Because $d\alpha=dd^c u \in \mathcal{A}^2(M_{\sol})$ and $d_{f_{\sol}}^{\ast}dv=d_{f_{\sol}}^{\ast}d^c u \in C^{\infty}(M_{\sol})$, local elliptic regularity gives $v\in C^{\infty}(M_{\sol})$ and $\alpha \in \mathcal{A}^1(M_{\sol})$. 

From the Bochner formula for 1-forms, we have
\begin{align*}
(\nabla^{g_{\sol}})^*\nabla^{g_{\sol}} d^cu=&(d_{g_{\sol}}^{\ast}d+dd_{g_{\sol}}^{\ast})d^c u-\Ric_{g_{\sol}}(d^cu)\\
=& d^c(d_{g_{\sol}}^{\ast}du)-\Ric_{g_{\sol}}(d^c u)\\
=&-2d^c (\Delta_{g_{\sol}}u)-\Ric_{g_{\sol}}(d^c u),
\end{align*}
where we used that $d_{g_{\sol}}^{\ast}d+dd_{g_{\sol}}^{\ast}$ commutes with both $\partial$ and $\overline{\partial}$. Combining this with
\begin{equation*}
    d^c(X u)=(\nabla^{g_{\sol}} \nabla^{g_{\sol}}f_{\sol})(d^cu)+\iota_X{\nabla}^{g_{\sol}}d^cu.
\end{equation*}
and applying \eqref{eq:weightedweitzenbock} gives
\begin{equation*}
    \begin{split}
        (dd_{f_{\sol}}^*+d_{f_{\sol}}^*d)(d^cu)&=(\nabla^{g_{\sol}})^*\nabla^{g_{\sol}}( d^cu)+\iota_X \nabla^{g_{\sol}}(d^cu)+d^cu\\
        &=-2d^c(\Delta_{f_{\sol}}u)-(\Ric_{g_{\sol}}+\nabla^{g_{\sol}}\nabla^{g_{\sol}}f_{\sol})(d^cu)+d^cu\\
        &=2\lambda d^cu.
    \end{split}
\end{equation*}
Recalling that $d^c u=dv+\alpha$, we have
\begin{equation} \label{eq:lineartheoryseparation}
     dd_{f_{\sol}}^*dv+d_{f_{\sol}}^*d\alpha=2\lambda dv+2\lambda\alpha,
\end{equation}
and thus
\begin{equation*}
    \Delta_{f_{\sol}}(d_{f_{\sol}}^*dv-2\lambda v)=0.
\end{equation*}
Because $d^c u$ is an eigenfunction of the weighted Hodge Laplacian, we can integrate by parts as in \ref{lem:lineartheory-assertion (i)} to conclude that $d_{f_{\sol}}^{\ast}dv=d_{f_{\sol}}^{\ast}(d^c u) \in L^2(M_{\sol},\nu_{\sol})$.
Suppose $\lambda \in (0,1)$. Because the only $\Delta_{f_{\sol}}$-harmonic functions in $L^2(M_{\sol},\nu_{\sol})$ are the constants, we may therefore add a constant to $v$ in order to assume that
\begin{equation*}
    d_{f_{\sol}}^*dv=2\lambda v.
\end{equation*}
Combining with \eqref{eq:lineartheoryseparation} then implies
\begin{equation*}
    d_{f_{\sol}}^{\ast}d\alpha=2\lambda \alpha.
\end{equation*}
By \ref{lem:lineartheory-assertion (ii)}, we must have $\alpha=0$,
hence $dd^{c}u=d^{2}v=0$; in other words, $u$ is pluriharmonic.
If $\lambda=1$, then $\alpha^{\sharp}$ is a real-holomorphic Killing
field with $\alpha(X)=0$. We thus have
\begin{equation*}
0=\mathcal{L}_{\alpha^{\sharp}}\omega_{\sol}=d\iota_{\alpha^{\sharp}}\omega_{\sol}=-\frac{1}{2}d\left(\iota_{J_{\sol}\nabla^{g_{\sol}} u+2\nabla^{g_{\sol}} v}\omega_{\sol}\right)=-\frac{1}{2}d^2 u-2dd^c v=-2dd^{c}v,
\end{equation*}
so that $v$ is pluriharmonic. By applying $J$ to both sides of $d^{c}u=dv+\alpha$,
we get
\begin{equation*}
du=-4d^{c}v+2J_{\sol}\alpha.
\end{equation*}
Then applying $d$ and using $dd^{c}v=0$ yields $d(J_{\sol}\alpha)=0$. Because $M_{\sol}$ is simply connected by \cite{esparzafundamental,SunZhang}, we can therefore find $h\in C^{\infty}(M_{\sol})$ satisfying $2dh=J_{\sol}\alpha$,
or equivalently $2\nabla^{g_{\sol}} h=-J_{\sol}\alpha^{\sharp}$.

Moreover, $\nabla^{g_{\sol}} h$
is real-holomorphic, and $J_{\sol}\nabla^{g_{\sol}} h=\alpha^{\sharp}$ is a real-holomorphic
Killing field with 
\begin{equation*}
0=\alpha(X)=\langle J_{\sol}\nabla^{g_{\sol}} h,X\rangle_{g_{\sol}}=-\mathcal{L}_{J_{\sol}X}h.    
\end{equation*}
In particular, we have
\begin{equation*}
    2d^ch=2\alpha=d_{f_{\sol}}^*d\alpha=d^*dd^ch + \iota_X d\alpha=d^{\ast}dd^ch+\mathcal{L}_X\alpha
\end{equation*}
since $\alpha(X)=0$. From
\begin{equation*}
    d^{\ast}d^ch=\frac{1}{4}d^{\ast}\alpha=\frac{1}{4}d_{f_{\sol}}^{\ast}\alpha-\frac{1}{4}\alpha(X)=0,
\end{equation*}
we can compute
\begin{equation*}
    d^{\ast}dd^ch=(d^{\ast}d+dd^{\ast})d^ch=d^c(d^{\ast}d+dd^{\ast})h=-2d^c(\Delta_{g_{\sol}}h).
\end{equation*}
Combining expressions and using
\begin{equation*}
    \mathcal{L}_X \alpha = 4\mathcal{L}_X(d^c h)=4d^c(\mathcal{L}_Xh),
\end{equation*}
we conclude that 
\begin{equation*}
    2d^c h =-2d^c(\Delta_{g_{\sol}}h)+d^c(\mathcal{L}_Xh)=-2d^c(\Delta_{f_{\sol}}h).
\end{equation*}
Therefore, by adding an appropriate constant to $h$, we have $-\Delta_{f_{\sol}}h=h$. Since $|\partial h|_{g_{\sol}}=\frac{1}{2\sqrt{2}}|\alpha|_{g_{\sol}}\in L^2(M_{\sol},\nu_{\sol})$ and  $\Delta_{f_{\sol}}h=-h,$ a standard cut-off argument shows that $h\in L^2(M_{\sol},\nu_{\sol}),$ hence $h\in H$. Finally, $d(u-h)=-4d^c v$ implies $u_{PH}:=u-h$ is pluriharmonic, and because $\Delta_{f_{\sol}}u=-u$ and $\Delta_{f_{\sol}}h=-h$, we obtain $\Delta_{f_{\sol}}u_{PH}=-u_{PH}$, hence $\mathcal{L}_{\frac{1}{2}X}u_{PH}=u_{PH}$. 

\ref{lem:lineartheory-assertion (iv)} Setting $w:=\langle \nabla^{g_{\sol}}u_{PH},\nabla^{g_{\sol}}h\rangle_{g_{\sol}}$, Bochner's formula gives
\begin{align*}
    \Delta_{f_{\sol}}\langle \nabla^{g_{\sol}}u_{PH},\nabla^{g_{\sol}}h\rangle_{g_{\sol}} =& \langle \nabla^{g_{\sol}}\Delta_{f_{\sol}}u_{PH},\nabla^{g_{\sol}}h\rangle_{g_{\sol}}+\langle \nabla^{g_{\sol}}u_{PH},\nabla^{g_{\sol}}\Delta_{f_{\sol}}h\rangle_{g_{\sol}} +\langle \nabla^{g_{\sol}}u_{PH},\nabla^{g_{\sol}}h\rangle_{g_{\sol}}\\
    &+\langle \nabla^{g_{\sol}}\nabla^{g_{\sol}}u_{PH},\nabla^{g_{\sol}}\nabla^{g_{\sol}}h\rangle_{g_{\sol}}\\
    =&-\langle \nabla^{g_{\sol}}u_{PH},\nabla^{g_{\sol}}h\rangle_{g_{\sol}},
\end{align*}
where we used that $\langle \nabla^{g_{\sol}}\nabla^{g_{\sol}}u_{PH},\nabla^{g_{\sol}}\nabla^{g_{\sol}}h\rangle_{g_{\sol}}=0$ since $\nabla^{g_{\sol}}\nabla^{g_{\sol}}h$ has bidegree $(1,1)$ (because $\nabla^{g_{\sol}}h$ is real-holomorphic) and $\nabla^{g_{\sol}}\nabla^{g_{\sol}}u_{PH}$ is pointwise orthogonal to terms of bidegree $(1,1)$ (because it is pluriharmonic). By the proof of \ref{lem:lineartheory-assertion (i)}, we have $|\partial u_{PH}|_{g_{\sol}},|\partial h|_{g_{\sol}}\in L^2(M_{\sol},\nu_{\sol})$, and thus $w \in L^1(M_{\sol},\nu_{\sol})$. Choose $\chi \in C^{\infty}(\mathbb{R},[0,1])$ such that $\chi|_{(-\infty,1]}\equiv 1$, $\chi|_{[2,\infty)}\equiv 0$, we set 
$\chi_r(\cdot):=\chi(r^{-2}f_{\sol}(\cdot))$ for all $r\in (1,\infty)$. We then integrate by parts to obtain
\begin{equation}
    \begin{split}
        \int_{M_{{\sol}}}\chi_rwd\nu_{\sol}&=-\int_{M_{{\sol}}}\chi_r\Delta_{f_{\sol}}wd\nu_{\sol}\\
        &=-\int_{M_{{\sol}}}(\Delta_{f_{\sol}}\chi_r)wd\nu_{\sol}\\
        &=-\int_{M_{\sol}}(\chi'r^{-2}\Delta_{f_{\sol}}f_{\sol}+\chi''r^{-4}|\partial f_{\sol}|_{g_{\sol}}^2)wd\nu_{\sol}\\
        &\le C\int_{\{r^2\le f_{\sol}\le 2r^2\}}|w|d\nu_{\sol}.
    \end{split}
\end{equation}
Taking $r\to \infty$ and using the dominated convergence theorem gives $\int_{M_{\sol}}wd\nu_{\sol}=0$, from which a final integration by parts yields
\begin{equation*}
    0=\int_{M_{\sol}} \langle \nabla^{g_{\sol}}u_{PH},\nabla^{g_{\sol}}h\rangle_{g_{\sol}} d\nu_{\sol} = -2\int_{M_{\sol}} u_{PH}\Delta_{f_{\sol}} hd\nu_{\sol} = 2\int_{M_{\sol}} u_{PH}hd\nu_{\sol},
\end{equation*}
and the claim follows.
\end{proof}

Because any K\"ahler-Ricci shrinker satisfies a log Sobolev inequality, the spectrum of the drift Laplacian $\Delta_{f_{\sol}}$ is discrete \cite{XZ}. Moreover, we can use Lemma \ref{lem:lineartheory} to give a description of those eigenfunctions corresponding to small eigenvalues.

\begin{lemma} \label{lem:eigenvalues} Suppose that $(M_{\sol},J_{\sol},g_{\sol},f_{\sol})$ is a K\"ahler-Ricci shrinker which does not split a factor of $\mathbb{C}^k$ isometrically. Then there exist nonnegative integers $N_- \leq  N_0$ and an orthonormal basis $(\Psi_{\alpha})_{\alpha \in \mathbb{N}}$ of $L^2(M_{\sol},\nu_{\sol})$ such that 
\begin{equation*} \Delta_{f_{\sol}}\Psi_j =-\lambda_j \Psi_j, \end{equation*} where 
\begin{equation*} 0=\lambda_0<\lambda_1 \leq \cdots \leq \lambda_{N_-} \leq 1 < \lambda_{N_0+1} \leq  \lambda_{N_0+2}\leq \cdots, \end{equation*}
and $\lim_{\alpha \to \infty} \lambda_{\alpha}=\infty$. In addition, $\sqrt{-1}\partial \overline{\partial}\Psi_{\alpha}=0$ for $\alpha \leq N_-$, whereas $\lambda_{\alpha}=1$ and $\Psi_{\alpha} \in H$ whenever $N_- < \alpha \leq N_0$. 
\end{lemma}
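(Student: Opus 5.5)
Since the spectrum of $-\Delta_{f_{\sol}}$ on $L^2(M_{\sol},\nu_{\sol})$ is discrete \cite{XZ}, I will take an orthonormal eigenbasis with $\lambda_\alpha\to\infty$. The work is then to identify the eigenspaces with eigenvalue at most $1$ and to arrange the basis inside them; Lemma \ref{lem:lineartheory} does most of this.

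\textbf{Step 1: the bottom of the spectrum.} Integrating by parts gives $\lambda\int u^2\,d\nu_{\sol}=\int|\partial u|^2\,d\nu_{\sol}\ge 0$, with the cutoff argument from the proof of Lemma \ref{lem:lineartheory} justifying the boundary terms. If $\lambda=0$, then $u$ is constant because $M_{\sol}$ is connected. Hence the kernel is spanned by $\Psi_0=1$, and every other eigenvalue satisfies $\lambda_\alpha>0$.

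\textbf{Step 2: eigenvalues in $(0,1)$.} Each eigenfunction is smooth by elliptic regularity and lies in $L^2$. By Lemma \ref{lem:lineartheory} \ref{lem:lineartheory-assertion (iii)}, any eigenfunction with $\lambda<1$ must be pluriharmonic. These eigenspaces are finite-dimensional by discreteness, and I take an orthonormal basis within each.

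\textbf{Step 3: the eigenvalue $\lambda=1$.} Let $E_1$ denote this eigenspace and set $P:=\{u\in E_1:\sqrt{-1}\partial\overline{\partial}u=0\}$. Any $h\in H$ lies in $E_1$ by Definition \ref{definition: model metrics} \ref{def:H1}, so $H\subseteq E_1$. By Lemma \ref{lem:lineartheory} \ref{lem:lineartheory-assertion (iii)}, every $u\in E_1$ decomposes as $u=u_{PH}+h$ with $u_{PH}\in P$ and $h\in H$, so $E_1=P+H$.
\begin{itemize}
\item This sum is $L^2(\nu_{\sol})$-orthogonal by Lemma \ref{lem:lineartheory} \ref{lem:lineartheory-assertion (iv)}, so $E_1=P\oplus H$ orthogonally.
\item In particular $P\cap H=\{0\}$, which should also follow directly, since a pluriharmonic element of $H$ is orthogonal to itself.
\end{itemize}
I then choose an orthonormal basis of $P$ followed by an orthonormal basis of $H$.

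\textbf{Step 4: ordering the basis.} The indices $1,\dots,N_-$ will list the eigenvalues in $(0,1)$ in increasing order, followed by the basis of $P$. The indices $N_-+1,\dots,N_0$ will list the basis of $H$. The remaining indices carry eigenvalues greater than $1$, with the rest of the basis chosen arbitrarily within each eigenspace and ordered increasingly. This yields $0=\lambda_0<\lambda_1\le\cdots\le\lambda_{N_-}\le1<\lambda_{N_0+1}\le\cdots$. For $\alpha\le N_-$, the functions $\Psi_\alpha$ are pluriharmonic, and this includes the constant $\Psi_0$. For $N_-<\alpha\le N_0$, we have $\Psi_\alpha\in H$ with $\lambda_\alpha=1$.

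\textbf{Main obstacle.} The only delicate point is making sure the pieces fit together exactly, namely that $E_1=P\oplus H$ with $P\perp H$. This is precisely Lemma \ref{lem:lineartheory} \ref{lem:lineartheory-assertion (iii)} and \ref{lem:lineartheory-assertion (iv)}. Once those are in hand, the rest is routine bookkeeping.
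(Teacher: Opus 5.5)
Your proposal is correct and takes the same route as the paper, which cites \cite[Theorem 4]{XZ} for the discrete orthonormal eigenbasis with $\lambda_\alpha\to\infty$ and Lemma~\ref{lem:lineartheory} for the eigenfunctions with $\lambda_\alpha\leq 1$. You make explicit the bookkeeping the paper leaves implicit: the kernel consists of the constants, and parts (iii) and (iv) of that lemma give the $L^2(M_{\sol},\nu_{\sol})$-orthogonal splitting of the $\lambda=1$ eigenspace into pluriharmonic functions plus $H$.
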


\begin{proof}
     The existence of the orthonormal eigenbasis $(\Psi_{\alpha})_{\alpha \in \mathbb{N}}$ with $\lim_{\alpha \to \infty} \lambda_{\alpha}=\infty$ follows from \cite[Theorem 4]{XZ}, and the statements concerning $\Psi_{\alpha}$ for $\alpha \leq N_0$ follow from Lemma \ref{lem:lineartheory}.
\end{proof}

Using estimates from \cite{ConlonDeruelleSun,ColdingMinicozzi25}, we now obtain pointwise upper bounds for eigenfunctions of $\Delta_{f_{\sol}}$. 

\begin{lemma} \label{lem:eigenfunctiongrowth} Suppose that for some $\lambda \in [0,\infty)$, $v\in C^{\infty}(M_{\sol})\cap L^2(M_{\sol},\nu_{\sol})$ satisfies
\begin{equation} \label{eq:eigenfunction} \Delta_{f_{\sol}} v = -\lambda v .
\end{equation}
\begin{enumerate}
    \item If $(M_{\sol},g_{\sol})$ has bounded Ricci curvature and $\lambda \in [0,1]$, then there exists $C_0(g_{\sol}) \in (0,\infty)$ such that
    \label{lem:eigenfunctiongrowth0}
    \begin{equation*}
        |v|\leq C_0(g_{\sol})\left( \int_{M_{\sol}}v^2 d\nu_{\sol} \right)^{\frac{1}{2}}b_{\sol}^{2\lambda}.
    \end{equation*}

    \item If $(M_{\sol},g_{\sol})$ has bounded curvature, then there exists $C_0(g_{\sol},\lambda) \in (0,\infty)$ such that \label{lem:eigenfunctiongrowth1} 
    \begin{equation*} |v| +b_{\sol}^{-1}|\partial v|_{g_{\sol}} 
    \leq C_0(g_{\sol},\lambda) \left( \int_{M_{\sol}}v^2 d\nu_{\sol} \right)^{\frac{1}{2}}b_{\sol}^{2n-1+2\lambda}.
    \end{equation*}

    \item \label{lem:eigenfunctiongrowth2} If $\lambda \in [0,1]$ and $(M_{\sol},g_{\sol})$ is either AC or compact, then for each $k\in \mathbb{N}$, there exists $C_k(g_{\sol}) \in (0,\infty)$ such that the following hold:
    \begin{equation*}
    \sup_{M_{\sol}}\sum_{j=0}^k b_{\sol}^{j-2\lambda}|(\nabla^{g_{\sol}})^j v|_{g_{\sol}}\leq C_k(g_{\sol}) \left( \int_{M_{\sol}} v^2 d\nu_{\sol}\right)^{\frac{1}{2}}.
\end{equation*}
\end{enumerate}
\end{lemma}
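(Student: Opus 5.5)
The plan is to treat the three assertions separately but with one common device. An eigenfunction of the drift Laplacian is the same as a homogeneous caloric function on the self-similar flow. Concretely, given $v$ with $\Delta_{f_{\sol}}v=-\lambda v$, I set
\begin{equation*}
w(x,t):=|t|^{\lambda}\,\bigl(\phi_{\log\frac{1}{|t|}}^{\ast}v\bigr)(x),\qquad t<0 .
\end{equation*}
Using $g_{\sol,t}=|t|\phi^{\ast}_{\log\frac{1}{|t|}}g_{\sol}$ and $\partial_s\phi_s=\frac12X$, a direct computation should give $(\partial_t-\Delta_{g_{\sol,t}})w=0$. Moreover $w(\cdot,-1)=v$, and $w(\cdot,t)$ is $v$ rescaled by the factor $|t|^{\lambda}$, i.e.\ of parabolic degree $2\lambda$. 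Growth of $v$ in $b_{\sol}$ then becomes a statement about how $w$ at time $-1$ is controlled by its values at earlier times $-\tau$ with $\tau\sim b_{\sol}(x)^2$.

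For \ref{lem:eigenfunctiongrowth0}, I would use the reproducing formula
\begin{equation*}
w(x,-1)=\int_{M_{\sol}}w(\cdot,-\tau)\,d\nu_{x,-1;-\tau}.
\end{equation*}
This is justified for $L^2(\nu_{\sol})$-eigenfunctions by a cutoff argument together with Gaussian bounds for the conjugate heat kernel. I choose $\tau:=C\,(b_{\sol}(x)^2+1)$. Rescaling by $\tau$, the basepoint $x$ lies at distance $O(1)$ from $x_0$ with respect to $g_{\sol,-\tau}/\tau$, by Lemma \ref{lem:primitivecomparingbandd} and Theorem \ref{thm:basicKRSfacts}. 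Bounded Ricci curvature should then give a density bound
\begin{equation*}
d\nu_{x,-1;-\tau}\leq C\,\bigl(\phi_{\log\tau}\bigr)_{\ast}\text{-pullback of }d\nu_{\sol}
\end{equation*}
after rescaling. This needs two-sided Gaussian heat kernel bounds, with volume comparison supplied by the Ricci bound and the shrinker volume growth, together with the fact that the two centers are at bounded rescaled distance. Granting this, Cauchy--Schwarz yields
\begin{equation*}
|v(x)|\le C\,\tau^{\lambda}\Bigl(\int_{M_{\sol}}v^2\,d\nu_{\sol}\Bigr)^{1/2}\le C\,b_{\sol}(x)^{2\lambda}\,\|v\|_{L^2(\nu_{\sol})}.
\end{equation*}
I expect this heat kernel density comparison to be the main obstacle. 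If it fails, I would fall back on the Colding--Minicozzi frequency monotonicity for drift eigenfunctions. That gives $I(r):=r^{1-2n}\int_{\{b_{\sol}=r\}}v^2|\nabla b_{\sol}|\lesssim r^{4\lambda}\|v\|^2$, and I would finish with a local mean value inequality.

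For \ref{lem:eigenfunctiongrowth1}, with arbitrary $\lambda$ and bounded curvature, the loss $b_{\sol}^{2n-1}$ indicates a cruder route. I would combine the Colding--Minicozzi bound on $I(r)$ (valid for all $\lambda$) with the coarea formula and the volume bound $|\{b_{\sol}\le r\}|\le Cr^{2n}$. This controls the unweighted $L^2$ norm of $v$ on unit-width annuli $\{r\le b_{\sol}\le r+1\}$. The equation $\Delta v=\frac12X\cdot v-\lambda v$ has drift $|X|\le b_{\sol}$. I would therefore work on balls of radius $b_{\sol}^{-1}$, lifting to local covers via the curvature bound to handle the injectivity radius. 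On such balls, Lemma \ref{lem:L2toLinfty} and Lemma \ref{lem: schauder estimates} give $|v|+b_{\sol}^{-1}|\partial v|$ bounded by the annular $L^2$ norm times a power of $b_{\sol}$. Bookkeeping the powers should produce $b_{\sol}^{2n-1+2\lambda}$.

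For \ref{lem:eigenfunctiongrowth2}, the compact case is immediate from elliptic regularity, so assume $M_{\sol}$ is AC. Fix $x$ with $R:=b_{\sol}(x)$ large and use the caloric function $w$ on the parabolic neighbourhood $B_{g_{\sol,-1}}(x,cR)\times[-1-c^2R^2,-1]$. There, Lemma \ref{lem:basicACfacts} \ref{lem:basicACfacts3} and \ref{lem:basicACfacts2} give $|\nabla^k\Rm|\le C_kR^{-2-k}$, and \ref{lem:basicACfacts4} together with the curvature bound gives injectivity radius $\gtrsim R$. Part \ref{lem:eigenfunctiongrowth0}, transported through the scaling of $w$, yields $\sup|w|\le CR^{2\lambda}\|v\|$ on this region. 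Standard interior parabolic estimates, the parabolic analogue of Lemma \ref{lem: schauder estimates}, at scale $R$ then give
\begin{equation*}
|(\nabla^{g_{\sol}})^jv|(x)\le C_jR^{2\lambda-j}\|v\|_{L^2(\nu_{\sol})},
\end{equation*}
which is the assertion. On the compact set $\{b_{\sol}\le R_0\}$ the bound follows from local elliptic estimates alone.
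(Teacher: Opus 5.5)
\textbf{The gap is in part (i), and (iii) inherits it because you build (iii) on (i).}

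The pointwise comparison $d\nu_{x,-1;-\tau}\le C\,d\nu_{\sol,-\tau}$ is false for every choice of $\tau$, already on the flat Gaussian shrinker. There $g_{\sol,t}$ is the static Euclidean metric and $f_{\sol}=\frac12|z|^2$, so
\begin{equation*}
\rho(y):=\frac{d\nu_{x,-1;-\tau}}{d\nu_{\sol,-\tau}}(y)=\Bigl(\frac{\tau}{\tau-1}\Bigr)^{n}\exp\Bigl(\frac{|y|^2}{2\tau}-\frac{|y-x|^2}{2(\tau-1)}\Bigr).
\end{equation*}
At $y=\tau x$ this equals $(\frac{\tau}{\tau-1})^n e^{f_{\sol}(x)}$, which is unbounded in $x$.

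What your Cauchy--Schwarz step really needs is the weaker bound $\int\rho^2\,d\nu_{\sol,-\tau}\le C$. On the Gaussian this integral is $(\frac{\tau^2}{\tau^2-1})^n e^{2f_{\sol}(x)/(\tau+1)}$, which is bounded for $\tau\sim b_{\sol}(x)^2$, so the idea is sound there. In general, however, you need an upper bound $K(x,-1;y,-\tau)\lesssim \tau^{-n}\exp\bigl(-d^2_{g_{\sol,-\tau}}(z,y)/(c\tau)\bigr)$ with $c<4$ (the sharp constant in the paper's normalization is $2$). The reason is that $K^2$ must beat the weight $e^{f_{\sol,-\tau}}\approx e^{d^2/2\tau}$ against Euclidean volume growth.

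The generic Ricci-flow Gaussian bound (Bamler's, with $8+\epsilon$ in Riemannian normalization, i.e.\ $c=4+\epsilon$ here) lies exactly on the failing side. It is not clear how bounded Ricci curvature plus volume comparison would improve it on a Ricci-flow background. You would have to import or prove a near-sharp heat kernel bound for the shrinker flow, and nothing in your sketch supplies it.

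Your fallback does not close the gap either. Combining $I(r)\lesssim r^{4\lambda}\|v\|^2$ with a mean value inequality at the elliptic scale $b_{\sol}^{-1}$ (the scale at which the drift $\frac12X$ is harmless) loses $b_{\sol}^{2n-1}$, and that is exactly the argument for (ii).

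\textbf{The paper's route for (i).} The proof of (i) in the paper is not heat-kernel based; it uses the K\"ahler structure.
\begin{itemize}
\item By Lemma \ref{lem:lineartheory}, for $\lambda\le 1$ one has $v=u_{PH}+h$ with $u_{PH}$ pluriharmonic, $h\in H$, and $h=0$ unless $\lambda=1$. This is precisely where $\lambda\in[0,1]$ enters, and your argument never uses it.
\item Pluriharmonicity gives $\Delta_{g_{\sol}}u_{PH}=0$, so the eigenvalue equation collapses to the transport equation $\mathcal{L}_{\frac12X}u_{PH}=\lambda u_{PH}$.
\item The identity $\mathcal{L}_{\frac12X}(u_{PH}f_{\sol}^{-\lambda})=\lambda u_{PH}f_{\sol}^{-\lambda-1}\operatorname{R}_{g_{\sol}}$ integrates along the flow of $X$ from $\{b_{\sol}=r_0\}$, using bounded scalar curvature, to give $|u_{PH}|\le Cf_{\sol}^{\lambda}$.
\item The bound $|h|\le Cf_{\sol}$ is quoted from Conlon--Deruelle--Sun.
\item The uniform constant comes from equivalence of norms on the finite-dimensional eigenspace.
\end{itemize}

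\textbf{The other parts.} With (i) in hand, the rest of your plan works. Part (ii) is the paper's argument, but use annuli of width $\sim b_{\sol}^{-1}$, as the coarea formula gives; unit-width annuli only yield $b_{\sol}^{2n-\frac12+2\lambda}$.

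Your proof of (iii), via parabolic interior estimates for the caloric lift $w$ at scale $b_{\sol}(x)$, is a valid and arguably cleaner alternative to the paper's argument. The paper uses the Bochner formula and a maximum principle for $Q_j-\epsilon f_{\sol}^{p_j}$, which requires a coarse a priori polynomial bound. Your version instead uses the AC curvature decay and injectivity-radius control at scale $b_{\sol}(x)$.
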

\begin{proof}
\ref{lem:eigenfunctiongrowth0}
By Lemma \ref{lem:lineartheory}, we can write $v=u_{PH}+h$, where $h\in H$, $u_{PH}$ is pluriharmonic, $\Delta_{f_{\sol}}u_{PH}=-\lambda u_{PH}$, $\Delta_{f_{\sol}}h=-h$, and $h=0$ unless $\lambda=1$. Setting $X:= \nabla^{g_{\sol}}f_{\sol}$, we then have
\begin{equation*}
    -\lambda u_{PH}=\Delta_{f_{\sol}} u_{PH} = -\frac{1}{2}\mathcal{L}_X u_{PH}
\end{equation*}
 
 \begin{equation*}
     \mathcal{L}_{\frac{1}{2}X} \frac{u_{PH}}{f_{\sol}^{\lambda}} = \lambda \frac{u_{PH}}{f_{\sol}^{\lambda+1}}\left(f_{\sol}-|\partial f_{\sol}|_{g_{\sol}}^2 \right) = \lambda \frac{u_{PH}}{f_{\sol}^{\lambda+1}}\operatorname{R}_{g_{\sol}},
 \end{equation*}
 from which we can estimate
 \begin{equation*}
     \left|  \mathcal{L}_{\frac{1}{2}X} \frac{u_{PH}}{f_{\sol}^{\lambda}} \right| \leq \frac{C}{f_{\sol}}\left|\frac{u_{PH}}{f_{\sol}^{\lambda}}\right|.
 \end{equation*}
 It follows that for any $x\in b_{\sol}^{-1}(r_0)$, the function $\psi(s):= \frac{u_{PH}}{f_{\sol}^{\lambda}}(\phi_s(x))$ satisfies
 \begin{equation*}
     |\psi'(s)|\leq C(g_{\sol})e^{-\frac{s}{2}}|\psi(s)|.
 \end{equation*}
 It follows from integration in $s$ that 
 \begin{equation*}
     |u_{PH}|\leq C(g_{\sol})f_{\sol}^{\lambda}\sup_{\Omega(r_0)}|u_{PH}| \leq C(g_{\sol},u_{PH})f_{\sol}^{\lambda}.
 \end{equation*}
By \cite[Claim A.14]{ConlonDeruelleSun}, we similarly have  
\begin{equation*}
    |h|\leq C(g_{\sol},h)f_{\sol}.
\end{equation*}
It follows that $\|\cdot\|_{L^2(M_{\sol},\nu_{\sol})}$ and
\begin{equation*}
    \|u\|' := \sup_{M_{\sol}} \frac{|u|}{f_{\sol}^{\lambda}}
\end{equation*}
are two norms on the vector space
\begin{equation*}
    V_{\lambda} := \{u\in C^{\infty}(M_{\sol})\cap L^2(M_{\sol},\nu_{\sol}) \: | \: -\Delta_{f_{\sol}}u=\lambda u\}. 
\end{equation*}
However, each $V_{\lambda}$ is finite-dimensional by Lemma \ref{lem:eigenvalues}, so the claim follows from the fact that any two norms on a finite-dimensional vector space are equivalent.

\ref{lem:eigenfunctiongrowth1}
Because $(M_{\sol},g_{\sol})$ has bounded curvature, we can choose $r_0 \in (0,\infty)$ so that 
\begin{equation} \label{eq:nablababout1}
    \frac{1}{2} \leq |\nabla^{g_{\sol}}b_{\sol}|_{g_{\sol}} \leq 2 \qquad \text{ on } M_{\sol}\setminus \Omega(r_0).
\end{equation}
We recall the weighted $L^2$ average defined in \cite[(4.2)]{ColdingMinicozzi25} by 
\begin{equation*}
I(r)= \frac{1}{r^{2n-1}}\int_{b_{\sol}^{-1}(r)}v^2|\nabla b_{\sol}| d\sigma_{r}
\end{equation*}
for all $r\in [r_0,\infty)$, 
where $\sigma_r$ denotes the Riemannian volume measure of $g_{\sol}|_{b_{\sol}^{-1}(r)}$. 
After possibly further increasing $r_0$, \cite[Theorem 4.1]{ColdingMinicozzi25} gives
\begin{equation}\label{eq; CM25 average bound}
I(r_2)\leq C(\lambda,g_{\sol})\left( \frac{r_2}{r_1} \right)^{4\lambda}I(r_1)
\end{equation}
for all $r_2 \geq r_1 \geq r_0$ and all $v \in C^{\infty}(M_{\sol})\cap L^2(M_{\sol},\nu_{\sol})$ satisfying \eqref{eq:eigenfunction}.

Fix $x\in M_{\sol}$, and suppose $R_x:=b_{\sol}(x)\geq r_0$. Let also $r_x=b_{\sol}(x)^{-1}.$ Because $(M_{\sol},g_{\sol})$ has bounded curvature and $\sup_{B_{g_{\sol}}(x,r_x)}|\nabla f_{\sol}|_{g_{\sol}}\leq 2r_x^{-1}$, local elliptic regularity applied at scale $r_x$ (see \cite[Lemma 4.2]{LiZhang}) gives
\begin{equation*}
\sup_{B_{g_{\sol}}(x,\frac{r_x}{2})}v^2 +r_x^2 \sup_{B_{g_{\sol}}(x,\frac{r_x}{2})}|\partial v|_{g_{\sol}}^2\leq \frac{C(g_{\sol},\lambda)}{r_x^{2n}}\int_{B_{g_{\sol}}(x,r_x)}v^2 \omega_{\sol}^n.
\end{equation*}
If we possibly further increase $r_0=r_0(g_{\sol})$, then $B_{g_{\sol}}(x,r_x)\subseteq \Omega(R_x+2r_x)\setminus \overline{\Omega}(R_x-2r_x)$. Then \eqref{eq:nablababout1}, \eqref{eq; CM25 average bound}, and the co-area formula yield
\begin{align*}
    \frac{C}{r_x^{2n}}\int_{B_{g_{\sol}}(x,r_x)}v^2 \omega_{\sol}^n &\leq  \frac{C}{r_x^{2n}}\int_{\Omega(R_x+2r_x)\setminus \overline{\Omega}(R_x-2r_x)}v^2 \omega_{\sol}^n \leq \frac{C}{r_x^{2n}}\int_{R_x-2r_x}^{R_x+2r_x}\left( \int_{b_{\sol}^{-1}(s)}v^2|\nabla b_{\sol}|d\sigma_s\right)ds \\
    &\leq \frac{C}{r_x^{2n}}\int_{R_x-2r_x}^{R_x+2r_x} s^{2n-1}I(s)ds  \le C\frac{(R_x+2r_x)^{2n-1}}{r_x^{2n-1}}I(R_x+2r_x) \leq C\frac{R_x^{2n-1+4\lambda}}{r_x^{2n-1}}I(r_0)\\& \leq R_x^{2(2n-1+2\lambda)}I(r_0).
\end{align*}
Because $\inf_{\Omega(2r_0)}e^{-f_{\sol}} \geq C(g_{\sol})^{-1}$, a similar argument yields
\begin{equation*}
I(r_0)\leq C(g_{\sol},\lambda)\int_{\Omega(2r_0)}|v|^2 d\nu_{\sol}\leq C(g_{\sol},\lambda)\|v\|^2_{L^2(M_{\sol},\nu_{\sol})},
\end{equation*}
and the claim follows by combining estimates.

 \ref{lem:eigenfunctiongrowth2} 
We proceed by induction on $k\in \mathbb{N}$, with the case $k=0$ following from \ref{lem:eigenfunctiongrowth0}. Since the shrinker is now assumed to be AC or compact, there exist constants $C_m<\infty$ such that
\begin{equation} \label{eq:eigenvalueACcurvature}
|(\nabla^{g_{\sol}})^m \Rm(g_{\sol}) |_{g_{\sol}} \leq \frac{C_m}{b_{\sol}^{2+m}}
\end{equation}
on $M_{\sol}$. By combining \ref{lem:eigenfunctiongrowth0} and \eqref{eq:eigenvalueACcurvature} with standard elliptic estimates at scale $b_{\sol}(x)^{-1}$, we get that for every $j\in \N,$ there exists $N_j \in \mathbb{N}$ such that
\begin{equation} \label{eq:eigenfunctionverycoarse}
|(\nabla^{g_{\sol}})^j v|_{g_{\sol}}\leq C \| v\|_{L^2(M_{\sol},\nu_{\sol})}b_{\sol}^{N_j}.
\end{equation}
For $j\geq 1$, the Bochner formula yields (see \cite[Section 2]{ColdingMinicozzi25}):
\begin{align*}
\Delta_{f_{\sol}}(\nabla^{g_{\sol}})^j v
&=\left(\frac j2 -\lambda\right)(\nabla^{g_{\sol}})^jv + \sum_{p=0}^{j-2}(\nabla^{g_{\sol}})^p\Rm(g_{\sol}) \ast(\nabla^{g_{\sol}})^{j-p} v\\
&=  \left(\frac j2 -\lambda\right)(\nabla^{g_{\sol}})^j v  + \Rm(g_{\sol})\ast (\nabla^{g_{\sol}})^jv + E_j,
\end{align*}
where $|E_j|\leq C_j b_{\sol}^{2\lambda-j-2}\|v\|_{L^2(M_{\sol},\nu_{\sol})}$ by our induction hypothesis, and the term $\Rm(g_{\sol}) \ast (\nabla^{g_{\sol}})^j v$ is not present if $j=1$. Using Cauchy's inequality, we estimate
\begin{equation} \label{eq:ellipticineqhigher}
\Delta_{f_{\sol}} |(\nabla^{g_{\sol}})^j v|_{g_{\sol}}^2\geq 2|(\nabla^{g_{\sol}})^{j+1} v|_{g_{\sol}}^2+\left(j-2\lambda- \frac{C}{f_{\sol}} \right)|(\nabla^{g_{\sol}})^j v|_{g_{\sol}}^2 -C_j f_{\sol}^{2\lambda-j-1}\|v\|_{L^2(M_{\sol},\nu_{\sol})}^2.
\end{equation}
Combining \eqref{eq:ellipticineqhigher} with 
\begin{align*} \Delta_{f_{\sol}}f_{\sol}^{j-2\lambda} &= -(j-2\lambda)f_{\sol}^{j-2\lambda-1}(f_{\sol}-n) + (j-2\lambda)(j-2\lambda-1)|\partial f_{\sol}|_{g_{\sol}}^2 f_{\sol}^{j-2\lambda-2} \\
&\geq -(j-2\lambda)f_{\sol}^{j-2\lambda} -C_j f_{\sol}^{j-2\lambda-1}
\end{align*}
and using Cauchy's inequality again, we estimate
\begin{align*} 
    \Delta_{f_{\sol}} \left( f_{\sol}^{j-2\lambda}|(\nabla^{g_{\sol}})^j v|_{g_{\sol}}^2 \right) \geq & 2f_{\sol}^{j-2\lambda}|(\nabla^{g_{\sol}})^{j+1}v|_{g_{\sol}}^2 - C_jf_{\sol}^{j-2\lambda-1}|(\nabla^{g_{\sol}})^j v|_{g_{\sol}}^2 - C_j f_{\sol}^{-1}\|v\|_{L^2(M_{\sol},\nu_{\sol})}^2 \\
    &-C_j|(\nabla^{g_{\sol}})^{j+1}v|_{g_{\sol}}|(\nabla^{g_{\sol}})^jv|_{g_{\sol}} f_{\sol}^{j-2\lambda-\frac{1}{2}} \\
    \geq & f_{\sol}^{j-2\lambda}|(\nabla^{g_{\sol}})^{j+1}v|_{g_{\sol}}^2- C_jf_{\sol}^{j-2\lambda-1}|(\nabla^{g_{\sol}})^j v|_{g_{\sol}}^2 - C_j f_{\sol}^{-1}\|v\|_{L^2(M_{\sol},\nu_{\sol})}^2.
\end{align*}
Thus, for $A_j \in (1,\infty)$ chosen sufficiently large, the quantity
\begin{align*}
    Q_j:= f_{\sol}^{j-2\lambda}|\nabla^j v|^2 + A_j f_{\sol}^{j-1-2\lambda}|\nabla^{j-1}v|^2.
\end{align*}
satisfies (by the induction hypothesis)
\begin{align*}
\Delta_{f_{\sol}}Q_j \geq & f_{\sol}^{j-2\lambda-1}|(\nabla^{g_{\sol}})^j v|
_{g_{\sol}}^2 - A_jC_{j-1}f_{\sol}^{j-2\lambda-2}|(\nabla^{g_{\sol}})^{j-1}v|_{g_{\sol}}^2-C_j f_{\sol}^{-1}\|v\|_{L^2(M_{\sol},\nu_{\sol})}^2\\
\geq & \frac{Q_j}{f_{\sol}} - \frac{C(j)}{f_{\sol}}\|v\|_{L^2(M_{\sol},\nu_{\sol})}^2.
\end{align*}
On the other hand, for any $a>1$, we can estimate
\begin{equation*}
    \Delta_{f_{\sol}}f_{\sol}^{a} \leq -af_{\sol}^{a-1}(f_{\sol}-n) +a(a-1)f_{\sol}^{a-1} \leq -(a-1) f_{\sol}^{a}+C(a) f_{\sol}^{a-1}.
\end{equation*}
Choose $p_j > \max \{N_j+j-2\lambda,N_{j-1}+j-1-2\lambda\}$. 
By \eqref{eq:eigenfunctionverycoarse}, the quantity $Q_{j,\varepsilon}:=Q_j-\varepsilon f_{\sol}^{p_j}$ then achieves a global maximum at some point $x^{\ast}$, and the above inequalities yield
\begin{equation*}
   0\geq \Delta_{f_{\sol}}Q_{j,\epsilon}(x^{\ast})\geq \frac{Q_{j,\epsilon}(x^{\ast})}{f_{\sol}(x^{\ast})} - \frac{C(j)}{f_{\sol}(x^{\ast})}\|v\|_{L^2(M_{\sol},\nu_{\sol})}^2 + \epsilon(f_{\sol}^{p_j}(x^{\ast})-C_j f_{\sol}^{p_j-1}(x^{\ast})).
\end{equation*}
Rearranging terms, we obtain
\begin{equation*}
    \sup_{M_{\sol}} Q_{j,\epsilon} = Q_{j,\epsilon}(x^{\ast}) \leq C(j)\|v\|_{L^2(M_{\sol},\nu_{\sol})}^2 + \epsilon \sup_{M_{\sol}}(C_j f_{\sol}^{p_j}-f_{\sol}^{p_j+1}).
\end{equation*}
Letting $\varepsilon\to 0$ yields $Q_j \leq C_j\|v\|_{L^2(M_{\sol},\nu_{\sol})}^2$ on $M_{\sol}$, from which the claim follows.
\end{proof}

Lemma \ref{lem:eigenvalues} will be combined with the following $L^2$ expansion theorem to prove a decay estimate for the drift heat equation modulo gauge terms.

\begin{lemma} 
\label{lem:decayfordriftheateq}
Suppose $(M_{\sol},g_{\sol})$ has bounded curvature and $(\psi_t)_{t\in (t_-,t_+]}$ is a solution of 
\begin{equation*}
    \partial_t \psi_t = \Delta_{f_{\sol}}\psi_t + \psi_t
\end{equation*}
on $M_{\sol} \times (t_-,t_+)$ which satisfies
\begin{equation*}
    \sup_{t\in (t_-,t_+]} \int_{M_{\sol}} \psi_t^2 d\nu_{\sol} <\infty.
\end{equation*}
\begin{enumerate}
    \item \label{lem:decayfordriftheateq1} For any $t_0 \in (t_-,t_+]$, there exist $b_{\alpha} \in \mathbb{R}$ satisfying $\sum_{\alpha=0}^{\infty} b_{\alpha}^2 = \int_{M_{\sol}} \psi_{t_0}^2 d\nu_{\sol}$ and
\begin{equation*}
    \lim_{N\to \infty} \int_{M_{\sol}} \left|\psi_t - \sum_{\alpha=0}^{N}b_{\alpha}e^{(1-\lambda_{\alpha})(t-t_0)} \Psi_{\alpha} \right|^2 d\nu_{\sol} =0
\end{equation*}
for all $t\in (t_-,t_+]$.

    \item \label{lem:decayfordriftheateq2} The map $(t_-,t_+] \to (0,\infty),$  $t\mapsto \int_{M_{\sol}}\psi_t^2 d\nu_{\sol}$ is continuous. 
\end{enumerate}

\end{lemma}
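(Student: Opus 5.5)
The plan is to expand $\psi_t$ in the orthonormal eigenbasis $(\Psi_\alpha)$ of Lemma \ref{lem:eigenvalues}, whose existence uses only the log Sobolev inequality and the discreteness of the spectrum from \cite{XZ}. Set $b_\alpha(t):=\int_{M_{\sol}}\psi_t\Psi_\alpha\,d\nu_{\sol}$. The aim is to show that each coefficient satisfies the ODE $b_\alpha'(t)=(1-\lambda_\alpha)b_\alpha(t)$. Then $b_\alpha(t)=b_\alpha e^{(1-\lambda_\alpha)(t-t_0)}$ with $b_\alpha:=b_\alpha(t_0)$, and Parseval's identity gives $\sum_\alpha b_\alpha^2=\int\psi_{t_0}^2d\nu_{\sol}$ together with the $L^2$ convergence of the partial sums at every $t$, which is \ref{lem:decayfordriftheateq1}.

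The key step is the ODE. Formally,
\begin{equation*}
b_\alpha'=\int(\Delta_{f_{\sol}}\psi_t+\psi_t)\Psi_\alpha\,d\nu_{\sol}=\int\psi_t(\Delta_{f_{\sol}}\Psi_\alpha+\Psi_\alpha)\,d\nu_{\sol}=(1-\lambda_\alpha)b_\alpha .
\end{equation*}
Both the differentiation under the integral and the self-adjoint integration by parts need justification, since a priori we only know that $\psi_t$ is uniformly in $L^2(\nu_{\sol})$.

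I would handle this with cutoffs. Take $\chi_r=\chi(r^{-2}f_{\sol})$ as in the proof of Lemma \ref{lem:lineartheory}\ref{lem:lineartheory-assertion (iv)}, and work with $b_{\alpha,r}(t):=\int\chi_r\psi_t\Psi_\alpha\,d\nu_{\sol}$, which is differentiable because the integrand is smooth and compactly supported. Integrating by parts, the error terms are
\begin{equation*}
\int\psi_t\Psi_\alpha\,\Delta_{f_{\sol}}\chi_r\,d\nu_{\sol}+2\int\psi_t\langle\nabla\chi_r,\nabla\Psi_\alpha\rangle\,d\nu_{\sol}.
\end{equation*}
Since $|\nabla\chi_r|\le Cr^{-1}$ and $|\Delta_{f_{\sol}}\chi_r|\le C$ on the annulus $\{r^2\le f_{\sol}\le 2r^2\}$, these errors are bounded by
\begin{equation*}
C\|\psi_t\|_{L^2(\nu_{\sol})}\bigl(\|\Psi_\alpha\|_{L^2(\{f_{\sol}\ge r^2\},\nu_{\sol})}+\|\partial\Psi_\alpha\|_{L^2(\{f_{\sol}\ge r^2\},\nu_{\sol})}\bigr).
\end{equation*}
This tends to $0$ uniformly in $t$, because $\Psi_\alpha\in W^{1,2}(\nu_{\sol})$; alternatively one can use the polynomial growth from Lemma \ref{lem:eigenfunctiongrowth}\ref{lem:eigenfunctiongrowth1} against the Gaussian weight. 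Integrating the cutoff identity in time from $t_0$ to $t$ and letting $r\to\infty$ by dominated convergence then gives $b_\alpha(t)-b_\alpha(t_0)=(1-\lambda_\alpha)\int_{t_0}^t b_\alpha$. The uniform $L^2$ bound ensures $b_\alpha$ is bounded on compact subintervals, so this integral identity yields the ODE.

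For \ref{lem:decayfordriftheateq2}, note that $\int\psi_t^2d\nu_{\sol}=\sum_\alpha b_\alpha^2e^{2(1-\lambda_\alpha)(t-t_0)}$. Only finitely many $\lambda_\alpha$ are $\le 1$ by Lemma \ref{lem:eigenvalues}, so on any interval $[t_1,t_+]$ with $t_1>t_-$ I would choose $t_0=t_1$. The series is then dominated by $e^{C(t_+-t_1)}\sum b_\alpha^2<\infty$, and each term is continuous, so the sum is continuous. Positivity of the norm is assumed implicitly for nonzero $\psi$, since a zero norm at one time forces all $b_\alpha=0$.

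The main obstacle is the justification of the weak identity, since no spatial decay of $\psi_t$ beyond $L^2$ is given and $\partial\psi_t$ is not controlled. That is why all derivatives are moved onto $\Psi_\alpha$ and $\chi_r$, using the smoothness of $\psi$ only locally.
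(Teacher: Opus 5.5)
Your proposal is correct and follows essentially the same route as the paper. You expand in the eigenbasis $(\Psi_\alpha)$, justify the coefficient ODE with a compactly supported cutoff satisfying $r|\partial\chi_r|+|\Delta_{f_{\sol}}\chi_r|\le C$, integrate in time and let $r\to\infty$, and obtain continuity from dominated convergence of the series with $t_0$ taken close to $t_-$. The differences are cosmetic. The paper multiplies by the integrating factor $e^{(\lambda_\alpha-1)(t-t_0)}$ before integrating, and it controls the cutoff error with the polynomial growth bound of Lemma \ref{lem:eigenfunctiongrowth}\ref{lem:eigenfunctiongrowth1}. Your $W^{1,2}(\nu_{\sol})$ tail argument works equally well and avoids using the bounded-curvature hypothesis.
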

\begin{proof} \ref{lem:decayfordriftheateq1} Because $\psi_{t} \in L^2(M_{\sol},\nu_{\sol})$ for each $t\in (t_-,t_+]$, it follows that
\begin{equation*}
    b_{\alpha}(t):= \int_{M_{\sol}} \psi_t \Psi_{\alpha} d\nu_{\sol}
\end{equation*}
satisfy $\sum_{\alpha=0}^{\infty}b_{\alpha}^2(t) =\int_{M_{\sol}} \psi_{t}^2 d\nu_{\sol}$ as well as 
\begin{equation*}
   \lim_{N\to \infty} \int_{M_{\sol}} \left| \psi_{t}-\sum_{\alpha=0}^N b_{\alpha}(t) \Psi_{\alpha}\right|^2 d\nu_{\sol}=0.
\end{equation*}
It remains to show that $b_{\alpha}(t)=e^{(1-\lambda_{\alpha})(t-t_0)}b_{\alpha}(t_0)$ for all $t\in (t_-,t_+]$. 

By local parabolic regularity theory, we have $\psi \in C^{\infty}(M_{\sol}\times (t_-,t_+])$. Choose a cutoff function $\eta \in C^{\infty}(\mathbb{R})$ such that $\eta|_{(-\infty,1]}\equiv 1$, $\eta|_{[2,\infty)}\equiv 0$, and $-2 \leq \eta '\leq 0$. We then define $\chi_r \in C_c^{\infty}(M_{\sol})$ for each $r>0$ by $\chi_r(x):=\eta(r^{-1}b_{\sol}(x))$. Recalling that $|\partial b_{\sol}|_{g_{\sol}}\leq 1$ and
\begin{equation*}
    |\Delta_{f_{\sol}}b_{\sol}| \leq \frac{|\Delta_{f_{\sol}}f_{\sol}|}{b_{\sol}} + \frac{1}{4}\frac{|\partial b_{\sol}^2|}{b_{\sol}^3} \leq C(g_{\sol})(b_{\sol}+1),
\end{equation*}
we obtain 
\begin{equation} \label{eq:boundsforcutoffs} r|\partial \chi_r|_{g_{\sol}}+|\Delta_{f_{\sol}} \chi_r|\leq C(g_{\sol})
\end{equation}
for all $r\in (0,\infty)$. Setting $\Lambda := \sup_{t\in (t_-,t_+]}\int_{M_{\sol}}\psi_t^2 d\nu_{\sol}$, we combine \eqref{eq:boundsforcutoffs} with Lemma \ref{lem:eigenfunctiongrowth} \ref{lem:eigenfunctiongrowth1} to estimate
\begin{align*}
    e^{-(\lambda_{\alpha}-1)(t-t_0)}\left| \frac{d}{dt}\left(e^{(\lambda_{\alpha}-1)(t-t_0)}\int_{M_{\sol}} \chi_r\psi_t\Psi_{\alpha}d\nu_{\sol} \right) \right|= &  \left| \int_{M_{\sol}} \psi_t \Psi_{\alpha}\Delta_{f_{\sol}}\chi_r d\nu_{\sol} +2\operatorname{Re}\int_{M_{\sol}}\psi_t \langle \partial \chi_r,\overline{\partial}\Psi_{\alpha}\rangle d\nu_{\sol}\right|\\
    \leq & C(g_{\sol})\sqrt{\Lambda} \sqrt{\nu_{\sol}(M_{\sol}\setminus \Omega(r))} \\& + C(g_{\sol},\lambda_{\alpha})\sqrt{\Lambda}r^{2n-1+2\lambda_{\alpha}}\sqrt{\nu_{\sol}(M_{\sol}\setminus\Omega(r))}\\
    \leq & \Psi(r^{-1}|\lambda_{\alpha},\Lambda).
\end{align*}
Integrating in time and taking $r\to \infty$, the dominated convergence theorem yields
\begin{equation*}
    e^{(\lambda_{\alpha}-1)(t-t_0)}b_{\alpha}(t)=b_{\alpha}(t_0)
\end{equation*}
for all $t\in (t_-,t_+]$, so the claim follows.

\ref{lem:decayfordriftheateq2} By \ref{lem:decayfordriftheateq1}, we have
\begin{equation*}
    \int_{M_{\sol}}\psi_t^2 d\nu_{\sol} = \sum_{\alpha=0}^{\infty} e^{2(1-\lambda_{\alpha})(t-t_0)}b_{\alpha}^2
\end{equation*}
for all $t\in (t_-,t_+]$. Because $t_0$ can be chosen arbitrarily close to $t_-$, and $\sum_{\alpha=0}^{\infty} b_{\alpha}^2<\infty$, the claim follows from the dominated convergence theorem for series.
\end{proof}

\section{Model metrics}

\label{section:modelmetrics}

Throughout this section, we assume that $(M_{\sol},g_{\sol},J_{\sol},f_{\sol})$
is K\"ahler-Ricci shrinker which is either AC or compact, and write $X:=\nabla^{g_{\sol}}f_{\sol}$.

Lemma \ref{lem:eigenfunctiongrowth} shows that for any $h\in H$, $|\nabla^{g_{\sol}}h|_{g_{\sol}}$ grows at most linearly, thereby ensuring the completeness of the vector field $\nabla^{g_{\sol}}h$. For any $h\in H$, we let $(\zeta_s^h)_{s\in \mathbb{R}}$ be the flow of $\frac{1}{2}\nabla^{g_{\sol}}h$. Following the strategy of \cite{ChiuSzek}, we now define an important collection of metrics which differ from $(M_{\sol},J_{\sol},g_{\sol},f_{\sol})$ by a holomorphic gauge change.

\begin{definition}[Model metrics]\label{defn; model metrics}
The family $\mathcal{F}=\{\omega_{h}\}_{h\in H}$
of model metrics corresponding to $(M_{\sol},J_{\sol},g_{\sol},f_{\sol})$ consists of the tuples $(M_{\sol},J_{\sol},g_h,f_h)_{h\in H}$ defined by
\begin{equation*}
    g_h := (\zeta_1^h)^{\ast}g_{\sol}, \qquad f_h := (\zeta_1^h)^{\ast}f_{\sol}.
\end{equation*}
\end{definition}

Next, we summarize the basic identities satisfied by model metrics, including the fact that any model metric is a K\"ahler-Ricci shrinker with soliton vector field $X$.

\begin{prop} \label{prop:modelidentities} Suppose $\{g_h,f_h\}_{h\in H}$ is the family of model metrics corresponding to $(M_{\sol},J_{\sol},g_{\sol},f_{\sol})$, and let $\omega_h$ be the corresponding K\"ahler form. For each $h\in H$, there then exists a unique $u_h \in C^{\infty}(M_{\sol})$ satisfying the following:
\begin{enumerate}

\item \label{prop:modelidentities1} $\omega_h = \omega_{\sol}+\sqrt{-1}\partial \overline{\partial}u_h$, 

\item \label{prop:modelidentities1.5}
$f_h = f_{\sol}+\frac{1}{2}X\cdot u_h$,

\item \label{prop:modelidentities2} $\mathcal{L}_{JX}u_h=\mathcal{L}_{JX}f_h=0$,

\item \label{prop:modelidentities3} $\nabla^{g_{h}}f_{h} = X$, 

\item \label{prop:modelidentities4} $e^{u_{h}-\frac{1}{2}Xu_{h}}\omega_{h}^{n}=\omega_{\sol}^{n}$,

\item \label{prop:modelidentities5}$\Ric(\omega_h)+\mathcal{L}_{\frac{1}{2}X}\omega_h = \omega_h$.

\end{enumerate}
\end{prop}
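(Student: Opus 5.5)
The plan is to exploit that $\zeta^h_s$ consists of biholomorphisms commuting with the soliton flow, and to obtain $u_h$ by integrating along $s\in[0,1]$. Write $Y_h:=\frac12\nabla^{g_{\sol}}h$. By Definition \ref{definition: model metrics} \ref{def:H3} and Remark \ref{rem:vfieldscommute}, $Y_h$ is real-holomorphic and $[Y_h,X]=0$. Moreover, $Y_h$ is complete by Lemma \ref{lem:eigenfunctiongrowth}, so $\zeta_s^h$ is a one-parameter group of biholomorphisms commuting with $\phi_s$. Since $J\nabla h$ is Killing and holomorphic, $\mathcal{L}_{Y_h}\omega_{\sol}=\sqrt{-1}\partial\overline{\partial}(\tfrac12 h)$; more precisely, $\iota_{\nabla h}\omega_{\sol}$ is a multiple of $d^c h$, so $\mathcal{L}_{\nabla h}\omega_{\sol}$ is a fixed multiple of $dd^ch$. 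Pulling back by $\zeta_s^h$ (holomorphic, so it commutes with $\partial\overline{\partial}$) gives
\[
\frac{d}{ds}(\zeta_s^h)^\ast\omega_{\sol}=c\,\sqrt{-1}\partial\overline{\partial}\big((\zeta_s^h)^\ast h\big).
\]
I therefore set $u_h:=c\int_0^1(\zeta_s^h)^\ast h\,ds+\kappa$ for a constant $\kappa$ to be fixed, which yields \ref{prop:modelidentities1}.

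Next comes \ref{prop:modelidentities2}. Since $JX$ commutes with $\nabla h$ (as $JX$ is holomorphic Killing preserving $h$), it commutes with $\zeta_s^h$. Hence $JX\cdot (\zeta_s^h)^\ast h=(\zeta^h_s)^\ast(JX\cdot h)=0$ by \ref{def:H2}, and similarly for $f_h$. For \ref{prop:modelidentities3}, because $\zeta_1^h$ commutes with the flow of $X$, we have $(\zeta_1^h)_\ast X=X$, so $\nabla^{g_h}f_h=(\zeta_1^h)^\ast(\nabla^{g_{\sol}}f_{\sol})=X$. Identity \ref{prop:modelidentities5} is then the pullback of \eqref{eq:solitonnormalization} by a biholomorphism preserving $X$.

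For \ref{prop:modelidentities1.5}, I compare $\iota_X\omega_h=-Jdf_h$ (up to the normalization fixed by $\nabla^{g_h}f_h=X$) with $\iota_X\omega_{\sol}+\iota_X dd^cu_h$. Using $\mathcal{L}_{JX}u_h=0$ together with Cartan's formula, $\iota_X dd^c u_h$ is a multiple of $J d(Xu_h)$. This gives $d(f_h-f_{\sol}-\tfrac12Xu_h)=0$, so the difference is a constant. For \ref{prop:modelidentities4}, applying the trace of \ref{prop:modelidentities5} against \eqref{eq:solitonnormalization} gives $\sqrt{-1}\partial\overline\partial$ of
\[
\log\frac{\omega_h^n}{\omega_{\sol}^n}+u_h-\tfrac12 Xu_h
\]
equal to zero; here one uses $\Ric(\omega_h)-\Ric(\omega_{\sol})=-\sqrt{-1}\partial\overline\partial\log(\omega_h^n/\omega_{\sol}^n)$ and $\mathcal{L}_{\frac12X}\sqrt{-1}\partial\overline\partial u_h=\sqrt{-1}\partial\overline\partial(\tfrac12Xu_h)$. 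So this quantity is pluriharmonic. It is also $JX$-invariant, and I would then show it is constant. One way is to note it equals a combination of $f_h-f_{\sol}-\tfrac12Xu_h$ and the difference of the soliton normalizations $\operatorname{R}+|\partial f|^2-f$, both of which are preserved under pullback. Concretely, $\Delta f_h-f_h$ versus $\Delta f_{\sol}-f_{\sol}$ relates $\log$ of the volume ratio with $X u_h$ via $\operatorname{div}X$. All constants are then absorbed by choosing $\kappa$ in $u_h$ so that \ref{prop:modelidentities4} holds exactly, and I would check that this same choice makes \ref{prop:modelidentities1.5} hold with zero constant, using the normalization $\operatorname{R}_{g_h}+|\partial f_h|^2=f_h$ that pulls back from \eqref{eq:solitonnormalization}.

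Uniqueness: if $u,u'$ both satisfy \ref{prop:modelidentities1} and \ref{prop:modelidentities4}, then $w=u-u'$ satisfies $w-\tfrac12Xw=0$, so $w$ is homogeneous of degree $2$ along $\phi_s$. Since $\sqrt{-1}\partial\overline\partial w=0$, $w$ is pluriharmonic. Its values are determined on the compact set $\operatorname{argmin} f_{\sol}$ (or the exceptional set), where $X=0$ forces $w=0$ there. Propagating along $\phi_s$, and using that every flow line of $-X$ accumulates on the zero set of $X$, gives $w\equiv0$.

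The main obstacle I expect is pinning down the constants: that the single constant $\kappa$ simultaneously makes \ref{prop:modelidentities1.5} and \ref{prop:modelidentities4} exact. I would handle this by evaluating both identities at a zero of $X$ (a critical point of $f_{\sol}$, mapped by $\zeta_1^h$ into the zero set of $X$), where $Xu_h=0$. There the identities reduce to matching $f_h$, $f_{\sol}$ and the volume ratio, which is controlled by the pulled-back normalization.
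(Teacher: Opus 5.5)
Your arguments for \ref{prop:modelidentities1}, \ref{prop:modelidentities2}, \ref{prop:modelidentities3} and \ref{prop:modelidentities5} are fine; the last follows by pulling back the soliton equation by a biholomorphism with $(\zeta_1^h)_\ast X=X$. The argument for \ref{prop:modelidentities4}, however, has a genuine gap.

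\textbf{Identity \ref{prop:modelidentities4}.} You correctly get that $P:=\log(\omega_h^n/\omega_{\sol}^n)+u_h-\frac12Xu_h$ is pluriharmonic and $JX$-invariant. But the claim that $P$ ``equals a combination'' of $f_h-f_{\sol}-\frac12Xu_h$ and the normalization defects is false. The term $\log(\omega_h^n/\omega_{\sol}^n)$ is the log-Jacobian of $\zeta_1^h$, not a pointwise expression in $f_h$, $f_{\sol}$ and scalar curvature. The divergence computation you allude to uses $\mathcal{L}_X\omega_h^n=2\Delta_{\omega_h}f_h\,\omega_h^n$, $\operatorname{R}+\Delta f=n$ for both solitons, the normalizations, and \ref{prop:modelidentities1.5}. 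It yields only $X\cdot P=0$.

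Going from ``pluriharmonic and invariant under $X$ and $JX$'' to ``constant'' needs a further global argument, and the proposal supplies none. For example, in the AC case flow lines of $-X$ accumulate in $\{X=0\}\subseteq\pi_{\sol}^{-1}(o)$, and a pluriharmonic function is constant on this compact connected analytic set by the maximum principle.

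Notice that you never use \ref{def:H1}. The paper uses exactly this to avoid constant-chasing. With $u_{sh}=\int_0^s(\zeta_\tau^h)^\ast h\,d\tau$ one has $\partial_s u_{sh}=(\zeta_s^h)^\ast h$, $\partial_s f_{sh}=(\zeta_s^h)^\ast(\frac12Xh)$ and $\partial_s\omega_{sh}^n=(\zeta_s^h)^\ast(\Delta_{\omega_{\sol}}h)\,\omega_{sh}^n$. Hence
\[
\frac{d}{ds}\bigl(e^{u_{sh}-f_{sh}}\omega_{sh}^n\bigr)=(\zeta_s^h)^\ast\bigl(\Delta_{f_{\sol}}h+h\bigr)\,e^{u_{sh}-f_{sh}}\omega_{sh}^n=0.
\]
Together with \ref{prop:modelidentities1.5}, this gives \ref{prop:modelidentities4} exactly, with $c=1$ and $\kappa=0$.

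\textbf{Identity \ref{prop:modelidentities1.5}.} This follows at once from $\frac{d}{ds}(\zeta_s^h)^\ast f_{\sol}=(\zeta_s^h)^\ast(\frac12Xh)$ and $[X,\nabla^{g_{\sol}}h]=0$. In your version the constant in \ref{prop:modelidentities1.5} does not involve $\kappa$ at all, since only $Xu_h$ appears. So it cannot be fixed by ``the same choice of $\kappa$''. Evaluating at $x_0\in\operatorname{argmin}f_{\sol}$ works only once you know $f_{\sol}(\zeta_1^h(x_0))=f_{\sol}(x_0)$. That holds because $\frac{d}{ds}f_{\sol}(\zeta_s^h(x_0))=\frac12\langle X,\nabla^{g_{\sol}}h\rangle_{g_{\sol}}(\zeta_s^h(x_0))=0$ along this path inside $\{X=0\}$, not because of the normalization.

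\textbf{Uniqueness.} Your uniqueness argument is also invalid. From $w=\frac12Xw$ you get $w(x)=e^{s}w(\phi_{-s}(x))$, so knowing $w=0$ on $\{X=0\}$ gives only an indeterminate form $\infty\cdot0$ as $s\to\infty$. In fact, pluriharmonic $w$ with $\frac12Xw=w$ are exactly the pluriharmonic eigenfunctions of $-\Delta_{f_{\sol}}$ with eigenvalue $1$. Lemma \ref{lem:lineartheory} does not rule these out; on the flat Gaussian, $\operatorname{Re}(z_1^2)$ is one. So \ref{prop:modelidentities1} and \ref{prop:modelidentities4} alone do not determine $u_h$. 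Uniqueness comes instead from \ref{prop:modelidentities1.5}. If $u,u'$ both satisfy it, then $X\cdot(u-u')=0$, and \ref{prop:modelidentities4} forces $u-u'=\frac12X(u-u')=0$.
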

\begin{proof}
By $[X,\nabla^{g_{\sol}}h]=0$,
we get $(\zeta_{s}^{h})^{\ast}X=X$ for all $h\in H$ and $s\in\mathbb{R}$. For all $s\in \R$,
we compute
\begin{align*}
\frac{d}{ds}\omega_{sh}=  \frac{d}{ds}(\zeta_{s}^{h})^{\ast}\omega_{\sol}=&\frac{1}{2}(\zeta_{s}^{h})^{\ast}\mathcal{L}_{\nabla^{g_{\sol}}h}\omega_{\sol}\\
= & \frac{1}{2}(\zeta_{s}^{h})^{\ast}d\iota_{\nabla^{g_{\sol}}h}\omega_{\sol}\\
= & -\frac{1}{2}(\zeta_{s}^{h})^{\ast}dJdh\\
= & \sqrt{-1}\partial\overline{\partial}((\zeta_{s}^{h})^{\ast}h),
\end{align*}
so that integrating from $s=0$ to $s=1$ yields \ref{prop:modelidentities1}, 
where $u_{h}:=\int_{0}^{1}(\zeta_{s}^{h})^{\ast}hds$. Moreover, from this formula, we also have
\begin{equation*}
\begin{split}
    f_h-f_{\sol}&=(\zeta^h_1)^*f_{\sol}-f_{\sol}=\frac{1}{2}\int_{0}^1(\zeta^h_t)^*\langle\nabla^{g_{\sol}}h,\nabla^{g_{\sol}}f_{\sol}\rangle_{g_{\sol}}dt\\
    &=\int_{0}^1(\zeta^h_t)^*(\frac{X}{2}\cdot h)dt\\
    &=\frac{X}{2}\cdot \int_0^1(\zeta_t^h)^*hdt
    =\frac{X}{2}\cdot u_h.
    \end{split}
\end{equation*}
Since $(JX)\cdot h=0$, we have $\mathcal{L}_{JX}u_h=0$, hence also
\begin{equation*}
    \mathcal{L}_{JX}f_h = \frac{1}{2}\mathcal{L}_{JX}(Xu_h)=\frac{1}{2}[JX,X]u_h+\frac{1}{2}\mathcal{L}_X(\mathcal{L}_{JX}u_h)=0,
\end{equation*}
and \ref{prop:modelidentities2} follows.
We also have 
\begin{align*}
\iota_{JX}\omega_{h}= & \iota_{JX}(\omega_{\sol}+\sqrt{-1}\partial\overline{\partial}u_{h})=-df_{\sol}+\iota_{JX}dd^{c}u_{h}\\
= & -df_{\sol}+\mathcal{L}_{JX}(d^{c}u_{h})-d\iota_{JX}d^{c}u_{h}\\
= & -d\left(f_{\sol}+\frac{1}{2}X\cdot u_{h}\right)\\
= & -df_{h},
\end{align*}
so that \ref{prop:modelidentities3} holds. To prove \ref{prop:modelidentities4}, we use
\begin{equation*}
\partial_{s}u_{sh}=\partial_{s}\int_{0}^{1}(\zeta_{rs}^{h})^{\ast}(sh)dr=\partial_{s}\left(\int_{0}^{s}(\zeta_{\tau}^{h})^{\ast}hd\tau\right)=(\zeta_{s}^{h})^{\ast}h
\end{equation*}
to compute
\begin{align*}
\frac{d}{ds}(e^{u_{sh}-f_{sh}}\omega_{sh}^{n})= & ((\zeta_{s}^{h})^{\ast}h)e^{u_{sh}-f_{sh}}\omega_{sh}^{n}-\frac{1}{2}((\zeta_{s}^{h})^{\ast}(X\cdot h))e^{u_{sh}-f_{sh}}\omega_{sh}^{n}+\frac{1}{2}e^{u_{sh}-f_{sh}}(\zeta_{s}^{h})^{\ast}(\mathcal{L}_{\nabla^{g_{\sol}}h}\omega_{\sol}^{n})\\
= & (\zeta_{s}^{h})^{\ast}\left(\Delta_{g_{\sol}}h-\frac{1}{2}\langle\nabla^{g_{\sol}}f_{\sol},\nabla^{g_{\sol}}h\rangle_{g_{\sol}}+h\right)e^{u_{sh}-f_{sh}}\omega_{sh}^{n}
=  0.
\end{align*}
Integrating from $s=0$ to $s=1$ and using $u_{0}=0$, $f_{0}=f_{\sol}$
gives
\begin{equation*}
e^{u_{h}-f_{h}}\omega_{h}^{n}=e^{-f_{\sol}}\omega_{\sol}^{n},
\end{equation*}
which implies \ref{prop:modelidentities4}. Finally, \ref{prop:modelidentities5} follows by combining \ref{prop:modelidentities4} with the Ricci soliton equation for $\omega_{\sol}$. 
\end{proof}
\begin{corollary}\label{coro: uniform value of min f_h}
    If $x_0\in\arg\min f_{\sol}$, then $f_h(x_0)=f_{\sol}(x_0)$ for all $h\in H$.
\end{corollary}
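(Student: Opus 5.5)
The plan is to use the fact that $x_0$ is a critical point of $f_{\sol}$, hence a zero of the soliton vector field $X=\nabla^{g_{\sol}}f_{\sol}$. Since $f_h=(\zeta_1^h)^{\ast}f_{\sol}$, we have $f_h(x_0)=f_{\sol}(\zeta_1^h(x_0))$. It therefore suffices to show that $f_{\sol}$ is constant along the integral curve $s\mapsto \zeta_s^h(x_0)$ for $s\in[0,1]$. Differentiating, we get
\begin{equation*}
\frac{d}{ds}f_{\sol}(\zeta_s^h(x_0)) = \tfrac12\langle \nabla^{g_{\sol}}h,\nabla^{g_{\sol}}f_{\sol}\rangle_{g_{\sol}}(\zeta_s^h(x_0)) = \tfrac12 (X\cdot h)(\zeta_s^h(x_0)).
\end{equation*}
So the goal becomes showing that $X\cdot h$ vanishes along this curve.

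The key step uses the commutation $[X,\nabla^{g_{\sol}}h]=0$ from Remark \ref{rem:vfieldscommute}, which gives $(\zeta_s^h)^{\ast}X=X$, i.e.\ $d\zeta_s^h(X)=X\circ\zeta_s^h$. Since $X(x_0)=0$, it follows that $X(\zeta_s^h(x_0))=d\zeta_s^h(X(x_0))=0$ for all $s$. Hence the whole orbit of $x_0$ under $\zeta^h$ lies in the zero set of $X$. Consequently $(X\cdot h)(\zeta_s^h(x_0))=dh(X)=0$ along the curve, the derivative above vanishes, and integrating from $s=0$ to $s=1$ gives $f_h(x_0)=f_{\sol}(x_0)$.

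Equivalently, one can argue with Proposition \ref{prop:modelidentities} \ref{prop:modelidentities1.5}: $f_h(x_0)=f_{\sol}(x_0)+\tfrac12 (X\cdot u_h)(x_0)$, and the last term vanishes because $X(x_0)=0$. This is even more direct and needs no flow argument at all. I would present this one-line version and mention the orbit argument only as a consistency check.

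I do not expect any real obstacle. The only point to check is completeness of $\nabla^{g_{\sol}}h$, so that $\zeta_1^h$ is defined; this was already noted via Lemma \ref{lem:eigenfunctiongrowth} before Definition \ref{defn; model metrics}.
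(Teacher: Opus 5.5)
Your one-line argument is exactly the paper's proof. Since $x_0$ minimizes $f_{\sol}$, we have $X(x_0)=0$, and evaluating Proposition \ref{prop:modelidentities} \ref{prop:modelidentities1.5}, $f_h=f_{\sol}+\frac{1}{2}X\cdot u_h$, at $x_0$ gives $f_h(x_0)=f_{\sol}(x_0)$. Your orbit argument, which uses $[X,\nabla^{g_{\sol}}h]=0$ to keep $\zeta_s^h(x_0)$ inside the zero set of $X$, is also correct, but it is not needed.
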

\begin{proof}
    Since $x_0\in\arg\min f$, we have $X(x_0)=0$. By Proposition \ref{prop:modelidentities} \ref{prop:modelidentities1.5}, we have $f_h=f_{\sol}+\frac{1}{2}X\cdot u_h$ holds for all $h\in H$. Evaluating at $x_0$, we obtain $f_h(x_0)=f_{\sol}(x_0)$
for all $h\in H$.
\end{proof}
\begin{definition}
    For $h\in H$, we define $\nu_h := (2\pi)^{-n}e^{-f_h-W}dg_h$.
\end{definition}

The following lemma shows that the potential function $f_h$ is uniformly comparable to the background potential $f_{\sol}$ whenever $h$ is in a bounded region of $H$. 
\begin{lemma}\label{lem; equiv of potentials}
    There exists $C(g_{\sol}) \in (1,\infty)$ such that the following hold for all $h\in H$:
\begin{enumerate}
    \item \label{lem; equiv of potentials1}
    $e^{-C(g_{\sol})\|h\|_H} f_{\sol} \leq f_{h} \leq e^{C(g_{\sol})\|h\|_H}f_{\sol}$,
    \item \label{lem; equiv of potentials2} for all $x_0\in \arg\min f_{h}$, 
\begin{equation*}
    \frac{1}{2}(d_{g_{h}}(x_0,\cdot)-C(g_{\sol}))_+^2 \le f_{h}\le \frac{1}{2}(d_{g_{h}}(x_0,\cdot)+C(g_{\sol}))^2.
\end{equation*}
\end{enumerate}     
\end{lemma}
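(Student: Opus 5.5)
The plan rests on one observation: $f_h=(\zeta_1^h)^{\ast}f_{\sol}$ and $g_h=(\zeta_1^h)^{\ast}g_{\sol}$, so $(M_{\sol},g_h,f_h)$ is isometric to $(M_{\sol},g_{\sol},f_{\sol})$ via $\zeta_1^h$. Part \ref{lem; equiv of potentials2} then follows directly from Theorem \ref{thm:basicKRSfacts} \ref{thm:basicKRSfacts1} applied to $g_{\sol}$. Indeed, $\zeta_1^h$ maps $\arg\min f_h$ onto $\arg\min f_{\sol}$ and $d_{g_h}(x_0,x)=d_{g_{\sol}}(\zeta_1^h(x_0),\zeta_1^h(x))$, and the constant $C(g_{\sol})$ in \cite{CaoZhou} depends only on the isometry class. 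Everything therefore reduces to part \ref{lem; equiv of potentials1}.

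For \ref{lem; equiv of potentials1}, differentiate along the path $s\mapsto f_{sh}=(\zeta^h_s)^{\ast}f_{\sol}$. Since $\frac{d}{ds}\zeta_s^h$ is given by $\frac12\nabla^{g_{\sol}}h$, we get
\begin{equation*}
\frac{d}{ds}f_{sh}=(\zeta_s^h)^{\ast}\left(\tfrac12\langle\nabla^{g_{\sol}}h,\nabla^{g_{\sol}}f_{\sol}\rangle_{g_{\sol}}\right)=(\zeta_s^h)^{\ast}\left(\tfrac12 X\cdot h\right).
\end{equation*}
The key pointwise bound we need is
\begin{equation*}
|X\cdot h|\leq |\nabla^{g_{\sol}} h|_{g_{\sol}}|X|_{g_{\sol}}\leq C(g_{\sol})\|h\|_H f_{\sol}.
\end{equation*}
This follows from Lemma \ref{lem:eigenfunctiongrowth} \ref{lem:eigenfunctiongrowth2} with $\lambda=1$, $k=1$, which gives $|\nabla h|\leq C\|h\|_H b_{\sol}$, together with $|X|=|\partial f_{\sol}|\cdot\sqrt2\leq C b_{\sol}$ (from $|\partial f_{\sol}|^2\leq f_{\sol}$) and $b_{\sol}^2=2f_{\sol}$. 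Since $H$ is finite-dimensional, all norms are equivalent, so the constant is uniform in $h$.

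Pulling this bound back by $\zeta_s^h$ gives $\left|\frac{d}{ds}f_{sh}\right|\leq C\|h\|_H (\zeta_s^h)^{\ast}f_{\sol}=C\|h\|_H f_{sh}$. Here we use $f_{\sol}>0$ (\cite{BinglongChen}), so that $\log f_{sh}$ is defined. Then $|\frac{d}{ds}\log f_{sh}|\leq C\|h\|_H$, and integrating over $s\in[0,1]$ gives $e^{-C\|h\|_H}f_{\sol}\leq f_h\leq e^{C\|h\|_H}f_{\sol}$.

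The only delicate point is the uniform bound $|\nabla h|\lesssim \|h\|_H b_{\sol}$ for $h\in H$. This is exactly the AC/compact growth estimate already established, so I expect no real obstacle. One should still check that the flow $\zeta^h_s$ is complete, which follows from the same linear-growth bound and was noted at the start of Section \ref{section:modelmetrics}.
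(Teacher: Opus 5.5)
Your proposal is correct and is essentially the paper's proof. Part (i) differentiates $f_{\sol}\circ\zeta_s^h$, bounds $|\langle X,\nabla^{g_{\sol}}h\rangle_{g_{\sol}}|\le C(g_{\sol})\|h\|_H f_{\sol}$ using Lemma \ref{lem:eigenfunctiongrowth} and $|X|^2_{g_{\sol}}\le 2f_{\sol}$, and integrates in $s$; part (ii) pulls back Theorem \ref{thm:basicKRSfacts} \ref{thm:basicKRSfacts1} through the isometry $\zeta_1^h$. The differences are cosmetic. You get both inequalities at once from the two-sided bound on $\frac{d}{ds}\log f_{sh}$, whereas the paper proves the upper bound and then pulls back by $\zeta_{-1}^h$. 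You also identify $\arg\min f_h=(\zeta_1^h)^{-1}(\arg\min f_{\sol})$ directly instead of using Corollary \ref{coro: uniform value of min f_h}.
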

\begin{proof}
\ref{lem; equiv of potentials1}
By Lemma \ref{lem:eigenfunctiongrowth}, there exists $C(g_{\sol}) \in (1,\infty)$ such that for all $h\in H$,
\begin{align*}
    |\nabla^{g_{\sol}}h|_{g_{\sol}}\le C(g_{\sol}) \|h\|_H \sqrt{f_{\sol}}.
\end{align*}
Combining this with $|X|^2_{g_{\sol}}=|\nabla^{g_{\sol}}f_{\sol}|_{\sol} ^2\le 2f_{\sol}$ and the Cauchy-Schwarz inequality yields 

\begin{align*}
    \left| \frac{d}{dt}f_{\sol}(\zeta_t^h(x))\right|\leq  \frac{1}{2}|\langle X,\nabla^{g_{\sol}}h\rangle_{g_{\sol}}|(\zeta_t^h(x))\leq C(g_{\sol})\|h\|_H f_{\sol}(\zeta_t^h(x))
\end{align*}
for any fixed $x \in M_{\sol}$. Upon integration, we obtain
\begin{align*}
    f_h(x)=f_{\sol}(\zeta_1^h(x)) \leq e^{C(g_{\sol})\|h\|_H }f_{\sol}(x),
\end{align*}
and the remaining inequality follows by pulling back this inequality by $\zeta_{-1}^h$.

\ref{lem; equiv of potentials2}
For any $x_0 \in \operatorname{argmin}_{M_{\sol}}f_{\sol}$, Corollary \ref{coro: uniform value of min f_h} gives $\min_{M_{\sol}}f_h=f_h(x_0)=f_{\sol}(x_0)$ for all $h\in H$. The remaining claim thus follows from Theorem \ref{thm:basicKRSfacts} \ref{thm:basicKRSfacts1}, after pulling back by $\zeta_1^h$. 
\end{proof}

In the following, we summarize some important properties of model metrics which will be used in the sequel. We first begin with a technical lemma:
\begin{lemma}\label{tech lemma on lie derivative}
    Suppose $(Y_t)_{t\in [0,1]}$ is a smooth family of complete vector fields on a smooth manifold $M$, and for each $t\in [0,1]$, let $(\Phi_s^{Y_t})_{s\in\R}$ denote the flow of $Y_t$. For all $t\in [0,1]$ and $s\in \mathbb{R}$, define
    \begin{equation*}
        Z_s^t=\frac{\partial}{\partial t}\Phi_s^{Y_t}\circ \Phi_{-s}^{Y_t}.
    \end{equation*}
    Then we have
    \begin{equation*}
        Z_s^t=\int_0^s(\Phi_{s-\tau}^{Y_t})_{\ast}(\partial_t Y_t) d\tau.
    \end{equation*}
\end{lemma}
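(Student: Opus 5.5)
The identity is a Duhamel-type formula for the $t$-derivative of a flow of a $t$-dependent vector field. Fix $t$ and write $\Phi_s:=\Phi_s^{Y_t}$. I will show that both sides, viewed as functions of $s$ with values in vector fields, solve the same linear ODE with the same initial value, and then invoke uniqueness.

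First, the initial value. Since $\Phi_0^{Y_t}=\operatorname{id}$ for every $t$, we get $Z_0^t=0$, and the right-hand side also vanishes at $s=0$.

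Second, the ODE for $Z_s^t$. Here I interpret $Z_s^t$ as the vector field $x\mapsto \partial_t\Phi_s^{Y_t}(y)$ evaluated at $y=\Phi_{-s}^{Y_t}(x)$, with the $t$-derivative taken before composing. Fix $y$ and set $c(s,t):=\Phi_s^{Y_t}(y)$, so that $\partial_s c=Y_t(c)$. Differentiating in $t$ and commuting the mixed partials (in a chart, or via the torsion-free symmetry lemma) gives, along $c$,
\[
\partial_s\bigl(\partial_t c\bigr)=(\partial_tY_t)(c)+DY_t(c)\,\partial_t c .
\]
So $V_s:=\partial_t c$ solves the linearized equation with a source term. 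Pushing forward by $(\Phi_{-s})_*$ and using $\frac{d}{ds}(\Phi_{-s})_*W_s=(\Phi_{-s})_*\bigl(\dot W_s-[Y_t,W_s]\bigr)$, this equation becomes
\[
\frac{d}{ds}\bigl((\Phi_{-s})_*Z_s^t\bigr)=(\Phi_{-s})_*(\partial_tY_t).
\]
Equivalently, at the level of vector fields, $\partial_sZ_s^t=\partial_tY_t+[Y_t,Z_s^t]$, i.e. $\partial_s Z_s^t=\partial_tY_t-\mathcal{L}_{Z_s^t}Y_t$. The sign convention needs checking here; it is cleanest to verify it on a linear example $Y_t=A_t x$ on $\mathbb{R}^m$.

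Integrating the last display from $0$ to $s$ with $Z_0^t=0$ gives
\[
(\Phi_{-s})_*Z_s^t=\int_0^s(\Phi_{-\tau})_*(\partial_tY_t)\,d\tau .
\]
Applying $(\Phi_s)_*$ then yields $Z_s^t=\int_0^s(\Phi_{s-\tau})_*(\partial_tY_t)\,d\tau$, as claimed.

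The main obstacle is keeping the pushforward and pullback conventions consistent so that the exponent $s-\tau$ comes out, rather than $\tau-s$ or $\tau$. Completeness of each $Y_t$ and smooth dependence on $t$ ensure that $(s,t,y)\mapsto\Phi_s^{Y_t}(y)$ is smooth, by smooth dependence of ODE solutions on parameters. This justifies exchanging the derivatives and differentiating under the integral.
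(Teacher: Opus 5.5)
Your proposal is correct and takes the same route as the paper. The paper also commutes the mixed partials of $(t,s)\mapsto\Phi_s^{Y_t}$ (via a torsion-free connection) to get an ODE for $Z_s^t$, rewrites it as $\partial_s\bigl((\Phi_s^{Y_t})^*Z_s^t\bigr)=(\Phi_s^{Y_t})^*\partial_tY_t$, and integrates from $Z_0^t=0$.

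On the sign you flagged, both of your intermediate formulas have the wrong sign:
\begin{itemize}
\item With the usual vector-field bracket, $\frac{d}{ds}(\Phi_{-s})_*W_s=(\Phi_{-s})_*(\dot W_s+[Y_t,W_s])$, not $\dot W_s-[Y_t,W_s]$.
\item Consequently $\partial_sZ_s^t=\partial_tY_t-[Y_t,Z_s^t]=\partial_tY_t+\mathcal{L}_{Z_s^t}Y_t$, which is the paper's $\partial_sZ_s^t+[Y_t,Z_s^t]=\partial_tY_t$.
\end{itemize}
The two slips cancel, so your key display $\frac{d}{ds}\bigl((\Phi_{-s})_*Z_s^t\bigr)=(\Phi_{-s})_*(\partial_tY_t)$ and the final formula are right.

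Beware that on linear fields $[Ax,Bx]=(BA-AB)x$, which is minus the matrix commutator. So your proposed check on $Y_t=A_tx$ can itself mislead.

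You can avoid brackets altogether. Substitute $\partial_tc=d\Phi_s(W_s)$ into your pointwise equation; this gives $\partial_sW_s=(d\Phi_s)^{-1}\bigl((\partial_tY_t)(\Phi_s(y))\bigr)=\bigl((\Phi_{-s})_*\partial_tY_t\bigr)(y)$ directly.
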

\begin{proof}
Define $\Phi: [0,1]\times \R\mapsto M$ by $\Phi(t,s)=\Phi_s^{Y_t}.$
Let $D_s,D_t$ denote covariant differentiation along $\Phi$ with respect to a Levi-Civita connection $\nabla$. Since the connection is torsion-free, we have $D_s(\frac{\partial}{\partial t}\Phi)=D_t(\frac{\partial}{\partial s}\Phi)$.
    Differentiating $\displaystyle{\frac{\partial}{\partial t}\Phi_s^{Y_t}=Z_s^t\circ\Phi_s^{Y_t}}$ with respect to $s$ gives us $D_s(\frac{\partial}{\partial t}\Phi)= D_s(Z_s^t \circ \Phi_s^{Y_t}).$
    On the other hand, we have
   \begin{equation*}
       D_t(\frac{\partial}{\partial s}\Phi)= D_t(Y_t\circ \Phi_s^{Y_t}).
   \end{equation*}
   Combining these gives us
   \begin{equation*}
       \frac{\partial }{\partial s}Z_s^t+[Y_t,Z_s^t]=\partial_tY_t, \quad \text{and}\quad    \frac{\partial}{\partial s}((\Phi_s^{Y_t})^*Z_s^t)=(\Phi_s^{Y_t})^*\partial_tY_t.
   \end{equation*}
   Therefore, we have
   \begin{equation*}
       (\Phi^{Y_t}_s)^*Z_s^t-Z_0^t=\int_{0}^s(\Phi_{\tau}^{Y_t})^*(\partial_tY_t)d\tau.
   \end{equation*}
   Since $Z_0^t=0$, the claim follows.
\end{proof}
\begin{prop} \label{prop:modelmetrics} Assume that $(M_{\sol},g_{\sol})$ is AC or compact.

\begin{enumerate}

    \item \label{modelmetrics0} For all $h\in H$, 
\begin{equation*} 
   e^{-C(g_{\sol})\|h\|_H} \omega_{\sol} \leq  \omega_h \leq e^{C(g_{\sol})\|h\|_H}\omega_{\sol}
\end{equation*}

\item \label{modelmetrics1} For all $h,k\in H$ with $\|h\|_H,\|k\|_H \leq 1$, 
    \begin{equation*} |u_h-u_k|\le C(g_{\sol})\|h-k\|_H f_{\sol},\end{equation*}

\item \label{modelmetrics2.5} For any $\ell_1,\ell_2 \in \mathbb{N}$, $m\in \mathbb{N}$, and $\mu \in \mathbb{R}$, there exists $C_{\ell_1,\ell_2,m}(g_{\sol},\mu) \in (1,\infty)$ such that for any smooth tensor $T$ of type $(\ell_1,\ell_2)$ on $M_{\sol}$, any $s\in [0,1]$, and any $h\in H$ with $\|h\|_H \leq 1$, we have 
\begin{equation*}
    \sum_{j=0}^m \sup_{M_{\sol}} f_{\sol}^{j-\mu}|(\nabla^{g_{\sol}})^j (\zeta_s^h)^{\ast}T|_{g_{\sol}}^2 \leq C_{\ell_1,\ell_2,m}(g_{\sol},\mu)\sum_{j=0}^{m} \sup_{M_{\sol}} f_{\sol}^{j-\mu}|(\nabla^{g_{\sol}})^jT|_{g_{\sol}}^2.
\end{equation*}

\item \label{modelmetrics3} 
for each $j\in\N$ and $h,k\in H$ with $\|h\|_H,\|k\|_H \leq 1$, 
\begin{equation*}
    \sup_{M_{\sol}}f_{\sol}^{\frac{j}{2}}|(\nabla^{g_{\sol}})^j(\omega_{h}-\omega_{k})|_{\omega_{\sol}}\leq C_j(g_{\sol})\|h-k\|_{H},
\end{equation*}

\item \label{modelmetrics4} there exists $C(g_{\sol})\in (1,\infty)$ such that for all $h,k\in H$ with $\|h\|_H,\|k\|_H \leq 1$,
\begin{equation*}
    |(u_{h}-u_{k})-(h-k)| \leq C(g_{\sol})(\|h\|_H + \|k\|_H)\|h-k\|_H f_{\sol}.
\end{equation*}
\end{enumerate}
\end{prop}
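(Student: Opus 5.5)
The plan is to reduce everything to estimates on the flows $\zeta^h_s$, using only three inputs: the weighted bounds for $h\in H$ from Lemma \ref{lem:eigenfunctiongrowth} \ref{lem:eigenfunctiongrowth2} with $\lambda=1$, the commutation $[X,\nabla^{g_{\sol}}h]=0$, and the fact that $\nabla^{g_{\sol}}h$ is real-holomorphic with $J\nabla^{g_{\sol}}h$ Killing. By linearity of $H$ and finite dimensionality, Lemma \ref{lem:eigenfunctiongrowth} gives, for all $h\in H$ and $j\in\mathbb{N}$,
\begin{equation*}
|(\nabla^{g_{\sol}})^j h|_{g_{\sol}}\le C_j\|h\|_H f_{\sol}^{1-\frac{j}{2}} .
\end{equation*}
Thus $V_h:=\frac12\nabla^{g_{\sol}}h$ has $|V_h|\le C\|h\|_H f_{\sol}^{1/2}$ and $|\nabla^{j} V_h|\le C_j\|h\|_H f_{\sol}^{(1-j)/2}$. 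In particular $\nabla V_h$ is bounded and the higher covariant derivatives of $V_h$ decay.

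For \ref{modelmetrics0}, differentiate $\omega_{sh}=(\zeta^h_s)^*\omega_{\sol}$ in $s$, which gives $(\zeta^h_s)^*\mathcal{L}_{V_h}\omega_{\sol}$. Since $|\mathcal{L}_{V_h}g_{\sol}|_{g_{\sol}}\le C\|h\|_H$, the ratio of the metrics along each flow line satisfies a Grönwall inequality, and this yields the exponential comparability. For \ref{modelmetrics2.5}, which I regard as the key technical tool, write $\frac{d}{ds}(\zeta_s^h)^*T=(\zeta_s^h)^*\mathcal{L}_{V_h}T$. Then compare $\nabla^{g_{\sol}}$ with $\nabla^{g_{sh}}$, since $(\zeta_s^h)^*$ intertwines $\nabla^{g_{\sol}}$ with $\nabla^{g_{sh}}$. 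Their difference tensor is controlled by $\nabla\mathcal{L}_{V_h}g_{\sol}$ integrated in $s$, which is $O(f_{\sol}^{-1/2})$ with derivatives gaining further powers. Also $f_{\sol}\circ\zeta^h_s\simeq f_{\sol}$ by Lemma \ref{lem; equiv of potentials} \ref{lem; equiv of potentials1}. Expanding $\mathcal{L}_{V_h}T=\nabla_{V_h}T\pm \nabla V_h\ast T$, the term $\nabla_{V_h}T$ contributes $f_{\sol}^{1/2}|\nabla^{j+1}T|$, which is exactly consistent with the weight $f_{\sol}^{(j+1)-\mu}$ versus $f_{\sol}^{j-\mu}$. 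The difficulty is the loss of one derivative in this Grönwall scheme. To avoid it, I would first prove the pointwise statement $|(\zeta^h_s)^*T|_{g_{\sol}}(x)\le C|T|_{g_{\sol}}(\zeta^h_s x)$ from the metric comparison in \ref{modelmetrics0}. I would then handle derivatives by induction on $j$, using the Christoffel difference bounds rather than differentiating the flow equation.

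For \ref{modelmetrics1} and \ref{modelmetrics4}, use $u_h=\int_0^1(\zeta^h_s)^*h\,ds$, so that
\begin{equation*}
u_h-h=\int_0^1\!\!\int_0^s (\zeta^h_\tau)^*(V_h\cdot h)\,d\tau\,ds .
\end{equation*}
Here $|V_h\cdot h|\le C\|h\|_H^2 f_{\sol}$, and a bilinear version of the same expression controls the difference for $h,k$. To compare $(\zeta^h_s)^*h$ with $(\zeta^k_s)^*k$, split the difference as
\begin{equation*}
(\zeta^h_s)^*(h-k)+\big[(\zeta^h_s)^*k-(\zeta^k_s)^*k\big].
\end{equation*}
For the bracket, interpolate through $Y_t=V_{k+t(h-k)}$ and apply Lemma \ref{tech lemma on lie derivative}. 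The variation vector field $Z$ is an integral of pushforwards of $V_{h-k}$, so $|Z|\le C\|h-k\|_H f_{\sol}^{1/2}$, and $|dk|\le C\|k\|_H f_{\sol}^{1/2}$. This gives the quadratic bound $C\|k\|_H\|h-k\|_H f_{\sol}$, and together with the first term it proves \ref{modelmetrics1}. The same bookkeeping, subtracting the linear term $h-k$, proves \ref{modelmetrics4}.

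For \ref{modelmetrics3}, write $\omega_h-\omega_k=\sqrt{-1}\partial\overline\partial(u_h-u_k)$. Using $\frac{d}{ds}\omega_{sh}=\sqrt{-1}\partial\overline\partial(\zeta^h_s)^*h$, this becomes $\int_0^1[(\zeta^h_s)^*\sqrt{-1}\partial\overline\partial h-(\zeta^k_s)^*\sqrt{-1}\partial\overline\partial k]\,ds$. Apply \ref{modelmetrics2.5} with $\mu=0$ to the tensors $\sqrt{-1}\partial\overline\partial(h-k)$, whose weighted norms are bounded by $C\|h-k\|_H$. The remaining difference of pullbacks is handled again via Lemma \ref{tech lemma on lie derivative} as $\int (\zeta)^*\mathcal{L}_Z(\sqrt{-1}\partial\overline\partial k)$, with weights checked as above.

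I expect the main obstacle to be \ref{modelmetrics2.5}: keeping the weights exact while avoiding derivative loss, which is needed uniformly in $\|h\|_H\le1$.
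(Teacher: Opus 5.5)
Your proposal is correct. Parts \ref{modelmetrics0}, \ref{modelmetrics1} and \ref{modelmetrics4} match the paper essentially verbatim. Your route to \ref{modelmetrics2.5} is genuinely different, and \ref{modelmetrics3} is reorganized.

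\textbf{Part \ref{modelmetrics2.5}.} The paper also differentiates in $s$, but not at a fixed point. It follows $|(\nabla^{g_{\sol}})^mT_s|^2$ along the backward flow line $x_s=\zeta^h_{-s}(x)$, so the transport term $\nabla_{V_h}(\nabla^{g_{\sol}})^mT_s$ cancels. Only the commutator $(\nabla^{g_{\sol}})^m\mathcal{L}_{V_h}T_s-\nabla_{V_h}(\nabla^{g_{\sol}})^mT_s$ survives. It involves only $(\nabla^{g_{\sol}})^{\leq m}T_s$ times derivatives of $h$ and $\Rm$, so Gr\"onwall plus induction on $m$ closes the argument.

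You instead use that $(\zeta^h_s)^*$ intertwines $\nabla^{g_{\sol}}$ with $\nabla^{g_{sh}}$. The top-order term $(\nabla^{g_{sh}})^mT_s=(\zeta^h_s)^*(\nabla^{g_{\sol}})^mT$ is then handled by the pointwise comparison from \ref{modelmetrics0}. Everything else is lower order in $T_s$, multiplied by derivatives of $A_s:=\nabla^{g_{sh}}-\nabla^{g_{\sol}}$.

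You should make explicit how $(\nabla^{g_{\sol}})^aA_s$ is bounded for $a\leq m-1$. Write $A_s=\int_0^s(\zeta^h_\sigma)^*(\mathcal{L}_{V_h}\nabla^{g_{\sol}})\,d\sigma$. Then apply the induction hypothesis (order $m-1$, $\mu=-1$) to the tensor $\mathcal{L}_{V_h}\nabla^{g_{\sol}}=\nabla^2V_h+\Rm\ast V_h$, whose weighted bounds come from Lemma~\ref{lem:eigenfunctiongrowth} and Lemma~\ref{lem:basicACfacts}. Expressing $A_s$ through $\nabla^{g_{\sol}}g_{sh}$ instead would be circular at top order. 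With this, every term in the expansion of $(\nabla^{g_{\sol}}+A_s)^mT_s$ carries weight exactly $f_{\sol}^{(\mu-m)/2}$.

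Your version makes the absence of derivative loss structural and uses Gr\"onwall only at order zero. The cost is a multilinear expansion and an induction over all tensor types at once. The paper needs just one commutator identity per order.

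\textbf{Part \ref{modelmetrics3}.} Your splitting of $\int_0^1[(\zeta^h_s)^*\sqrt{-1}\partial\overline{\partial}h-(\zeta^k_s)^*\sqrt{-1}\partial\overline{\partial}k]\,ds$ works with the same weight bookkeeping. Note that the bounds on $(\nabla^{g_{\sol}})^iZ$ need \ref{modelmetrics2.5} at negative times, which you get by replacing $h$ with $-h$. The paper's interpolation $\partial_t\omega_{h_t}=(\zeta_1^{h_t})^*\mathcal{L}_{Z_1^t}\omega_{\sol}$ is slightly leaner. Since $\omega_{\sol}$ is parallel, only $\nabla Z_1^t$ enters there.
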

\begin{proof} \ref{modelmetrics0}
For any $t\in [0,1]$, we have
\begin{equation*} \partial_t \omega_{th}=(\zeta_t^h)^{\ast}\mathcal{L}_{\frac{1}{2}\nabla^{g_{\sol}}h}\omega_{\sol}=(\zeta_t^h)^{\ast}\sqrt{-1}\partial \overline{\partial}h .\end{equation*}
By Lemma \ref{lem:eigenfunctiongrowth} \ref{lem:eigenfunctiongrowth2}, we have
\begin{equation*}
     -C(g_{\sol})\|h\|_H\omega_{\sol}\le \sqrt{-1}\partial\bar\partial h\le C(g_{\sol})\|h\|_H\omega_{\sol}
\end{equation*}
for all $h\in H$, so combining expressions yields 
\begin{equation*}
   \partial_t (e^{-C(g_{\sol})\|h\|_Ht}\omega_{th}) \leq 0.
\end{equation*}
Upon integrating over $t\in [0,1]$, this yields $\omega_h \leq e^{C(g_{\sol})\|h\|_H}\omega_{\sol}$. After pulling back by $\zeta_1^h$, we obtain the remaining inequality.  

\ref{modelmetrics1}:
Recall that $u_h=\int_0^1(\zeta^h_s)^*hds$, where $(\zeta_s^h)_{s\in \mathbb{R}}$ is the flow of $\frac{1}{2}\nabla^{g_{\sol}}h$. Now we compute 
\begin{equation*}
   \begin{split}
        |u_h-u_k|&=\left|\int_0^1\left((\zeta^h_s)^*h-(\zeta^h_s)^*k+(\zeta^h_s)^*k-(\zeta^k_s)^*k\right)ds\right|\\
        &\le \int_0^1|(\zeta^h_s)^*(h-k)|ds+\int_0^1|(\zeta^h_s)^*k-(\zeta^k_s)^*k|ds\\
   \end{split}
\end{equation*}
By Lemma \ref{lem:eigenfunctiongrowth} and Lemma \ref{lem; equiv of potentials} \ref{lem; equiv of potentials1}, there is a uniform constant $C(g_{\sol})\in (1,\infty)$ such that for all $h\in H$ with $\|h\|_H \leq 1$, we have
\begin{equation*}
    |h|+\sqrt{f_{\sol}}|\nabla^{g_{\sol}}h|_{g_{\sol}}\le C\|h\|_{H}f_{\sol},\qquad \frac{1}{C(g_{\sol})}f_{\sol}\le f_h\le C(g_{\sol})f_{\sol}.
\end{equation*}
Hence, for all $t\in[0,1]$, we have
\begin{equation*}
    (\zeta^h_t)^*|h-k|\le C(g_{\sol})\|h-k\|_{H}f_{th}\le C(g_{\sol})\|h-k\|_H f_{\sol}.
\end{equation*}
To estimate the other term, we use that
\begin{equation*}
        (\zeta_s^h)^*k-(\zeta_s^k)^*k=\int_0^1\frac{\partial}{\partial \tau}\left((\zeta^{k+\tau(h-k)}_s)^*k\right)d\tau = \int_0^1 (\zeta^{k+\tau(h-k)}_s)^*(Z^\tau_s\cdot k)d\tau,
\end{equation*}
where $Z_s^{\tau}:=\frac{\partial}{\partial \tau}\zeta^{k+\tau(h-k)}_s\circ\zeta^{k+\tau(h-k)}_{-s}$. By Lemma \ref{tech lemma on lie derivative}, we have
\begin{equation} \label{eq:variation vector field model metrics}
    Z_s^\tau=\frac{1}{2}\int_0^s(\zeta_{s-\sigma}^{k+\tau(h-k)})_*(\nabla^{g_{\sol}}(h-k))d\sigma .
\end{equation}
 As a consequence of \ref{modelmetrics0}, for any tensor $T$ on $M_{\sol}$, and any $h\in H$, we can estimate
\begin{equation} \label{eq:usefuldoodad}
    |(\zeta_t^h)^{\ast}T|_{g_{\sol}}\leq C(g_{\sol}) |(\zeta_t^h)^{\ast}T|_{(\zeta_t^h)^{\ast}g_{\sol}}=C(g_{\sol})(\zeta_t^h)^{\ast}|T|_{g_{\sol}}.
\end{equation}
If $\|h\|_H,\|k\|_H \leq 1$, we thus have
\begin{align*}
    |(\zeta_{s-\sigma}^{k+\tau(h-k)})_{\ast}(\nabla^{g_{\sol}}(h-k))|_{g_{\sol}} (x) &\le C(g_{\sol})|\nabla^{g_{\sol}}(h-k)|_{g_{\sol}}(\zeta_{\sigma-s}^{k+\tau(h-k)}(x))\\
    & \leq C(g_{\sol})\|h-k\|_H \sqrt{f_{\sol}},
\end{align*}
where, again, we used Lemma \ref{lem:eigenfunctiongrowth} and Lemma \ref{lem; equiv of potentials} \ref{lem; equiv of potentials1} for the second inequality. Integrating yields

\begin{equation*}
    |Z_s^\tau|_{g_{\sol}}\leq 
    \frac{1}{2}
    Cs\|h-k\|_H \sqrt{f_{\sol}}. 
\end{equation*}
Therefore, for all $s\in[0,1]$ we have
\begin{equation*}
     |(\zeta_s^h)^*k-(\zeta_s^k)^*k|\le Cs\|k\|_H\|h-k\|_H f_{\sol}.
\end{equation*}
Hence, if $\|h\|_H,\|k\|_H\le 1$ we conclude that
\begin{equation*}
    |u_h-u_k|\le C\|h-k\|_H f_{\sol}.
\end{equation*}

\ref{modelmetrics2.5} We proceed by induction, with the case $m=0$ immediate from \eqref{eq:usefuldoodad}. Suppose now that $m\in\N^\times$ and that the claim holds with
$m$ replaced by $m-1$. Note that the same estimate holds for $s\in [-1,1]$ by replacing $h$ with $-h$. By homogeneity, we may assume that
\begin{equation}\label{eq:normalized weighted tensor norm}
\sum_{j=0}^{m}
\sup_{M_{\sol}}
f_{\sol}^{j-\mu}
\left|
    \bigl(\nabla^{g_{\sol}}\bigr)^jT
\right|_{g_{\sol}}^2
=1.
\end{equation}

Fix $x\in M_{\sol}$, set $x_s := \zeta_{-s}^h(x)$, $V:=\frac{1}{2}\nabla^{g_{\sol}}h$, $T_s:= (\zeta_s^h)^{\ast}T$, and define
\begin{equation*}\eta(s):=
\left|(\nabla^{g_{\sol}})^mT_s\right|_{g_{\sol}}^2(x_s)\end{equation*}
for all $s\in [0,1]$. 
Since $\partial_sT_s=\mathcal{L}_VT_s$, we compute
\begin{align*}
\eta'(s)
={}&
2\left\langle
    (\nabla^{g_{\sol}})^m\mathcal{L}_VT_s,
     (\nabla^{g_{\sol}})^mT_s
\right\rangle_{g_{\sol}}(x_s)
-
\left(
    V\cdot \left| (\nabla^{g_{\sol}})^mT_s\right|_{g_{\sol}}^2
\right)(x_s)
\\
={}&
2\left\langle
     (\nabla^{g_{\sol}})^m\mathcal{L}_V T_s
    -
     \nabla^{g_{\sol}}_V  (\nabla^{g_{\sol}})^mT_s,
     (\nabla^{g_{\sol}})^mT_s
\right\rangle_{g_{\sol}}(x_s).
\end{align*}
As usual, let $T'\ast T''$ denote a linear combination of
contractions of $T'$ and $T''$ with $g_{\sol}$ and
$g_{\sol}^{-1}$. Commuting covariant derivatives gives
\begin{align} (\nabla^{g_{\sol}})^m\mathcal{L}_VT_s
={}&
\nabla^{g_{\sol}}_V (\nabla^{g_{\sol}})^mT_s
+
\sum_{j=0}^{m}
(\nabla^{g_{\sol}})^{j+2}h
\ast
(\nabla^{g_{\sol}})^{m-j}T_s
\nonumber\\
&+
\sum_{j=0}^{m-1}
(\nabla^{g_{\sol}})^j
\left(
    \nabla^{g_{\sol}} h
    \ast
    \Rm(g_{\sol})
    \ast
    (\nabla^{g_{\sol}})^{m-1-j}T_s
\right),
\label{eq:commuted lie derivative}
\end{align}
so that $\xi(s):=\sqrt{\eta(s)}$ is a Lipschitz function satisfying the following for a.e. $s \in [0,1]$:
\begin{equation}
\label{eq:eta derivative expanded}
\begin{aligned}
|\xi'(s)|
\leq{}&
C_m
\sum_{j=0}^{m}
\left|(\nabla^{g_{\sol}})^{j+2}h\right|_{g_{\sol}}(x_s)
\left|(\nabla^{g_{\sol}})^{m-j}T_s\right|_{g_{\sol}}(x_s)
\\
&+
C_m
\sum_{j=0}^{m-1}
\sum_{\substack{a,b,c\geq 0\\a+b+c=j}}
\left|(\nabla^{g_{\sol}})^{a+1}h\right|_{g_{\sol}}(x_s)
\left|(\nabla^{g_{\sol}})^b\Rm(g_{\sol})\right|_{g_{\sol}}(x_s)
\left|
    (\nabla^{g_{\sol}})^{m-1-j+c}T_s
\right|_{g_{\sol}}(x_s).
\end{aligned}
\end{equation}
By Lemma
\ref{lem:eigenfunctiongrowth} \ref{lem:eigenfunctiongrowth2}, Lemma \ref{lem:basicACfacts} \ref{lem:basicACfacts3} and $\|h\|_H\leq 1$, we have
\begin{align*}
\left|(\nabla^{g_{\sol}})^{j+2}h\right|_{g_{\sol}}
\leq
C_j(g_{\sol})f_{\sol}^{-\frac{j}{2}}, \quad 
\left|(\nabla^{g_{\sol}})^{a+1}h\right|_{g_{\sol}}
\leq
C_a(g_{\sol})f_{\sol}^{\frac{1-a}{2}}, \quad
\left|(\nabla^{g_{\sol}})^b\Rm(g_{\sol})\right|_{g_{\sol}}
\leq
C_b(g_{\sol})f_{\sol}^{-\frac{b+2}{2}}.
\end{align*}
Moreover, the induction hypothesis and
\eqref{eq:normalized weighted tensor norm} imply that
\begin{equation}\label{eq:lower derivative pullback bound}
\left|(\nabla^{g_{\sol}})^rT_s\right|_{g_{\sol}}(x_s)
\leq
C_r(g_{\sol})
f_{\sol}(x_s)^{\frac{\mu-r}{2}}
\end{equation}
for every $r\in \{0,...,m-1\}$. The $j=0$ term in the first sum in
\eqref{eq:eta derivative expanded} is bounded by $C_m\xi(s)$, so combining expressions yields
\begin{equation*}
|\xi'(s)|
\leq
C_m(g_{\sol})
\left( \xi(s) + f_{\sol}^{\frac{\mu-m}{2}}(x_s)
\right) = C_m(g_{\sol})\left( \xi(s)+f_{-sh}^{\frac{\mu-m}{2}}(x) \right).
\end{equation*}
Integrating over $s\in [0,1]$ and using Lemma
\ref{lem; equiv of potentials} \ref{lem; equiv of potentials1}
gives
\begin{equation*}
\left|(\nabla^{g_{\sol}})^mT_s\right|_{g_{\sol}}(x_s)
\leq
C_m(g_{\sol})
f_{\sol}^{\frac{\mu-m}{2}}(x).
\end{equation*}
Since $x\in M_{\sol}$ was arbitrary, another application of Lemma \ref{lem; equiv of potentials}  \ref{lem; equiv of potentials1} proves the estimate
at order $m$, so the claim follows by induction.

\ref{modelmetrics3} For $t\in[0,1]$, set $h_t := k+t(h-k)$, and let $Z_s^t$ be as in the proof of \ref{modelmetrics1}. 
Lemma
\ref{lem:eigenfunctiongrowth}\ref{lem:eigenfunctiongrowth2}
then gives
\begin{equation}\label{eq:weighted derivatives gradient h minus k}
\sum_{j=0}^{m+1}
\sup_{M_{\sol}}
f_{\sol}^{j-1}
\left|
    \bigl(\nabla^{g_{\sol}}\bigr)^j
    \nabla^{g_{\sol}}(h-k)
\right|_{g_{\sol}}^2
\leq
C_{m+1}(g_{\sol})
\|h-k\|_H^2.
\end{equation}
Applying \ref{modelmetrics2.5} with
$\mu=1$, and then using Jensen's inequality in
\eqref{eq:variation vector field model metrics} yields
\begin{equation}\label{eq:weighted lie derivative estimate}
\sum_{j=0}^{m}
\sup_{M_{\sol}}
f_{\sol}^{j}
\left|
    \bigl(\nabla^{g_{\sol}}\bigr)^j
    \mathcal{L}_{Z_1^t}\omega_{\sol}
\right|_{g_{\sol}}^2\leq \sum_{j=0}^{m+1}
\sup_{M_{\sol}}
f_{\sol}^{j-1}
\left|
    \bigl(\nabla^{g_{\sol}}\bigr)^jZ_1^t
\right|_{g_{\sol}}^2
\leq
C_{m+1}(g_{\sol})
\|h-k\|_H^2,
\end{equation}
uniformly for $t\in[0,1]$. 
Applying \ref{modelmetrics2.5} with
$\mu=0$ and using
\eqref{eq:weighted lie derivative estimate}, we obtain
\begin{align*}
f_{\sol}^{\frac{m}{2}}
\left|
    \frac{\partial}{\partial t}
    \bigl(\nabla^{g_{\sol}}\bigr)^m\omega_{h_t}
\right|_{g_{\sol}}
=
f_{\sol}^{\frac{m}{2}}
\left|
    \bigl(\nabla^{g_{\sol}}\bigr)^m
    \left(\zeta_1^{h_t}\right)^*
    \mathcal{L}_{Z_1^t}\omega_{\sol}
\right|_{g_{\sol}}\leq
C_m(g_{\sol})\|h-k\|_H,
\end{align*}
so the claim follows by integrating over $t\in[0,1]$.

\ref{modelmetrics4} For $h\in H$, we compute
\begin{equation*}
    u_h-h=\int_0^1((\zeta_t^h)^*h-h)dt=\int_0^1\int_0^t(\zeta_s^h)^*|\partial  h|_{g_{\sol}}^2dsdt,
\end{equation*}
so that for any $h,k\in H$, 
\begin{equation*}
  \begin{split}
        (u_h-h)-(u_k-k)&=\int_0^1\int_0^t\left((\zeta_s^h)^*|\partial h|_{g_{\sol}}^2-(\zeta_s^k)^*|\partial k|_{g_{\sol}}^2\right)dsdt\\
        &=\int_0^1\int_0^t\left((\zeta_s^h)^*(|\partial h|_{g_{\sol}}^2-|\partial k|_{g_{\sol}}^2)+(\zeta_s^h)^*|\partial k|_{g_{\sol}}^2-(\zeta_s^k)^*|\partial k|_{g_{\sol}}^2\right)dsdt
  \end{split}
\end{equation*}
For the first term, we have
\begin{equation*}
\begin{split}
     \left|  |\partial h|_{g_{\sol}}^2-|\partial k|^2_{g_{\sol}}\right|&\le ( |\partial h|_{g_{\sol}}+ |\partial k|_{g_{\sol}})|\partial(h-k)|_{g_{\sol}}\\
     &\le C(\|h\|_H+\|k\|_H)\|h-k\|_H f_{\sol}
\end{split}
\end{equation*}
which implies the following for $\|h\|_H \leq 1$:
\begin{equation*}
    \int_0^1\int_0^t|(\zeta_s^h)^*|\partial h|_{g_{\sol}}^2-(\zeta_s^h)^*|\partial k|_{g_{\sol}}^2|dsdt\le C(\|h\|_H+\|k\|_H)\|h-k\|_H f_{\sol}.
\end{equation*}
For the remaining term, we use that if $\|k+r(h-k)\|_H \leq 1$ for all $r\in[0,1]$, then
\begin{equation*}
    \begin{split}
       \left| (\zeta_s^h)^*|\partial k|_{g_{\sol}}^2-(\zeta_s^k)^*|\partial k|_{g_{\sol}}^2\right|&=\left|\int_0^1\frac{\partial}{\partial r}\left((\zeta_s^{k+r(h-k)})^*|\partial k|_{g_{\sol}}^2\right)dr\right|\\
        &=\left|\int_0^1(\zeta_s^{k+r(h-k)})^*(Z_s^r\cdot|\partial k|_{g_{\sol}}^2)dr\right|\\
        &\le Cs \|h-k\|_H\|k\|_H^2 f_{\sol}.    
    \end{split}
\end{equation*}
If $h,k\in H$ satisfy $\|h\|_H,\|k\|_H \leq 1$, it follows that
\begin{equation*}
      |(u_h-h)-(u_k-k)|\le C\|h-k\|_H(\|h\|_H+\|k\|_H)f_{\sol}.
\end{equation*}
This completes the proof of \ref{modelmetrics4}.
\end{proof}

The following lemma will be used to estimate how $\Omega_h(D)$ changes under the flow of $\nabla^{g_{\sol}}f_{\sol}$ and when $h$ changes.

\begin{lemma} \label{lem:Dandrho} There is a function $D:(0,1)\to (1,\infty)$, depending only on $g_{\sol}$, such that for all $h,h'\in H$ with $\max\{\|h\|_H,\|h'\|_H\}<D(\rho)^{-1}$, the following hold on $M_{\sol}\setminus \Omega_{h'}(D(\rho))$:
\begin{enumerate}
    \item \label{lem:Dandrho1} $(1-\rho) f_{h} \leq |\partial f_h|_{g_h}^2 \leq (1+\rho)f_h$,

    \item \label{lem:Dandrho2} $\sqrt{-1}\partial \overline{\partial}f_h \geq (1-\rho)\omega_h$,

    \item \label{lem:Dandrho3} $(1-\rho)d_{g_{h'}}(x_0,\cdot) \leq b_h \leq (1+\rho)d_{g_{h'}}(x_0,\cdot)$.
\end{enumerate}
Moreover, for any $D\geq D(\rho)$ and $s\geq 0$, we have
\begin{equation} \label{eq:containmentofsublevelsets}
    \Omega_{h}(e^{\frac{1-\rho}{2}s}D) \subseteq \phi_s(\Omega_h(D)) \subseteq \Omega_{h}(e^{\frac{1+\rho}{2}s}D), \qquad \Omega_{h'}((1-\rho)D) \subseteq \Omega_h(D) \subseteq \Omega_{h'}((1+\rho)D).
\end{equation}
\end{lemma}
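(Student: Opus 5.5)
The plan is to transfer the corresponding facts for $f_{\sol}$ to $f_h$ by the pullback $\zeta_1^h$, and to control the discrepancy between $h$ and $h'$ using the closeness estimates of Proposition \ref{prop:modelmetrics} and Lemma \ref{lem; equiv of potentials}.

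\textbf{Items (1) and (2).} By Proposition \ref{prop:modelidentities}, $(g_h,f_h)=(\zeta_1^h)^{\ast}(g_{\sol},f_{\sol})$. Hence $|\partial f_h|^2_{g_h}=(\zeta_1^h)^{\ast}|\partial f_{\sol}|^2_{g_{\sol}}$ and $\sqrt{-1}\partial\overline{\partial} f_h-\omega_h=-(\zeta_1^h)^{\ast}\Ric(\omega_{\sol})$. The soliton identity gives $f_{\sol}-|\partial f_{\sol}|^2=\operatorname{R}_{g_{\sol}}$, and the AC or compact assumption gives $|\Rm(g_{\sol})|\le C/f_{\sol}$ by Lemma \ref{lem:basicACfacts} \ref{lem:basicACfacts3} and Remark \ref{rem:alsocompact}. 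Therefore
\[
\bigl|f_{\sol}-|\partial f_{\sol}|^2\bigr|\le C, \qquad |\Ric(\omega_{\sol})|_{g_{\sol}}\le C f_{\sol}^{-1}.
\]
Pulling back yields $|f_h-|\partial f_h|^2_{g_h}|\le C$ and $\sqrt{-1}\partial\overline{\partial} f_h\ge(1-Cf_h^{-1})\omega_h$ everywhere, with $C=C(g_{\sol})$ independent of $h$. So (1) and (2) hold wherever $f_h\ge C\rho^{-1}$. By Lemma \ref{lem; equiv of potentials} \ref{lem; equiv of potentials1} with $\|h\|_H,\|h'\|_H\le 1$, we have $f_h\ge e^{-2C}f_{h'}$, so it suffices to take $D(\rho)^2\ge C\rho^{-1}$.

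\textbf{Item (3).} I would apply Lemma \ref{lem; equiv of potentials} \ref{lem; equiv of potentials2} to $h'$. Since $x_0\in\arg\min f_{\sol}$ is also a minimizer of every $f_{h'}$ by Corollary \ref{coro: uniform value of min f_h}, this gives $|b_{h'}-d_{g_{h'}}(x_0,\cdot)|\le C$. It then remains to compare $b_h$ with $b_{h'}$. The cleanest route is a Lipschitz estimate in $h$: the function $(\zeta_1^h)^{\ast}f_{\sol}$ has $\partial_h$-derivative bounded by $C\|\delta h\|_H f_{\sol}$, exactly as in the proof of Proposition \ref{prop:modelmetrics} \ref{modelmetrics1} using the vector fields $Z_s^\tau$. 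This gives
\[
|f_h-f_{h'}|\le C\|h-h'\|_H f_{\sol}\le 2CD(\rho)^{-1}f_{\sol},
\]
so $b_h=(1+O(D^{-1}))b_{h'}$. Combining this with the additive error $C/D$, which is relatively small on $\{b_{h'}\ge D\}$, gives (3) once $D(\rho)$ is large. \textbf{This is the main obstacle}, because only the exponential comparison of Lemma \ref{lem; equiv of potentials} \ref{lem; equiv of potentials1} is available directly. The smallness $\|h\|_H,\|h'\|_H<D(\rho)^{-1}$ is needed so that the multiplicative constant tends to $1$, and the $Z_s^\tau$ argument supplies exactly this.

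\textbf{Containments.} For the second containment in \eqref{eq:containmentofsublevelsets}, use $b_h^2=(1+O(D^{-1}))b_{h'}^2$ on the relevant region and apply it at the level $D$; this is valid because both sides exceed $D(\rho)$. For the first, recall that $\phi_s$ is the flow of $\tfrac12 X=\tfrac12\nabla^{g_h}f_h$ by Proposition \ref{prop:modelidentities} \ref{prop:modelidentities3}. Along this flow,
\[
\tfrac{d}{ds}f_h=\tfrac12|\partial f_h|^2_{g_h}\in\bigl[\tfrac{1-\rho}{2}f_h,\tfrac{1+\rho}{2}f_h\bigr]
\]
outside $\Omega_h(D(\rho))$ by (1); strictly, one should use the Riemannian $|\nabla f_h|^2$, in which case $\rho$ is adjusted by a harmless factor. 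Integrating gives $e^{(1-\rho)s}f_h\le\phi_s^{\ast}f_h\le e^{(1+\rho)s}f_h$ for points starting outside $\Omega_h(D)$. Taking square roots, and using that $f_h$ is nondecreasing along the flow (so orbits starting outside $\Omega_h(D)$ stay outside), yields the two inclusions.
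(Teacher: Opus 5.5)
Your proposal is correct and follows essentially the same route as the paper: pull back the asymptotics of $|\partial f_{\sol}|^2_{g_{\sol}}/f_{\sol}$ and of $\sqrt{-1}\partial\overline{\partial} f_{\sol}=\omega_{\sol}-\Ric(\omega_{\sol})$ by $\zeta_1^h$, pass from $\Omega_h$ to $\Omega_{h'}$ and to $d_{g_{h'}}(x_0,\cdot)$ via Lemma \ref{lem; equiv of potentials}, and integrate $f_h$ along $\phi_s$. The one real difference is that your ``main obstacle'' is not an obstacle: Lemma \ref{lem; equiv of potentials} \ref{lem; equiv of potentials1} with $\|h\|_H,\|h'\|_H<D(\rho)^{-1}$ already gives $e^{-2C/D(\rho)}f_{h'}\le f_h\le e^{2C/D(\rho)}f_{h'}$, which is exactly what the paper uses, so your $Z_s^\tau$ Lipschitz estimate is valid but unnecessary.

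Two small fixes in the last step. First, with the paper's convention $|\partial f|^2_g=\frac12|\nabla^g f|^2_g$, one has exactly $\frac{d}{ds}f_h(\phi_s(x))=\frac12 X\cdot f_h=|\partial f_h|^2_{g_h}$. Your displayed factor $\frac12$ would halve the growth rate rather than merely perturb $\rho$, so it is your integrated bound $e^{(1-\rho)s}f_h\le\phi_s^{\ast}f_h\le e^{(1+\rho)s}f_h$ that is correct. Second, for orbits starting inside $\Omega_h(D(\rho))$, the upper inclusion needs one more line, e.g. the global bound $|\partial f_h|^2_{g_h}=f_h-\operatorname{R}_{g_h}\le f_h$; the paper passes over this point as well.
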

\begin{proof} Let $X = \nabla^{g_{\sol}}f_{\sol}$ be the soliton vector field, and let $(\phi_s)_{s\in \mathbb{R}}$ be the flow of $\frac{1}{2}X$. Recall that for the model metrics, we have 
\begin{equation*}
g_h= (\zeta_1^h)^{\ast}g_{\sol}, \qquad  f_h=(\zeta_1^h)^{\ast}f_{\sol}, \qquad X=\nabla^{g_h}f_h.
\end{equation*} Combining \eqref{eq:solitonnormalization}, Lemma \ref{lem:basicACfacts} \ref{lem:basicACfacts3}, and Theorem \ref{thm:basicKRSfacts} \ref{thm:basicKRSfacts1}
we have
\begin{equation*}
\lim_{x\to \infty}\frac{|\partial f_{\sol}|^2}{f_{\sol}}(x) = 1.
\end{equation*}
Therefore, given any $\mu \in (0,\frac{1}{2})$, we can find $D_0(\mu)\ge 1$ large so that 
\begin{equation*}
(1-\mu) f_{\sol} \leq |\partial f_{\sol}|_{g_{\sol}}^2 \leq (1+\mu)f_{\sol}
\end{equation*}
holds on $M_{\sol}\setminus \Omega(D_0(\mu))$. Also using the quadratic curvature decay, it is then straightforward to get
\begin{equation*}
\sqrt{-1}\partial\bar\partial f_{\sol} = \omega_{\sol} - \Ric(\omega_{\sol})\ge (1-\mu)\omega_{\sol}
\end{equation*}
on the same region. Pulling back by $\zeta_1^h,$ these identities also hold for $(g_h,f_h)$ on a region where $f_h$ is sufficiently big. To prove this on a region defined by $f_{h'},$ we use Lemma \ref{lem; equiv of potentials}: for $\|h\|_H$ and $\|h'\|_H$ sufficiently small, we have
\begin{equation*}
(1-C\delta)f_{\sol} \leq f_h \leq (1+C\delta)f_{\sol}, 
\end{equation*}
with the same holding for $f_{h'}.$ 
Given the above, given $\rho\in (0,1)$ and $D_0(\rho)>1$ large enough, we can find $D(\rho) \in (1,\infty)$ such that for $\max\{\|h\|_H,\|h'\|_H\}<D(\rho)^{-1}$, $f_{h'}\geq \frac{1}{2}D(\rho)^2,$ implies $f_h\geq \frac{1}{2}D_0(\mu)^2,$ with $\mu=\mu(\rho)>0$ small enough so that \ref{lem:Dandrho1}, \ref{lem:Dandrho2} hold. 
Similarly, we may combine Lemma \ref{lem; equiv of potentials} \ref{lem; equiv of potentials1}, \ref{lem; equiv of potentials2} to obtain \ref{lem:Dandrho3}, after possibly further increasing $D(\rho)$.

We now prove \eqref{eq:containmentofsublevelsets}. Since
\(X=\nabla^{g_h} f_h\) and \(\phi_s\) is generated by \(\frac{1}{2} X\), we
have
\begin{equation} \label{eq:fhunderflow}
\frac{d}{ds} f_h(\phi_s(x))
=
\frac{1}{2} X \cdot f_h(\phi_s(x))
=
|\partial f_h|_{g_h}^2(\phi_s(x)).
\end{equation}
By \ref{lem:Dandrho1}, after increasing \(D(\rho)\) if necessary, whenever
\(b_h(\phi_s(x))\ge D_0(\rho)\) we have
\[
(1-\rho) f_h(\phi_s(x))
\le
\frac{d}{ds} f_h(\phi_s(x))
\le
(1+\rho) f_h(\phi_s(x)).
\]
Thus, for all \(D\ge D_0(\rho)\) and all \(x\in \Omega_h(D)\), applying Gronwall's
inequality, 
\[
f_h(\phi_s(x))
\le
e^{(1+\rho)s} f_h(x)
\le
e^{(1+\rho)s}\frac{D^2}{2},
\]
which gives \(b_h(\phi_s(x)) \le e^{\frac{1+\rho}{2}s}D.\) Hence,
\begin{equation}
    \phi_s(\Omega_h(D))
\subset
\Omega_h\left(e^{\frac{1+\rho}{2}s}D\right).
\end{equation}
For the reverse inclusion, suppose \(x\notin \Omega_h(D)\), so that by \eqref{eq:fhunderflow}, we have \(b_h(\phi_\tau(x))\ge b_h(x)\ge D(\rho)\) for all $\tau \in [0,s]$. Hence, we can use \ref{lem:Dandrho1} again to obtain
\begin{equation*}
\frac{d}{d\tau} f_h(\phi_\tau(x))
\ge
(1-\rho)f_h(\phi_\tau(x)).
\end{equation*}
Integrating from 0 to $s$ gives $f_h(\phi_s(x)) \ge e^{(1-\rho)s}f_h(x)$, so that $\phi_s(x)\notin
\Omega_h\left(e^{\frac{1-\rho}{2}s}D\right)$, hence
\begin{equation*}
\Omega_h\left(e^{\frac{1-\rho}{2}s}D\right)
\subset
\phi_s(\Omega_h(D)).
\end{equation*}
Applying Lemma \ref{lem; equiv of potentials}, the second set of inclusions in \eqref{eq:containmentofsublevelsets} then follows analogously.
\end{proof}

\section{K\"ahler-Ricci flow near asymptotically conical shrinkers}

\label{section:KRFnearAC}

Throughout this section, we assume that $(M_{\sol},J_{\sol},g_{\sol},f_{\sol})$ is a K\"ahler-Ricci shrinker which is either AC or compact, and write $X:= \nabla^{g_{\sol}}f_{\sol}$.

Let $(\phi_s)_{s\in \mathbb{R}}$ be the flow of $\frac{1}{2}X$, and let $(g_h)_{h\in H}$ be the collection of model metrics corresponding to $(M_{\sol},J_{\sol},g_{\sol},f_{\sol})$. Then $g_{h,t}:= |t|\phi_{\log \frac{1}{|t|}}^{\ast}g_{h}$ are the corresponding K\"ahler-Ricci flows. Recall that $\omega_{h}=\omega_{\sol}+\sqrt{-1}\partial \overline{\partial} u_h$ for some $u_h \in C^{\infty}(M_{\sol})$, and $f_h = f_{\sol}+\frac{1}{2}X\cdot u_h$ is the corresponding soliton potential. Setting $u_{h,t}:= |t|\phi_{\log \frac{1}{|t|}}^{\ast}u_h$, it follows that $\omega_{h,t}=\omega_{\sol,t}+\sqrt{-1}\partial \overline{\partial}u_{h,t}$. We also set $f_{h,t}:=\phi_{\log \frac{1}{|t|}}^{\ast}f_{h}$, $b_{h,t}:=\sqrt{2|t|f_{h,t}}$, and
\begin{equation*}
    \Omega_{h,t}(D):= \{x\in M_{\sol} \: | \: b_{h,t}(x)\leq D\}.
\end{equation*}

We will later need the following to compare $b_h$ and $b_{h,t}$ for $t\in [-1,0)$. 

\begin{lemma} \label{lem:comparingbandd}
  There exists $c_1 \in \mathbb{R}$ such that for all $t\in [-1,0)$ and $h\in H$, the following holds on $M_{\sol}$:  \begin{equation*}
    b_{h}\le d_{g_{h,t}}(x_0,\cdot)+c_1.
\end{equation*}
\end{lemma}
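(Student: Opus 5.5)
The plan is to repeat the proof of Lemma \ref{lem:primitivecomparingbandd} for the model shrinker $(M_{\sol},J_{\sol},g_h,f_h)$, keeping every constant uniform in $h\in H$. The point $x_0$ is the same for all $h$, since $x_0\in\operatorname{argmin} f_{\sol}$. Because $X(x_0)=0$ and $(\zeta^h_1)^{\ast}X=X$, the point $\zeta^h_1(x_0)$ is also a zero of $X$. Corollary \ref{coro: uniform value of min f_h} gives $f_h(x_0)=f_{\sol}(x_0)$, so $x_0\in\operatorname{argmin} f_h$ and $b_h(x_0)=b_{\sol}(x_0)$ is independent of $h$.

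\emph{Step 1: the gradient bound at time $-1$.} Since $g_h$ is a shrinker with potential $f_h$ (Proposition \ref{prop:modelidentities}), normalized as in \eqref{eq:solitonnormalization} (this follows by pullback by $\zeta_1^h$), we have $|\nabla^{g_h}b_h|_{g_h}\le 1$. Hence $b_h\le d_{g_h}(x_0,\cdot)+b_{\sol}(x_0)$. Pulling back by $\phi_{\log\frac1{|t|}}$, which fixes $x_0$ because $X(x_0)=0$, gives
\begin{equation*}
  f_{h,t}\le \tfrac12\bigl(|t|^{-1/2}d_{g_{h,t}}(x_0,\cdot)+b_{\sol}(x_0)\bigr)^2 .
\end{equation*}

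\emph{Step 2: the time derivative.} Exactly as in \eqref{eq:primitivetimederivative},
\begin{equation*}
  \partial_t(|t|f_{h,t})=-|t|\operatorname{R}_{g_{h,t}} .
\end{equation*}
This lies in $[-\sup \operatorname{R}_{g_h},0]$, and $\sup\operatorname{R}_{g_h}=\sup\operatorname{R}_{g_{\sol}}$ because $g_h$ is isometric to $g_{\sol}$ via $\zeta_1^h$. This is finite for AC or compact shrinkers by Lemma \ref{lem:basicACfacts} \ref{lem:basicACfacts3} or Remark \ref{rem:alsocompact}. Integrating from $-1$ to $t$ gives
\begin{equation*}
  f_h\le |t|f_{h,t}+C(g_{\sol})\le C(g_{\sol})+\tfrac12\bigl(d_{g_{h,t}}(x_0,\cdot)+b_{\sol}(x_0)\bigr)^2 .
\end{equation*}
Taking square roots gives $b_h\le d_{g_{h,t}}(x_0,\cdot)+c_1$ with $c_1$ depending only on $g_{\sol}$.

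The only delicate point is uniformity in $h$, meaning $h$ ranges over all of $H$ rather than a bounded set. This is handled by the observations above: every quantity used (the scalar curvature bound, $\min f_h$, and the base point) is invariant under the isometry $\zeta_1^h$ or is fixed by it. So no estimate depending on $\|h\|_H$, such as Lemma \ref{lem; equiv of potentials}, is needed.
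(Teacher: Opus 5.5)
Your proposal is correct and is essentially the paper's argument: the paper simply pulls back Lemma \ref{lem:primitivecomparingbandd} by the isometry $\zeta_1^h$, whereas you rerun that proof directly for $(g_h,f_h)$, and the same $h$-independent constants appear. Your use of Corollary \ref{coro: uniform value of min f_h} to get $x_0\in\operatorname{argmin} f_h$ is what keeps the base point equal to $x_0$; in the paper's one-line pullback the same role is played, implicitly, by the fact that $\zeta_1^h(x_0)$ is again a minimizer of $f_{\sol}$.
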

\begin{proof} This follows from Lemma \ref{lem:primitivecomparingbandd} after pulling back by $\zeta_1^h$. 
\end{proof}

The proof of the following lemma is similar to \cite[Proposition 6.2]{FangLi}, with some modifications for our specific situation.

\begin{lemma} \label{lem:unnormalizedpseudolocality} For any $\epsilon\in (0,1)$, the following holds if
$\delta \leq \overline{\delta}(\epsilon,g_{\sol})$ and $D\geq \underline{D}(\epsilon,g_{\sol})$. Suppose $(M,(g_t)_{t\in [-\delta^{-1},0]})$ is a compact solution of the K\"ahler-Ricci flow, $h\in H$, and $\psi:\Omega_h(D) \to M$ is an open holomorphic embedding satisfying 
\begin{equation*}
    \|\psi^{\ast}g_{-1}-g_{h,-1}\|_{C^2(\Omega_h(D),g_{h,-1})} <\delta.
\end{equation*}
Then for all $t\in [-\epsilon^{-1},-\epsilon]$, we have 
\begin{equation*}
\|\psi^{\ast}g_t-g_{h,t}\|_{C^{\lfloor \epsilon^{-1} \rfloor}(\Omega_h((1-\epsilon)D),g_{h,t})}<\epsilon.
\end{equation*}
\end{lemma}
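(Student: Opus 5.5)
Since $\Omega_h(D)$ is a compact region with boundary, the plan is to run a contradiction–compactness argument. For the backward interval $[-\epsilon^{-1},-1]$ we use the holomorphic structure and local curvature control; for the forward interval $[-1,-\epsilon]$ we use pseudolocality of Perelman/Bamler type, as in \cite[Proposition 6.2]{FangLi}.

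Suppose the statement fails for some $\epsilon$. Then there are $\delta_i\to 0$, $D_i\to\infty$, flows $(M_i,g^i_t)_{t\in[-\delta_i^{-1},0]}$, $h_i\in H$ and embeddings $\psi_i$ such that $\|\psi_i^\ast g^i_{-1}-g_{h_i,-1}\|_{C^2(\Omega_{h_i}(D_i))}<\delta_i$, yet the conclusion fails at some $t_i\in[-\epsilon^{-1},-\epsilon]$ and $x_i\in\Omega_{h_i}((1-\epsilon)D_i)$.

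The first step is to obtain uniform local geometric control of $\psi_i^\ast g^i_t$ on $\Omega_{h_i}((1-\epsilon/2)D_i)\times[-\epsilon^{-1},-\epsilon]$.

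On the forward interval $t\in[-1,-\epsilon]$, the time-$(-1)$ metric is $C^2$-close to $g_{h_i,-1}$. By Lemma \ref{lem:basicACfacts} \ref{lem:basicACfacts3} (pulled back via Proposition \ref{prop:modelmetrics} \ref{modelmetrics2.5}), $g_{h_i,-1}$ has curvature bounded by $C(b+1)^{-2}$. By Lemma \ref{lem:basicACfacts} \ref{lem:basicACfacts4} it also has Euclidean volume ratios and controlled injectivity radius at scale $\sim 1$ uniformly. Hence every point of $\Omega_{h_i}((1-\epsilon/2)D_i)$ has a ball of fixed radius $r_0$ which is almost Euclidean at time $-1$. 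Perelman's pseudolocality, applied to the compact flow $M_i$, gives $|\Rm|\le C$ on a slightly smaller parabolic neighbourhood over $[-1,-1+r_0^2\epsilon_0]$. Iterating the argument is where the work lies: to reach $t=-\epsilon$ we need curvature bounds persisting for time of order one, not just $r_0^2$. The plan is to exploit the fact that far out ($b\gtrsim 1/\epsilon$) the model is almost flat at scale $\sim b$. We apply pseudolocality at scale $\sim b$, which gives bounds for a time $\sim b^2\ge 1$. The remaining compact region $\Omega_{h_i}(A)$, with $A=A(\epsilon)$, is handled by the compactness step below, together with the parabolic distance-distortion estimates used in \cite[Proposition 6.2]{FangLi} to keep the region from escaping.

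On the backward interval $t\in[-\epsilon^{-1},-1]$ no pseudolocality is available. Instead I will use Perelman's no-local-collapsing and Bamler's $\epsilon$-regularity. The Nash entropy at time $-1$ is near the shrinker value, because the conjugate heat kernel is almost entirely supported in $\Omega_{h_i}(D_i)$ where the metric is close to the model. Monotonicity then forces the flow to be almost self-similar backward, which gives curvature bounds via Bamler's theory on $\Omega_{h_i}((1-\epsilon/2)D_i)$. Shi's estimates then upgrade everything to $C^k$ bounds for every $k$.

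The second step is the limit argument. Since $\|h_i\|_H$ is bounded (this is implicit; otherwise restrict to $\|h\|\le1$ as in later usage), pass to a subsequence with $h_i\to h_\infty$. Using the uniform bounds and that $\psi_i$ are holomorphic with $\psi_i^\ast J=J_{\sol}$ fixed, we obtain a smooth limit flow $\bar g_t$ on all of $M_{\sol}\times[-\epsilon^{-1},-\epsilon]$ with $\bar g_{-1}=g_{h_\infty,-1}$ and curvature bounded by $C(b+1)^{-2}$. The two halves of the interval are then treated separately.

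For $t\ge -1$, uniqueness of complete bounded-curvature Ricci flows (Chen–Zhu) gives $\bar g_t=g_{h_\infty,t}$.

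For $t\le -1$, backward uniqueness (Kotschwar) also gives $\bar g_t=g_{h_\infty,t}$, since both flows have bounded curvature and agree at $t=-1$.

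This contradicts the assumed failure at $(x_i,t_i)$, provided that failure occurs at bounded $b(x_i)$. If instead $b(x_i)\to\infty$, rescale around $x_i$ at scale comparable to its distance; there both flows converge to the static flat cone region, giving the same contradiction.

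The main obstacle I expect is the forward persistence of curvature control over the whole interval $[-1,-\epsilon]$ uniformly up to the boundary region $(1-\epsilon)D$. Pseudolocality only gives short-time bounds, so one must carefully chain it using scale-adapted balls, with the required scale and time both growing with $b$.
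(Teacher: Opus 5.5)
Your treatment of the backward interval $[-\epsilon^{-1},-1]$ has a genuine gap.

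\emph{The entropy argument does not work.} The lemma's hypotheses contain no conjugate heat kernel or entropy bound. Even granting $\mathcal{N}(1)\approx W$ for some kernel, monotonicity only gives $\mathcal{N}(\tau)\le\mathcal{N}(1)$ for $\tau>1$. Data at the single time $t=-1$ can never supply the lower bound at larger $\tau$ that almost self-similarity on $[-\epsilon^{-1},-1]$ requires.

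\emph{Backward pseudolocality is available.} Your premise that ``no pseudolocality is available'' backward in time is mistaken. The paper uses Bamler's backward pseudolocality theorem \cite[Theorem 2.47]{Bam3}: an almost-Euclidean ball of radius $r$ at time $-1$ yields a curvature bound on a backward parabolic neighbourhood of duration $\sim r^2$. Apply it at radius $r_x\sim\epsilon^2\min\{b_{\sol}(x),\delta^{-1/2}\}$, together with forward pseudolocality \cite[Theorem 1.2]{LuPseudolocal}. This gives small curvature on $\Omega((1-\epsilon^2)D_i)\setminus\Omega(\Lambda)$ over the whole window $[-2\epsilon^{-1},-\frac{1}{2}\epsilon]$. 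Shi's estimates and \cite[Lemma A.2]{BamKlein} then give $C^k$-closeness there; the latter applies because the identity map between two metrics K\"ahler for the same $J$ is harmonic. So any failure is confined to a fixed compact set $\Omega(2\Lambda)$.

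\emph{The compact core is circular as written.} On $\Omega(2\Lambda)$ you defer to ``the compactness step''. That step needs uniform curvature bounds on $\Omega(2\Lambda)$ over all of $[-\epsilon^{-1},-\epsilon]$, but pseudolocality at scale $O(1)$ only gives them for a short time around $t=-1$. The real difficulty sits here, not near $\Omega((1-\epsilon)D)$, where scale-adapted pseudolocality already suffices.

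The missing idea is a continuity argument in time:
\begin{enumerate}
\item Let $\tau$ be the infimum of $s\in(0,1]$ with $\limsup_i\sup_{[-s^{-1},-s]}\sup_{\Omega(2\Lambda)}|\Rm|<\infty$.
\item Pseudolocality in both directions at $t=-1$ gives $\tau<1$.
\item On $(-\tau^{-1},-\tau)$, extract a limit and identify it with $g_{\sol,t}$ by Chen--Zhu and Kotschwar uniqueness \cite{ChenZhu,Kotschwar1}.
\item The resulting closeness to the soliton at times just inside the endpoints gives almost-Euclidean balls of definite size.
\item Pseudolocality then pushes the curvature bound past $\tau$, forcing $\tau=0$.
\end{enumerate}

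\emph{Minor point on $h$.} You need not restrict to bounded $\|h\|_H$, and the lemma is stated for all $h\in H$. The map $\zeta_1^h$ is a holomorphic isometry from $g_{h,t}$ to $g_{\sol,t}$ for every $t$ and maps $\Omega_h(D)$ onto $\Omega(D)$. So replacing $\psi$ by $\psi\circ(\zeta_1^h)^{-1}$ reduces everything to $h=0$, with constants independent of $h$.
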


\begin{proof}
We first assume that the lemma is true when $h=0$, and show that this implies the result for any $h\in H$. Recalling that $g_{h,t}= (\zeta_1^h)^*g_{\sol,t}$ and $f_{h,t} = (\zeta_1^h)^{\ast}f_{\sol,t}$ for all $t<0$, we may define $\Phi:= \psi \circ (\zeta_1^h)^{-1}:\Omega(D)\to M$. Note that we have
\begin{align*}
    \Phi^* g_{-1} - g_{\sol,-1}= ((\zeta_1^h)^{-1})^*(\psi^*g_{-1}-g_{h,-1}),
\end{align*}
and since $\zeta_1^h:(M_{\sol},g_{h,-1})\to (M_{\sol},g_{\sol,-1})$ is an isometry, we have
\begin{align*}
    \| \Phi^*g_{-1}- g_{\sol,-1}\|_{C^2(\Omega(D),g_{\sol,-1})}\leq \delta.
\end{align*}
By our assumption that the lemma holds when $h=0$, 
\begin{align*}
    \sup_{t\in [-\varepsilon^{-1},-\varepsilon]}\|\Phi^{\ast}g_t-g_{\sol,t}\|_{C^{\lfloor \epsilon^{-1} \rfloor}(\Omega((1-\epsilon)D),g_{\sol,t})}<\epsilon.
\end{align*}
Now observing that $\psi^*g_t= (\zeta_1^{h})^*\Phi^*g_t$ and $g_{h,t}=(\zeta_1^{h})^*g_{\sol,t},$ we get
\begin{align*}
    \sup_{t\in [-\varepsilon^{-1},-\varepsilon]}\|\psi^*g_t-g_{h,t}\|_{C^{\lfloor \varepsilon^{-1}\rfloor}(\Omega_h((1-\varepsilon)D),g_{h,t})} \leq \varepsilon.
\end{align*}

It therefore suffices to prove the Lemma for $h=0$. Suppose by way of contradiction that the lemma does not hold, so that for some $\varepsilon>0$, there are $D_i\to \infty,$ $\delta_i\searrow 0,$ compact K\"ahler-Ricci flows $(M_i,(g_{i,t})_{t\in [-\delta_i^{-1},0]})$, and open holomorphic embeddings $\psi_i:\Omega(D_i)\to M_i$ such that $\widetilde{g}_{i,t}:=\psi_i^{\ast}g_{i,t}$ satisfy 
\begin{align} \label{eq:assumeclose}
\|\widetilde{g}_{i,-1}-{g}_{\sol,-1}\|_{C^2(\Omega(D_i),g_{\sol,-1})}&<\delta_i, \\ \label{eq:contradictfar} \sup_{t\in [-\epsilon^{-1},-\epsilon]} \|\widetilde{g}_{i,t}-g_{\sol,t}\|_{C^{\lfloor \epsilon^{-1} \rfloor}(\Omega((1-\epsilon)D_i),g_{\sol,t})}&\geq \epsilon.
\end{align}
By \eqref{eq:assumeclose} and \eqref{eq:quadraticcurvaturedecay}, there exists a constant $C>0$ such that for all $x\in \Omega((1-\epsilon^2)D_i)$, we have 
\begin{equation} \label{eq:nearbymetriccurvaturedecay}
    \sup_{B_{\widetilde{g}_{i,-1}}(x,r_x)}|\Rm(\tilde g_{i,-1})|_{\tilde g_{i,-1}}\le C\left( \delta_i+\frac{1}{b_{\sol}^2(x)}\right),\quad r_x:=\frac{\varepsilon^2}{2}\min\{b_{\sol}(x),\delta_i^{-1/2}\}
\end{equation}
as well as 
\begin{equation}
     \label{eq:nearbymetricvolume} \frac{\operatorname{Vol}_{\widetilde{g}_{i,-1}}(B_{\widetilde{g}_{i,-1}}(x,r_x))}{r_x^{2n}} \geq C(g_{\sol})^{-1}.
\end{equation}
We therefore apply \cite[Theorem 2.47]{Bam3} and \cite[Theorem 1.2]{LuPseudolocal} to obtain 
\begin{equation} \label{eq:smallcurvatureonlargeannulus}
     \sup_{t\in [-2\epsilon^{-1},-\frac{1}{2}\epsilon]} \sup_{\Omega((1-\epsilon^2)D_i)\setminus \overline{\Omega}(\Lambda)} |\Rm(\tilde g_{i,t})|_{\tilde g_{i,t}}\le \Psi(i^{-1},\Lambda^{-1}|\epsilon),
\end{equation}
for sufficiently large $\Lambda>1$.
By combining \eqref{eq:smallcurvatureonlargeannulus} with Shi's local derivative estimates for Ricci flow (for example, applying \cite[Theorem 14.14]{ChowII} to parabolic rescalings), we obtain
\begin{equation}\label{eq:smallderivativesonlargeannulus}
\sup_{t\in [-\epsilon^{-1},-\epsilon]} \sup_{\Omega((1-\epsilon^{\frac{3}{2}})D_i)\setminus \overline{\Omega}(\Lambda)}
      |(\nabla^{\tilde g_{i,t}})^k\Rm(\tilde g_{i,t})|_{\tilde g_{i,t}}\le \Psi(i^{-1},\Lambda^{-1}|k,\epsilon).
\end{equation}
Integrating \eqref{eq:smallcurvatureonlargeannulus} yields
\begin{equation}\label{eq:closemetricsonlargeannulus} \sup_{t\in [-2\epsilon^{-1},-\frac{1}{2}\epsilon]} \sup_{\Omega((1-\epsilon^2)D_i)\setminus \overline{\Omega}(\Lambda)} |\widetilde{g}_{i,t}-g_{\sol,t}|_{\tilde g_{i,t}}\le \Psi(i^{-1},\Lambda^{-1}|\epsilon).
\end{equation}
By \eqref{eq:smallderivativesonlargeannulus} and \eqref{eq:closemetricsonlargeannulus}, we can apply \cite[Lemma A.2]{BamKlein} to
$\widetilde{g}_i$ to obtain
\begin{equation}\label{eq: behavior at infinity lemma 5.1}
\sup_{t\in [-\epsilon^{-1},-\epsilon]} \sup_{\Omega((1-\epsilon)D_i)\setminus \overline{\Omega}(\frac{3}{2}\Lambda)}
    |(\nabla^{g_{\sol,t}})^k (\widetilde g_{i,t}-g_{\sol,t})|_{{g_{\sol,t}}}\le \Psi(i^{-1},\Lambda^{-1}|k,\epsilon).
\end{equation}
By \eqref{eq: behavior at infinity lemma 5.1} and \eqref{eq:contradictfar}, we can choose $\Lambda \geq \underline{\Lambda}(\epsilon)$ to guarantee that for $i\geq \underline{i}(\epsilon)$ sufficiently large, 
\begin{equation} \label{eq:contradictinginequality}
\sup_{t\in [-\epsilon^{-1},-\epsilon]}\|\widetilde{g}_{i,t}-g_{\sol,t}\|_{C^{\lfloor \epsilon^{-1} \rfloor}(\Omega(2\Lambda),g_{\sol,t})}\geq \epsilon.
\end{equation}
Define
\begin{equation*}
    \tau := \inf \left\{ s \in (0,1] \: | \: \limsup_{i \to \infty} \sup_{t \in [-s^{-1},-s]} \sup_{\Omega(2\Lambda)}|\Rm(\widetilde{g}_{i,t})|_{\widetilde{g}_{i,t}}<\infty\right\} .
\end{equation*}
By \cite[Theorem 2.47]{Bam3}, \cite[Theorem 10.3]{Perelman1}, \eqref{eq:nearbymetriccurvaturedecay}, and \eqref{eq:nearbymetricvolume}, we obtain $\tau <1$. If $\tau=0$, then we are done, so assume $\tau>0$. Integrating in time and arguing as in the previous paragraph then gives
\begin{equation} \label{eq:weakderivativeestimates} \sup_{t\in [-(\tau')^{-1},-\tau']} \sup_{\Omega((1-\epsilon)D_i)\setminus \overline{\Omega}(\Lambda)}
    |(\nabla^{g_{\sol,t}})^k\tilde g_{i,t}|_{{g_{\sol,t}}}\le C_k(\tau')
\end{equation}
for each $\tau' \in (\tau,1]$. 
By \eqref{eq: behavior at infinity lemma 5.1}, \eqref{eq:weakderivativeestimates}, and the Arzel\`a-Ascoli theorem (see, for instance, \cite[Chapter 3]{ChowI}), we may pass to a subsequence in order to assume that $\widetilde{g}_{i,t} \to \widetilde{g}_{\infty,t}$ in $C_{\operatorname{loc}}^{\infty}(M_{\sol} \times (-\tau^{-1},-\tau))$, where $(\widetilde{g}_{\infty,t})_{t\in (-\tau^{-1},-\tau)}$ is a complete K\"ahler-Ricci flow on $M_{\sol}$ with bounded curvature on compact time intervals. Because $\widetilde{g}_{i,-1}\to g_{\sol,-1}$ by \eqref{eq:assumeclose}, we have $\widetilde{g}_{\infty,-1}=g_{\sol,-1}$, hence the 
two-sided uniqueness of Ricci flows (see \cite{ChenZhu} and \cite{Kotschwar1}) implies $\widetilde{g}_{\infty,t}=g_{\sol,t}$ for all $t\in (-\tau^{-1},-\tau)$. By smooth convergence, for any $\tau'\in (\tau,1]$, we have 
\begin{equation} \label{eq:consequenceofclosenesstosoliton}
    \limsup_{i\to \infty} \sup_{t\in [-(\tau')^{-1},-\tau']} \sup_{\Omega(\frac{3}{2}\Lambda)} |\Rm(\widetilde{g}_{i,t})|_{\widetilde{g}_{i,t}} \leq (\tau')^{-1}\sup_{M_{\sol}}|\Rm(g_{\sol})|_{g_{\sol}}.
\end{equation}
If $\tau>0$, we could choose $\tau'>\tau$ sufficiently close to $\tau$, and then combine \eqref{eq:consequenceofclosenesstosoliton} with \cite{Perelman1} and \cite{Bam3} to obtain a contradiction. Thus $\tau=0<\epsilon$, so that we can take $\tau'<\epsilon$ in \eqref{eq:consequenceofclosenesstosoliton}. Repeating the above argument shows that we can pass to a further subsequence such that $\widetilde{g}_{i,t} \to g_{\sol,t}$ in $C_{\operatorname{loc}}^{\infty}(M_{\sol} \times [-\epsilon^{-1},-\epsilon])$, finally contradicting \eqref{eq:contradictinginequality}.
\end{proof}

In the following, we use the argument of \cite{FangLi} to obtain bounds for the conjugate heat kernel for K\"ahler-Ricci flows near a model metric.

\begin{lemma} \label{lem:comparabilityofpotentials}
For any AC K\"ahler-Ricci shrinker $(M_{\sol},J_{\sol},g_{\sol},f_{\sol})$ and any $\sigma \in (0,1)$ and $A,Y \in (1,\infty)$, the following holds if $D \geq \underline{D}(g_{\sol},A,Y,\sigma)$ and $\delta \leq \overline{\delta}(g_{\sol},A,Y,\sigma)$. 

Suppose that for some $r>0$, $(M,(g_t)_{t\in [-\delta^{-1}r^2,0)})$ is a compact solution of the K\"ahler-Ricci flow and $(\nu_t)_{t\in [-\delta^{-1}r^2,0)}$ is a conjugate heat kernel based at the singular time with $\mathcal{N}_{\nu}(\delta^{-1}r^2)\geq -Y$. Also assume that $h\in H$ and $\psi: \Omega_h(D) \to M$ is an open holomorphic embedding satisfying 
\begin{equation*}
    \|r^{-2}\psi^{\ast}g_{-r^2}-g_{h,-1}\|_{C^2(\Omega_h(D),g_{h,-1})} < \delta, \qquad f_{-r^2}(\psi(x_0))\leq A.
\end{equation*}
Then the following hold:

\begin{enumerate}
    \item \label{eq:comparabilityofpotentials} for all $t\in [-9,-\frac{1}{9}]$ and $x\in \Omega_h((1-\sigma)D)$,
\begin{equation*}
(1+\sigma)^{-1}f_{h,t}(x) -C(\sigma,A,Y,g_{\sol}) \leq (\psi^{\ast}f_{r^2t})(x) \leq (1+\sigma)f_{h,t}(x) + C(\sigma,A,Y,g_{\sol}),  
\end{equation*}

\item \label{eq:comparabilitytodistance} for all $t\in [-9,-\frac{1}{9}]$ and $y\in \psi \left(\Omega_h((1-\sigma)D) \right)$,
\begin{equation*}
(1+\sigma)^{-1}\frac{d_{g_{r^2t}}^2(y,\psi(x_0))}{2r^2|t|}-C(\sigma,A,Y,g_{\sol}) \leq f_{r^2 t}(y) \leq (1+\sigma)\frac{d_{g_{r^2t}}^2(y,\psi(x_0))}{2r^2|t|}+C(\sigma,A,Y,g_{\sol}).
\end{equation*}

\item \label{eq:comparabilitytoheatreg} If $\overline{f} \in C^{\infty}(M\times [-9r^2,-\frac{1}{9}r^2])$ is defined by $\overline{f}_{-9r^2}=f_{-9r^2}$ and $(\partial_t -\Delta_{\omega_t})(|t|\overline{f}_t)=0$, then for all $t\in [-8,-\frac{1}{8}]$ and $y\in \psi(\Omega_h((1-\sigma)D))$, we have
\begin{equation*}
(1+\sigma)^{-1}f_{r^2t}(y) -C(\sigma,A,Y,g_{\sol}) \leq \overline{f}_{r^2 t}(y) \leq (1+\sigma)f_{r^2 t}(y)+C(\sigma,A,Y,g_{\sol}).
\end{equation*}
\end{enumerate}
\end{lemma}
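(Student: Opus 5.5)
By parabolic rescaling we may take $r=1$. As in the proof of Lemma \ref{lem:unnormalizedpseudolocality}, precomposing with $\zeta_1^h$ reduces everything to $h=0$: $\zeta_1^h$ is an isometry $(M_{\sol},g_{h,t})\to(M_{\sol},g_{\sol,t})$, it carries $f_{h,t}$ to $f_{\sol,t}$ and $\Omega_h(D)$ to $\Omega(D)$, and Lemma \ref{lem; equiv of potentials} controls how the basepoint $x_0$ moves. From now on $h=0$.

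\emph{Step 1: closeness of the flows.} Applying Lemma \ref{lem:unnormalizedpseudolocality} with $\epsilon$ small in terms of $\sigma$ gives
\[
\|\psi^{\ast}g_t-g_{\sol,t}\|_{C^k(\Omega((1-\sigma/2)D),g_{\sol,t})}\le \Psi(\delta,D^{-1}\mid \sigma)\quad\text{for } t\in[-10,-\tfrac{1}{10}].
\]
Consequently $d_{g_t}(\psi(x_0),\psi(x))$ and $d_{g_{\sol,t}}(x_0,x)$ agree up to a factor $1+\Psi$ on $\Omega((1-\sigma)D)$. For this we need minimizing $g_t$-geodesics between such points to stay inside $\psi(\Omega((1-\sigma/2)D))$. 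That holds because any path leaving this region has length at least about $(1-\sigma/2)D-b_{\sol}(x)$, which exceeds the competing length.

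\emph{Step 2: Gaussian bounds for $f_t$.} I follow \cite{FangLi}. Bamler's heat kernel bounds \cite{Bam3} use the entropy lower bound $\mathcal{N}_\nu\ge -Y$ and the condition $f_{-1}(\psi(x_0))\le A$. The latter puts an $H_{2n}$-center of $\nu_t$ within distance $C(A,Y)$ of $\psi(x_0)$ for $t\in[-9,-\frac19]$. Bamler's Gaussian estimates then give
\[
\frac{d^2_{g_t}(y,\psi(x_0))}{(8+\sigma')|t|}-C\le f_t(y)\le \frac{d^2_{g_t}(y,\psi(x_0))}{(4-\sigma')|t|}+C.
\]
These constants are not sharp, so I upgrade them to the factor $(1+\sigma)/2$ as in \cite{FangLi}. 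The inputs are the identity $\tau(2\Delta f-|\nabla f|^2+R)+f-n\le 0$ (Perelman's differential Harnack), the bound $\tau|\nabla f|^2\le C(f+C)$, and the $\mathcal{L}$-length comparison with the soliton. On the soliton region the reduced length from $\psi(x_0)$ is close to $f_{\sol,t}$, because the model flow is self-similar and $f_{\sol,t}\approx d^2_{g_{\sol,t}}/(4|t|)$ by Theorem \ref{thm:basicKRSfacts}. Under the normalization $d\nu=(2\pi\tau)^{-n}e^{-f}$ with K\"ahler Laplacian, this is $d^2/(2\tau)$, consistent with the statement. The upper bound for $f$ comes from $f\le \ell$ (reduced length) and the comparison of $\ell$ with the model; the lower bound comes from integrating $|\nabla\sqrt{f}|\le \frac{1+\Psi}{\sqrt{2\tau}}$ along geodesics.

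\emph{Steps 3 and 4: deriving (i) and (iii).} Statement (ii) is the output of Step 2. Combining (ii) with Step 1 and Theorem \ref{thm:basicKRSfacts} \ref{thm:basicKRSfacts2} applied to $f_{\sol,t}$ gives (i), with the bounded region $\Omega(D(\rho))$ absorbed into the constant. For (iii), $|t|\overline{f}$ solves the heat equation, and $|t|f$ is a supersolution up to a controlled error: $(\partial_t-\Delta)(|t|f)=-|t|(|\nabla f|^2-R)+\dots$, and the entropy bounds give $\int \tau(|\nabla f|^2+R)\,d\nu$-type control. I would compare the two through the heat kernel representation $\overline{f}_t(y)|t|=\int 9f_{-9}\,d\nu_{y,t;-9}$ and apply (ii) under $\nu_{y,t;-9}$. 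The measure $\nu_{y,t;-9}$ concentrates within $O(1)$ of an $H_{2n}$-center, and the quadratic form $d^2/(2|t|)$ is preserved under the model heat flow. The tail where $\nu_{y,t;-9}$ leaves the controlled region has Gaussian-small mass, of order $e^{-cD^2}$, which is what the requirement $D$ large accounts for.

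\emph{Main obstacle.} The hardest part is the sharp constant $(1+\sigma)$ in Step 2, particularly the lower bound for $f$, and keeping it valid out to scale $D$ rather than only on compact sets. My first attempt would be to adapt the argument of \cite[Section 5]{FangLi} verbatim, with $\ell$ compared to the soliton's reduced distance on the pseudolocality region.
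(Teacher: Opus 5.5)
\textbf{Gap in the lower bounds of (i) and (ii).} This is the step you flag as hardest, and the mechanism you propose for it does not work. Integrating $|\nabla\sqrt{f}|\le \frac{1+\Psi}{\sqrt{2\tau}}$ along geodesics from $\psi(x_0)$ runs in the wrong direction. A Lipschitz bound on $\sqrt{f_t}$, together with $f_t(\psi(x_0))\le C$, only gives
\[
\sqrt{f_t(y)}\le \sqrt{C}+\frac{1+\Psi}{\sqrt{2|t|}}\,d_{g_t}(y,\psi(x_0)),
\]
which is an \emph{upper} bound for $f_t(y)$. Your other inputs do not help either. The differential Harnack inequality, $f\le \ell$, and $\tau|\nabla f|^2\le C(f+C)$ all bound $f$ from above, and Bamler's Gaussian bound, as you note, has the wrong constant.

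A lower bound $f_t(y)\ge \frac{d^2}{2(1+\sigma)|t|}-C$ is an upper bound on the density of $\nu_t$ far from its center. That is a global heat-kernel estimate. It cannot come from integrating pointwise inequalities outward from a point where $f$ is small.

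The paper imports exactly this estimate. By \cite[proof of Claim 6.4]{FangLi}, $(\psi(x_0),t)$ is an $H(A,Y,g_{\sol})$-center of $\nu$ for all $t\in[-9,-\frac19]$. The sharp Gaussian bound \cite[Theorem 2.13]{FangLi} then gives
\[
K(x,t)\le \frac{C(A,Y,\sigma,g_{\sol})}{|t|^{n}}\exp\Bigl(-\frac{d_{g_t}^2(\psi(x_0),x)}{2(1+\sigma^4)|t|}\Bigr)\qquad\text{for all }(x,t)\in M\times[-9,-\tfrac19],
\]
that is, $f_t\ge \frac{d^2_{g_t}(\psi(x_0),\cdot)}{2(1+\sigma^4)|t|}-C$ on all of $M$. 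Combine this with the distance comparison of your Step 1 (which the paper also gets from Lemma \ref{lem:unnormalizedpseudolocality}) and Lemma \ref{lem:basicACfacts} \ref{lem:basicACfacts2}. This yields the lower bounds in (i) and (ii) directly, with no reduced-length argument.

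\textbf{Upper bounds and (iii).} Your route to the upper bounds is also not justified as written. Since $\nu$ is based at the singular time, $\ell$ would have to be based at points $(x_i,t_i)$ with $t_i\nearrow 0$. Any competitor path from there must then cross the time interval $(-\epsilon,0)$ after rescaling. On that interval Lemma \ref{lem:unnormalizedpseudolocality} gives no control of the flow, so $\ell$ cannot be compared with the model's reduced distance. The paper takes the upper bounds from \cite[Proposition 6.3]{FangLi}. A sharp gradient bound of the form you wrote, if established out to scale $D$, is the natural tool for that direction.

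For (iii), your plan is a reasonable alternative to citing \cite[Lemma 6.8]{FangLi}. The upper bound is in fact immediate. Perelman's Harnack inequality, which passes to conjugate heat kernels based at the singular time, gives $(\partial_t-\Delta_{\omega_t})(|t|f_t)\ge -n$. The maximum principle then yields $\overline{f}\le f+C$ on $[-8,-\frac18]$. However, your lower bound via $|t|\overline{f}_t(y)=9\int f_{-9}\,d\nu_{y,t;-9}$ relies on the lower bound in (ii), so it inherits the gap above.
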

\begin{proof} Write $d\nu_t(x)=K(x,t)dg_t(x)=(2\pi|t|)^{-n}e^{-f_t(x)}dg_t(x)$. By rescaling, we may assume $r=1$. We may also assume $h=0$ after composing with the biholomorphism $\zeta_1^h$ used to define $g_h$. By \cite[proof of Claim 6.4]{FangLi}, $(\psi(x_0),t)$ is an $H(A,Y,g_{\sol})$-center of $\nu$ for all $t\in [-9,-\frac{1}{9}]$. By \cite[Theorem 2.13]{FangLi}, we therefore have
\begin{align}\label{heat kernel bound; FangLi}
    K(x,t)\leq \frac{C(A,Y,\sigma,g_{\sol})}{|t|^{n}}\exp\left( -\frac{d_{g_t}^2(\psi(x_0),x)}{2(1+\sigma^4)|t|}\right),
\end{align}
for all $(x,t)\in M\times [-9, -\frac{1}{9}]$, or equivalently  
\begin{align*}
    f_t(x)\geq \frac{d_{g_t}^2(\psi(x_0),x)}{2(1+\sigma^4)|t|}-C(A,Y,\sigma,g_{\sol}).
\end{align*}
By Lemma \ref{lem:unnormalizedpseudolocality}, 
for any $(x,t)\in \Omega((1-\sigma^4)D) \times [-9,-\frac 19],$  we have
\begin{align*}
    (1-\sigma^4)d_{g_{\sol,t}}(x_0,x)\leq d_{g_t}(\psi(x_0),\psi(x))\leq (1+\sigma^4)d_{g_{\sol,t}}(x_0,x)
\end{align*}
if we choose $\delta \leq \overline{\delta}(g_{\sol},\sigma)$.
Finally, we can combine the above inequalities with \eqref{heat kernel bound; FangLi} and Lemma \ref{lem:basicACfacts} \ref{lem:basicACfacts2} to get the lower bound in \ref{eq:comparabilityofpotentials}:
\begin{align*}
    f_t(\psi(x))&\geq \frac{d_{g_t}^2(\psi(x_0),\psi(x))}{2(1+\sigma^4)|t|} -C(\sigma,A,Y,g_{\sol})\ge \frac{d_{g_{\sol,t}}^2(x_0,x)}{2(1+\sigma^4)|t|}(1-\sigma^4)^2 -C(\sigma,A,Y,g_{\sol})\\
    &\geq \frac{1}{1+\sigma}f_{\sol,t}(x) -C(\sigma,A,Y,g_{\sol}),
\end{align*}
where we have assumed (without loss of generality) $\sigma \leq \overline{\sigma}$. The upper bounds of \ref{eq:comparabilityofpotentials} and \ref{eq:comparabilitytodistance} then follow exactly as in \cite[Proposition 6.3]{FangLi}. Finally, \ref{eq:comparabilitytoheatreg} follows from \cite[Lemma 6.8]{FangLi}.
\end{proof}

In the following, we consider only comparison diffeomorphisms corresponding to a fixed conjugate heat kernel. In this case, we can make a crucial improvement on Lemma \ref{lem:comparabilityofpotentials} \ref{eq:comparabilityofpotentials} by removing the dependence of the constant $C$ on $A$.

\begin{lemma} \label{lem:improvementofcomparability} For any AC K\"ahler-Ricci shrinker $(M_{\sol},J_{\sol},g_{\sol},f_{\sol})$ and any $Y \in (1,\infty)$, the following holds if $D\geq \underline{D}(Y,g_{\sol})$, $\delta \leq \overline{\delta}(Y,g_{\sol})$.

Suppose $(M,(g_t)_{t\in [-T,0)})$ is a compact solution of the K\"ahler-Ricci flow, $\nu=(\nu_t)_{t\in [-T,0)}$ is a conjugate heat kernel based at the singular time $0$, and $r>0$ is such that $\mathcal{N}_{\nu}(\delta^{-1}r^2) \geq -Y$, and $\nu$ is $(\delta,r)$-self-similar.  
Suppose $h\in H$ and $\psi:\Omega_h(D)\to M$ is an open holomorphic embedding such that 
\begin{equation*}
    \|r^{-2}\psi^{\ast}g_{-r^2}-g_{h,-1}\|_{C^2(\Omega_h(D),g_{h,-1})} < \delta,  \qquad \|h\|_H < \delta, \qquad f_{-r^2}(\psi(x_0))\leq 1+f_{\sol}(x_0)+W,
\end{equation*}
where $W$ is the entropy of $(M_{\sol},g_{\sol},f_{\sol})$, $x_0 \in \operatorname{argmin}_{M_{\sol}}f_{\sol}$, and $f_t$ is defined by $d\nu_t = (2\pi|t|)^{-n}e^{-f_t}dg_t$. Then we have
\begin{equation}\label{eq:comparabilityofpotentials improved}
\sup_{t\in [-8,-\frac{1}{8}]} f_{r^2 t}(\psi(x_0))\leq \frac{1}{2}+f_{\sol}(x_0)+W.
\end{equation}
\end{lemma}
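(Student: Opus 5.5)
A natural route is a contradiction–compactness argument that exploits the near–self-similarity of $\nu$. After parabolic rescaling we may take $r=1$, and after composing with $\zeta_1^h$ we may replace $g_h$ by $g_{\sol}$; since $\|h\|_H<\delta$, Lemma \ref{lem; equiv of potentials} and Corollary \ref{coro: uniform value of min f_h} show that this changes $f_{\sol}(x_0)$ and the sublevel sets only by $\Psi(\delta)$. We argue by contradiction. Suppose there are $D_i\to\infty$, $\delta_i\to0$, flows $(M_i,g_{i,t})$, conjugate heat kernels $\nu^i$ that are $(\delta_i,1)$-self-similar with $\mathcal N_{\nu^i}(\delta_i^{-1})\ge -Y$, and embeddings $\psi_i$ satisfying the hypotheses, but
\begin{equation*}
\sup_{t\in[-8,-1/8]} f^i_t(\psi_i(x_0))>\tfrac12+f_{\sol}(x_0)+W.
\end{equation*}
The aim is to show that $\psi_i^*f^i_t\to f_{\sol,t}+W$ locally uniformly on $M_{\sol}\times[-8,-\frac18]$. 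Since $f_{\sol,t}(x_0)=f_{\sol}(x_0)$ (because $X(x_0)=0$), this would be a contradiction.

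\textbf{Step 1 (metric convergence).} Lemma \ref{lem:unnormalizedpseudolocality} gives $\psi_i^*g_{i,t}\to g_{\sol,t}$ in $C^\infty_{\mathrm{loc}}(M_{\sol}\times[-9,-\frac19])$.

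\textbf{Step 2 (compactness of the heat kernels).} The hypothesis $f^i_{-1}(\psi_i(x_0))\le 1+f_{\sol}(x_0)+W$ lets us apply Lemma \ref{lem:comparabilityofpotentials} with $A=1+f_{\sol}(x_0)+W$, a constant depending only on $g_{\sol}$. This gives two-sided bounds $f^i_t\approx f_{\sol,t}\pm C$ uniformly on $\psi_i(\Omega((1-\sigma)D_i))$, with $C=C(Y,g_{\sol})$. The Gaussian upper bound \eqref{heat kernel bound; FangLi} rules out loss of mass at infinity, and local parabolic regularity for the conjugate heat equation then makes $\psi_i^*f^i_t$ precompact in $C^\infty_{\mathrm{loc}}$. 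Passing to a subsequence, $\psi_i^*f^i_t\to f_{\infty,t}$, where $(2\pi|t|)^{-n}e^{-f_{\infty,t}}dg_{\sol,t}$ is a conjugate heat flow of probability measures on the shrinker flow over $[-8,-\frac18]$. It has quadratic-growth bounds for $f_\infty$, and tightness comes from the uniform Gaussian tails.

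\textbf{Step 3 (rigidity).} The $(\delta_i,1)$-self-similarity means the pointed Nash entropy is almost constant on $[10^{-1},10]$. Upper semicontinuity and tightness pass this to the limit, so $\mathcal N$ is constant on an interval for the limit flow. By the monotonicity identity, the integrand
\begin{equation*}
\Big|\Ric+\nabla^2 f_\infty-\frac{g}{2|t|}\Big|^2
\end{equation*}
then vanishes. Hence $f_{\infty,t}$ is a soliton potential for $g_{\sol,t}$, and $\nabla f_{\infty,t}-\nabla f_{\sol,t}$ is parallel. Because $M_{\sol}$ is AC and non-flat, it does not split a line (Proposition \ref{prop: killing is real-holomorphic}), so this parallel field is zero. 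Therefore $f_{\infty,t}=f_{\sol,t}+c$, and the normalization $\int d\nu=1$ forces $c=W$. Evaluating at $x_0$ gives $f^i_t(\psi_i(x_0))\to f_{\sol}(x_0)+W$ uniformly in $t\in[-8,-\frac18]$, contradicting the assumed lower bound.

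The main obstacle is Step 3's passage to the limit. We must justify that the Nash entropies converge, i.e. that the integrals of $f^i\,d\nu^i$ do not lose or gain mass at spatial infinity or outside $\psi_i(\Omega(D_i))$. Lemma \ref{lem:comparabilityofpotentials}\ref{eq:comparabilitytodistance}, combined with the Gaussian bound, controls this: the region outside $\Omega((1-\sigma)D_i)$ carries $\nu^i$-mass $\le Ce^{-cD_i^2}$, and its contribution to $\int f\,d\nu$ is similarly small. We would also need to check that the $(\delta,r)$-self-similarity time window $[10^{-1}r^2,10r^2]$ covers $[\frac18,8]$, which it does.
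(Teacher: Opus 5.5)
Your argument is correct and follows the same overall structure as the paper's proof. Both rescale to $r=1$, reduce to $h=0$, and argue by contradiction. Both get $\psi_i^*g_{i,t}\to g_{\sol,t}$ from Lemma \ref{lem:unnormalizedpseudolocality}, and apply Lemma \ref{lem:comparabilityofpotentials} with $A$ depending only on $g_{\sol}$ to get compactness of $\psi_i^*f_{i,t}$. Both then show the limit is a soliton potential, use that $M_{\sol}$ splits no line to get $f_{\infty,t}=f_{\sol,t}+B(t)$, and fix $B(t)=W$ by mass concentration.

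The genuine difference is how you extract the soliton equation. You pass the near-constancy of $\mathcal{N}$ to the limit and then run the monotonicity identities on the non-compact limit flow. This route has two costs:
\begin{itemize}
\item You must prove genuine convergence $\mathcal{N}^i(\tau)\to\mathcal{N}^\infty(\tau)$, since semicontinuity alone does not control a difference of two entropies. That requires uniform integrability of $f^i$ against $\nu^i$ on all of $M_i$. This includes $M_i\setminus\psi_i(\Omega(D_i))$, where Lemma \ref{lem:comparabilityofpotentials} gives nothing, so you need a global input such as $\int f^2\,d\nu\le C(Y)$ from Bamler's gradient and Poincar\'e bounds.
\item You must justify the integrations by parts behind $\frac{d}{d\tau}(\tau\mathcal{N})=\mathcal{W}$ and the $\mathcal{W}$-monotonicity on $M_{\sol}$.
\end{itemize}
The paper sidesteps both. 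On the compact $M_i$, $(\delta_i,1)$-self-similarity directly makes the space-time integral of $|\Ric+\nabla^2 f-\frac{1}{|t|}g|^2\,d\nu$ over $\psi_i(\Omega(D_i))\times[-10,-\frac{1}{10}]$ tend to zero. Local smooth convergence (Fatou on compact sets) then gives the soliton equation for $f_\infty$ with no tail control on $\int f\,d\nu$. Tails enter only in the mass normalization, where the paper uses \cite[Propositions 3.12, 3.13]{Bam1} at the $C(Y,g_{\sol})$-center $\psi_i(x_0)$. So the step you flag as the main obstacle comes from your route, not from the problem.

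Two small corrections:
\begin{itemize}
\item Proposition \ref{prop: killing is real-holomorphic} does not show that $M_{\sol}$ splits no line. The actual reason is that a line factor would make $|\Rm|$ constant along lines on which $f_{\sol}\to\infty$. By \eqref{eq:quadraticcurvaturedecay} this forces flatness, contradicting non-flatness.
\item Composing with $\zeta_1^h$ maps $\Omega_h(D)$ exactly onto $\Omega(D)$, so nothing is perturbed by $\Psi(\delta)$. What moves is the basepoint, to $\zeta_1^h(x_0)$. This point still minimizes $f_{\sol}$, hence is a zero of $X$, because $f_{\sol}(\zeta_1^h(x_0))=f_h(x_0)=f_{\sol}(x_0)$ by Corollary \ref{coro: uniform value of min f_h}. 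That is why the reduction to $h=0$ is harmless.
\end{itemize}
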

\begin{proof} 
By rescaling, we may assume $r=1$. Because $f_h(x_0)=f_{\sol}(x_0)$ for all $h\in H$, we may assume $h=0$ since the following proof is uniform in $h$. 

Suppose by way of contradiction that the claim fails for some $Y$, so that (after rescaling) for some $\delta_i \searrow 0$ and $D_i \to \infty$, there is a sequence of compact K\"ahler-Ricci flows $(M_i,(g_{i,t})_{t\in [-\delta_i^{-1},0)})$ and conjugate heat kernels $(\nu_t^i)_{t\in [-\delta_i^{-1},0)}$ based at the singular time, such that $\mathcal{N}_{\nu^i}^{g_i}(\delta_i^{-1})\geq -Y$, $(M_i,(g_{i,t})_{[-\delta_i^{-1},0)})$ is $(\delta_i,1)$-self-similar at $\nu^i$, and there exist smooth open embeddings $\psi_i:\Omega(D_i)\to M_i$ such that 
\begin{equation} \label{eq:comparabilityofpotentialsassumedC2close}
    \|\psi_i^{\ast}g_{i,-1}-g_{\sol,-1}\|_{C^2(\Omega(D_i),g_{\sol,-1})}<\delta_i 
\end{equation}
and such that if we write $d\nu_t^i = (2\pi|t|)^{-n}e^{-f_{i,t}}dg_{i,t}$, then 
\begin{equation*}
f_{i,-1}(\psi_i(x_0)) \leq 1+f_{\sol}(x_0)+W,
\end{equation*}
but also
\begin{equation*}
    \sup_{t\in [-8,-\frac{1}{8}]} f_{i,t}(\psi_i(x_0)) > \frac{1}{2}+f_{\sol}(x_0)+W.
\end{equation*}
By \eqref{eq:comparabilityofpotentialsassumedC2close} and Lemma \ref{lem:unnormalizedpseudolocality}, we may assume that in fact $\psi_i^{\ast}g_{i,t} \to g_{\sol,t}$ in $C_{\operatorname{loc}}^{\infty}(M_{\sol}\times [-9,-\frac{1}{9}])$. Because $\nu^i$ is $(\delta_i,1)$-self-similar, we also have 
\begin{equation*}
    \begin{split}
        &\quad\lim_{i\to \infty}  \int_{-10}^{-\frac{1}{10}}2|t| \int_{{\Omega (D_i)}} \left| \Ric(\psi_i^{\ast}g_{i,t})+(\nabla^{\psi_i^{\ast}g_{i,t}})^2f_{i,t}-\frac{1}{|t|}\psi_i^{\ast}g_{i,t}\right| ^2_{\psi_i^{\ast}g_{i,t}}\frac{e^{-\psi_i^{\ast}f_{i,t}}}{(2\pi|t|)^{n}}\psi_i^{\ast}dg_{i,t} dt=0
    \end{split}
\end{equation*}
By Lemma \ref{lem:comparabilityofpotentials}, there exists $C = C(Y) \in (1,\infty)$ such that
\begin{equation} \label{eq:weakboundsforfi}
    \frac{1}{2}f_{\sol,t}(x)-C\leq (\psi_i^{\ast}f_{i,t})(x)\leq \frac{3}{2}f_{\sol,t}(x)+C
\end{equation}
on $\Omega(\frac{1}{2}D_i)$ for all $t\in [-9,-\frac{1}{9}]$ and $i\in \mathbb{N}$. By combining these facts with local parabolic regularity and passing to a further subsequence, we may therefore assume that $\psi_i^{\ast}f_{i,t} \to f_{\infty,t}$ for some $f_{\infty,t} \in C_{\operatorname{loc}}^{\infty}(M_{\sol}\times (-9,-\frac{1}{9}))$ which satisfies
\begin{equation*}
    \Ric(g_{\sol,t})+(\nabla^{g_{\sol,t}})^2 f_{\infty,t} = \frac{1}{|t|}g_{\sol,t}.
\end{equation*}
Because $g_{\sol,t}$ does not isometrically split a factor of $\mathbb{R}$, it follows that for each $t\in (-9,-\frac{1}{9})$, we have $f_{\infty,t}-f_{\sol,t}=B(t)$ for some constant $B(t)\in \mathbb{R}$. By our choice of normalization, we have $\int_{M_{\sol}}(2\pi)^{-n}e^{-f_{\sol}}dg_{\sol} = e^{W}$, where $W$ is the entropy of $(M_{\sol},g_{\sol},f_{\sol})$. On the other hand, Lemma \ref{lem:Dandrho} implies 
\begin{equation*}
    \psi_i(\Omega(D)) \supseteq \psi_i \left( B_{g_{\sol}}(x_0,\frac{1}{2}D) \right) \supseteq B_{g_{i,-1}}(\psi_i(x_0),\frac{1}{4}D)
\end{equation*}
for $D$ and $i\in \mathbb{N}$ sufficiently large, so we can estimate
\begin{align*} \left|
    \int_{\Omega(D)} (2\pi|t|)^{-n} e^{-f_{\infty,t}}dg_{\sol,t} -1 \right| \leq & \left| \int_{\Omega(D)} (2\pi |t|)^{-n}e^{-f_{\infty,t}}dg_{\sol,t} - \int_{\psi_i(\Omega(D))} (2\pi |t|)^{-n} e^{-f_{i,t}}dg_{i,t} \right|  \\
    &+\int_{M_{i}\setminus \psi_i(\Omega(D))} (2\pi |t|)^{-n} e^{-f_{i,t}}dg_{i,t} \\
    \leq & \Psi(i^{-1}|D)+ \nu_{t}^i\left( M_{i}\setminus B_{g_{i,-1}}(\psi_i(x_0),\frac{1}{4}D) \right) \\
    \leq &\Psi(i^{-1}|D)+ \Psi(D^{-1}|Y)
\end{align*}
for all $t\in [-8,-\frac{1}{8}]$,
where for the last inequality, we used \cite[Propositions 3.12, 3.13]{Bam1} and 
the fact that $(\psi_i(x_0),t)$ is a $C(Y,g_{\sol})$-center by \eqref{eq:weakboundsforfi}. 
Therefore, $B(t)=W$ for all $t\in [-8,-\frac{1}{8}]$, so for sufficiently large $i\in \mathbb{N}$, we have
\begin{equation*}
    f_{i,t}(\psi_i(x_0)) = (\psi_i^{\ast} f_{i,t})(x_0) \leq f_{\infty,t}(x_0) + \frac{1}{2} = f_{\sol}(x_0)+W+\frac{1}{2}
\end{equation*}
for all $t\in [-8,-\frac{1}{8}]$,
a contradiction.
\end{proof}

\section{Non-concentration and decay estimates}\label{Non-concentration and decay estimates}

\label{section:decay}

Throughout this section, we fix a compact K\"ahler-Ricci flow $(M^n,(\widetilde{\omega}_t)_{t\in [-T,0)})$ with $T\geq 1$, a conjugate heat kernel $(\widetilde{\nu}_t)_{t\in [-T,0)} \in M_0$ based at the singular time, and write $d\widetilde{\nu}_t = (2\pi|t|)^{-n}e^{-\widetilde{f}_t}d\widetilde{g}_t$. We also fix a K\"ahler-Ricci shrinker $(M_{\sol},J_{\sol},g_{\sol},f_{\sol})$ which is either AC or compact, and fix a basepoint $x_0 \in \operatorname{argmin}_{M_{\sol}}f_{\sol}$. Let $(\omega_h)_{h\in H}$ be the associated model family of K\"ahler-Ricci shrinkers.

\subsection{The non-concentration estimate}
In this subsection, we establish the key non-concentration lemma. Throughout this subsection, we will work under the following assumption.

\begin{assumption} \label{assume:existsholomap} For some $\rho \in (0,\frac{1}{5})$, there is an open holomorphic embedding
$F: \Omega(D(\rho))\to M$.
\end{assumption}

We may therefore define 
\begin{equation} \label{eq:modifyingtheflow}
    \omega_s := e^s (F\circ \phi_{-s})^{\ast}\widetilde{\omega}_{-e^{-s}}, \qquad f_s:= (F\circ \phi_{-s})^{\ast}\widetilde{f}_{-e^{-s}}
\end{equation}
on $\Omega(e^{\frac{1-\rho}{2}s}D(\rho))$ for all $s\geq 0$. Setting $\nu_s := (F\circ \phi_{-s})^{\ast} \widetilde{\nu}_{-e^{-s}}$, we then have $d\nu_s = (2\pi)^{-n}e^{-f_s}dg_s$. Moreover, in the open set $\mathcal{U}:= \cup_{s\geq 0} (\Omega (e^{\frac{2}{5}s}D(\rho)) \times \{s\})$, the flow $(\omega_s)_{s\geq 0}$ satisfies the following modified K\"ahler-Ricci flow equation:
\begin{equation} \label{eq:modifiedflow}
    \partial_s \omega_s = -\left( \Ric(\omega_s) + \frac{1}{2}\mathcal{L}_{X}\omega_s -\omega_s\right).
\end{equation}

In the following, we use Lemma \ref{lem:unnormalizedpseudolocality} to show that closeness of the normalized flow $(\omega_s)_{s\geq 0}$ at some time to a model metric propagates forwards in time, and outwards in space.

\begin{lemma} \label{lem:pseudoapplication} Under Assumption \ref{assume:existsholomap}, for any $\epsilon \in (0,\frac{1}{10})$, there exist $\delta_0(\epsilon,\rho)>0$ and $D_0(\epsilon,\rho)\in (1,\infty)$ such that if $D \geq D_0$, $h\in H$, $s_0 \geq \delta_0^{-1}$ satisfy 
\begin{equation} \label{eq:pseudohypothesis}
\|h\|_H <\delta_0, \qquad \Omega_h(D)\times \{s_0\} \subseteq \mathcal{U}, \qquad \|\omega_{s_0} - \omega_h\|_{C^2(\Omega_h(D),g_h)}<\delta_0,
\end{equation}
then the following hold:
\begin{enumerate}
    \item \label{lem:pseudoapplication1} For all $s\in [s_0,s_0+1]$, we have
\begin{equation*}
\|\omega_{s}-\omega_h\|_{C^{\lfloor \epsilon^{-1}\rfloor }(\Omega_h(e^{\frac{1-\rho}{2}(s-s_0)}e^{-\rho}D),g_h)} < \epsilon,
\end{equation*}

    \item \label{lem:pseudoapplication2} If $\varphi \in C^{\infty}(\mathcal{U})$ satisfies $\omega_s=\omega_h+\sqrt{-1}\partial \overline{\partial} \varphi_s$ and
\begin{equation} \label{modifiedPCMA}
    \partial_s \varphi_s = \log \left( \frac{(\omega_h+\sqrt{-1}\partial \overline{\partial} \varphi_s)^n}{\omega_h^n}\right) -\frac{1}{2}X\cdot \varphi_s + \varphi_s
\end{equation}
on $\mathcal{U}$, then for all $s\in [s_0,s_0+1]$, we have
\begin{equation*}
    \sup_{\Omega_h(e^{\frac{1-\rho}{2}(s-s_0)}e^{-\rho}D)}|\varphi_s| <\epsilon + 10\sup_{\Omega_h(D)}|\varphi_{s_0}|.
\end{equation*}
\end{enumerate}
\end{lemma}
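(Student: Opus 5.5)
The plan is to reduce part \ref{lem:pseudoapplication1} to Lemma \ref{lem:unnormalizedpseudolocality} by undoing the normalization \eqref{eq:modifyingtheflow}, and then obtain part \ref{lem:pseudoapplication2} from a maximum principle for \eqref{modifiedPCMA}, using the smallness from part \ref{lem:pseudoapplication1}.

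\textbf{Part \ref{lem:pseudoapplication1}.} Set $r^2:=e^{-s_0}$. By \eqref{eq:modifyingtheflow}, $r^{-2}\psi^{\ast}\widetilde{g}_{-r^2}=g_{s_0}$, where $\psi:=F\circ\phi_{-s_0}$ is an open holomorphic embedding of $\Omega_h(D)$. Since $g_{h,-1}=g_h$, hypothesis \eqref{eq:pseudohypothesis} is exactly the hypothesis of Lemma \ref{lem:unnormalizedpseudolocality}, applied to the rescaled flow $r^{-2}\widetilde{g}_{r^2t}$. This flow exists on $[-e^{s_0}T,0)\supseteq[-\delta_0^{-1},0)$ because $s_0\geq\delta_0^{-1}$ and $T\geq1$.

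Apply that lemma with a small $\epsilon'(\epsilon,\rho)$ in place of $\epsilon$. It gives closeness of $\psi^{\ast}(r^{-2}\widetilde{g}_{r^2t})$ to $g_{h,t}$ in $C^{\lfloor\epsilon'^{-1}\rfloor}$ on $\Omega_h((1-\epsilon')D)$ for $t\in[-1,-e^{-1}]$.

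Set $t=-e^{-(s-s_0)}$. Pulling back by $\phi_{-(s-s_0)}$ converts $g_{h,t}$ into $|t|^{-1}\cdot$ (in the normalized picture) $g_h$, and converts the rescaled flow into $g_s$. The region becomes $\phi_{s-s_0}(\Omega_h((1-\epsilon')D))$. By \eqref{eq:containmentofsublevelsets} (used with $\|h\|_H<\delta_0\leq D(\rho)^{-1}$), this contains $\Omega_h(e^{\frac{1-\rho}{2}(s-s_0)}(1-\epsilon')D)$, which in turn contains the stated set because $e^{-\rho}\leq1-\epsilon'$.

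The norms with respect to $g_{h,t}$ and $g_h$ match up to the factor $|t|\in[e^{-1},1]$, which is harmless, so choosing $\epsilon'\ll\epsilon$ gives \ref{lem:pseudoapplication1}. The one point to check is that $D$ is large enough that $\Omega_h$-sublevel sets lie beyond $D(\rho)$, so that Lemma \ref{lem:Dandrho} applies.

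\textbf{Part \ref{lem:pseudoapplication2}.} Let $L:=\sup_{\Omega_h(D)}|\varphi_{s_0}|$. Write the equation as $\partial_s\varphi=\log\frac{\omega_s^n}{\omega_h^n}-\frac12X\cdot\varphi+\varphi$.

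By part \ref{lem:pseudoapplication1}, $|\log(\omega_s^n/\omega_h^n)|\leq C\epsilon'$ on the moving domain $\Omega_h(e^{\frac{1-\rho}{2}(s-s_0)}e^{-\rho}D)$. The natural approach is to integrate along the flow lines of $\frac12X$, since $\partial_s+\frac12X$ is a transport operator. Setting $\gamma(s)=\phi_{s-s_0}(y)$, we get
\[
\frac{d}{ds}\varphi_s(\gamma(s))=\varphi_s(\gamma(s))+O(\epsilon').
\]
So $|\varphi_s(\gamma(s))|\leq e(L+C\epsilon')$ along each flow line.

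It remains to check that the flow lines cover the target region. The point $\gamma(s)$ ranges over $\phi_{s-s_0}(\Omega_h(e^{-\rho}D))$. By \eqref{eq:containmentofsublevelsets}, this set contains $\Omega_h(e^{\frac{1-\rho}{2}(s-s_0)}e^{-\rho}D)$, and each point of that set arises from a starting point $y\in\Omega_h(e^{-\rho}D)\subseteq\Omega_h(D)$. Along the path the curve stays inside the domain of part \ref{lem:pseudoapplication1}, because $b_h$ increases under $\phi$ by \eqref{eq:fhunderflow}. This gives $|\varphi_s|\leq eL+C\epsilon'<10L+\epsilon$.

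\textbf{Main obstacle.} The hardest step is the domain bookkeeping: checking that the flow-line preimages and the pseudolocality output region fit inside each other at the scales $e^{-\rho}$ and $1-\epsilon'$. This requires $D_0\geq D(\rho)$, after which the inclusions in \eqref{eq:containmentofsublevelsets} make it routine.
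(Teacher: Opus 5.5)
Your proposal is correct and follows the paper's proof. Part (i) is Lemma \ref{lem:unnormalizedpseudolocality} applied to the rescaled flow $e^{s_0}(F\circ\phi_{-s_0})^{\ast}\widetilde{\omega}_{e^{-s_0}t}$, pulled back by $\phi_{-(s-s_0)}$ and combined with \eqref{eq:containmentofsublevelsets}. Your integration along the flow lines of $\frac{1}{2}X$ in part (ii) is exactly the paper's time integration of $\widehat{\varphi}_t=|t|\phi_{\log\frac{1}{|t|}}^{\ast}\varphi_{s_0+\log\frac{1}{|t|}}$, which satisfies $\partial_t\widehat{\varphi}_t=\log(\widehat{\omega}_t^n/\omega_{h,t}^n)$.

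One small correction: monotonicity of $b_h$ along $\phi$ does not by itself put $\gamma(\sigma)$ in the time-$\sigma$ region $\Omega_h(e^{\frac{1-\rho}{2}(\sigma-s_0)}e^{-\rho}D)$, since it only bounds $b_h(\gamma(\sigma))$ by the larger time-$s$ radius. You do not need that argument, though. The estimate of part (i) at time $\sigma$ already holds on $\phi_{\sigma-s_0}(\Omega_h((1-\epsilon')D))$, before you apply \eqref{eq:containmentofsublevelsets}. That set contains $\gamma(\sigma)=\phi_{\sigma-s_0}(y)$ because $y\in\Omega_h(e^{-\rho}D)\subseteq\Omega_h((1-\epsilon')D)$.
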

\begin{proof} Define $\widehat{\omega}_t :=e^{s_0}(F\circ \phi_{-s_0})^{\ast}\widetilde{\omega}_{e^{-s_0}t}$, so that $(\widehat{\omega}_t)_{t\in [-e^{s_0}T,0)}$ is a solution of the unnormalized K\"ahler-Ricci flow satisfying 
\begin{equation*}
    \widehat{\omega}_{-1} = \omega_{s_0}, \qquad \omega_s = e^{s-s_0} \phi_{-(s-s_0)}^{\ast}\widehat{\omega}_{-e^{-(s-s_0)}}.
\end{equation*}
and therefore our hypotheses give
\begin{equation*}
    \|\widehat{\omega}_{-1}-\omega_{h,-1}\|_{C^2 (\Omega_h(D),g_{h,-1})}<\delta_0.
\end{equation*}
If $D_0\geq {D}_0(\epsilon,\rho)$ and $\delta_0 \leq \overline{\delta}_0(\epsilon,\rho)$, then we can apply Lemma \ref{lem:unnormalizedpseudolocality} to conclude
\begin{equation*}
    \sup_{t\in [-1,-\epsilon]} \|\widehat{\omega}_t-\omega_{h,t}\|_{C^{\lfloor \epsilon^{-1}\rfloor}(\Omega_h(e^{-\rho}D)),g_{h,t})}<\epsilon^2 e^{-\frac{\lfloor\epsilon^{-1}\rfloor}{2}}.
\end{equation*}
If we also require $\epsilon<e^{-1}$, this implies that for all $s\in [s_0,s_0+1]$, we have
\begin{align*}
    \|\omega_s-\omega_h\|_{C^{\lfloor \epsilon^{-1} \rfloor}(\phi_{s-s_0}(\Omega_h(e^{-\rho}D)),g_{h})} &= \|e^{s-s_0}\phi_{-(s-s_0)}^{\ast}(\widehat{\omega}_{-e^{-(s-s_0)}}-\omega_{h,-e^{-(s-s_0)}})\|_{C^{\lfloor \epsilon^{-1} \rfloor}(\phi_{s-s_0}(\Omega_h(e^{-\rho}D)),g_{h})}\\
    &\le  e^{\frac{\lfloor\epsilon^{-1}\rfloor}{2}} \|\widehat{\omega}_{-e^{-(s-s_0)}}-\omega_{h,-e^{-(s-s_0)}}\|_{C^{\lfloor \epsilon^{-1}\rfloor}(\Omega_h(e^{-\rho}D),g_{h,-e^{-(s-s_0)}})}\\
    &<\epsilon^2.
\end{align*}
This and \eqref{eq:containmentofsublevelsets} imply \ref{lem:pseudoapplication1}. For $t\in [-1,-e^{-1}]$, we then set $\widehat{\varphi}_t:= |t|\phi_{\log\frac{1}{|t|}}^{\ast}\varphi_{s_0+\log \frac{1}{|t|}}$, and integrate
\begin{equation*}
    |\partial_t \widehat{\varphi}_t| = \left|\log \left( \frac{\widehat{\omega}_t^n}{\omega_{h,t}^n} \right)\right| \leq C|\widehat{\omega}_t-\omega_{h,t}|_{\omega_{h,t}} \leq C\epsilon
\end{equation*}
in time, yielding \ref{lem:pseudoapplication2}.
\end{proof}

We now combine the estimates of Section \ref{section:KRFnearAC} with the general strategy of \cite{LSS} to prove the following non-concentration estimate for a weighted $L^2$ norm of $\varphi$, which ensures that the $L^2$ norm does not concentrate near spatial infinity. 
 In more detail, the proof proceeds as follows.
Using our assumption that some time slice $g_{s_0}$ of our flow is close in $C^{\lfloor \epsilon_1^{-1}\rfloor}$ to a model metric $g_h$, we may convert a weighted $L^2$ estimate for the relative K\"ahler potential $\varphi_{s_0}$ to a weak pointwise estimate \eqref{eq:NC1} on a slightly smaller region. An application of pseudolocality in the form of Lemma \ref{lem:pseudoapplication} allows us to propagate this to a weak pointwise estimate \eqref{eq:strongC0bound} in spacetime, in a region which grows exponentially fast. From here, we must estimate the negative and positive parts of $\varphi$ separately. 

An elementary computation shows $e^{-s}(\varphi_s)_+$ is a subsolution of the heat flow of $\Delta_{f_h}$. If $\varphi$ were globally defined, an $L^{2p}$ estimate for $(\varphi_{s_0+1})_+$ would follow immediately from the hypercontractivity of the associated semigroup. Instead, we must graft $e^{-s}\varphi_s$ onto an appropriate normalization of $e^{-s}f_h$, thereby obtaining a global subsolution $\psi_s$ of the $\Delta_{f_h}$-heat flow on $M_{\sol}$. Our assumption on $D$ ensures that the error introduced by this procedure is negligible relative to the weighted $L^2$ norm of $\varphi_{s_0}$. 

The bound for $(\varphi_s)_-$ follows from a somewhat similar idea, using the fact that, the K\"ahler potential $\widehat{\varphi}_t$ of the un-modified flow $(\widehat{\omega}_t)$, satisfies $(\partial_t -\Delta_{\widehat{g}_t})(\widehat{\varphi}_t)_- \leq 0$ where it is defined. To apply hypercontractivity along the Ricci flow, we must therefore graft $\widehat{\varphi}_-$ to a global solution of the heat flow of $\widehat{g}_t$ which is closely comparable to the soliton potential in the region of interest. Such a solution is provided by the work of \cite{FangLi}, so the $L^{2p}$ bound for $\varphi_-$ at a later time follows after translating between the modified and un-normalized flows.

For the remainder of the section, we fix the following constants: $p_1:= \frac{1+e^{\frac{1}{4}}}{2}$, $\theta_1:= e^{\frac{1}{200}}-1$, and $\rho_1 := \frac{1}{800}$.

\begin{lemma} \label{lem:nonconcentration} (Non-concentration estimate) For any $Y \in (0,\infty)$, there exist $\epsilon_1 = \epsilon_1(g_{\sol},Y) \in (0,\frac{1}{10^2})$, $C_1 =C_1(g_{\sol},Y)\in (1,\infty)$, and $D_1=D_1(g_{\sol},Y) \in (1,\infty)$ such that the following holds under Assumption \ref{assume:existsholomap} with $\rho \in (0,\rho_1]$, whenever $\mathcal{N}_{\widetilde{\nu}}(1)\geq -Y$, $D \geq D_1$ and $\Omega_h(D) \times \{s_0\}\subseteq \mathcal{U}$. Suppose $h\in H$, and for some $s_0\geq \epsilon_1^{-1}$, we have
\begin{align} \label{eq:nonconcentrationcourse} \|h\|_H <\epsilon_1, \qquad \|g_{s_0} - g_h\|_{C^{\lfloor \epsilon_1^{-1} \rfloor}(\Omega_h(D),g_h)}<\epsilon_1, \qquad f_{s_0}(x_0) \leq f_{\sol}(x_0)+W+1,
\end{align}
and that $\varphi \in C^{\infty}(\mathcal{U})$ satisfies $\omega_{s} = \omega_h + \sqrt{-1}\partial \overline{\partial}\varphi_{s}$ and 
\begin{equation} \label{modifiedPCMA2}
    \partial_s \varphi_s = \log \left( \frac{(\omega_h+\sqrt{-1}\partial \overline{\partial} \varphi_s)^n}{\omega_h^n}\right) -\frac{1}{2}X\cdot\varphi_s + \varphi_s
\end{equation}
on $\mathcal{U}$. If in addition
\begin{equation*}
    \left( \int_{\Omega_h(D)} \varphi_{s_0}^2 d\nu_h+ e^{-(1+\theta_1)\frac{D^2}{2}}\right)^{\frac{1}{2}}<\epsilon
\end{equation*}
for some $\epsilon \in (0,\epsilon_1]$, then  
\begin{equation*}
    \left(\int_{\Omega_h(e^{\frac{1}{4}}\widetilde{D}_{\epsilon})} |\varphi_{s_0+1}|^{2p_1} d\nu_h \right)^{\frac{1}{2p_1}} < C_1\epsilon,
\end{equation*}
where $\widetilde{D}_{\epsilon}$ is defined by $e^{-\frac{\widetilde{D}_{\epsilon}^2}{4}}=\epsilon$.
\end{lemma}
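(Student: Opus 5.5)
We follow the outline given before the lemma. First, the weighted $L^2$ smallness of $\varphi_{s_0}$ is converted into a pointwise bound. Since $g_{s_0}$ is $\epsilon_1$-close to $g_h$ in $C^{\lfloor\epsilon_1^{-1}\rfloor}$ on $\Omega_h(D)$, we have $\Delta_{g_h}\varphi_{s_0}=\operatorname{tr}_{\omega_h}(\omega_{s_0}-\omega_h)$, which is bounded by $C\epsilon_1$. Lemma~\ref{lem:L2toLinfty} at unit scale, with the density $e^{-f_h}$ taken into account, then gives
\[
|\varphi_{s_0}|\leq C\epsilon\, e^{f_h/2}+C\epsilon_1 \quad \text{on } \Omega_h(D-1). \tag{NC1}
\]
Next we apply Lemma~\ref{lem:pseudoapplication} with a suitably small $\delta_0$ and use \eqref{eq:containmentofsublevelsets}. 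This propagates the bound to all $s\in[s_0,s_0+1]$ on the expanding regions $\Omega_h(e^{\frac{1-\rho}{2}(s-s_0)}e^{-\rho}D)$. The pointwise bound on $\varphi_s$ has a controlled additive error, and $\omega_s$ stays close to $\omega_h$ there. By \eqref{modifiedPCMA2}, this closeness in turn bounds $|\partial_s\varphi-\varphi+\frac12X\varphi|$. We call the resulting spacetime estimate \eqref{eq:strongC0bound}.

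For the positive part, concavity of $\log\det$ gives $\partial_s\varphi\le\Delta_{\omega_h}\varphi-\frac12X\varphi+\varphi$. Hence $e^{-s}(\varphi_s)_+$ is a subsolution of $\partial_s-\Delta_{f_h}$ wherever it is defined. To globalize it, we graft it onto a barrier built from $f_h$: take $\psi_s:=\max\{e^{-s}(\varphi_s)_+,\,\Lambda e^{-s}(f_h-K)\}$, or more precisely a function of the form $\lambda e^{-s}e^{(1+\theta)f_h/2}$ suitably adjusted. We choose it to be a supersolution barrier which dominates $e^{-s}\varphi_+$ near $\partial\Omega_h(e^{\frac{1-\rho}{2}(s-s_0)}e^{-\rho}D)$, using \eqref{eq:strongC0bound}, and is itself a subsolution outside a compact set. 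The identity $\Delta_{f_h}f_h=n-f_h$ from Lemma~\ref{lem:Dandrho} gives exactly the drift needed. The outcome is a global $\Delta_{f_h}$-subsolution $\psi_s$ on $M_{\sol}$ that agrees with $e^{-s}\varphi_+$ on the inner region.

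The $L^2(\nu_h)$ norm of $\psi_{s_0}$ is at most $C(\epsilon+e^{-(1+\theta_1)D^2/4})\le C\epsilon$: the grafted part lives where $b_h\gtrsim D$, and there its $\nu_h$-mass is controlled by the tail $e^{-(1+\theta_1)D^2/2}$. Bakry--Émery hypercontractivity for $\nu_h$ applies, since Proposition~\ref{prop:modelidentities} shows $\Ric+\nabla^2 f_h=g_h$, giving a log-Sobolev constant $1$. It yields $\|\psi_{s_0+1}\|_{L^{q}}\le\|\psi_{s_0}\|_{L^2}$ with $q-1=e^{1}$. Since $2p_1<q$, this gives the $L^{2p_1}$ bound. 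The radius $e^{1/4}\widetilde D_\epsilon$ is chosen so that, by \eqref{eq:containmentofsublevelsets}, it lies inside the region where $\psi=e^{-s}\varphi_+$ at $s=s_0+1$.

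The negative part is the main obstacle. We pass to the unnormalized flow $\widehat\omega_t$ with potential $\widehat\varphi_t$. There concavity goes the other way, and $(\partial_t-\Delta_{\widehat g_t})(\widehat\varphi_t)_-\le0$ holds only with respect to the evolving metric. We graft $(\widehat\varphi)_-$ onto $\Lambda\,e^{c\overline f}$-type barriers, where $\overline f$ is the heat-regularized potential of Lemma~\ref{lem:comparabilityofpotentials}\ref{eq:comparabilitytoheatreg}, and use hypercontractivity along Ricci flow with respect to the conjugate heat kernel $\widetilde\nu$. The hypotheses $\mathcal N\ge -Y$ and $f_{s_0}(x_0)\le f_{\sol}(x_0)+W+1$ enter through Lemmas~\ref{lem:comparabilityofpotentials} and~\ref{lem:improvementofcomparability}: they make $\widetilde\nu$ comparable to $\nu_h$ up to a factor $e^{\sigma f_h+C(Y)}$. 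We pick $\sigma$ small relative to $\theta_1$ and $p_1$, so that this factor is absorbed into the slack between $2p_1$ and the hypercontractive exponent, and into the $e^{-\widetilde D_\epsilon^2/4}=\epsilon$ tail. The delicate point is precisely checking that this multiplicative comparison error stays below the available exponent margin.
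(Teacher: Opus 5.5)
Your positive-part argument has a genuine gap, and it sits exactly in the quantitative bookkeeping the lemma is about.

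You graft near $b_h\approx D$ and claim $\|\psi_{s_0}\|_{L^2(\nu_h)}\le C(\epsilon+e^{-(1+\theta_1)D^2/4})$. Neither ingredient is available.
\begin{itemize}
\item \emph{No pointwise control at radius $D$.} The weighted $L^2$ hypothesis gives smallness of $\varphi_{s_0}$ only where $\epsilon e^{f_h/2}\ll 1$, i.e.\ on $\Omega_h(\lambda\widetilde D_\epsilon)$ with $\lambda<1$. The assumption $e^{-(1+\theta_1)D^2/2}<\epsilon^2$ only says $D>\widetilde D_\epsilon/\sqrt{1+\theta_1}$, so $D$ may be much larger than $\widetilde D_\epsilon$, and $\varphi_{s_0}$ need not be small near $b_h\approx D$.
\item \emph{The tail estimate goes the wrong way.} A barrier of size $\gtrsim 1$ on $\{b_h\ge R\}$ has $\nu_h$-mass of order $R^{2n-2}e^{-R^2/2}$ there. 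This is \emph{not} controlled by $e^{-(1+\theta_1)R^2/2}$.
\item \emph{An $L^2$ start cannot close even at the right scale.} If you graft at $R\sim\lambda\widetilde D_\epsilon$, where $\varphi$ is tiny, the barrier tail has $L^2(\nu_h)$-norm of order $\epsilon^{\lambda^3}\gg\epsilon$.
\end{itemize}

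The missing idea is to run hypercontractivity from a sub-quadratic exponent:
$\|\psi_{s_0+1}\|_{L^{2p}(\nu_h)}\le\|\psi_{s_0}\|_{L^{2q}(\nu_h)}$ with $2q=1+e^{-1/2}<2$ and $2p=1+e^{1/2}>2p_1$. Graft at $b_h=e^{\frac{s-s_0}{2}}\lambda\widetilde D_\epsilon$ with $\lambda=e^{-1/100}$, which lies inside the region where \eqref{eq:strongC0bound} gives $|\varphi_s|\le 10^{-10}$. Then:
\begin{itemize}
\item the interior term is $\le C\epsilon^{2q}$ by H\"older from the $L^2$ hypothesis;
\item for any $\sigma>0$ the tail obeys $\int_{\{b_h\ge\lambda^{3/2}\widetilde D_\epsilon\}}f_h^{2q}\,d\nu_h\le C(\sigma)e^{-\lambda^3\widetilde D_\epsilon^2/(2+\sigma)}$, which is $\le C\epsilon^{2q}$ provided $2\lambda^3/(2+\sigma)\ge q$.
\end{itemize}
That condition can be met precisely because $q<\lambda^3<1$; for $q=1$ it is impossible. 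This is also why the conclusion is stated on $\Omega_h(e^{1/4}\widetilde D_\epsilon)$ rather than on a region determined by $D$.

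Your barrier is also not a subsolution as written.
\begin{itemize}
\item One computes $(\partial_s-\Delta_{f_h})[\Lambda e^{-s}(f_h-K)]=\Lambda e^{-s}(K-n)$, which is positive for $K>n$.
\item $e^{-s}e^{(1+\theta)f_h/2}$ is a supersolution near infinity and does not lie in $L^2(\nu_h)$.
\item A maximum of a subsolution with a supersolution is not a subsolution.
\end{itemize}
The correct choice is $e^{-(s-s_0)}f_h-\frac12\lambda^3\widetilde D_\epsilon^2$, with the constant outside the factor $e^{-(s-s_0)}$. Then $\Delta_{f_h}f_h=n-f_h$ gives $(\partial_s-\Delta_{f_h})(\cdot)=-ne^{-(s-s_0)}\le 0$.

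For the negative part your sketch leaves the exponents unspecified, and it needs the same sub-$L^2$ start along the Ricci flow. Graft $(\widehat\varphi_t)_-$ onto $\widehat q_t-\frac12\lambda^3\widetilde D_\epsilon^2$, where $\widehat q$ solves the heat equation with $\widehat q_{-1}=\widehat f_{-1}$; by Lemma~\ref{lem:comparabilityofpotentials}, $\widehat q_t$ is comparable to $d_{\widehat g_t}^2/2$. Then absorb the $e^{10^{-3}f_h}$ distortion between $\widehat\nu_t$ and $\nu_h$ by H\"older, using the gaps $2q<2$ and $2p_1<2p$.
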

\begin{proof} Set $\lambda:= e^{-\frac{1}{100}}$. We will determine $\epsilon_1,C_1,D_1$ in the course of the proof. By hypothesis, $D > \frac{1}{\sqrt{1+\theta_1}} \widetilde{D}_{\epsilon}=\lambda^{\frac{1}{4}}\widetilde{D}_{\epsilon}$. For any $x\in \Omega_h(\lambda^{\frac{1}{3}} \widetilde{D}_{\epsilon})$, $|\nabla b_h|_{g_h}\leq 1$ implies 
\begin{equation*} B_{g_h}(x,1)\subseteq \Omega_h(\lambda^{\frac{1}{3}} \widetilde{D}_{\epsilon}+1) \subseteq \Omega_h(\lambda^{\frac{1}{4}}\widetilde{D}_{\epsilon}) \end{equation*} if $\epsilon_1 \leq \overline{\epsilon}_1$.
We can thus estimate
\begin{equation} \label{eq:NC1}
    \int_{B_{g_h}(x,1)} \varphi_{s_0}^2 \omega_h^n \leq \epsilon^2 e^{\frac{1}{2}(\lambda^{\frac{1}{4}} \widetilde{D}_{\epsilon})^2}\leq e^{-\frac{1}{2}(1-\lambda^{\frac{1}{2}})\widetilde{D}_{\epsilon}^2},
\end{equation}
By \eqref{eq:NC1} and $\Delta_{g_h}\varphi_{s_0}=\operatorname{tr}_{g_h}(g_{s_0}-g_h)$, we can apply Lemma \ref{lem:L2toLinfty} and then Lemma \ref{lem: schauder estimates} to obtain
\begin{equation} \label{eq:5derivativesonvarphi} 
\sup_{\Omega_h(\lambda^{\frac{1}{3}}\widetilde{D}_{\epsilon}) }|\varphi_{s_0}|<10^{-20},
\end{equation}
when $\epsilon_1 \leq \overline{\epsilon}_1(g_{\sol})$.  
If $\epsilon_1 \leq \overline{\epsilon}_1(g_{\sol})$ and $D_1 \geq \underline{D}_1(g_{\sol})$,
we can apply Lemma \ref{lem:pseudoapplication} \ref{lem:pseudoapplication2} with $D$ replaced by $\lambda^{\frac{1}{3}}\widetilde{D}_{\epsilon}$ and $\rho$ replaced by $\rho_1$ to obtain
\begin{equation} \label{eq:strongC0bound}
    \sup_{s\in [s_0,s_0+1]} \sup_{\Omega_h(e^{\frac{(1-\rho_1)(s-s_0)}{2}}\sqrt{\lambda} \widetilde{D}_{\epsilon} ) } |\varphi_s| \leq 10^{-10}.
\end{equation}
Define
\begin{equation*}
    \psi_s(x) := \left\{ \begin{array}{cc}
        \max \{e^{-(s-s_0)}\varphi_s(x)_+, e^{-(s-s_0)}f_h(x) - \frac{1}{2}\lambda^3 \widetilde{D}_{\epsilon}^2 \}, &
        b_h(x)\leq e^\frac{s-s_0}{2}\lambda  \widetilde{D}_{\epsilon}\\
        e^{-(s-s_0)}f_h(x) - \frac{1}{2}\lambda^3 \widetilde{D}_{\epsilon}^2,  & b_h(x)\geq e^{\frac{s-s_0}{2}}\lambda \widetilde{D}_{\epsilon}
    \end{array} \right. .
\end{equation*}
For $(x,s) \in M_{\sol}\times [s_0,s_0+1]$ in a sufficiently small neighborhood of $\cup_{s' \in [s_0,s_0+1]} (\partial \Omega_h(e^{\frac{s'-s_0}{2}}\lambda \widetilde{D}_{\epsilon})\times \{s'\})$, we have
\begin{equation*}
    e^{-(s-s_0)}f_h(x)-\frac{1}{2}\lambda^3 \widetilde{D}_{\epsilon}^2 \geq \frac{1}{2}\lambda^{2} \widetilde{D}_{\epsilon}^2-\frac{1}{2}\lambda^3 \widetilde{D}_{\epsilon}^2 \geq \frac{1}{2}(\lambda^{2}-\lambda^3)\widetilde{D}_{\epsilon}^2 \geq 1.
\end{equation*}
On the other hand, $\varphi_s(x)_+ <1$ since 
\begin{equation*}
    \Omega_h(e^{\frac{s-s_0}{2}}\lambda \widetilde{D}_{\epsilon}) \subseteq \Omega_h(e^{\frac{(1-\rho_1)(s-s_0)}{2}}\sqrt{\lambda}\widetilde{D}_{\epsilon})
\end{equation*}
for all $s\in [s_0,s_0+1]$. 

Next, we compute
\begin{equation*}
    (\partial_s - \Delta_{f_h})f_h = -\Delta_{\omega_h}f_h+\frac{1}{2}X\cdot f_h = f_h-n \leq f_h,
\end{equation*}
\begin{equation*}
    (\partial_s - \Delta_{f_h})\varphi_s = \log \left( \frac{(\omega_h+\sqrt{-1}\partial \overline{\partial}\varphi_s)^n}{\omega_h^n} \right)+\varphi_s-\operatorname{tr}_{\omega_h}(\sqrt{-1}\partial \overline{\partial}\varphi_s) \leq \varphi_s.
\end{equation*}
It follows that $\psi_s$ is a continuous subsolution of a drift heat equation on $M_{\sol}\times [s_0,s_0+1]$ in the sense of distributions:
\begin{equation} \label{eq:psiissubsolution}
    (\partial_s - \Delta_{f_h})\psi_s \leq 0.
\end{equation}
By our definition of $\psi_s$, we moreover have $\psi_s = e^{-(s-s_0)}(\varphi_s)_+$ on $\Omega_h(e^{\frac{s-s_0}{2}}\lambda^{\frac{3}{2}} \widetilde{D}_{\epsilon})$ for all $s\in [s_0,s_0+1]$. 

By \eqref{eq:psiissubsolution} and the hypercontractivity \cite[Theorem 5.2.3 and Proposition 5.7.1]{Bakry} of the semigroup corresponding to $\Delta_{f_h}$, if we set $p:=\frac{1+\sqrt{e}}{2}$ and $q:=\frac{1+\frac{1}{\sqrt{e}}}{2}$, then
\begin{align*}
    \left( \int_{M_{\sol}} \psi_{s_0+1}^{2p} d\nu_h \right)^{\frac{1}{2p}} &\leq \left( \int_{M_{\sol}} \psi_{s_0}^{2q}d\nu_h \right)^{\frac{1}{2q}} \\&\leq \left( \int_{\Omega_h(D) } |\varphi_{s_0}|^{2q} d\nu_h + \int_{M_{\sol}\setminus \Omega_h(\lambda^{\frac{3}{2}} \widetilde{D}_{\epsilon})} f_h^{2q} d\nu_h\right)^{\frac{1}{2q}}.\\
\end{align*}
H\"older's inequality yields
\begin{equation*}
    \begin{split}
        \int_{\Omega_h(D)} |\varphi_{s_0}|^{2q} d\nu_h &\le \left(\int_{\Omega_h(D)} \varphi_{s_0}^{2} d\nu_h \right)^{q}\nu_h(\Omega_h(D))^{1-q}\\
        &\le C(g_{\sol})\varepsilon^{2q}.
    \end{split}
\end{equation*}
 Moreover, for any $\sigma>0$, we can argue as in  \cite[Proof of Proposition 3.1]{FangLi} to get
\begin{align*}
    \int_{M_{\sol}\setminus \Omega_h(\lambda^{\frac{3}{2}} \widetilde{D}_{\epsilon})} f_h^{2q} d\nu_h  \leq C(\sigma,g_{\sol})e^{-\frac{\lambda^3 \widetilde{D}_{\epsilon}^2}{2+\sigma}} = C(\sigma,g_{\sol})\epsilon^{\frac{4\lambda^3}{2+\sigma}}.
\end{align*}
Since $\lambda>q^{\frac{1}{3}}$, we can define $\sigma>0$ by $\frac{2\lambda^3}{2+\sigma}=q$, yielding
\begin{equation*}
     \int_{M_{\sol}\setminus \Omega_h(\lambda^{\frac{3}{2}} \widetilde{D}_{\epsilon})} f_h^{2q} d\nu_h  \leq C(g_{\sol})\varepsilon^{2q}.
\end{equation*}
Combining expressions yields 
\begin{align}\label{Lp bound on positive part}
 \left(\int_{\Omega_h(e^{\frac{1}{2}}\lambda^{\frac{3}{2}} \widetilde{D}_{\epsilon})} (\varphi_{s_0+1})_+^{2p} d\nu_h \right)^{\frac{1}{2p}} \leq e\left( \int_{M_{\sol}} \psi_{s_0+1}^{2p} d\nu_h \right)^{\frac{1}{2p}}  \leq C(g_{\sol})\varepsilon.
\end{align}

Next, we prove the analogous bound for $(\varphi_{s_0+1})_-$. Set $S:=-e^{-1}$, $t_0 := -e^{-s_0}$, $\widehat{F}:= F\circ \phi_{-s_0}$, and
\begin{align*}
    \widehat{\omega}_t := |t_0|^{-1}\widetilde{\omega}_{|t_0|t}, \qquad \widehat{\varphi}_t := |t|(\widehat{F}^{-1})^{\ast}\phi_{\log \frac{1}{|t|}}^{\ast}\varphi_{s_0+\log \frac{1}{|t|}}, \qquad \widehat{\nu}_t := \widetilde{\nu}_{|t_0|t},
\end{align*}
so that $\widehat{\omega}_{-1}=(\widehat{F}^{-1})^{\ast}\omega_{s_0}$,  $\widehat{\omega}_S = e^{-1}(\widehat{F}^{-1})^{\ast}\phi_1^{\ast}\omega_{s_0+1}$, and 
\begin{equation*} \widehat{F}^{\ast}\widehat{\omega}_t = \omega_{h,t}+\sqrt{-1}\partial \overline{\partial}(\widehat{F}^{\ast}\widehat{\varphi}_t) \end{equation*}
for all $t\in [-1,S]$. 
From \eqref{eq:nonconcentrationcourse}, we have
\begin{equation*}
    \|\widehat{F}^{\ast}\widehat{g}_{-1}-g_{h,-1}\|_{C^2(\Omega_h(D),g_{h,-1})} \leq \epsilon_1,
\end{equation*}
hence Lemma \ref{lem:unnormalizedpseudolocality} implies that
\begin{equation} \label{eq:nonconcentrationpseudoapp1}
  \|\widehat{F}^{\ast}\widehat{g}_t - g_{h,t}\|_{C^5(\Omega_h(e^{-\frac{\rho_1}{8}}D),g_{h,t})}\leq \rho_1^2,
\end{equation}
for all $t\in[-1,S]$ and $\epsilon_1 \leq \overline{\epsilon}_1(g_{\sol})$. If $\epsilon_1 \leq \overline{\epsilon}_1(g_{\sol})$ and $D_1 \geq \underline{D}_1(g_{\sol})$,   
we can apply Lemma \ref{lem:Dandrho} to obtain
\begin{equation*}
    B_{g_{h,t}}(x_0,e^{-\frac{\rho_1}{4}}D)\subset \Omega_h(e^{-\frac{\rho_1}{8}}D), \qquad 
     \|\widehat{F}^{\ast}\widehat{g}_t - g_{h,t}\|_{C^5\left(B_{g_{h,t}}(x_0,e^{-\frac{\rho_1}{4}}D),g_{h,t}\right)}\le \rho_1^2
\end{equation*}
so that
\begin{equation} \label{eq:NCmetricsclose}
    (1-\rho_1^{\frac{3}{2}})g_{h,t} \leq \widehat{F}^*\widehat{g}_t \leq (1+\rho_1^{\frac{3}{2}})g_{h,t}
\end{equation}
on $B_{g_{h,t}}(x_0,e^{-\frac{\rho_1}{4}}D)$ for all $t\in[-1,S]$. Standard distance comparison arguments then yield
\begin{equation} \label{eq:NC2}
B_{\widehat{g}_t}(\widehat{F}(x_0),e^{-\frac{\rho_1}{2}}D)\subset \widehat{F}\left(B_{g_{h,t}}(x_0,e^{-\frac{\rho_1}{4}}D)\right) \subseteq \widehat{F}\left(\Omega_h(e^{-\frac{\rho_1}{8}}D) \right). 
\end{equation}
For any $y\in B_{\widehat{g}_t}(\widehat{F}(x_0),\lambda \widetilde{D}_{\epsilon})$, \eqref{eq:NCmetricsclose} gives
\begin{equation*}
    d_{g_{h,t}}(x_0,\widehat{F}^{-1}(y)) \leq (1-\rho_1^{\frac{3}{2}})^{-\frac{1}{2}}d_{\widehat{g}_t}(\widehat{F}(x_0),y)<(1-\rho_1^{\frac{3}{2}})^{-\frac 12}\lambda\widetilde{D}_{\epsilon}.
\end{equation*}
Combined with Lemma \ref{lem:comparingbandd}, this implies $\widehat{F}^{-1}(y) \in \Omega_h(e^{\rho_1}\lambda \widetilde{D}_{\epsilon})$ if we assume $\epsilon_1 \leq \overline{\epsilon}_1(g_{\sol})$, so that
\begin{equation*}
    \phi_{\log\frac{1}{|t|}}(\widehat{F}^{-1}(y)) \in \Omega_h(e^{\frac{1-\rho_1}{2}\log\frac{1}{|t|}}\sqrt{\lambda}\widetilde{D}_{\epsilon})
\end{equation*}
for all $t\in [-1,S]$. Along with \eqref{eq:strongC0bound}, this implies
\begin{equation} \label{eq:NC3}
    \sup_{t\in [-1,S]} \sup_{B_{\widehat{g}_t}(\widehat{F}(x_0),\lambda \widetilde{D}_{\epsilon})} |\widehat{\varphi}_t| \leq \frac{1}{2}.
\end{equation}
We can moreover compute
\begin{equation*}
    (\partial_t - \Delta_{\widehat{\omega}_t})\widehat{\varphi}_t  = -\left( \log \left( \frac{(\widehat{\omega}_t + \sqrt{-1}\partial \overline{\partial}(-\widehat{\varphi}_t))^n}{\widehat{\omega}_t^n} \right)-\operatorname{tr}_{\widehat{\omega}_t}(\sqrt{-1}\partial \overline{\partial}(-\widehat{\varphi}_t)) \right) \geq 0,
\end{equation*}
so that $(\widehat{\varphi}_t)_- = \max\{-\widehat{\varphi}_t,0\}$ satisfies
\[
(\partial_t-\Delta_{\widehat{\omega}_t})(\widehat{\varphi}_t)_{-} \leq 0
\]
in the sense of distributions. 
Write $d\widehat{\nu}_t = (2\pi|t|)^{-n}e^{-\widehat{f}_t}d\widehat{g}_t$, and let $\widehat{q}_t \in C^{\infty}(M \times [-1,S])$ be the solution of 
\begin{equation*} (\partial_t -\Delta_{\widehat{\omega}_t})\widehat{q}_t = 0 .\end{equation*}
satisfying $\widehat{q}_{-1}=\widehat{f}_{-1}$. 
If $\epsilon_1 \leq \overline{\epsilon}_1$, then we can use $\widehat{f}_{-1}(\widehat{F}(x_0))\leq f_{\sol}(x_0)+W+1$, \eqref{eq:nonconcentrationcourse},\eqref{eq:nonconcentrationpseudoapp1}, and Lemma \ref{lem:comparabilityofpotentials} to obtain
$C_0=C_0(g_{\sol},Y) \in (1,\infty)$ such that
\begin{equation} \label{eq:comparabilityapproximatepotential}
(1-10^{-3})\widehat{F}_{\ast}f_{h,t} -C_0 \leq \widehat{f}_t\leq (1+10^{-3})\widehat{F}_{\ast}f_{h,t} + C_0,
\end{equation}
\begin{equation}
\label{eq:comparabilitywithdistancefunction}
(1-10^{-3})\frac{d_{\widehat{g}_t}^2(\widehat{F}(x_0),\cdot)}{2} -C_0 \leq \widehat{q}_t \leq (1+10^{-3})\frac{d_{\widehat{g}_t}^2(\widehat{F}(x_0),\cdot)}{2}+C_0,
\end{equation}
on $B_{\widehat{g}_t}(\widehat{F}(x_0),\lambda^{\frac{1}{2}}\widetilde{D}_{\epsilon})$ for all $t\in [-1,S]$. 

We now define a function $\widehat{\psi}_t,$ analogous to the function $\psi_s$ used in the first part of this proof. Define
\begin{equation*}
     \widehat{\psi}_t(x) :=  \left\{ \begin{array}{cc}
         \max \{\widehat\varphi_t(x)_-, \widehat{q}_t(x)- \frac{1}{2}\lambda^3 \widetilde{D}_{\epsilon}^2 \}, & d_{\widehat{g}_t}(x,\widehat{F}(x_0)) \leq \lambda \widetilde{D}_{\epsilon}\\
      (\widehat{q}_t(x)-\frac{1}{2}\lambda^3 \widetilde{D}_{\epsilon}^2)_+,  & d_{\widehat{g}_t}(x,\widehat{F}(x_0))\geq \lambda  \widetilde{D}_{\epsilon}
     \end{array} \right. .
 \end{equation*}
For $(x,t)\in M\times [-1,S]$ sufficiently close to $\cup_{t'\in [-1,S]}(\partial B_{\widehat{g}_{t'}}(\widehat{F}(x_0),\lambda \widetilde{D}_{\epsilon})\times \{t'\})$, \eqref{eq:comparabilitywithdistancefunction} gives
\begin{equation*}
    \widehat{q}_t(x) -\frac{1}{2}\lambda^3 \widetilde{D}_{\epsilon}^2 \geq \frac{1-10^{-3}}{2}(\lambda \widetilde{D}_{\epsilon}-1)^2-C_0 - \frac{1}{2}\lambda^3 \widetilde{D}_{\epsilon}^2 \geq 1 > \widehat{\varphi}_t(x)_-
\end{equation*}
if we choose $\epsilon_1 \leq \overline{\epsilon}_1(g_{\sol},Y)$. 
It follows that $\widehat{\psi}_t$ is a continuous, nonnegative subsolution of the heat equation coupled to the Ricci flow: 
\begin{equation*}
(\partial_t-\Delta_{\widehat{g}_t})\widehat{\psi}_t\leq 0
\end{equation*}
in the sense of distributions. Moreover, on $B_{\widehat{g}_t}(\widehat{F}(x_0),\lambda^2 \widetilde{D}_{\epsilon})$, \eqref{eq:comparabilitywithdistancefunction} gives
\begin{equation} \label{eq:NC4}
   \begin{split}
     \widehat{q}_t- \frac{1}{2}\lambda^3 \widetilde{D}_{\epsilon}^2\le \frac{1}{2}(1+10^{-3})\lambda^4 \widetilde{D}_\varepsilon^2- \frac{1}{2}\lambda^3 \widetilde{D}_{\epsilon}^2+C_0 \le -\frac{1}{2} \le \widehat{\varphi}_t,
   \end{split}
\end{equation}
if we choose 
$\epsilon_1 \leq \overline{\epsilon}_1(g_{\sol},Y)$. 
By hypercontractivity \cite[Theorem 12.1]{Bam1} along the Ricci flow, 
\begin{align*}
    \left( \int_{M}\widehat{\psi}_S^{2p}d\widehat{\nu}_S \right)^{\frac{1}{2p}}&\leq   \left( \int_{M}\widehat{\psi}_{-1}^{2q}d\widehat{\nu}_{-1} \right)^{\frac{1}{2q}} \\ &\leq \left(\int_{B_{\widehat{g}_{-1}}(\widehat{F}(x_0),\lambda \widetilde{D}_{\epsilon})} |\widehat{\varphi}_{-1}|^{2q}d\widehat{\nu}_{-1}+\int_{M\setminus B_{\widehat{g}_{-1}}(\widehat{F}(x_0),\lambda^2\widetilde{D}_{\epsilon})} (\widehat{f}_{-1})_+^{2q} d\widehat{\nu}_{-1}\right)^{\frac{1}{2q}},
\end{align*}
Our hypothesis $f_{s_0}(x_0)\leq f_{\sol}(x_0)+W+1$ implies that $\widehat{f}_{-1}(\widehat{F}(x_0)) \leq f_{\sol}(x_0)+W+1$, hence $\widehat{F}(x_0)$ is a $C(g_{\sol})$-center of $\widehat{\nu}$. For $\sigma'>0$ to be determined, we can therefore combine H\"older's inequality, \cite[Theorem 2.13]{FangLi}, and \cite[Proposition 6.2]{Bam3} to estimate
\begin{align*}
    \int_{M\setminus B_{\widehat{g}_{-1}}(\widehat{F}(x_0),\lambda^2 \widetilde{D}_{\epsilon})} (\widehat{f}_{-1})_+^{2q} d\widehat{\nu}_{-1} &\leq \left( \int_M (\widehat{f}_{-1})_+^{\frac{2q}{\sigma'}}d\widehat{\nu}_{-1}\right)^{\sigma'}\widehat{\nu}_{-1}(M\setminus B_{\widehat{g}_{-1}}(\widehat{F}(x_0),\lambda^2 \widetilde{D}_{\epsilon}))^{1-\sigma'}\\
    &\leq C(\sigma',Y)e^{-\frac{(1-\sigma')\lambda^4 \widetilde{D}_{\epsilon}^2}{2(1+\sigma')}}\\
    &\leq C(\sigma',Y)\epsilon^{2q},
\end{align*}
where in the last line, we defined $\sigma'>0$ by $\frac{(1-\sigma')\lambda^4}{1+\sigma'}=q$. By \eqref{eq:NCmetricsclose} and Lemma \ref{lem:Dandrho}, we have
\begin{equation*}
    B_{\widehat{g}_{-1}}(\widehat{F}(x_0),\lambda \widetilde{D}_{\epsilon}) \subseteq \widehat{F}(\Omega_h(\sqrt{\lambda}\widetilde{D}_{\epsilon})),
\end{equation*}
so we can also use \eqref{eq:comparabilityapproximatepotential} and H\"older's inequality to estimate
\begin{align*}
    \int_{B_{\widehat{g}_{-1}}(\widehat{F}(x_0),\lambda \widetilde{D}_{\epsilon})} |\widehat{\varphi}_{-1}|^{2q} d\widehat{\nu}_{-1} &\leq C(g_{\sol})\int_{\Omega_h(\sqrt{\lambda} \widetilde{D}_{\epsilon})} |\varphi_{s_0}|^{2q}e^{10^{-3}f_h}d\nu_h \\& \leq C(g_{\sol})\left( \int_{\Omega_h(\sqrt{\lambda} \widetilde{D}_{\epsilon})}\varphi_{s_0}^2 e^{-f_h} \omega_h^n\right)^q \left( \int_{M_{\sol}}e^{\frac{1}{10^3(1-q)}f_h} d\nu_h \right)^{1-q} \\
    &\leq C(g_{\sol})\epsilon^{2q},
\end{align*}
By \eqref{eq:NC4}, we have $\widehat{\psi}_t = (\widehat{\varphi}_t)_-$ on $B_{\widehat{g}_t}(\widehat{F}(x_0),\lambda^2 \widetilde{D}_{\epsilon})$, hence we may combine the above estimates to obtain
\begin{equation*}
    \int_{B_{\widehat{g}_S}(\widehat{F}(x_0),\lambda^2 \widetilde{D}_{\epsilon})}(\widehat{\varphi}_S)_-^{2p}d\widehat{\nu}_S\le C(g_{\sol})\varepsilon^{2p}.
\end{equation*}
Moreover, \eqref{eq:comparabilityapproximatepotential} yields
    \begin{equation*}
C(g_{\sol})\int_{B_{\widehat{g}_S}(\widehat{F}(x_0),\lambda^2 \widetilde{D}_{\epsilon})}(\widehat{\varphi}_S)_-^{2p}d\widehat{\nu}_S\ge \int_{B_{\widehat{g}_S}(\widehat{F}(x_0),\lambda^2 \widetilde{D}_{\epsilon})}(\widehat{\varphi}_S)_-^{2p}e^{-(1+10^{-3})\widehat{F}_*f_{h,S}}\widehat{F}_*(\omega_{h,S})^n.
\end{equation*}
Setting $p_1 :=\frac{1+e^{\frac{1}{4}}}{2}$, 
we can then apply H\"older's inequality to obtain
\begin{equation*}
   \begin{split}
      &\int_{B_{\widehat{g}_S}(\widehat{F}(x_0),\lambda^2 \widetilde{D}_{\epsilon})}(\widehat{\varphi}_S)_-^{2p_1}e^{-\widehat{F}_{\ast}f_{h,S}}(\widehat{F}_{\ast}\omega_{h,S})^n \\
      \le& \left( \int_{B_{\widehat{g}_S}(\widehat{F}(x_0),\lambda^2 \widetilde{D}_{\epsilon})}(\widehat{\varphi}_S)_-^{2p}e^{-(1+10^{-3})\widehat{F}_{\ast}f_{h,S}}(\widehat{F}_{\ast}\omega_{h,S})^n\right)^{\frac{p_1}{p}}\left(\int_{M_{\sol}}e^{\frac{p_1}{10^3(p-p_1)}f_{h,S}}e^{-f_{h,S}}\omega_{h,S}^n\right)^{1-\frac{p_1}{p}}
       \\ \leq & C(g_{\sol})\varepsilon^{2p_1}
   \end{split}
\end{equation*}
Because $\widehat{F} \left(B_{g_{h,S}}(x_0,\lambda^{\frac{5}{2}}\widetilde{D}_{\epsilon}) \right)\subset B_{\widehat{g}_S}(\widehat{F}(x_0),\lambda^{2}\widetilde{D}_{\epsilon})$ if $\epsilon_1 \leq \overline{\epsilon}_1$ and $\varphi_{s_0+1}=e\phi_{-1}^{\ast}(\widehat{F}^{\ast}\widehat{\varphi}_S),$ we conclude that
\begin{equation*}
    \int_{B_{g_h}(x_0,\sqrt{e}\lambda^{\frac{5}{2}}\widetilde{D}_{\epsilon})}(\varphi_{s_0+1})_-^{2p_1}e^{-f_{h}}\omega_h^n\le C(g_{\sol})\varepsilon^{2p_1}.
\end{equation*}
If $\epsilon_1 \leq \overline{\epsilon}_1$, then Lemma \ref{lem:Dandrho} moreover yields
\begin{equation*}
    \Omega_h( \sqrt{e}\lambda^3\widetilde{D}_{\epsilon})\subset B_{g_h}(x_0,\sqrt{e}\lambda^{\frac{5}{2}}\widetilde{D}_{\epsilon}),
\end{equation*} and therefore
\begin{equation*}
 \int_{\Omega_h(\sqrt{e}\lambda^3\widetilde{D}_{\epsilon})}(\varphi_{s_0+1})_-^{2p_1}e^{-f_{h}}\omega_h^n\le C(g_{\sol})\varepsilon^{2p_1}.
\end{equation*}
Combined with \eqref{Lp bound on positive part}, this finishes the proof of the lemma.
\end{proof}

We also prove an analogous result for K\"ahler-Ricci flow near compact shrinkers, which will be used in the proof of Theorem \ref{thm:main3}. The proof is substantially easier since it does not require dealing with the behavior near spatial infinity, which was the major technical difficulty in proving Lemma \ref{lem:nonconcentration}.

\begin{lemma} \label{lem:nonconcentrationcompact} Suppose $(M_{\sol},J_{\sol},g_{\sol},f_{\sol})$ is a compact K\"ahler-Ricci shrinker and $(M,(\widetilde{g}_t)_{t\in [-1,0)})$ is a compact K\"ahler-Ricci flow with $\omega_{-1} \in c_1(M)$. Then there exist $\epsilon_c \in (0,1)$ and $p_c,A_c \in (1,\infty)$ such that the following holds.

Suppose there exists a biholomorphism $F:M_{\sol} \to M$ such that $g_s := e^s (F\circ \phi_{-s})^{\ast}\widetilde{\omega}_{-e^{-s}}$ satisfies 
\begin{equation*}
    \|h\|_H <\epsilon_c, \qquad \|g_{s_0}-g_h\|_{C^{\lfloor\epsilon_c^{-1} \rfloor}(M_{\sol},g_h)}<\epsilon_c
\end{equation*}
for some $s_0 \geq \epsilon_c^{-1}$ and $h\in H$,
and that $\varphi \in C^{\infty}(M_{\sol} \times [0,\infty))$ satisfies $\omega_s=\omega_h +\sqrt{-1}\partial \overline{\partial} \varphi_s$ and \eqref{modifiedPCMA2}, as well as 
\begin{equation*}
    \left( \int_{M_{\sol}} \varphi_{s_0}^2 d\nu_h \right)^{\frac{1}{2}} <\epsilon
\end{equation*}
for some $\epsilon \in (0,\epsilon_c]$. Then 
\begin{equation*}
    \left( \int_{M_{\sol}} |\varphi_{s_0+1}|^{2p_c}d\nu_h \right)^{\frac{1}{2p_c}} \leq A_c \epsilon.
\end{equation*}
\end{lemma}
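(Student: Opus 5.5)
We follow the structure of Lemma \ref{lem:nonconcentration}, with all grafting near spatial infinity removed because $M_{\sol}$ is compact. The plan has three steps: an $L^\infty$ bound at time $s_0$, a propagation of that bound to $[s_0,s_0+1]$, and hypercontractivity applied separately to $(\varphi_s)_+$ and $(\varphi_s)_-$.

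\textbf{Step 1: $L^\infty$ bound at time $s_0$.} Since $M_{\sol}$ is compact, $\nu_h$ has density bounded above and below with respect to $\omega_h^n$, uniformly for $\|h\|_H<\epsilon_c$; this uses Lemma \ref{lem; equiv of potentials} and Proposition \ref{prop:modelmetrics}. Hence $\|\varphi_{s_0}\|_{L^2(M_{\sol},\omega_h^n)}\leq C\epsilon$. We have $\Delta_{g_h}\varphi_{s_0}=\operatorname{tr}_{g_h}(g_{s_0}-g_h)$, which is bounded by $C\epsilon_c$. Lemma \ref{lem:L2toLinfty}, applied at a fixed scale with uniform geometry, then gives $\sup|\varphi_{s_0}|\leq C(\epsilon+\epsilon_c)$. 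In particular $\sup|\varphi_{s_0}|\leq 10^{-10}$ once $\epsilon_c$ is small. A rough bound of this kind is enough here, because the quantitative estimate will come from hypercontractivity.

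\textbf{Step 2: propagation on $[s_0,s_0+1]$.} We use a compact analogue of Lemma \ref{lem:pseudoapplication}. By the Cheeger--Gromov compactness and uniqueness argument of Lemma \ref{lem:unnormalizedpseudolocality}, now with no boundary region, the metrics $g_s$ stay $C^k$-close to $g_h$ for $s\in[s_0,s_0+1]$. Integrating \eqref{modifiedPCMA2} in time, as in part \ref{lem:pseudoapplication2} of that lemma, then shows that $\varphi_s$ remains small in $C^0$ on this interval. Strictly, this smallness is not needed for the argument below; it only makes the comparisons more comfortable.

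\textbf{Step 3a: the positive part.} On all of $M_{\sol}$ we have
\[
(\partial_s-\Delta_{f_h})\varphi_s=\log\frac{(\omega_h+\sqrt{-1}\partial\overline{\partial}\varphi_s)^n}{\omega_h^n}-\operatorname{tr}_{\omega_h}(\sqrt{-1}\partial\overline{\partial}\varphi_s)+\varphi_s\leq\varphi_s,
\]
by concavity of $\log$. Therefore $e^{-(s-s_0)}(\varphi_s)_+$ is a global distributional subsolution of the $\Delta_{f_h}$ drift heat equation. By Proposition \ref{prop:modelidentities}, $\omega_h$ is a compact shrinker, so $\nu_h$ satisfies a log-Sobolev inequality. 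Applying hypercontractivity \cite[Theorem 5.2.3]{Bakry} of this semigroup over unit time, with the exponents $q=\frac{1+e^{-1/2}}{2}<1<p=\frac{1+\sqrt e}{2}$, gives
\[
\|(\varphi_{s_0+1})_+\|_{L^{2p}(\nu_h)}\leq e\,\|\varphi_{s_0}\|_{L^{2q}(\nu_h)}\leq C\epsilon,
\]
where the last inequality is H\"older's inequality.

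\textbf{Step 3b: the negative part.} Pass to the unnormalized flow $\widehat{\omega}_t$ on $M$ with potential $\widehat{\varphi}_t$, exactly as in the proof of Lemma \ref{lem:nonconcentration}. There $(\partial_t-\Delta_{\widehat{\omega}_t})\widehat{\varphi}_t\geq0$, so $(\widehat{\varphi}_t)_-$ is a global subsolution of the heat equation coupled to the flow on the compact manifold $M$. Apply hypercontractivity \cite[Theorem 12.1]{Bam1} with respect to a conjugate heat flow $\widehat{\nu}$. Since $M$ is compact, we may take $\widehat{\nu}$ to be the conjugate heat kernel based at a point at time $S=-e^{-1}$, or the pullback of the relevant measure.

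The main obstacle is comparing $d\widehat{\nu}_t$ with $F_*\nu_{h,t}$ at the two ends, $t=-1$ and $t=S$, with uniform multiplicative constants. In the compact setting we obtain this by another compactness argument: the flow is $C^k$-close to the shrinker flow of $g_h$, and conjugate heat kernels converge smoothly under smooth convergence of the flows. Hence the potentials $\widehat{f}_t$ and $f_{h,t}$ differ by at most $C$ on $[-1,S]$, and the densities are comparable. Given this comparison, the $L^{2p}$ bound for $(\varphi_{s_0+1})_-$ follows exactly as for the positive part.

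Combining the two parts, we take $p_c=p$ and $A_c$ equal to the sum of the constants obtained. If the heat kernel comparison proves awkward, a fallback is to use the time-$(-1)$ smallness of Step 1 together with smallness of the heat kernel at positive scale. We still expect the heat kernel comparison to be the only nonroutine step.
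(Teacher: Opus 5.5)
Your positive part is the paper's argument, and Steps 1--2 are harmless; only closeness of the metrics on $[-1,S]$ is actually used. The gap is in Step 3b, at exactly the comparison you flag as nonroutine.

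To go from $L^{2q}$ at $t=-1$ to $L^{2p}$ at $t=S=-e^{-1}$ with $\frac{2p-1}{2q-1}=e$, the paper bases $\widehat{\nu}$ at the singular time $0$, so that $\frac{0-(-1)}{0-S}=e$. Then $\widehat{\nu}_S$ is determined by the flow on all of $[S,0)$, including times arbitrarily close to $0$. There the hypothesis at $s_0$, even after pseudolocality, gives no closeness to $g_{h,t}$. So the principle that conjugate heat kernels converge under smooth convergence of the flows does not apply. Nothing you wrote rules out $\widehat{\nu}_S$ concentrating, and that would destroy the lower density bound needed to pass from $\int_M(\widehat{\varphi}_S)_-^{2p}\,d\widehat{\nu}_S$ back to $\int_{M_{\sol}}(\varphi_{s_0+1})_-^{2p}\,d\nu_h$. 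Your other option, a kernel based at $(y,S)$, gives $\widehat{\nu}_S=\delta_y$, so a two-sided density comparison at $t=S$ is simply false.

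The paper closes the gap with an a priori input you do not use. Because $\omega_{-1}\in c_1(M)$, Perelman's estimates for the Fano K\"ahler--Ricci flow \cite{PerelmanKahler} give $|\widetilde{f}_t|\le A$ for all $t\in[-1,0)$, with $A$ depending only on the flow; the constants $\epsilon_c,p_c,A_c$ are allowed to depend on it. Also $|f_h|\le A$, since $M_{\sol}$ is compact and $\|h\|_H$ is small. Together with closeness of the metrics on $[-1,S]$, this gives the density comparison at every $t\in[-1,S]$ with a constant $C(A,p)$, and no compactness argument is needed.

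Alternatively, you can repair your own route by basing the kernel at $(y,t_0)$ with $S<t_0<0$, inside the window where Step 2 (extended slightly past $s_0+1$) gives closeness. Bamler's hypercontractivity \cite{Bam1} still applies, since $\frac{t_0+1}{t_0-S}\ge e$. The densities of $\nu_{y,t_0;-1}$ and $\nu_{y,t_0;S}$ are then bounded above and below on all of $M$ by quantities depending only on the flow on $[-1,t_0]$, where it is controlled. In the degenerate case $t_0=S$, the same reasoning gives the pointwise bound $(\widehat{\varphi}_S)_-(y)\le\big(\int_M(\widehat{\varphi}_{-1})_-^2\,d\nu_{y,S;-1}\big)^{1/2}$, which is even an $L^\infty$ estimate for the negative part.
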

\begin{proof}
    Because 
    \begin{equation*}
        (\partial_s -\Delta_{f_h})(e^{-s}\varphi_s)_+ \leq 0
    \end{equation*}
    in the sense of distributions, we can use the hypercontractivity of the semigroup associated to $(M_{\sol},g_{h},f_{h})$ to get
    \begin{equation*}
        \left( \int_{M_{\sol}} (\varphi_{s_0+1})_+^{2p_c} d\nu_h \right)^{\frac{1}{2p_c}} \leq e\left( \int_{M_{\sol}} \varphi_{s_0}^2 d\nu_h \right)^{\frac{1}{2}}
    \end{equation*}
    as in the proof of Lemma \ref{lem:nonconcentration}. Choose any $\widetilde{\nu} \in M_0$. Then by \cite{PerelmanKahler}, there exists $A \in (1,\infty)$ depending only on the flow $(M,(\widetilde{g}_t)_{t\in [-1,0)})$ such that if we write $d\widetilde{\nu}_t = (2\pi|t|)^{-n}e^{-\widetilde{f}_t}d\widetilde{g}_t$, then $|\widetilde{f}_t|\leq A.$ After possibly increasing $A$, we can also assume that $|f_h|\leq A$ for all $h\in H$. Given $s_0 \geq 0$, if we define $\widehat{\varphi},\widehat{\nu},S$ as in the proof of Lemma \ref{lem:nonconcentration} and choose $\epsilon_c \leq \overline{\epsilon}_c$, it therefore follows that
\begin{equation} \label{eq:coarsefcompact}
    C(A,p)^{-1}\int_{M_{\sol}} (\varphi_{s_0+\log\frac{1}{|t|}})_-^{2p}d\nu_h\leq \int_M (\widehat{\varphi}_t)_-^{2p} d\widehat{\nu}_t \leq C(A,p)\int_{M_{\sol}} (\varphi_{s_0+\log\frac{1}{|t|}})_-^{2p}d\nu_h
\end{equation}
for all $t\in [-1,S]$. Because 
\begin{equation*}
    (\partial_t - \Delta_{\widetilde{\omega}_t})(\widehat{\varphi}_t)_- \leq 0
\end{equation*}
in the sense of distributions, we apply hypercontractivity along the Ricci flow as in the proof of Lemma \ref{lem:nonconcentration} to obtain
\begin{equation*}
    \left( \int_{M} (\widehat{\varphi}_{S})_-^{2p_c} d\widehat{\nu}_S \right)^{\frac{1}{2p_c}} \leq \left( \int_{M} (\widehat{\varphi}_{-1})_-^2 d\widehat{\nu}_{-1} \right)^{\frac{1}{2}},
\end{equation*}
so the claim follows by combining with \eqref{eq:coarsefcompact}.
\end{proof}

\subsection{Decay proposition}

We now define
\begin{equation*}
    \mathcal{D}_h(\varphi,s):= \int_{\Omega_h(a\sqrt{s})} \varphi_s^2 d\nu_h + e^{-(1+\theta_1)\frac{a^2 s}{2}}, \qquad \widetilde{\mathcal{D}}_h(\varphi,s):= \sup_{s'\in [s-2,s]} \mathcal{D}_h(\varphi,s'),
\end{equation*}
with $a>0$ to be determined later.

The following proposition is a nonlinear analogue of the fact that any unstable mode of the linear equation \eqref{eq:intro:linear} can be removed by a change of gauge. It is proved by contradiction, following the general strategy of \cite[Proposition 3.5]{ChiuSzek}. 

In more detail, we suppose there are solutions $\varphi_i$ of the modified equation \eqref{eq:normalizedmodifiedflow} with respect to a sequence $h_i$ of reference metrics which converge to zero but whose corresponding functionals $\widetilde{\mathcal{D}}_{h_i}(\varphi_i,\cdot)$ do not decay after a change of gauge. We can then apply Lemma \ref{lem:nonconcentration} to the re-normalized potentials $\widehat{\varphi}_i$, allowing us to pass to the limit to obtain a solution of the drift heat equation \eqref{eq:intro:linear}, and where the weighted $L^2$ norms converge. By the analysis of the linear equation done in Section \ref{section:lineartheory}, the solution consisting only of stable modes has a definite decay rate. Subtracting unstable or neutral modes corresponding to pluriharmonic functions gives a new solution of \eqref{eq:normalizedmodifiedflow}, and up to first order, subtracting the remaining neutral modes corresponds to a change of model metric by the estimates of Section \ref{section:modelmetrics}. Modifying $\widehat{\varphi}_i$ accordingly gives the desired decay. The remaining assertions of the proposition follow by our almost-self-similarity assumption, elliptic estimates, and interpolation. These claims will be used to ensure we can iterate the decay lemma indefinitely in the proof of Theorem \ref{thm:main1}.

\begin{prop} \label{prop:decay} 
Fix an AC K\"ahler-Ricci shrinker $(M_{\sol},J_{\sol},g_{\sol},f_{\sol})$, and let $(\omega_{h})_{h\in H}$ be the corresponding family of model metrics. For any $Y \in (0,\infty)$, there exist $a=a(g_{\sol},Y)>0$ and $\epsilon_2=\epsilon_2(g_{\sol},Y)>0$ such that the following holds.

Suppose $(M^n,(\widetilde{\omega}_t)_{t\in [-T,0)})$ is a compact K\"ahler-Ricci flow with $T\geq 1$, and with reference conjugate heat kernel $(\widetilde\nu_t)_{t\in [-T,0)} \in M_0$ based at the singular time. Suppose that for some $s_0\geq \epsilon_2^{-1}$, there exists $h\in H$ and a solution $\varphi$ of 
\begin{equation}
\partial_{s}\varphi_{s}=\log\frac{(\omega_{h}+\sqrt{-1}\partial\overline{\partial}\varphi_{s})^{n}}{\omega_{h}^{n}}-\frac{1}{2}X\cdot\varphi_{s}+\varphi_{s},\label{eq:normalizedmodifiedflow}
\end{equation}
on $\mathcal{U}:=\cup_{s\in [0,\infty)}(\Omega(e^{\frac{2}{5}s})\times \{s\})$ such that for some open holomorphic embedding $F:\Omega(\epsilon_2^{-1})\to M$, we have:
\begin{enumerate}
    \item \label{prop:decayassumption1} $\omega_{s}:= e^{s}\phi_{-s}^{\ast}F^{\ast}\widetilde{\omega}_{-e^{-s}}=\omega_{h}+\sqrt{-1}\partial\overline{\partial}\varphi_{s}$ on $\mathcal{U}$;
    
    \item \label{prop:decayassumption2} $\|h\|_{H}<\epsilon_2$;

    \item \label{prop:decayassumption3} $\sup_{s\in [s_0-2,s_0]} \|\omega_{s}-\omega_h\|_{C^{\lfloor \epsilon_2^{-1} \rfloor}(\Omega_h(a\sqrt{s}),g_h)}<\epsilon_2$;
    
    \item \label{prop:decayassumption4} $\widetilde{\mathcal{D}}_h(\varphi,s_0)\le \exp \left(-\frac{2a^2s_0}{(2+\theta_1)^2} \right)$, where $\theta_1 \in (0,\frac{1}{10^2})$ is as in Lemma \ref{lem:nonconcentration};

    \item \label{prop:decayassumption5} $f_{s_0}(x_0) \leq f_{\sol}(x_0)+W+1$, where $W$ is the entropy of $(M_{\sol},g_{\sol})$ and $f_s = \phi_{-s}^{\ast}F^{\ast}\widetilde{f}_{-e^{-s}}$;

    \item \label{prop:decayassumption6} $\widetilde{\nu}$ is $(\epsilon_2,e^{-\frac{s_0}{2}})$-self-similar and $\mathcal{N}_{\widetilde{\nu}}(1)\geq -Y$.
\end{enumerate}
Let $(\Psi_\alpha)_{\alpha=0}^{N_-}$ be the orthonormal basis from Lemma \ref{lem:eigenvalues}.
\noindent Then there exist $h'\in H$ and $b_0,...,b_{N_-} \in \mathbb{R}$ such that
\begin{equation} \label{eq:decayboundsforcoeffsinstatement} \|h-h'\|^2_H +\sum_{\alpha=0}^{N_-} b_{\alpha}^2  \leq \widetilde{\mathcal{D}}_{h}(\varphi,s_0), 
\end{equation}
and such that
\begin{equation*}
    \varphi_s':= \varphi_s +(u_{h}-u_{h'})+\sum_{\alpha=0}^{N_-} e^{(1-\lambda_{\alpha})(s-s_0)}b_{\alpha}\Psi_{\alpha}
\end{equation*}
is a solution of 
\begin{equation} \label{eq:modifiedKRFprime}
    \partial_s \varphi_s' = \log \frac{(\omega_{h'}+\sqrt{-1}\partial \overline{\partial}\varphi_s')^n}{\omega_{h'}^n} - \frac{1}{2}X\cdot\varphi_s'+\varphi_s'    
\end{equation}
on $\mathcal{U}$ such that the following hold:
\begin{enumerate}[label=(\alph*)]
    \item \label{prop:decayconclusion1} $\omega_s = \omega_{h'}+\sqrt{-1}\partial \overline{\partial} \varphi_s'$ on $\mathcal{U}$

    \item \label{prop:decayconclusion2} $\widetilde{\mathcal{D}}_{h'}(\varphi',s_0+1)\leq e^{-\frac{a^2}{2}}\widetilde{\mathcal{D}}_h(\varphi,s_0)$,

    \item \label{prop:decayconclusion3} $f_{s_0+1}(x_0)\leq f_{\sol}(x_0)+W+\frac{1}{2}$.
    \item \label{prop:decayconclusion4} there exists $\delta_2=\delta_2(\epsilon_2,g_{\sol},Y)>0$ such that if in addition $\widetilde{\mathcal{D}}_h(\varphi,s_0)\leq \delta_2$, then 
    \begin{equation*}
        \sup_{s\in [s_0-1,s_0+1]}\|\omega_s-\omega_{h'}\|_{C^{\lfloor \epsilon_2^{-1} \rfloor}(\Omega_{h'}(a\sqrt{s}),g_{h'})}<\epsilon_2.
    \end{equation*}
\end{enumerate}
\end{prop}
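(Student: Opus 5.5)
I would prove this by contradiction and compactness, following \cite[Proposition 3.5]{ChiuSzek}, with the decay rate $e^{-a^2/2}$ fixed in advance through the spectral gap. First I choose $a$ small enough that $e^{-a^2/2}$ sits strictly between the decay factor $e^{-2(\lambda_{N_0+1}-1)}$ of the stable modes over the relevant unit time steps and $1$. I also require that the truncation term $e^{-(1+\theta_1)a^2 s/2}$ decays by at least $e^{-a^2/2}$ from $s$ to $s+1$; this holds automatically. The condition in \ref{prop:decayassumption4} then ensures that the tail quantity $e^{-(1+\theta_1)D^2/2}$ with $D=a\sqrt{s}$ is negligible against the $L^2$ part, so that Lemma \ref{lem:nonconcentration} applies with $\epsilon^2\approx\widetilde{\mathcal{D}}_h(\varphi,s_0)$. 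Its output region $\Omega_h(e^{1/4}\widetilde D_\epsilon)$ contains $\Omega_h(a\sqrt{s_0+1})$ once $a$ is small relative to the exponent in \ref{prop:decayassumption4}.

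Suppose the proposition fails for a sequence with $\epsilon_{2,i}\to0$. This gives $h_i\to0$, times $s_{0,i}\to\infty$, and solutions $\varphi_i$ such that no choice of $h'$ and $b_\alpha$ satisfying \eqref{eq:decayboundsforcoeffsinstatement} yields \ref{prop:decayconclusion2}. Set $\kappa_i:=\widetilde{\mathcal{D}}_{h_i}(\varphi_i,s_{0,i})^{1/2}$ and normalize $\widehat\varphi_{i,\tau}:=\kappa_i^{-1}\varphi_{i,s_{0,i}-2+\tau}$ for $\tau\in[0,3]$. Applying Lemma \ref{lem:nonconcentration} at the times $s_{0,i}-2+\tau$, I get uniform $L^{2p_1}(\nu_{h_i})$ bounds on growing sublevel sets. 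Hypotheses \ref{prop:decayassumption3} and \ref{prop:decayassumption1}, together with the local parabolic estimates behind Lemma \ref{lem:pseudoapplication} and Lemma \ref{lem: schauder estimates}, give $C^\infty_{\rm loc}$ bounds for $\widehat\varphi_i$. The equation \eqref{eq:normalizedmodifiedflow} divided by $\kappa_i$ converges to the linear equation \eqref{eq:intro:linear}, because the nonlinearity is quadratic in $\sqrt{-1}\partial\overline\partial\varphi$, which is $o(1)$. So $\widehat\varphi_i\to\psi$ locally smoothly, and the uniform higher integrability gives strong convergence in $L^2(\nu_{\sol})$ on the whole of $M_{\sol}$; this rules out loss of mass at infinity. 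The limit therefore has $\sup_\tau\|\psi_\tau\|_{L^2}\le1$, and Lemma \ref{lem:decayfordriftheateq} applies. Expanding $\psi$ in the basis of Lemma \ref{lem:eigenvalues} shows that the component orthogonal to the modes $\alpha\le N_0$ decays by $e^{-(\lambda_{N_0+1}-1)}$ per unit time.

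To treat the neutral and unstable modes, I read off the projections of $\psi_{\tau=2}$ (time $s_{0,i}$). The components along $\Psi_\alpha$ with $\alpha\le N_-$ give $b_\alpha:=-\kappa_i\beta_\alpha$; since these $\Psi_\alpha$ are pluriharmonic, adding $e^{(1-\lambda_\alpha)(s-s_0)}b_\alpha\Psi_\alpha$ preserves both $\omega_s$ and the equation \eqref{eq:normalizedmodifiedflow}, because such a term solves the linear part exactly and leaves $\sqrt{-1}\partial\overline\partial$ unchanged. The component along $H$, call it $k$, gives $h':=h_i+\kappa_ik$. By Proposition \ref{prop:modelidentities} the substitution $\varphi\mapsto\varphi+u_h-u_{h'}$ transforms the equation for $h$ into the one for $h'$: the volume-form identity \ref{prop:modelidentities4} exactly cancels the extra terms $u-\tfrac12Xu$. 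By Proposition \ref{prop:modelmetrics} \ref{modelmetrics4}, $u_{h_i}-u_{h'}=-\kappa_ik+O(\kappa_i\|h_i\|f_{\sol})$, so after dividing by $\kappa_i$ the error vanishes in weighted $L^2$. Bessel's inequality then gives \eqref{eq:decayboundsforcoeffsinstatement}. The modified normalized sequence converges to the stable part of $\psi$, whose $L^2(\nu)$ norm over $[s_0-1,s_0+1]$ is at most $e^{-(\lambda_{N_0+1}-1)}$. For large $i$ this is below $e^{-a^2/2}$, after accounting for the change of region from $\Omega_{h}$ to $\Omega_{h'}$ using \eqref{eq:containmentofsublevelsets} and the harmless tail term. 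This contradicts the assumption that \ref{prop:decayconclusion2} fails.

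The remaining conclusions are easier. \ref{prop:decayconclusion3} follows from Lemma \ref{lem:improvementofcomparability}, using \ref{prop:decayassumption5}, \ref{prop:decayassumption6} and the $C^2$ closeness in \ref{prop:decayassumption3}, after rescaling to $r=e^{-s_0/2}$. \ref{prop:decayconclusion4} follows in two steps. Smallness of $\widetilde{\mathcal{D}}$ gives pointwise smallness of $\varphi'$ via Lemma \ref{lem:L2toLinfty}, and also of $\omega_{h'}-\omega_h$ via Proposition \ref{prop:modelmetrics} \ref{modelmetrics3}. Interpolation by Lemma \ref{lem:interpolation} against the higher-derivative bounds from \ref{prop:decayassumption3} and pseudolocality (Lemma \ref{lem:pseudoapplication}) then gives $C^{\lfloor\epsilon_2^{-1}\rfloor}$ smallness. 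The main obstacle is the strong $L^2$ convergence without concentration at infinity, which is exactly what Lemma \ref{lem:nonconcentration} supplies. A secondary difficulty is checking that all the sublevel regions $\Omega_h(a\sqrt s)$, shifted in $h$ and $s$, stay within the region where that lemma gives control.
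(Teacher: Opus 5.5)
Your argument for \ref{prop:decayconclusion1}--\ref{prop:decayconclusion3} is essentially the paper's. It runs by contradiction with $\kappa_i^2=\widetilde{\mathcal{D}}_{h_i}(\varphi_i,s_i)$, using Lemma \ref{lem:nonconcentration} to rule out loss of mass at infinity and passing to a limit that solves the drift heat equation. The pluriharmonic modes and the $H$-component are then removed through $h'=h_i+\kappa_i k$, via Proposition \ref{prop:modelidentities} \ref{prop:modelidentities4} and Proposition \ref{prop:modelmetrics} \ref{modelmetrics4}, and \ref{prop:decayconclusion3} comes from Lemma \ref{lem:improvementofcomparability}. Two small corrections there. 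First, assumption \ref{prop:decayassumption4} does not make the tail term negligible: the tail is built into $\mathcal{D}_h$ precisely so that the hypothesis of Lemma \ref{lem:nonconcentration} reads $\mathcal{D}_h<\epsilon^2$. In this step \ref{prop:decayassumption4} only gives $\widetilde{D}_\epsilon\geq \frac{2a\sqrt{s_0}}{2+\theta_1}$, whence $e^{1/4}\widetilde{D}_\epsilon>a\sqrt{s_0+1}$ with no condition on $a$. Second, you already need Lemma \ref{lem:improvementofcomparability} to check $f_s(x_0)\leq f_{\sol}(x_0)+W+1$ at the earlier times $s\in[s_0-2,s_0)$ where you apply Lemma \ref{lem:nonconcentration}.

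The genuine gap is in \ref{prop:decayconclusion4}. Lemma \ref{lem:L2toLinfty} needs unweighted $L^2$ bounds on balls, but near $\partial\Omega_h(a\sqrt{s})$ the density of $\nu_h$ is about $e^{-a^2 s/2}$. So smallness of $\widetilde{\mathcal{D}}_h(\varphi,s_0)$ only gives $\int_{B_{g_h}(x,1)}\varphi_s^2\,\omega_h^n\lesssim e^{a^2 s/2}\,\widetilde{\mathcal{D}}_h(\varphi,s_0)$ there. Even using \ref{prop:decayassumption4}, this is of size $\exp\bigl(a^2 s_0(\tfrac12-\tfrac{2}{(2+\theta_1)^2})\bigr)$, which grows because $(2+\theta_1)^2>4$. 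Your step ``smallness of $\widetilde{\mathcal{D}}$ gives pointwise smallness via Lemma \ref{lem:L2toLinfty}'' therefore fails on the outer part of $\Omega_{h'}(a\sqrt{s})$. There, \ref{prop:decayassumption3} and Lemma \ref{lem:pseudoapplication} only give closeness of order $\epsilon_2$ to $\omega_h$, not the strict improvement needed to end below $\epsilon_2$ relative to $\omega_{h'}$ on $[s_0-1,s_0+1]$.

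The missing idea is to combine the higher integrability with \ref{prop:decayassumption4}. With $\ell=1+\frac{\theta_1}{4+\theta_1}$, Lemma \ref{lem:nonconcentration} gives $\sup_{s\in[s_0-1,s_0+1]}\int_{\Omega_h(a\ell\sqrt{s})}|\varphi_s|^{2p_1}\omega_h^n\leq C e^{a^2\ell^2 s_0/2}\,\widetilde{\mathcal{D}}_h(\varphi,s_0)^{p_1}\leq C\delta_2^{\gamma_0}$, where $\gamma_0=p_1-\frac{\ell^2(2+\theta_1)^2}{4}>0$. Only then do $L^{2p_1}\to L^\infty$, Schauder and interpolation give $C^2$-smallness on $\Omega_h(a\ell\sqrt{s}-1)\supseteq\Omega_{h'}(a\frac{1+\ell}{2}\sqrt{s})$. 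That region is slightly larger than needed, which absorbs the radius loss in the final application of Lemma \ref{lem:pseudoapplication} \ref{lem:pseudoapplication1}. Alternatively, you could use plain $L^2$ together with \ref{prop:decayassumption4} on $\Omega_h(ca\sqrt{s})$ for a suitable $c<1$, then the outward expansion of Lemma \ref{lem:pseudoapplication} over a unit time step. Either way \ref{prop:decayassumption4} must enter this step, and your sketch never uses it there.
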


\begin{proof} 
The assertion \ref{prop:decayconclusion3} follows directly by Corollary \ref{coro: uniform value of min f_h} and Lemma \ref{lem:improvementofcomparability} if $\varepsilon_2$ is sufficiently small. 

Now set $a^2:=\frac{1}{2}(\lambda_{N_0+1}-1)$, where $\lambda_{N_0+1}$ is as in Lemma \ref{lem:eigenvalues}. We prove assertions \ref{prop:decayconclusion1} and \ref{prop:decayconclusion2} by way of contradiction:
that is, suppose there exist compact K\"ahler-Ricci flows $(M_i,J_i,(g_{i,t})_{t\in [-T_i,0)})$ with $T_i \geq 1$ and reference measures $\widetilde{\nu}^i \in (M_i)_0$, and suppose there are $\epsilon_i \to 0$, $h_{i}\in H$, $s_{i} \geq \epsilon_i^{-1}$ such that 
\begin{equation} \label{eq:decaycontrahypothesis}\sup_{s\in [s_i-2,s_i]} \|\omega_{i,s} - \omega_{h_i}\|_{C^{\lfloor \epsilon_i^{-1} \rfloor }(\Omega_{h_i}(a\sqrt{s}),g_{h_i})}<\epsilon_i, \qquad \|h_i\|_H <\epsilon_i, \qquad f_{i,s_i}(x_0)\leq f_{\sol}(x_0)+W+1, 
\end{equation}
$(M_i,(g_{i,t})_{t\in [-T_i,0)})$ is $(\epsilon_i,e^{-\frac{s_i}{2}})$-self-similar at $\widetilde{\nu}^i$, $\mathcal{N}_{\widetilde{\nu}^i}(1)\geq -Y$ and that there are solutions $(\varphi_{i,s})_{s\in [0,\infty)}$ to (\ref{eq:normalizedmodifiedflow}) with $h$ replaced by $h_i$, which satisfy $\omega_{i,s} = \omega_{h_i}+\sqrt{-1}\partial \overline{\partial} \varphi_{i,s}$ on $\mathcal{U}$, and $\widetilde{\mathcal{D}}_{h_i}(\varphi_i,s_i) \leq e^{-\frac{2a^2 s_i}{(2+\theta_1)^2}},$ but such that there do not exist $h'\in H$ and $b_0,...,b_{N_-} \in \mathbb{R}$ such that conclusions \ref{prop:decayconclusion1},\ref{prop:decayconclusion2} and the bound \eqref{eq:decayboundsforcoeffsinstatement} hold.  

Set $\kappa_i:= \sqrt{\widetilde{\mathcal{D}}_{h_i}(\varphi_i,s_i)}$, and define $\widehat{\varphi}_{i,s}:=\kappa_{i}^{-1}\varphi_{i,s+s_i}$, so that 
\begin{equation} \label{eq:L2boundalongsequence}
\sup_{s\in[-2,0]}\left(\int_{\Omega_{h_i}(a\sqrt{s_i+s})}\widehat{\varphi}_{i,s}^{2}d\nu_{h_i} \right)^{\frac{1}{2}}\leq 1.
\end{equation}  
Applying Lemma \ref{lem:nonconcentration}, with $\varphi$ replaced by $\varphi_{i,s+s_i-1}$, $D$ replaced by $a\sqrt{s_i+s-1}$, and $\epsilon$ replaced by $\kappa_i$, noting that $f_{s+s_i-1}(x_0)\leq f_{\sol}(x_0)+W+\frac{1}{2}$ by Lemma \ref{lem:improvementofcomparability}, we then have 
\begin{equation} \label{eq:Lpboundalongsequence}
    \sup_{s\in [-1,1]} \left( \int_{\Omega_{h_i}(e^{\frac{1}{4}}D_i)} |\widehat{\varphi}_{i,s}|^{2p_1} d\nu_{h_i} \right)^{\frac{1}{2p_1}} \leq C
\end{equation}
where $p_1>1$ is as in Lemma \ref{lem:nonconcentration}, and 
\begin{equation*}
    D_i :=\sqrt{-4\log \kappa_i} \geq \frac{2}{2+\theta_1}a\sqrt{s_i}
\end{equation*}
by assumption \ref{prop:decayassumption4}.
It follows that
\begin{equation*}
    e^{\frac{1}{4}}D_i\ge \frac{(2+2\theta_1)a\sqrt{s_i}}{2+\theta_1}\ge a\sqrt{s_i+s}
\end{equation*}
for all $s\in [-1,1]$, when $i\in \mathbb{N}$ is sufficiently large.
We rewrite \eqref{eq:normalizedmodifiedflow} as
\begin{align*}
    \partial_s \widehat{\varphi}_{i,s} + \frac{1}{2}X\cdot\widehat{\varphi}_{i,s}-\widehat{\varphi}_{i,s} = & \frac{1}{\kappa_i} \int_0^1 \partial_\tau \log \left( \frac{(\omega_{h_i}+\sqrt{-1}\partial \overline{\partial} (\tau\varphi_{i,s_i+s}))^n}{\omega_{h_i}^n}\right) d\tau \\
    =& \mathcal{F}_{i,s}(\sqrt{-1}\partial \overline{\partial}\widehat{\varphi}_{i,s}),
\end{align*}
where we define the vector bundle homomorphism $\mathcal{F}_i$ by
\begin{equation} \label{eq:ellipticfunctional}
    \mathcal{F}_{i,s}(\alpha):= n\int_0^1 \frac{(\tau\omega_{i,s}+(1-\tau)\omega_{h_i})^{n-1}\wedge \alpha}{(\tau\omega_{i,s}+(1-\tau)\omega_{h_i})^n} d\tau.
\end{equation}
By \eqref{eq:decaycontrahypothesis}, Proposition \ref{prop:modelmetrics}\ref{modelmetrics3}, and Lemma \ref{lem:pseudoapplication} \ref{lem:pseudoapplication1}, we moreover have that $\mathcal{F}_{i,s}$ converges locally smoothly on $M_{\sol} \times [-2,1]$ to $\alpha \mapsto \operatorname{tr}_{\omega_{\sol}}(\alpha)$ as $i\to \infty$. For each compact set $K\subset M_{\sol}$, $\mathcal{F}_{i,s}$ is uniformly elliptic on $K\times [-2,1],$ so we may combine interior estimates \cite[Theorem 7.22]{Lieberman} for linear parabolic equations with \eqref{eq:L2boundalongsequence} and \eqref{eq:Lpboundalongsequence} to get 
\begin{equation*}
\sup_{s\in[-2+\sigma ,1]}\sup_{ \Omega(\sigma^{-1})}|(\nabla^{g_{\sol}})^{k}\widehat{\varphi}_{i,s}|_{g_{\sol}}\leq C_{k}(\sigma)
\end{equation*}
for any $\sigma \in (0,1)$ when $i\geq \underline{i}(\sigma,k)\in\mathbb{N}$. We may therefore pass to a subsequence so that 
\begin{equation} \label{eq:decaysmoothconvergence}
    \widehat{\varphi}_i \to \psi \qquad \text{ in } C_{\operatorname{loc}}^{\infty}(M_{\sol}\times (-2,1])
\end{equation}
for some solution $\psi\in C^{\infty}(M_{\sol}\times (-2,1])$ of the drift heat equation
\begin{equation} \label{eq:driftheateqdecayprop}
    \partial_s \psi_s = \Delta_{\omega_{\sol}}\psi_s - \frac{1}{2}X\cdot\psi_s + \psi_s.
\end{equation}
By \eqref{eq:L2boundalongsequence},\eqref{eq:Lpboundalongsequence}, and \eqref{eq:decaysmoothconvergence}, we have 
\begin{equation} \label{eq:decayLpboundsonlimit}
    \sup_{s\in (-2,0]} \int_{M_{\sol}} \psi_s^2 d\nu_{\sol} \leq 1, \qquad \sup_{s\in [-1,1]} \int_{M_{\sol}}|\psi_s|^{2p_1} d\nu_{\sol} \leq C.
\end{equation}
By \eqref{eq:decayLpboundsonlimit}, Lemma \ref{lem:lineartheory} and Lemma \ref{lem:decayfordriftheateq}, we can define 
\begin{equation*} \psi_s^{\leq 0} = k+\sum_{\alpha=0}^{N_-} e^{-(\lambda_{\alpha}-1)s}b_{\alpha}\Psi_{\alpha},\quad s\in (-2,1] \end{equation*}
where $k\in H$, $N_- \leq N_0$ satisfies $\lambda_{N_-} \leq1$, 
\begin{equation*}
    \|k\|_H^2 + \sum_{\alpha=0}^{N_-}b_{\alpha}^2\leq 1,
\end{equation*}
and $\Psi_{\alpha}$ are pluriharmonic functions satisfying $\mathcal{L}_{\frac{1}{2}X}\Psi_{\alpha} = \lambda_{\alpha} \Psi_{\alpha}$ such that $\psi^{>0}:=\psi-\psi^{\le0}$ satisfies:
\begin{equation}\label{eq: decay of L^2-psi>0}
    \left(\sup_{s\in [-1,1]} \int_{M_{\sol}} (\psi_s^{>0})^2 d\nu_{\sol} \right)^{\frac{1}{2}} \leq e^{-(\lambda_{N_0+1}-1)} \left( \sup_{s\in (-2,0]} \int_{M_{\sol}} (\psi_s^{>0})^2 d\nu_{\sol} \right)^{\frac{1}{2}} \leq e^{-(\lambda_{N_0+1}-1)}.
\end{equation}
 Define \begin{equation*} \psi_{i,s} := \varphi_{i,s} -\left( u_{h_i+\kappa_i k} -u_{h_i}+ \kappa_i\sum_{\alpha=0}^{N_-} e^{-(\lambda_{\alpha}-1)(s-s_i)}b_{\alpha}\Psi_\alpha  \right),\quad s\in [s_i-2,s_i+1]\end{equation*} 
so that
\begin{equation} \label{eq:changingmodels}
    \omega_{h_i+\kappa_i k} + \sqrt{-1}\partial \overline{\partial}\psi_{i,s} = \omega_{i,s},
\end{equation}
holds on $\mathcal{U}$.
By Proposition \ref{prop:modelidentities} \ref{prop:modelidentities4}, \eqref{eq:changingmodels}, and $\mathcal{L}_{\frac{1}{2}X}\Psi_{\alpha} = \lambda_{\alpha} \Psi_{\alpha}$, we moreover have
\begin{align*}
    \partial_s \psi_{i,s} =& \log\left( \frac{\omega_s^n}{\omega_{h_i+\kappa_i k}^n} \right)-\frac{1}{2}X\cdot\psi_{i,s}+\psi_{i,s}+\log \left( \frac{\omega_{h_i+\kappa_i k}^n}{\omega_{h_i}^n} \right) +\kappa_i \sum_{\alpha=0}^{N_-} (\lambda_{\alpha}-1)e^{-(\lambda_{\alpha}-1)(s-s_i)}b_{\alpha}\Psi_{\alpha} \\
    &-\frac{1}{2}X\cdot\left( u_{h_i+\kappa_i k}-u_{h_i} + \kappa_i \sum_{\alpha=0}^{N_-} e^{-(\lambda_{\alpha}-1)(s-s_i)}b_{\alpha}\Psi_{\alpha} \right)+  u_{h_i+\kappa_i k}-u_{h_i} + \kappa_i \sum_{\alpha=0}^{N_-} e^{-(\lambda_{\alpha}-1)(s-s_i)}b_{\alpha}\Psi_{\alpha} \\
    =& \log\left( \frac{\omega_s^n}{\omega_{h_i+\kappa_i k}^n} \right)-\frac{1}{2}X\cdot\psi_{i,s}+\psi_{i,s}
    + \log \left( \frac{e^{u_{h_i+\kappa_i k}-\frac{1}{2}Xu_{h_i+\kappa_i k}}\omega_{h_i+\kappa_i k}^n}{e^{u_{h_i}-\frac{1}{2}X\cdot u_{h_i}}\omega_{h_i}^n} \right) \\
    &+ \kappa_i \sum_{\alpha=0}^{N_-} e^{-(\lambda_{\alpha}-1)(s-s_i)}b_{\alpha}(\lambda_{\alpha} \Psi_{\alpha} - \frac{1}{2}X\cdot\Psi_{\alpha} )\\
    =& \log\left( \frac{(\omega_{h_i+\kappa_i k}+\sqrt{-1}\partial \overline{\partial}\psi_{i,s})^n}{\omega_{h_i+\kappa_i k}^n} \right)-\frac{1}{2}X\cdot\psi_{i,s}+\psi_{i,s}.
\end{align*}
Next, we define $\widehat{\psi}_{i,s}:= \kappa_i^{-1}\psi_{i,s+s_i}$, so that 
\begin{equation} \label{eq:decaysmoothconvergenceofmod}
    \widehat{\psi}_{i,s}-\psi_s^{>0} = (\widehat{\varphi}_{i,s}-\psi_s)-\left( \frac{u_{h_i+\kappa_i k}-u_{h_i}}{\kappa_i}-k \right) \to 0 \qquad \text{ in } C_{\operatorname{loc}}^{0}(M_{\sol} \times [-1,1])
\end{equation}
as $i\to \infty$ by \eqref{eq:decaysmoothconvergence} and Proposition \ref{prop:modelmetrics} \ref{modelmetrics4}. For any fixed $A \in (1,\infty)$, \eqref{eq:decaysmoothconvergenceofmod} yields
\begin{align} \label{eq:decayconvergencefixedradius}
    \sup_{s\in [-1,1]} \left| \int_{\Omega(A)} \widehat{\psi}_{i,s}^2 d\nu_{h_i+\kappa_i k} - \int_{\Omega(A)} (\psi_s^{>0})^2 d\nu_{\sol} \right| \leq \Psi(i^{-1}|A).
\end{align}
We are then left to check uniform control of the tails. Note that, on $\Omega_{h_i+\kappa_ik}(a\sqrt{s_i+s}),$ we have $f_{\sol}\leq Cs_i,$ while $\kappa_i\leq e^{-cs_i},$ so the pointwise estimates in Proposition \ref{prop:modelmetrics} \ref{modelmetrics1} yield
\begin{equation*}
\sup_{\Omega_{h_i+\kappa_i k}(a\sqrt{s_i+s})} |u_{h_i+\kappa_ik}-u_{h_i}|\leq Ce^{-cs_i}(1+s_i)\to 0
\end{equation*}
as $i\to \infty$. This, together with \eqref{eq:Lpboundalongsequence}, Proposition \ref{prop:modelidentities} \ref{prop:modelidentities4}, and H\"older's inequality, gives us
\begin{equation} \label{eq:decaytailalongsequence}
\begin{split}
     \hspace{-10 mm}\sup_{s\in [-1,1]} \int_{\Omega_{h_i+\kappa_i k}(a\sqrt{s_i+s})\setminus \Omega(A)} \widehat{\varphi}_{i,s}^2 d\nu_{h_i+\kappa_i k} \leq&C\left( \int_{M_{\sol}\setminus \Omega_{h_i}(\frac{1}{2}A)} e^{p'(u_{h_i}-u_{h_i+\kappa_ik})}d\nu_{h_i} \right)^{\frac{1}{p'}}  \leq  \Psi(A^{-1}),
\end{split}
\end{equation}
where $p':=\frac{p_1}{p_1-1}$. Now, given the correction term
\begin{equation*}
\Theta_{i,s}:=\left( \frac{u_{h_i+\kappa_i k}-u_{h_i}}{\kappa_i} \right) + \sum_{\alpha=0}^{N_-} e^{-(\lambda_{\alpha}-1)s}b_{\alpha}\Psi_\alpha,
\end{equation*}
we have $\widehat{\psi}_{i,s}=\widehat{\varphi}_{i,s}-\Theta_{i,s}.$ Hence, Proposition \ref{prop:modelmetrics} and Lemma \ref{lem:eigenfunctiongrowth} \ref{lem:eigenfunctiongrowth1} yield 
\begin{align*}
    |\Theta_{i,s}| \leq C\|k\|_H f_{\sol}+C\frac{1}{\kappa_i}\|\kappa_i k\|_H(\kappa_i\|k\|_H+\|h_i\|_H) f_{\sol}+ C\sum_{\alpha=0}^{N_-}f_{\sol}^{\lambda_{\alpha}}
    \leq C\left( \int_{M_{\sol}} \psi_s^2 d\nu_{\sol} \right)^{\frac{1}{2}} f_{\sol}\leq Cf_{\sol}
\end{align*}
for all $s\in [-1,1]$, we can argue as before to obtain
\begin{equation}\label{eq: decaytailcorrectionterm}
    \sup_{s\in [-1,1]} \int_{\Omega_{h_i+\kappa_i k}(a\sqrt{s_i+s})\setminus \Omega(A)} |\Theta_{i,s}|^2 d\nu_{h_i+\kappa_i k} \leq \Psi(A^{-1}).
\end{equation}
Similarly, passing \eqref{eq:decayLpboundsonlimit} to the limit and using H\"older's inequality yields
\begin{equation}\label{eq:decaytailonlimit} 
\sup_{s\in [-1,1]} \int_{M_{\sol}\setminus \Omega(A)} \psi_s^2 d\nu_{\sol} \leq \Psi(A^{-1}).
\end{equation}
Using $|\psi_s-\psi_s^{>0}|\leq Cf_{\sol}$ for all $s\in [-1,1]$, and arguing as above, we obtain from
\eqref{eq:decayconvergencefixedradius}, \eqref{eq:decaytailalongsequence}, \eqref{eq: decaytailcorrectionterm}, \eqref{eq:decayLpboundsonlimit}, and \eqref{eq:decaytailonlimit} that
\begin{equation}\label{eq:convergenceofmodifiedpotentials}
    \lim_{i \to \infty} \sup_{s\in [-1,1]}\left| \int_{\Omega_{h_i+\kappa_i k}(a\sqrt{s_i+s}) }\widehat{\psi}_{i,s}^2 d\nu_{h_i+\kappa_i k} - \int_{M_{\sol}}(\psi_s^{>0})^2 d\nu_{\sol} \right| =0.
\end{equation}
Combining \eqref{eq: decay of L^2-psi>0} and \eqref{eq:convergenceofmodifiedpotentials}, we conclude that
\begin{equation*}
  \widetilde{\mathcal{D}}_{h_i+\kappa_ik}(\psi_i,s_i+1)= \sup_{s\in [s_i-1,s_i+1]}\left( \int_{\Omega_{h_i+\kappa_i k}(a\sqrt{s})} \psi_{i,s}^2 d\nu_{h_i+\kappa_i k} +e^{-\frac{(1+\theta_1)a^2s}{2}}\right) \leq e^{-\frac{a^2}{2}} \kappa_i^2
\end{equation*}
for sufficiently large $i\in \mathbb{N}$, yielding the desired contradiction.

We have thus shown that there exists $\varphi'$ satisfying conclusions \ref{prop:decayconclusion1}-\ref{prop:decayconclusion3}. It remains to show that \ref{prop:decayconclusion4} is a consequence of \ref{prop:decayconclusion1}-\ref{prop:decayconclusion3}.
Applying Lemma \ref{lem:nonconcentration} as in the derivation of \eqref{eq:Lpboundalongsequence}, and using H\"older's inequality, we obtain
\begin{equation*}
   \sup_{s\in [s_0-1,s_0+1]}\int_{\Omega_h(e^{\frac{1}{4}}\widetilde{D})}|\varphi_s|^{2p_1}d\nu_h\le C(g_{\sol},Y)\widetilde{\mathcal{D}}_h^{p_1}(\varphi,s_0),
\end{equation*}
where $\widetilde{D}^2=-2\log\widetilde{\mathcal{D}}_h(\varphi,s_0)\ge \frac{4a^2s_0}{(2+\theta_1)^2}$. In this case, if $s_0>2$ is sufficiently large, for all $s\in [s_0-2,s_0]$, we then have
\begin{equation*}
   \Omega_h \left((1+\frac{\theta_1}{4+\theta_1})a\sqrt{s+1} \right)\subset \Omega_h \left(\frac{2+2\theta_1}{2+\theta_1}a\sqrt{s}\right)\subset  \Omega_h(e^{\frac{1}{4}}\widetilde{D}).
\end{equation*}
Setting $\ell=1+\frac{\theta_1}{4+\theta_1}$,
we conclude that
\begin{equation*} 
   \sup_{s\in[s_0-1,s_0+1]} \int_{\Omega_h(a\ell\sqrt{s})}|\varphi_s|^{2p_1}\omega_h^n\leq (2\pi)^n\sup_{s\in [s_0-1,s_0+1]}e^{\frac{a^2\ell^2}{2}s}\int_{\Omega_h(a\ell\sqrt{s})}|\varphi_s|^{2p_1}d\nu_h\le C(g_{\sol},Y)e^{\frac{a^2\ell^2}{2}s_0}\widetilde{\mathcal{D}}^{p_1}_h(\varphi,s_0).
\end{equation*}
Combining our hypotheses $\widetilde{\mathcal{D}}_h(\varphi,s_0)\le e^{-\frac{2a^2s_0}{(2+\theta_1)^2}}$ and $\widetilde{\mathcal{D}}_h(\varphi,s_0)\le \delta$ yields
\begin{equation*}
    \widetilde{\mathcal{D}}^{p_1}_h(\varphi,s_0)\le e^{-\frac{a^2 \ell^2s_0}{2}}\delta^{p_1-\frac{\ell^2(2+\theta_1)^2}{4}}.
\end{equation*}
Hence, we may combine expressions to obtain
\begin{equation} \label{eq:Lpnormsmallindecay}
     \sup_{s\in [s_0-1,s_0+1]}\int_{\Omega_h(a\ell \sqrt{s})}|\varphi_s|^{2p_1}\omega_h^n\le C\delta^{p_1-\frac{\ell^2(2+\theta_1)^2}{4}} \leq C(g_{\sol},Y)\delta^{\gamma_0},
\end{equation}
where $\gamma_0:=p_1-\frac{\ell^2(2+\theta_1)^2}{4}$. On the other hand, assumption \ref{prop:decayassumption3} and Lemma \ref{lem:pseudoapplication} give
\begin{equation} \label{eq:metriccloseindecay}
     \sup_{s\in[s_0-1,s_0+1]} \|\omega_{s}-\omega_h\|_{C^{10}(\Omega_h(a\ell \sqrt{s}),g_h)}\le  \sup_{s\in[s_0-1,s_0+1]} \|\omega_s-\omega_h\|_{C^{10}(\Omega_h(a(1+\theta_1)\sqrt{s-1}),g_h)}\le \Psi(\epsilon_2|g_{\sol},Y).
\end{equation}
Taking $\epsilon_2 \leq \overline{\epsilon}_2(g_{\sol},Y)$, we use \eqref{eq:Lpnormsmallindecay} and \eqref{eq:metriccloseindecay} to apply Lemma \ref{lem:L2toLinfty} with $r=(\epsilon_2^{-1}\delta^{\frac{\gamma_0}{2p_1}})^{\frac{1}{2+\frac{n}{p_1}}}$ to get
\begin{equation} \label{eq:pointwisesmallindecay}
    \sup_{s\in [s_0-1,s_0+1]} \sup_{\Omega_h(a\ell \sqrt{s}-\frac{1}{2})} |\varphi_s|\leq C(g_{\sol},Y)\left( \frac{\delta^{\frac{\gamma_0}{2p_1}}}{\epsilon_2}\right)^{\frac{2}{2+\frac{n}{p_1}}}.
\end{equation}
Next, we combine \eqref{eq:pointwisesmallindecay} and \eqref{eq:metriccloseindecay} with Lemma \ref{lem: schauder estimates} to obtain
\begin{equation} \label{eq:higherorderindecay} \sup_{s\in [s_0-1,s_0+1]} \|\varphi_s\|_{C^8(\Omega_h(a\ell \sqrt{s}-\frac{3}{4}),g_h)} \leq 1
\end{equation}
if $\epsilon_2 \leq \overline{\epsilon}_2(g_{\sol},Y).$ By \eqref{eq:higherorderindecay}, \eqref{eq:pointwisesmallindecay}, and Lemma \ref{lem:interpolation}, 
we conclude that
\begin{equation*}
   \begin{split}
        \sup_{s\in[s_0-1,s_0+1]}\|\varphi_s\|_{C^{4}(\Omega_h(a\ell \sqrt{s}-1),g_h)} \leq \Psi(\delta|g_{\sol},Y)
   \end{split}
\end{equation*}
if $\epsilon_2 \leq \overline{\epsilon}_2(g_{\sol},Y)$. In particular, we have
\begin{equation*}
   \begin{split}
        \sup_{s\in[s_0-1,s_0+1]}\|\omega_s-\omega_h\|_{C^{2}(\Omega_h(a\ell \sqrt{s}-1),g_{h})}
        \le \Psi(\delta).
   \end{split}
\end{equation*}
If $\epsilon_2 \leq \overline{\epsilon}_2(g_{\sol})$, then \eqref{eq:containmentofsublevelsets} gives $\Omega_{h'}(a\frac{1+\ell}{2}\sqrt{s})\subset \Omega_h( a\ell \sqrt{s}-1)$, hence
\begin{equation*}
   \begin{split}
        \sup_{s\in[s_0-1,s_0+1]}\|\omega_s-\omega_{h'}\|_{C^{2}(\Omega_{h'}(a\frac{1+\ell}{2}\sqrt{s}),g_{h'})}
        \le \Psi(\delta)+\sup_{s\in[s_0-1,s_0+1]}\|\omega_h-\omega_{h'}\|_{C^{2}(\Omega_{h'}(a\frac{1+\ell}{2}\sqrt{s}),g_{h'})}.
   \end{split}
\end{equation*}
Combining Proposition \ref{prop:modelmetrics} \ref{modelmetrics3} and $\|h-h'\|_H^2 \leq C\widetilde{\mathcal{D}}_h(\varphi,s_0) \leq C\delta$, we get
\begin{equation*}
   \begin{split}
        \sup_{s\in[s_0-1,s_0+1]}\|\omega_s-\omega_{h'}\|_{C^{2}(\Omega_{h'}(a\frac{1+l}{2}\sqrt{s}),g_{h'})}
        \le \Psi(\delta).
   \end{split}
\end{equation*}
The claim then follows by taking $\delta \leq \overline{\delta}(\epsilon_2)$ and applying Lemma \ref{lem:pseudoapplication} \ref{lem:pseudoapplication1}.
\end{proof}

We also prove an analogous result in the setting of compact Ricci shrinkers, whose proof is again similar to the asymptotically conical case, but easier. For $\psi \in C^{\infty}(M_{\sol})$, $\varphi \in C^{\infty}(M_{\sol}\times [0,\infty))$, we define
\begin{equation*}
    \mathcal{D}_{c,h}(\psi):=\int_{M_{\sol}} \psi^2 d\nu_{h}, \qquad \widetilde{\mathcal{D}}_{c,h}(\varphi,s):=\sup_{s'\in [s-2,s]}\mathcal{D}_{c,h}(\varphi_{s'}).
\end{equation*}

\begin{prop} \label{prop:compactdecay}
Suppose $(M^n,J,(\widetilde{\omega}_t)_{t\in [-1,0)})$ is a compact K\"ahler-Ricci flow with $\omega_{-1}\in c_1(M)$, and suppose there is a K\"ahler-Ricci shrinker $(g_{\sol},f_{\sol})$ on $(M,J)$. Then there exist $a_c,\epsilon_c>0$ depending only on this data such that the following holds. 

Suppose that for some $s_0\geq \epsilon_c^{-1}$, there exists $h\in H$ and a solution $\varphi$ of \eqref{eq:normalizedmodifiedflow} on $M \times [0,\infty)$ satisfying:
\begin{enumerate}
    \item \label{prop:compactdecayassumption1} $\omega_{s}:= e^{s}\phi_{-s}^{\ast}\widetilde{\omega}_{-e^{-s}}=\omega_{h}+\sqrt{-1}\partial\overline{\partial}\varphi_{s}$ on $M_{\sol}\times [0,\infty)$,
    
    \item \label{prop:compactdecayassumption2} $\|h\|_{H}<\epsilon_c$,

    \item \label{prop:compactdecayassumption3} $\sup_{s\in [s_0-2,s_0]} \|\omega_{s}-\omega_h\|_{C^{\lfloor \epsilon_c^{-1} \rfloor}(M,g_h)}<\epsilon_c$,
    
    \item \label{prop:compactdecayassumption4} $\widetilde{\mathcal{D}}_{c,h}(\varphi,s_0)\le \epsilon_c$,
\end{enumerate}
\noindent Then there exist $h'\in H$ and $b\in \mathbb{R}$ such that
\begin{equation} \label{eq:compactdecayboundsforcoeffsinstatement} \|h-h'\|^2_H +b^2 \leq 2\widetilde{\mathcal{D}}_{c,h}(\varphi,s_0),    
\end{equation}
and such that
\begin{equation*}
    \varphi_s':= \varphi_s +(u_{h}-u_{h'})+ e^{s-s_0}b
\end{equation*}
is a solution of \eqref{eq:modifiedKRFprime} such that the following hold:
\begin{enumerate}[label=(\alph*)]
    \item \label{prop:compactdecayconclusion1} $\omega_s = \omega_{h'}+\sqrt{-1}\partial \overline{\partial} \varphi_s'$ on $M\times [0,\infty)$

    \item \label{prop:compactdecayconclusion2} $\widetilde{\mathcal{D}}_{c,h'}(\varphi',s_0+1)\leq e^{-\frac{a_c^2}{2}}\widetilde{\mathcal{D}}_{c,h}(\varphi,s_0)$,

    \item \label{prop:compactdecayconclusion4} there exists $\delta_c=\delta_c(\epsilon_c)>0$ such that if in addition $\widetilde{\mathcal{D}}_{c,h}(\varphi,s_0)\leq \delta_c$, then 
    \begin{equation*}
        \sup_{s\in [s_0-1,s_0+1]}\|\omega_s-\omega_{h'}\|_{C^{\lfloor \epsilon_c^{-1} \rfloor}(M_{\sol},g_{h'})}<\epsilon_c.
    \end{equation*}
\end{enumerate}
\end{prop}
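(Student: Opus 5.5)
This follows the proof of Proposition \ref{prop:decay}, but there is no spatial infinity and no self-similarity or entropy hypothesis. Set $a_c^2:=\frac{1}{2}(\lambda_{N_0+1}-1)$, with $\lambda_{N_0+1}$ the first eigenvalue of $-\Delta_{f_{\sol}}$ above $1$ from Lemma \ref{lem:eigenvalues}. On a compact manifold the only pluriharmonic functions are constants, so in Lemma \ref{lem:eigenvalues} we have $N_-=0$ and $\Psi_0=1$ with $\lambda_0=0$. The slow and neutral modes are therefore the constant mode, whose growth is $e^{s}$, and the space $H$. This explains the correction $e^{s-s_0}b$ in the statement.

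Conclusions \ref{prop:compactdecayconclusion1} and \ref{prop:compactdecayconclusion2} are proved by contradiction. Take flows, times $s_i\to\infty$, $h_i\in H$ with $\|h_i\|_H\to 0$, and potentials $\varphi_i$ with $\epsilon_c\to 0$ in hypotheses \ref{prop:compactdecayassumption2}--\ref{prop:compactdecayassumption4}, such that no admissible $(h',b)$ exists. Set $\kappa_i^2:=\widetilde{\mathcal{D}}_{c,h_i}(\varphi_i,s_i)$ and $\widehat\varphi_{i,s}:=\kappa_i^{-1}\varphi_{i,s+s_i}$. Then $\sup_{s\in[-2,0]}\|\widehat\varphi_{i,s}\|_{L^2(\nu_{h_i})}\le 1$. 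Lemma \ref{lem:nonconcentrationcompact}, applied at $s_i+s-1$, gives uniform $L^{2p_c}$ bounds on $[-1,1]$.

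Rewrite the equation as $\partial_s\widehat\varphi_i+\frac12X\cdot\widehat\varphi_i-\widehat\varphi_i=\mathcal F_{i,s}(\sqrt{-1}\partial\overline\partial\widehat\varphi_i)$, with $\mathcal F_{i,s}$ as in \eqref{eq:ellipticfunctional}. By hypothesis \ref{prop:compactdecayassumption3}, Proposition \ref{prop:modelmetrics}\ref{modelmetrics3}, and pseudolocality (trivial here by compactness, using Shi-type estimates forward in time), the operators $\mathcal F_{i,s}$ converge smoothly to $\operatorname{tr}_{\omega_{\sol}}$. Parabolic interior estimates then yield $C^\infty$ convergence to a solution $\psi$ of \eqref{eq:driftheateqdecayprop} on $M_{\sol}\times(-2,1]$. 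Since $M_{\sol}$ is compact, the $L^2$ norms converge directly and no tail estimate is needed.

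Lemma \ref{lem:decayfordriftheateq} gives the expansion $\psi_s=e^{s}b+k+\psi^{>0}_s$, where $k\in H$, $b^2+\|k\|_H^2\le 1$, and $\|\psi_s^{>0}\|_{L^2}$ decays by the factor $e^{-(\lambda_{N_0+1}-1)}$ from $(-2,0]$ to $[-1,1]$. Put $h':=h_i+\kappa_i k$ and $b_i:=\kappa_i b$, and define
\begin{equation*}
\psi_{i,s}:=\varphi_{i,s}-(u_{h'}-u_{h_i})-\kappa_i e^{s-s_i}b .
\end{equation*}
Proposition \ref{prop:modelidentities}\ref{prop:modelidentities4} shows that $\psi_{i,s}$ solves \eqref{eq:modifiedKRFprime} with reference metric $\omega_{h'}$, by the same computation as before. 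Constants are harmless since $X\cdot 1=0$. Proposition \ref{prop:modelmetrics}\ref{modelmetrics4} gives $\kappa_i^{-1}(u_{h'}-u_{h_i})\to k$ uniformly, because $f_{\sol}$ is bounded. Hence $\kappa_i^{-1}\psi_{i,s_i+s}\to\psi^{>0}_s$ uniformly. Since also $\nu_{h'}\to\nu_{\sol}$, for large $i$ we obtain $\widetilde{\mathcal D}_{c,h'}(\psi_i,s_i+1)\le e^{-a_c^2/2}\kappa_i^2$ and $\|k\kappa_i\|^2_H+b_i^2\le\kappa_i^2\le 2\kappa_i^2$. This contradicts the choice of the sequence.

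For \ref{prop:compactdecayconclusion4}, combine the $L^{2p_c}$ smallness from Lemma \ref{lem:nonconcentrationcompact} with Lemma \ref{lem:L2toLinfty} at unit scale, using $\Delta_{g_h}\varphi_s=\operatorname{tr}_{g_h}(g_s-g_h)$. This gives $\sup|\varphi_s|\le C\delta_c^{1/2}$ on $[s_0-1,s_0+1]$. Lemma \ref{lem: schauder estimates} gives uniform higher bounds, and Lemma \ref{lem:interpolation} then makes $\|\omega_s-\omega_h\|_{C^{\lfloor\epsilon_c^{-1}\rfloor}}$ small. We pass from $\omega_h$ to $\omega_{h'}$ using Proposition \ref{prop:modelmetrics}\ref{modelmetrics3} together with $\|h-h'\|_H\le C\delta_c^{1/2}$. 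The main obstacle I expect is the compactness step: obtaining the uniform $L^{2p_c}$ control and local parabolic regularity needed to pass to the linear limit. Lemma \ref{lem:nonconcentrationcompact} supplies this control, and the remaining steps are routine.
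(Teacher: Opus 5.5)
Your argument for (a) and (b) is essentially the paper's proof. It uses the same $a_c^2=\frac12(\lambda_{N_0+1}-1)$ and the same blow-up with $L^{2p_c}$ control from Lemma \ref{lem:nonconcentrationcompact}. It uses the same splitting $\psi_s=k+e^sb+\psi_s^{>0}$, since pluriharmonic functions on compact $M$ are constant. And it uses the same gauge change $h'=h_i+\kappa_ik$, justified by Proposition \ref{prop:modelidentities}\ref{prop:modelidentities4} and Proposition \ref{prop:modelmetrics}\ref{modelmetrics4}. (With your definition of $\psi_{i,s}$, the constant appearing in the statement is $-\kappa_ib$ rather than $\kappa_ib$; this is harmless since only $b^2$ enters.)

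The gap is in (c), at the step ``Lemma \ref{lem:L2toLinfty} at unit scale gives $\sup|\varphi_s|\le C\delta_c^{1/2}$.'' At unit scale the lemma gives only
\[
\sup|\varphi_s|\le C\bigl(\|\varphi_s\|_{L^2}+\sup|\operatorname{tr}_{g_h}(g_s-g_h)|\bigr)\le C(\delta_c^{1/2}+\epsilon_c).
\]
The inhomogeneous term is controlled only by the a priori closeness from hypothesis (iii) and pseudolocality. That closeness is of size comparable to $\epsilon_c$, which is fixed before $\delta_c$ and does not improve as $\delta_c\to0$. Feeding this into Lemma \ref{lem:interpolation} yields only $\|\omega_s-\omega_h\|_{C^j}\lesssim(\delta_c^{1/2}+\epsilon_c)^{1/2}$, which is not below $\epsilon_c$. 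So you cannot return under the same threshold $\epsilon_c$, and that is exactly what (c) must deliver for the iteration in Theorem \ref{thm:main3}.

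The fix, which the paper uses by referring to the proof of Proposition \ref{prop:decay}\ref{prop:decayconclusion4}, is to apply Lemma \ref{lem:L2toLinfty} at a $\delta_c$-dependent radius $r$. This gives $\sup|\varphi_s|\le C(r^{-n}\delta_c^{1/2}+r^2\epsilon_c)$, and the choice $r=\delta_c^{1/(2n+4)}$ yields $\sup|\varphi_s|\le C\delta_c^{1/(n+2)}$. Only then do the Schauder estimate and interpolation give $\|\omega_s-\omega_h\|_{C^2}\le\Psi(\delta_c)$. After that, Proposition \ref{prop:modelmetrics}\ref{modelmetrics3} with $\|h-h'\|_H\le C\delta_c^{1/2}$, together with pseudolocality, gives the claim.

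Note also that hypothesis (iii) says nothing about $s\in[s_0,s_0+1]$. The derivative bounds on $\omega_s-\omega_h$ needed for the Schauder step on that interval must come from Lemma \ref{lem:pseudoapplication}\ref{lem:pseudoapplication1}, as in the paper. Your ``uniform higher bounds'' implicitly assume this.
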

\begin{proof}
    The proof of \ref{prop:compactdecayconclusion1}, \ref{prop:compactdecayconclusion2} proceeds by contradiction as in Proposition \ref{prop:decay}. Set $a_c^2:=\frac{1}{2}(\lambda_{N_0+1}-1)$, where $\lambda_{N_0+1}$ is as in Lemma \ref{lem:eigenvalues}. Suppose by way of contradiction there exist $\epsilon_i \searrow 0$, $s_i \in [\epsilon_i^{-1},\infty)$, $h_i \in H$, and $\varphi_i \in C^{\infty}(M\times[0,\infty))$ such that $\|h_i\|_H <\epsilon_i$, $\varphi_i$ solves \eqref{eq:normalizedmodifiedflow} with $h$ replaced by $h_i$, and
    \begin{equation*}
        \sup_{s\in [s_i-2,s_i]} \|\omega_s-\omega_{h_i}\|_{C^{\lfloor \epsilon_i^{-1}\rfloor}(M,g_{h_i})}<\epsilon_i, \qquad \kappa_i^2:=\widetilde{\mathcal{D}}_{c,h_i}(\varphi_i,s_i)\leq \epsilon_i,
    \end{equation*}
    but there are no $h'\in H$ and $b\in \mathbb{R}$ with $\|h_i-h'\|_H^2 +b^2 \leq 2\kappa_i^2$ such that $\varphi_s':=\varphi_{i,s}+(u_{h_i}-u_{h'})+e^{s-s_i}b$ satisfies 
    \begin{equation*}
        \omega_s = \omega_{h'}+\sqrt{-1}\partial \overline{\partial} \varphi_s', \qquad \widetilde{\mathcal{D}}_{c,h'}(\varphi',s_i+1)\leq e^{-\frac{a_c^2}{2}}\kappa_i^2.
    \end{equation*}
Set $\widehat{\varphi}_{i,s}:=\kappa_i^{-1}\varphi_{i,s+s_i}$, and apply Lemma \ref{lem:nonconcentrationcompact} with $\varphi$ replaced by $\varphi_{i,s+s_i-1}$, obtaining
\begin{equation} \label{eq:compactLpbounds}
\sup_{s\in [-2,0]} \int_M \widehat{\varphi}_{i,s}^2 d\nu_{h_i} \leq 1, \qquad \sup_{s\in [-1,1]}\int_M |\widehat{\varphi}_{i,s}|^{2p_c}d\nu_{h_i}\leq A_c,
\end{equation}
where $p_c,A_c$ are as in Lemma \ref{lem:nonconcentrationcompact}. Recalling that
\begin{equation*}
    \partial_s \widehat{\varphi}_{i,s} = \mathcal{F}_{i,s}(\sqrt{-1}\partial \overline{\partial}\widehat{\varphi}_{i,s})-\frac{1}{2}X\widehat{\varphi}_{i,s}+\widehat{\varphi}_{i,s},
\end{equation*}
where $\mathcal{F}_{i,s}$ are the uniformly elliptic operators defined in \eqref{eq:ellipticfunctional}, we may combine Lemma \ref{lem:pseudoapplication} \ref{lem:pseudoapplication1} and interior parabolic estimates \cite[Theorem 7.22]{Lieberman} with \eqref{eq:compactLpbounds} to pass to a subsequence so that $\widehat{\varphi}_i \to \psi$ in $C_{\operatorname{loc}}^{\infty}(M\times (-2,1])$ for some solution $\psi \in C^{\infty}(M\times (-2,1])$ of \eqref{eq:driftheateqdecayprop} satisfying
\begin{equation}
    \sup_{s\in (-2,0]} \int_M \psi_s^2 d\nu_{\sol} \leq 1, \qquad \sup_{s\in [-1,1]} \int_M |\psi_s|^{2p_c} d\nu_{\sol} \leq C(g_{\sol}).
\end{equation}
Because the only pluriharmonic functions on $M$ are the constants, there exist $b\in \mathbb{R}$ and $k \in H$ such that 
\begin{equation*} \|k\|_H^2 + b^2 \leq 1,
\end{equation*}
and such that
$\psi_s^{\leq 0}:= k+e^sb$, $\psi^{>0}:=\psi-\psi^{\leq 0}$ satisfy \eqref{eq: decay of L^2-psi>0},
\begin{equation*}
    \varphi_{s}':=\varphi_{i,s}-\left( u_{h_i+\kappa_i k}-u_{h_i} + \kappa_i e^{s-s_i}b \right)
\end{equation*}
satisfy $\omega_s=\omega_{h_i+\kappa_i k}+\sqrt{-1}\partial \overline{\partial} \varphi_s'$, and \ref{prop:compactdecayconclusion1} holds. Moreover, from $\widehat{\varphi}_i \to \psi$ and Proposition \ref{prop:modelmetrics} \ref{modelmetrics4}, 
\begin{equation*}
    \widehat{\varphi}_{i,s}':=\kappa_i^{-1}\varphi_{i,s_i+s}' \to \psi_s^{>0} \quad \text{ in } \quad C_{\operatorname{loc}}^0(M\times [-1,1])
\end{equation*}
as $i\to \infty$, so that $\widetilde{\mathcal{D}}_{c,h_i+\kappa_i k}(\varphi_i',s_i+1) \leq e^{-\frac{a_c^2}{2}}\kappa_i^2$, yielding a contradiction.

It remains to show that \ref{prop:compactdecayconclusion4} holds. By Lemma \ref{lem:nonconcentrationcompact}, we have
\begin{equation} \label{prop:decaycompactpartc1}
    \sup_{s\in [s_0-1,s_0+1]} \int_M |\varphi_s|^{2p_c} d\nu_h \leq C\widetilde{\mathcal{D}}_{c,h}^{p_c}(\varphi,s_0),
\end{equation}
and by Lemma \ref{lem:pseudoapplication} \ref{lem:pseudoapplication1}, we have
\begin{equation} \label{prop:decaycompactpartc2}
    \sup_{s\in [s_0-1,s_0+1]} \|\omega_s-\omega_h\|_{C^j(M,g_h)} \leq \Psi(\epsilon_c|j).
\end{equation}
After possibly shrinking $\epsilon_c$, and recalling that $\sup_{h\in H}|f_h|\leq C(g_{\sol})$, we may therefore use \eqref{prop:decaycompactpartc1},\eqref{prop:decaycompactpartc2} and apply Lemma \ref{lem:L2toLinfty}, Lemma \ref{lem: schauder estimates}, and Lemma \ref{lem:interpolation} as in the proof of Proposition \ref{prop:decay} \ref{prop:decayconclusion4} to obtain
\begin{equation*}
    \sup_{s\in [s_0-1,s_0+1]} \|\omega_s-\omega_h\|_{C^2(M,g_h)} \leq \Psi(\delta_c).
\end{equation*}
Combining this with Proposition \ref{prop:modelmetrics} \ref{modelmetrics3} and $\|h-h'\|_H \leq C\delta_c^{\frac{1}{2}}$ yields the claim. 
\end{proof}

\section{Proofs of main theorems}

\label{section:mainproofs}

We now prove Theorem \ref{thm:main1} by iterating the decay estimate (Proposition \ref{prop:decay}) and combining with pseudolocality and the estimates for relative potentials of model metrics proved in Proposition \ref{prop:modelmetrics}.
In more detail, we first use the assumption that the given flow $(M,J,(\widetilde{g}_t)_{t\in [-T,0)},\widetilde{\nu})$ can be holomorphically approximated by AC K\"ahler-Ricci shrinkers to guarantee that the hypotheses of Proposition \ref{prop:decay} are satisfied for sufficiently large rescalings of the flow. Starting at a sufficiently large rescaling, we then iterate Proposition \ref{prop:decay} infinitely many times, obtaining a sequence $\varphi_j$ of K\"ahler potentials for the modified flow $\omega_s$ relative to a sequence $\omega_{h_j}$ of model metrics. Moreover, the quantities $\widetilde{\mathcal{D}}_{h_j}(\varphi_j,j)$ decay exponentially fast in $j$, ensuring that $h_j$ converge to some $h_{\infty}\in H$. Combined with parabolic regularity, we get that $\varphi_j$ decay exponentially on a region of size comparable to $\sqrt{j}$. 

Next, we use the explicit form of the modifications $\varphi_{j+1,s}-\varphi_{j,s}$ and the pointwise estimates of Proposition \ref{prop:modelmetrics} and Lemma \ref{lem:eigenfunctiongrowth} to find a limit K\"ahler potential $(\varphi_{\infty,s})_{s \in [0,\infty)}$ with respect to the model metric $\omega_{h_{\infty}}$ such that $\varphi_{j,j}-\varphi_{\infty,j}$ decays exponentially fast on a slightly larger region as $j\to \infty$. At this point, we have established the exponential decay of $\varphi_{\infty}$ on a region of the form $\{b_{h_{\infty}} \leq c\sqrt{s}\}$ as $s\to \infty$. Rewriting this in terms of the un-normalized K\"ahler potential $\widetilde{\varphi}_{\infty,t}$ and then applying pseudolocality once again allows us to finally propagate the bounds for $\widetilde{\varphi}_{\infty,t}$ to the entire region of interest. From here, the constructions of $F,\widehat{F}$ and the verifications of their properties are straightforward.

\begin{proof}[Proof of Theorem \ref{thm:main1}] After parabolic rescaling, we may assume $T\geq 1$. Set $Y:=1+|\mathcal{N}_{\widetilde{\nu}}(1)|$, and let $a=a(g_{\sol},Y)>0$ and  $\varepsilon_2=\epsilon_2(g_{\sol},Y)>0$ be as in the statement of Proposition \ref{prop:decay}. Let $\delta_2=\delta_2(\varepsilon_2,g_{\sol},Y)>0$ be as in Proposition \ref{prop:decay}, and choose $k\ge\max\{\epsilon_2^{-1}+2,3+\lfloor\frac{2}{(1+\theta_1)a^2}\log (2\delta_2^{-1})\rfloor\}$ such that $\widetilde{\nu}$ is $(\epsilon_2,e^{-\frac{s}{2}})$-self-similar for all $s\ge k-2$, and such that $\sum_{j=k}^{\infty} e^{-\frac{a^2}{4}(j-2)}<\frac{1}{2}\epsilon_2$. For $\delta_0 \in (0,\min\{\epsilon_2,\frac{1}{k+2}\})$ to be chosen depending on $\epsilon_2,a,\delta_2,k$, we apply Lemma \ref{lem:deldelbar} to obtain an open holomorphic embedding $F:\Omega(\delta_0^{-1})\to M$ and a solution $\varphi_0 \in C^{\infty}(\cup_{s\in [-\delta_0^{-1},\infty)}\Omega(\delta_0^{-1}e^{\frac{2}{5}s})\times \{s\})$ to
\begin{equation}
   \label{eq:PCMA3} \partial_s\varphi_{0,s}=\log\frac{(\omega_{\sol}+\sqrt{-1}\partial\bar\partial\varphi_{0,s})^n}{\omega_{\sol}^n}-\frac{1}{2}X\cdot\varphi_{0,s}+\varphi_{0,s}
\end{equation}
satisfying
$\omega_s = \omega_{\sol}+\sqrt{-1}\partial \overline{\partial}\varphi_{0,s}$, where $(\omega_s)_{s \in [-\delta_0^{-1},\infty)}$ is the modified flow defined as in \eqref{eq:modifyingtheflow}, and 
\begin{equation} \label{eq:inductionbase}
    \sup_{s\in [-\delta_0^{-1},\delta_0^{-1}]}\|\varphi_{0,s}\|_{C^{\lfloor \delta_0^{-1} \rfloor}(\Omega(e^{\frac{2}{5}s}\delta_0^{-1}) ,g_{\sol})}<\delta_0, \qquad \sup_{s\in [-\delta_0^{-1},\delta_0^{-1}]}\|f_{s} - (f_{\sol}+W)\|_{C^\lfloor \delta_0^{-1} \rfloor(\Omega(e^{\frac{2}{5}s}\delta_0^{-1}),g_{\sol})}<\delta_0,
\end{equation}
where $W$ is the entropy of the soliton $(M_{\sol},g_{\sol})$, and $f_s$ is defined as in \eqref{eq:modifyingtheflow}. In particular, we can ensure that $f_s(x_0)\leq f_{\sol}(x_0)+W+\frac{1}{4}$ for all $s\in [-\delta_0^{-1},\delta_0^{-1}]$. 

In the following, we set $h_0:=0$ and $\xi_{0,\alpha}:=0$ for $0\leq \alpha \leq N_-$. 
\begin{claim} \label{claim:inductivehype}
For each $j \in \mathbb{N}$, there exist $h_j \in H$ with $\|h_j-h_{j-1}\|_H \leq \sqrt{2}e^{-\frac{a^2}{4}(j-1+k)}$ for $j\ge 1$ and $\xi_{j,0},...,\xi_{j,N_-} \in \mathbb{R}$ with $\sum_{\alpha=0}^{N_-} \xi_{j,\alpha}^2 \leq 2e^{-\frac{a^2}{2}(j-1+k)}$, such that the recursively defined
\begin{equation*}
    \varphi_{j,s}:=\varphi_{j-1,s}+(u_{h_{j-1}}-u_{h_j})+\sum_{\alpha=0}^{N_-}e^{(1-\lambda_{\alpha})(s-(k+j-1))}\xi_{j,\alpha}\Psi_{\alpha}, \qquad j\geq 1
\end{equation*}
satisfy the following:
\begin{enumerate}
    \item \label{claim:inductive1} $\omega_s=\omega_{h_j}+\sqrt{-1}\partial\bar\partial\varphi_{j,s}$ on $\cup_{s\in [0,\infty)}(\Omega(e^{\frac{2}{5}s})\times \{s\})$,
    \item \label{claim:inductive2}  $\sup_{s\in [j+k-2,j+k]}\|\omega_s-\omega_{h_j}\|_{C^{\lfloor\varepsilon_2^{-1}\rfloor}(\Omega_{h_j}(a\sqrt{s}),g_{h_j})}<\varepsilon_2$,
    \item \label{claim:inductive3} $\widetilde{\mathcal{D}}_{h_j}(\varphi_j,j+k)\le \min\{\delta_2,e^{-\frac{a^2}{2}(j+k)}\} $,
    \item \label{claim:inductive4} $f_{j+k}(x_0)\leq f_{\sol}(x_0)+W+\frac{1}{2}$.
\end{enumerate}
\end{claim}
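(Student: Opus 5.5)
The claim is proved by induction on $j$, with Proposition \ref{prop:decay} supplying the inductive step. For $j=0$ we take $h_0=0$ and $\varphi_{0}$ as constructed above. Property \ref{claim:inductive1} is the definition of $\varphi_0$. Property \ref{claim:inductive2} follows from \eqref{eq:inductionbase}, since $\omega_s-\omega_{\sol}=\sqrt{-1}\partial\overline{\partial}\varphi_{0,s}$ and, once $\delta_0^{-1}\geq k+2$ and $\delta_0^{-1}\geq a\sqrt{k}$, the region $\Omega(a\sqrt{s})$ lies inside $\Omega(e^{\frac25 s}\delta_0^{-1})$ for $s\in[k-2,k]$. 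Property \ref{claim:inductive4} is the bound $f_s(x_0)\leq f_{\sol}(x_0)+W+\frac14$ recorded above. For \ref{claim:inductive3}, the integral part of $\mathcal{D}_0(\varphi_0,s)$ for $s\in[k-2,k]$ is at most $\delta_0^2\,\nu_{\sol}(M_{\sol})$. The tail term $e^{-(1+\theta_1)\frac{a^2 s}{2}}$ is at most $e^{-(1+\theta_1)\frac{a^2(k-2)}{2}}$. By the choice of $k$, this is $\leq\frac12\min\{\delta_2,e^{-\frac{a^2}{2}k}\}$, because $\theta_1 a^2 k/2$ dominates $(1+\theta_1)a^2$ when $k$ is large. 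Shrinking $\delta_0$ depending on $a,k,\delta_2$ then handles the integral part, so \ref{claim:inductive3} holds at $j=0$.

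For the inductive step, assume the claim for $j$ and apply Proposition \ref{prop:decay} with $s_0=j+k$, $h=h_j$ and $\varphi=\varphi_j$. The hypotheses check out as follows.
\begin{itemize}
\item[(1)] This is \ref{claim:inductive1}.
\item[(2)] The telescoping bound $\|h_j\|_H\leq\sum_{i\geq1}\sqrt2 e^{-\frac{a^2}{4}(i-1+k)}<\epsilon_2$ comes from the choice of $k$. The constant $\sqrt2$ is absorbed by starting the sum at $k-2$ rather than at $k$.
\item[(3)] This is \ref{claim:inductive2}.
\item[(4)] Here we need $e^{-\frac{a^2}{2}s_0}\leq \exp(-\frac{2a^2 s_0}{(2+\theta_1)^2})$. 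This holds because $\frac{2}{(2+\theta_1)^2}<\frac12$.
\item[(5)] This is \ref{claim:inductive4}.
\item[(6)] This follows from the choice of $k$ and $Y$.
\end{itemize}
The proposition then gives $h_{j+1}:=h'$ and $\xi_{j+1,\alpha}:=b_\alpha$. Its conclusion \ref{prop:decayconclusion1} yields \ref{claim:inductive1} for $j+1$, and the displayed formula for $\varphi'$ is exactly the recursion. The coefficient bound \eqref{eq:decayboundsforcoeffsinstatement} combined with \ref{claim:inductive3} gives $\|h_{j+1}-h_j\|_H^2+\sum\xi_{j+1,\alpha}^2\leq e^{-\frac{a^2}{2}(j+k)}$, which is within the stated bounds. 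Conclusion \ref{prop:decayconclusion3} gives \ref{claim:inductive4}.

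Conclusion \ref{prop:decayconclusion2} gives
\[
\widetilde{\mathcal D}_{h_{j+1}}(\varphi_{j+1},j+k+1)\leq e^{-\frac{a^2}{2}}\widetilde{\mathcal D}_{h_j}(\varphi_j,j+k)\leq \min\{e^{-\frac{a^2}{2}}\delta_2,\,e^{-\frac{a^2}{2}(j+k+1)}\}.
\]
This proves \ref{claim:inductive3}. Because $\widetilde{\mathcal D}_{h_j}(\varphi_j,j+k)\leq\delta_2$, conclusion \ref{prop:decayconclusion4} also applies. It gives closeness to $\omega_{h_{j+1}}$ on $[j+k-1,j+k+1]$, which is exactly \ref{claim:inductive2} for $j+1$ on the interval $[j+k-1,j+k+1]$.

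The main obstacle is bookkeeping of constants, and it sits in two places. First, $\widetilde{\mathcal D}$ contains the additive tail term $e^{-(1+\theta_1)a^2s/2}$, which does not decay at rate $e^{-a^2/2}$ per unit step on its own. Second, the smallness of $\|h_j\|_H$ must hold uniformly in $j$. Both are handled by the decay factor in \ref{prop:decayconclusion2}, which already accounts for the tail at time $s_0+1$, together with the choice of $k$ large. We still need to verify that $\epsilon_2$ does not depend on $j$, which holds because all the constants depend only on $(g_{\sol},Y)$.
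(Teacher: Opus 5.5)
Your proposal is correct and follows the paper's proof. You argue by induction on $j$: the base case comes from \eqref{eq:inductionbase}, and the inductive step applies Proposition \ref{prop:decay} at $s_0=j+k$, whose conclusions \ref{prop:decayconclusion1}--\ref{prop:decayconclusion4} and coefficient bound \eqref{eq:decayboundsforcoeffsinstatement} give exactly \ref{claim:inductive1}--\ref{claim:inductive4} together with the stated bounds on $h_{j+1}-h_j$ and $\xi_{j+1,\alpha}$. Your explicit base-case check that the tail term satisfies $e^{-(1+\theta_1)a^2(k-2)/2}\leq\frac12 e^{-a^2k/2}$ needs $k$ large compared to $\theta_1^{-1}$; this is a harmless extra largeness condition on $k$ that the paper leaves implicit.
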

\begin{proof}[Proof of Claim \ref{claim:inductivehype}]
If $j=0$, then the claim follows from \eqref{eq:inductionbase} by taking $\delta_0 \leq \overline{\delta}_0(\epsilon_2)$. Now assume that for some $j_0 \in \mathbb{N}$, Claim \ref{claim:inductivehype} holds for all $j\in \{0,...,j_0\}$. Then $\|h_{j_0}\|_H \leq 2\sum_{j=0}^{\infty}e^{-\frac{a^2}{4}(j+k)}<\epsilon_2$ by our choice of $k$. Thus all hypotheses of Proposition \ref{prop:decay} are satisfied, so we may apply Proposition \ref{prop:decay} with $s_0=k+j_0$ to obtain $h_{j_0+1} \in H$ and $\xi_{j_0+1,0},...,\xi_{j_0+1,N_-} \in \mathbb{R}$ satisfying $\|h_{j_0+1}-h_{j_0}\|_H^2 \leq 2 e^{-\frac{a^2}{2}(j_0+k)}$ and $\sum_{\alpha=0}^{N_-}\xi_{j_0+1,\alpha}^2 \leq 2 e^{-\frac{a^2}{2}(j_0+k)}$, and such that $\varphi_{j_0+1,s}$ satisfy \ref{claim:inductive1}, \ref{claim:inductive2}, \ref{claim:inductive4} with $j=j_0+1$, as well as 
\begin{equation*}
    \widetilde{\mathcal{D}}_{h_{j_0+1}}(\varphi_{j_0+1},j_0+1+k)\leq e^{-\frac{a^2}{2}}\widetilde{\mathcal{D}}_{h_{j_0}}(\varphi_{j_0},j_0+k)\leq \min \{\delta_2,e^{-\frac{a^2}{2}(j_0+1+k)}\},
\end{equation*}
so that \ref{claim:inductive3} holds as well. The claim therefore follows by induction.
\end{proof}

Define $h_{\infty}:= \lim_{j\to \infty}h_j$ and
\begin{equation*}
    \mu_{m,\alpha}(s) := \sum_{j=1}^{m} e^{(1-\lambda_{\alpha})(s-j-k+1)}\xi_{j,\alpha}
\end{equation*}
for $m\in \mathbb{N} \cup \{\infty\}$, so that $\|h_j - h_{\infty}\|_{H} \leq \frac{1}{2}e^{-\frac{a^2}{4} j}$ and 
\begin{equation*}
    |\mu_{\infty,\alpha}(s)-\mu_{j,\alpha}(s)|\leq C\sum_{i=j+1}^{\infty} e^{-\frac{a^2}{4}i}e^{(1-\lambda_{\alpha})(s-i)}.
\end{equation*}
Next, we define
\begin{equation*}
    \varphi_{\infty,s} := \varphi_{0,s} - u_{h_{\infty}} +\sum_{\alpha=0}^{N_-}\mu_{\infty,\alpha}(s)\Psi_{\alpha} = \varphi_{j,s}+(u_{h_j}-u_{h_{\infty}})+\sum_{\alpha=0}^{N_-}(\mu_{\infty,\alpha}(s)-\mu_{j,\alpha}(s))\Psi_{\alpha}.
\end{equation*}
From Claim \ref{claim:inductivehype} \ref{claim:inductive3}, we obtain 
\begin{equation*}
      \sup_{s\in[j+k-2,j+k]} \int_{\Omega_{h_j}(\frac{a}{2}\sqrt{s})}\varphi_{j,s}^2 \omega_{h_j}^n\le  e^{-\frac{a^2}{2} j}e^{\frac{1}{4}a^2j}\le e^{-\frac{a^2j}{4}}.
\end{equation*}
By combining this with Claim \ref{claim:inductivehype} \ref{claim:inductive2}, and applying Lemma \ref{lem:L2toLinfty} with $r=(\epsilon_2^{-1}e^{-\frac{a^2 j}{8}})^{\frac{1}{n+2}}$, we obtain
\begin{equation} \label{eq:mainpf1}
    \sup_{s\in[j+k-2,j+k]} \sup_{\Omega_{h_j}(\frac{a}{3}\sqrt{s})}|\varphi_{j,s}|\le C(g_{\sol})e^{-\frac{a^2j}{8(n+2)}}.
\end{equation}
Then, by Lemma \ref{lem: schauder estimates} and Lemma \ref{lem:interpolation}, for all $m\le 102$ we obtain
\begin{equation*}
   \sup_{s\in[j+k-2,j+k]} \sup_{\Omega_{h_j}(\frac{7a}{24}\sqrt{s})}|(\nabla^{g_{h_j}})^{m}\varphi_{j,s}|_{g_{h_j}}\le C_m(g_{\sol})e^{-\alpha j},
\end{equation*}
for some uniform $\alpha=\alpha(a)>0$.
And for sufficiently large $j$, we have that for all $m\le 100$
\begin{equation*}
    \sup_{s\in[j+k-2,j+k]} \sup_{\Omega_{h_j}(\frac{a}{4}\sqrt{s})}|(\nabla^{g_{h_\infty}})^{m}(\omega_s-\omega_{h_\infty})|_{g_{h_\infty}}\le C_me^{-\alpha j}.
\end{equation*}
We now set $\widetilde{\varphi}_{j,t}:=|t|\phi_{\log(\frac{1}{|t|})}^{\ast} \varphi_{j,\log(\frac{1}{|t|})}$, $\tau_j:=-e^{-(j+k-1)}$ so that after pulling back we get 
\begin{equation} \label{eq:metricclosenessinshrinkerregion}
     \sup_{t\in[\tau_{j-1},\tau_{j+1}]}\sup_{\Omega_{h_{\infty},t}\left(\frac{a}{4}\sqrt{|t|\log \frac{1}{|t|}} \right)}|t|^{\frac{m}{2}-\alpha}|(\nabla^{g_{h_\infty,t}})^{m}(F^*\widetilde\omega_t-\omega_{h_\infty,t})|_{g_{h_\infty,t}}\le C_m,
\end{equation}
\begin{equation} \label{eq:pointwisephitilde}
    \sup_{t\in[\tau_{j-1},\tau_{j+1}]} \sup_{\Omega_{h_{\infty},t}\left(\frac{a}{4}\sqrt{|t|\log \frac{1}{|t|}} \right)}|t|^{\frac{m}{2}-1-\alpha}|(\nabla^{g_{h_\infty,t}})^{m}\widetilde \varphi_{j,t}|_{g_{h_\infty,t}}\le C_m,
\end{equation}
for all $m\le 100$. For $t\in [\tau_{j-1},\tau_{j+1}]$ and $x\in \Omega_{h_{\infty},t}\left(\frac{a}{4}\sqrt{|t|\log \frac{1}{|t|}} \right)$, we can use Lemma \ref{lem:eigenfunctiongrowth} \ref{lem:eigenfunctiongrowth0}, Proposition \ref{prop:modelmetrics} \ref{modelmetrics1}, and Claim \ref{claim:inductivehype} to estimate
\begin{equation}
\label{eq:closenessattimet}
\begin{aligned}
    |\widetilde{\varphi}_{j,t}-\widetilde{\varphi}_{\infty,t}|(x) &\leq |t|\cdot|u_{h_{\infty}}-u_{h_j}|(\phi_{\log(\frac{1}{|t|})}(x))+C|t|\sum_{\alpha=0}^{N_-} |\mu_{\infty,\alpha}(-\log|t|)-\mu_{j,\alpha}(-\log|t|)|\cdot |\Psi_{\alpha}|(\phi_{\log(\frac{1}{|t|})}(x))\\ &\leq C\|h_{\infty}-h_j\|_H (|t|+b_{\sol,t}^2(x) )+ C|t|\sum_{\alpha=0}^{N_-} |\mu_{\infty,\alpha}(-\log|t|)-\mu_{j,\alpha}(-\log|t|)|\cdot (1+ f_{\sol,t}(x))^{\lambda_{\alpha}}\\
    &\leq C(|t|+b_{\sol,t}^2(x))e^{-\frac{a^2}{4}j}+C\sum_{\alpha=0}^{N_-} e^{-(1-\lambda_{\alpha}+\frac{a^2}{4})j}(|t|+b_{\sol,t}^2(x))^{\lambda_{\alpha}}\\
    &\le C|t|^{\frac{a^2}{4}}(|t|+b_{\sol,t}^2(x)).
\end{aligned}
\end{equation}
Since $\operatorname{R}_{\omega_{\sol}}$ is bounded, there exists $C(g_{\sol}) \in (1,\infty)$ such that
\begin{equation*} |b_{\sol,t}^2(x)-b_{\sol,s}^2(x)|=
2| |t|f_{\sol,t}(x)-|s|f_{\sol,s}(x)|\leq C|t-s|
\end{equation*}
for all $x\in M_{\sol}$ and $0 > s\ge t.$ We can, therefore, let $t\to 0^{-}$ to get a limit $b_{\sol,0}.$ In particular, we have
\begin{equation}\label{eq; time drift of potentials}
    |b^2_{\sol,t}-b^2_{\sol,0}|\leq C(g_{\sol})|t|,
\end{equation}
Note also that the same holds for $b_{h_{\infty},t}$. Then there exists a $\widetilde{T}_0 \in [-1,0)$ such that for all $t\in[\widetilde T_0,0]$,
\begin{equation*}
    \Omega_{h_{\infty},0}\left( \frac{a}{5}\sqrt{|t|\log \frac{1}{|t|}} \right) \subseteq \Omega_{h_{\infty},t}\left( \frac{a}{4}\sqrt{|t|\log \frac{1}{|t|}} \right).
\end{equation*}
Therefore, for all $m\le 100$,
\begin{equation} \label{eq:onehundo}
    \sup_{t\in [\widetilde{T}_0,0)}\sup_{\Omega_{h_{\infty},0}\left( \frac{a}{5}\sqrt{|t|\log \frac{1}{|t|}} \right)}|t|^{\frac{m}{2}-\alpha}|(\nabla^{g_{h_\infty,t}})^{m}(F^*\widetilde\omega_t-\omega_{h_\infty,t})|_{g_{h_\infty,t}}\le C_m,
\end{equation}
   \begin{figure}
\centering
\begin{minipage}{0.6\textwidth}
    \centering
    \resizebox{\linewidth}{!}{\tikzset{every picture/.style={line width=0.75pt}} 

\begin{tikzpicture}[x=0.75pt,y=0.75pt,yscale=-1,xscale=1]

\draw  [color={rgb, 255:red, 255; green, 255; blue, 255 }  ,draw opacity=1 ][fill={rgb, 255:red, 184; green, 233; blue, 134 }  ,fill opacity=0.46 ] (541.7,382.2) -- (145.8,117) -- (541.7,117) -- cycle ;
\draw  (588.1,382.51) -- (125.1,382.51)(541.8,58.51) -- (541.8,418.51) (132.1,377.51) -- (125.1,382.51) -- (132.1,387.51) (546.8,65.51) -- (541.8,58.51) -- (536.8,65.51)  ;
\draw    (145.8,117) -- (541.7,382.2) ;
\draw  [color={rgb, 255:red, 0; green, 0; blue, 0 }  ,draw opacity=1 ][fill={rgb, 255:red, 208; green, 2; blue, 27 }  ,fill opacity=0.34 ][line width=0.75]  (145.8,117) .. controls (145.8,117) and (145.8,117) .. (145.8,117) .. controls (364.5,117) and (541.8,235.87) .. (541.8,382.51) -- (145.8,382.51) -- cycle ;
\draw  [dash pattern={on 4.5pt off 4.5pt}]  (447,209) -- (542,209) ;
\draw  [dash pattern={on 4.5pt off 4.5pt}]  (447,209) -- (447,383) ;
\draw  [fill={rgb, 255:red, 0; green, 0; blue, 0 }  ,fill opacity=0.75 ] (545.38,209) .. controls (545.38,207.14) and (543.86,205.63) .. (542,205.63) .. controls (540.14,205.63) and (538.63,207.14) .. (538.63,209) .. controls (538.63,210.86) and (540.14,212.38) .. (542,212.38) .. controls (543.86,212.38) and (545.38,210.86) .. (545.38,209) -- cycle ;
\draw  [fill={rgb, 255:red, 0; green, 0; blue, 0 }  ,fill opacity=0.75 ] (450.38,383) .. controls (450.38,381.14) and (448.86,379.63) .. (447,379.63) .. controls (445.14,379.63) and (443.63,381.14) .. (443.63,383) .. controls (443.63,384.86) and (445.14,386.38) .. (447,386.38) .. controls (448.86,386.38) and (450.38,384.86) .. (450.38,383) -- cycle ;
\draw  [fill={rgb, 255:red, 0; green, 0; blue, 0 }  ,fill opacity=0.75 ] (450.38,209) .. controls (450.38,207.14) and (448.86,205.63) .. (447,205.63) .. controls (445.14,205.63) and (443.63,207.14) .. (443.63,209) .. controls (443.63,210.86) and (445.14,212.38) .. (447,212.38) .. controls (448.86,212.38) and (450.38,210.86) .. (450.38,209) -- cycle ;
\draw [color={rgb, 255:red, 0; green, 0; blue, 0 }  ,draw opacity=1 ] [dash pattern={on 4.5pt off 4.5pt}]  (145.8,117) .. controls (199,122) and (547,73) .. (541.8,382.51) ;
\draw [line width=1.5]    (447,181) -- (447,236) ;
\draw  [line width=0.75]  (448,208) .. controls (451.71,207.86) and (453.49,205.94) .. (453.35,202.24) -- (453.35,202.24) .. controls (453.16,196.94) and (454.91,194.22) .. (458.62,194.09) .. controls (454.91,194.22) and (452.96,191.64) .. (452.77,186.35)(452.85,188.74) -- (452.77,186.35) .. controls (452.63,182.64) and (450.71,180.86) .. (447,181) ;

\draw (550,71.4) node [anchor=north west][inner sep=0.75pt]    {$b_{h_{\infty } ,0}^{2}$};
\draw (543.8,385.91) node [anchor=north west][inner sep=0.75pt]    {$( 0,0)$};
\draw (182,392.4) node [anchor=north west][inner sep=0.75pt]    {$t$};
\draw (247,250.4) node [anchor=north west][inner sep=0.75pt]  [font=\scriptsize]  {$b_{h_{\infty } ,0}^{2} =\frac{|t|}{(\lambda \epsilon_{PS})^2}$};
\draw (243,152.4) node [anchor=north west][inner sep=0.75pt]  [font=\scriptsize]  {$b_{h_{\infty } ,0}^{2} =\frac{a^{2}}{10^{2}} |t|\log\frac{1}{|t|}$};
\draw (550,198.4) node [anchor=north west][inner sep=0.75pt]    {$( 0,x)$};
\draw (428,390.4) node [anchor=north west][inner sep=0.75pt]    {$( t_{0}(x) ,0)$};
\draw (387,202.4) node [anchor=north west][inner sep=0.75pt]    {$( t_{0}(x) ,x)$};
\draw (242,84.4) node [anchor=north west][inner sep=0.75pt]  [font=\scriptsize]  {$b_{h_{\infty } ,0}^{2} =\frac{a^{2}}{5^{2}} |t|\log\frac{1}{|t|}$};
\draw (462,187.4) node [anchor=north west][inner sep=0.75pt]  [font=\scriptsize]  {$\lambda b_{h_{\infty } ,0}( x)$};

\end{tikzpicture}}
\end{minipage}    
      \caption{This figure illustrates the relevant space--time region. Below the dashed curve $b_{h_\infty,0}^2=\frac{a^2}{5^2}|t|\log\frac{1}{|t|}$, we have precise estimates comparing $F^*\widetilde{\omega}_t$ and $\omega_{h_\infty,t}$. The curve $b_{h_\infty,0}^2=\frac{|t|}{(\lambda \varepsilon_{PS})^2}$ is the threshold arising from pseudolocality, below which we cannot propagate all the way to time $t=0$. 
      In the proof, we are given $x$, and choose $t_0(x)$ so that $(x,t_0(x))$ lies on the second curve. We apply pseudolocality at scale $\approx b_{h_{\infty},0}(x)$ to propagate the smallness of $F^{\ast}\widetilde{\omega}_{t_0}-\omega_{h_{\infty},t_0}$ near $x$ forwards in time up to $t=0$. 
}
\label{normalized space time}
\end{figure}

By shrinking $|\widetilde{T}_0|$ if necessary, we may assume that $\frac{a}{10}\sqrt{|t|\log \frac{1}{|t|}}$ is monotone on $[\widetilde{T}_0,0).$ Fix $\rho_0 \in (0,e^{-1})$ sufficiently small and let $T_0\in [\widetilde{T}_0,0)$ be the unique time such that 
\begin{equation*}
    \rho_0=\frac{a}{10}\sqrt{|T_0|\log \frac{1}{|T_0|}}.
\end{equation*}
For every $x \in M_{\sol}$ with $b_{h_{\infty},0}(x)\in (0,\rho_0],$ we then define $t_0(x)\in [T_0,0)$ by
\begin{equation*}
    b_{h_{\infty},0}(x)=\frac{a}{10}\sqrt{|t_0(x)|\log \frac{1}{|t_0(x)|}}.
\end{equation*}
We first estimate $|\widetilde{\varphi}_{\infty,t}|$ when $t\in [T_0,t_0(x)]$. 
Choose $j =\lfloor \log \frac{1}{|t|}\rfloor-k+2$. Then \eqref{eq:closenessattimet} and \eqref{eq; time drift of potentials} together imply
\begin{align*}
    |\widetilde{\varphi}_{\infty,t}- \widetilde{\varphi}_{j,t}|(x) 
    & \le C|t|^{1+\frac{a^2}{4}}+C b_{h_{\infty},0}^2(x)|t|^\frac{a^2}{4}+ C\sum_{\alpha=0}^{N_{-}}|t|^{(1-\lambda_{\alpha}+\frac{a^2}{4})}b_{h_{\infty},0}^{2\lambda_{\alpha}}(x)\\
    &\leq C(b_{h_{\infty},0}^2(x)+|t|)|t|^{\frac{a^2}{4}}.
\end{align*}
Since $x \in \Omega_{h_{\infty},0}(\frac{a}{10}\sqrt{|t|\log \frac{1}{|t|}})$, we may combine this with \eqref{eq:pointwisephitilde} to get

\begin{equation}
\label{eq; estimate on limit potential -positive time}
     |\widetilde{\varphi}_{\infty,t}(x)|\le C(b_{h_\infty,0}^{2}(x)+|t|)|t|^{\gamma}
\end{equation}
for all $t\in [T_0,t_0(x)]$, where $\gamma:=\frac{1}{2}\min\{\alpha,\frac{a^2}{4}\}$. Next, we estimate $|\widetilde{\varphi}_{\infty,t}|$ for $t\in [t_0(x),0)$. 
Combining \eqref{eq:onehundo} and Lemma \ref{lem:basicACfacts} \ref{lem:basicACfacts3}, \ref{lem:basicACfacts4}, if $b_{h_\infty,0}(x)$ is sufficiently small, we obtain $\lambda=\lambda(g_{\sol})>0$ such that
\begin{equation*}
    \sup_{B_{\widetilde{g}_{t_0}}(F(x),\lambda b_{h_{\infty},0}(x))} |\Rm(\widetilde{g}_{t_0})|_{\widetilde{g}_{t_0}} \leq \frac{1}{(\lambda b_{h_{\infty},0}(x))^2}, \qquad \frac{\operatorname{Vol}_{\widetilde{g}_{t_0}}\left(B_{\widetilde{g}_{t_0}}(F(x),\lambda b_{h_{\infty},0}(x))\right)}{(\lambda b_{h_{\infty},0}(x))^{2n}}\geq \lambda.
\end{equation*}
We may therefore apply pseudolocality \cite[Theorem 1.2]{LuPseudolocal}
to conclude that 
\begin{equation}\label{eq:curvatureboundspseudolocality}
    \sup_{\tau \in [t_0,\min\{t_0+(\lambda \epsilon_{PS}b_{h_{\infty},0}(x))^{2},0\})} |\Rm(F^{\ast}\widetilde{g}_{\tau})|_{F^{\ast}\widetilde{g}_\tau}(x)\leq \frac{C}{(\epsilon_{PS}\lambda b_{h_{\infty},0}(x))^2},
\end{equation}
where $\epsilon_{PS}=\epsilon_{PS}(g_{\sol})>0$. By our choice of $t_0$, we have $t_0+(\lambda \epsilon_{PS}b_{h_{\infty},0}(x))^{2}\geq 0$, hence 
\begin{equation} \label{eq:curvatureboundsuptotime0}
\sup_{\tau \in [t_0(x),0)} |\Rm_{F^{\ast}\widetilde{g}_\tau}|_{F^{\ast}\widetilde{g}_{\tau}}(x)\leq \frac{C(g_{\sol})}{b_{h_{\infty},0}^2(x)}.
\end{equation}
By combining Lemma \ref{lem:basicACfacts} \ref{lem:basicACfacts3}, \eqref{eq:curvatureboundsuptotime0}, integrating in time, and using \eqref{eq:metricclosenessinshrinkerregion}, we get
\begin{equation*} \exp\left(-C(g_{\sol})\frac{|t_0(x)|}{b_{h_{\infty},0}^2(x)}\right)\omega_{h_{\infty},t_0(x)}\leq 
\omega_{h_{\infty},t} \leq \exp\left(C(g_{\sol})\frac{|t_0(x)|}{b_{h_{\infty},0}^2(x)}\right)\omega_{h_{\infty},t_0(x)},
\end{equation*}
\begin{equation*} \exp\left(-C(g_{\sol})\frac{|t_0(x)|}{b_{h_{\infty},0}^2(x)}\right)F^{\ast}\widetilde{\omega}_{t_0(x)}\leq F^{\ast}
\widetilde{\omega}_t \leq \exp\left(C(g_{\sol})\frac{|t_0(x)|}{b_{h_{\infty},0}^2(x)}\right)F^{\ast}\widetilde{\omega}_{t_0(x)}
\end{equation*}
for all $t\in [t_0(x),0)$. It follows that
\begin{equation}
\label{eq:closenessofmetricsinconeregion}
|F^{\ast}\widetilde{g}_t-g_{h_{\infty},t}|_{g_{h_\infty},t}(x)\leq C(g_{\sol})\frac{|t_0(x)|}{b_{h_{\infty},0}^2(x)} \leq \frac{C(g_{\sol})}{\log\frac{1}{|t_0(x)|}}\leq \frac{C(g_{\sol})}{\log\frac{1}{b_{h_{\infty},0}(x)}},
\end{equation}
\begin{equation}
\label{eq:closenessofvolumeforms}
|\partial_t \widetilde{\varphi}_{\infty,t}|(x)=\left| \log \left( \frac{F^{\ast}\widetilde{\omega}_t^n}{\omega_{h_{\infty},t}^n}\right) \right|(x) \leq \frac{C(g_{\sol})}{\log\frac{1}{|t_0(x)|}}
\end{equation}
for all $t\in [t_0(x),0)$. In particular, $\widetilde{\varphi}_{\infty}$ extends continuously to time $0$. Integrating \eqref{eq:closenessofvolumeforms} in time and combining with \eqref{eq:pointwisephitilde} and \eqref{eq:closenessattimet} yields
\begin{align} \label{eq:pointwisebutlog}
    \sup_{t\in [t_0(x),0)} |\widetilde{\varphi}_{\infty,t}(x)| \leq C_0|t_0(x)|^{1+\gamma}+\frac{C(g_{\sol})|t_0(x)|}{\log\frac{1}{|t_0(x)|}} \leq \frac{C(g_{\sol})b_{h_{\infty},0}^2(x)}{\log\frac{1}{b_{h_{\infty},0}(x)}}.
\end{align} 
Combining \eqref{eq:pointwisebutlog} and \eqref{eq; estimate on limit potential -positive time} yields 
\begin{equation} \label{eq:C0estimatefinal} |\widetilde{\varphi}_{\infty,t}|(x) \leq \frac{C_0 (b_{h_{\infty},0}(x)+\sqrt{|t|})^2}{\log(\frac{1}{b_{h_{\infty},0}(x)+\sqrt{|t|}})} 
\end{equation}
for all $(x,t)\in \Omega_{h_{\infty},0}(\rho_0)\times[T_0,0]$, where $T_0 \in [\widetilde{T}_0,0)$ is defined by $\frac{a^2}{10^2}|T_0|\log \frac{1}{|T_0|}=\rho_0^2$. 
Setting $\rho(x,t):=b_{h_{\infty},0}(x)+\sqrt{|t|}$, we combine the $m=0$ case of \eqref{eq:onehundo} with \eqref{eq:closenessofmetricsinconeregion} to get $c>0$ such that
\begin{equation} \label{eq:C0intermsofrho}
   \sup_{y\in B_{g_{h_{\infty},t}}(x,c\rho(x,t))} \sup_{s\in [t-(c\rho(x,t))^2,t]} |F^{\ast}\widetilde{g}_s-g_{h_{\infty},s}|_{g_{h_{\infty},s}}(y) \leq \frac{C}{\log\frac{1}{\rho(x,t)}}
\end{equation}
for every $(x,t)\in M_{\sol}\times [\frac{1}{2}T_0,0)$ with $\rho(x,t) <\rho_0$. The metrics $F^{\ast}\widetilde{g}_t$ and $g_{h_{\infty},t}$ are K\"ahler with respect to the same fixed complex structure, which implies that the identity map between them is holomorphic and, in particular, harmonic. 
By \eqref{eq:C0intermsofrho} and Lemma \ref{lem:basicACfacts} \ref{lem:basicACfacts3}, we may therefore apply \cite[Lemma A.2]{BamKlein} with $r=c\rho(x,t)$ and $H=C(g_{\sol})(\log\frac{1}{\rho(x,t)})^{-1}$ to obtain
\begin{equation} \label{eq:pushingclosenessforwards}
    |(\nabla^{g_{h_\infty,t}})^k(F^*\widetilde
    g_{t}-g_{h_\infty,t})|_{g_{h_{\infty},t}}(x) \le \frac{C_k}{(b_{h_{\infty},0}(x)+\sqrt{|t|})^{k}\log(\frac{1}{b_{h_{\infty},0}(x)+\sqrt{|t|}})}
\end{equation}
for all $k\in \mathbb{N}$ and $(x,t)\in \Omega_{h_{\infty},0}(\rho_0)\times [\frac{1}{2}T_0,0)$ such that $\rho(x,t)\leq \overline{\rho}(k)$. On the complementary compact region $\{ \rho \geq \overline{\rho}(k)\}$, the estimate also holds (after possibly increasing $C_k$) since the flow is smooth in any such region. By \eqref{eq:C0estimatefinal},\eqref{eq:pushingclosenessforwards}, and Lemma \ref{lem: schauder estimates}, we finally obtain
\begin{equation} \label{eq:higherderivativesoftildevarphi}
    |(\nabla^{g_{h_\infty,t}})^k\widetilde{\varphi}_{\infty,t}|_{g_{h_\infty,t}}(x)\le \frac{C_k(b_{h_\infty,0}(x)+\sqrt{|t|})^{2-k}}{\log \left(\frac{1}{b_{h_\infty,0}(x)+\sqrt{|t|}}\right)},
\end{equation}
for all $k\in \mathbb{N}$ and  $(x,t)\in \Omega_{h_{\infty},0}(\rho_0) \times [T_0,0)$.
In particular, assertions \ref{mainthm:2},\ref{mainthm:4} hold if we replace $F$ by $F\circ (\zeta_1^{h_\infty})^{-1}$, where we set $\mathcal{U}:= \{r < \frac{\rho_0}{2}\}\subseteq \mathcal{C}$, so that $\pi_{\sol}^{\ast}r=b_{\sol,0}$ and $\pi_{\sol}^{-1}(\mathcal{U})=\{x\in M_{\sol} \: | \: b_{\sol,0}(x)<\frac{\rho_0}{2}\}$. 

It follows from \eqref{eq:higherderivativesoftildevarphi} that $\lim_{t\nearrow 0}\operatorname{Vol}_{\widetilde{g}_t}(M)>0$, so that $\alpha:= [\widetilde{\omega}_{-T}]-Tc_1(M)$ is big and nef. Set $\Sigma := E_{nK}(\alpha)$, so that $M\setminus \Sigma$ is exactly the (open) set where $\widetilde{\omega}_t$ converges smoothly to a K\"ahler metric $\widetilde{\omega}_0$ as $t\nearrow 0$ by \cite[Theorem 1.5]{collinstosatti}. Setting $Y^{\circ}:=Y\setminus \pi(\Sigma)$, the properness of $\pi$ implies that $\pi(\Sigma)$ is a complex subvariety of $Y$ of positive codimension (since $\dim(Y)=\dim(M)$ is big, hence $\pi$ is generically finite), and in particular $Y^{\circ}$ is open. For each $x\in M$, $\pi^{-1}(\pi(x))$ is a complex analytic subvariety of $M$. If $x\in M$ satisfies $\dim_{\mathbb{C}}\pi^{-1}(\pi(x))\geq 1$, then there is an irreducible variety $Z\subseteq \pi^{-1}(\pi(x))$ of dimension $k\geq 1$ containing $x$, so that $\int_Z \alpha^k=0$, hence $x\in \Sigma$. It follows that for all $x\in M\setminus \Sigma$, we have $\dim_{\mathbb{C}}\pi^{-1}(\pi(x))=0$, hence (because $M$ is compact and $\pi$ has connected fibers) $\pi^{-1}(\pi(x))=\{x\}$. In particular (recalling the fact that $\pi$ is surjective), $\pi$ restricts to a proper holomorphic bijection $\pi:M\setminus \Sigma \to Y^{\circ}$ between open subsets of $M,Y$, respectively, hence this restriction is a homeomorphism. Because any holomorphic homeomorphism between normal analytic spaces is an isomorphism \cite[p. 166]{GrauRemm}, it follows that $\pi:M\setminus \Sigma \to Y^{\circ}$ is in fact a biholomorphism. 

We now construct $\widehat{F}$. By the proof of Lemma \ref{lem:improvementofcomparability}, \begin{equation*}
\sup_{y\in \pi_{\sol}^{-1}(o)}\widetilde{f}_t(F(y))\leq \sup_{\pi_{\sol}^{-1}(o)}f_{\sol}+W+1
\end{equation*}
for $|t|$ sufficiently small. Thus $(F(y),t)$ is a $C(g_{\sol})$-center of $\widetilde{\nu}$ for each $y\in \pi_{\sol}^{-1}(o)$, so that for any $H_{2n}$-center $(z_t,t)$ of $\widetilde{\nu}$, we have $d_{\widetilde{g}_t}(z_t,F(y))\leq C(g_{\sol})\sqrt{|t|}$. Because $\widetilde{\omega}_t \geq C^{-1}\pi^{\ast}\omega_Y$, this implies 
\begin{equation*}
    \widehat{\pi}(\widetilde{\nu})=\lim_{t\nearrow 0} \pi(z_t)= \pi(F(y)),
\end{equation*}
hence $F(\pi_{\sol}^{-1}(o))\subseteq \pi^{-1}(\widehat{\pi}(\widetilde{\nu})) \subseteq \Sigma$.
Because $|\Rm(\widetilde{g}_t)|_{\widetilde{g}_t}(F(x)) \leq C b_{h_{\infty},0}^{-2}(x)$ for all $t\in  [-T,0)$, we have $F(\pi_{\sol}^{-1}(\mathcal{U}\setminus \{o\})) \subseteq M\setminus \Sigma$. 
Setting $y_0:= \widehat{\pi}(\widetilde{\nu})$, it follows that $F(\pi_{\sol}^{-1}(o))=F(\pi_{\sol}^{-1}(\mathcal{U}))\cap \pi^{-1}(y_0)$, hence $F(\pi_{\sol}^{-1}(o))$ is relatively open in $\pi^{-1}(y_0)$. Because $F(\pi_{\sol}^{-1}(o))$ is also compact, and $\pi_{\sol}^{-1}(o)$ is connected since $\pi_{\sol}$ is the Remmert reduction map, it is therefore a connected component of $\pi^{-1}(y_0)$. Because $\pi$ has connected fibers, it follows that $\pi^{-1}(y_0)=F(\pi_{\sol}^{-1}(o))$. Moreover, $\pi \circ F \circ (\pi_{\sol}|_{\pi_{\sol}^{-1}(\mathcal{U}\setminus \{o\})})^{-1}$ restricts to a biholomorphism  $\widehat{F}:\mathcal{U}\setminus \{o\} \to \mathcal{V}\setminus \{y_0\}$, where $\mathcal{V}:= (\pi \circ F)(\pi_{\sol}^{-1}(\mathcal{U}))$ is open because $F(\pi_{\sol}^{-1}(\mathcal{U}))$ is a $\pi$-saturated open subset of $M$. 

Because $\lim_{x\to \pi_{\sol}^{-1}(o)}\widetilde{\varphi}_0(x) = 0$, it follows that $\widetilde{\varphi}_0 = \pi_{\sol}^{\ast}\widehat{\varphi}$ for some $\widehat{\varphi} \in C^{\infty}(\mathcal{U}\setminus \{o\})\cap C^0(\mathcal{U})$. Set $\widehat{\omega}_0 := \pi_{\ast}\widetilde{\omega}_0$. After possibly shrinking $\rho_0>0$, \eqref{eq:higherderivativesoftildevarphi} gives $\widehat{F}^{\ast}\widehat{\omega}_0=((\pi_{\sol}|_{\pi_{\sol}^{-1}(\mathcal{U}\setminus \{o\})})^{-1})^{\ast}F^{\ast}\widetilde{\omega}_0 = \omega_{\mathcal{C}}+\sqrt{-1}\partial \overline{\partial}\widehat{\varphi}$, where 
\begin{equation*} -\frac{1}{2}\omega_{\mathcal{C}}\leq \sqrt{-1}\partial \overline{\partial}\widehat{\varphi} \leq \frac{1}{2}\omega_{\mathcal{C}},\end{equation*} 
and where Theorem \ref{thm:main1} \ref{mainthm:3}, \ref{mainthm:5} hold. In particular, after further shrinking $\rho_0$, $\widehat{F}:(\mathcal{U}\setminus \{o\},d_{g_{\mathcal{C}}})\to (\mathcal{V}\setminus \{y_0\},d_{\widehat{g}_0})$ extends to a bi-Lipschitz map $\widehat{F}:\mathcal{U} \to \mathcal{V}$, which is holomorphic by the Riemann extension theorem for normal spaces (applied to local coordinate functions). Because $\widehat{F}:\mathcal{U} \to \mathcal{V}$ is a holomorphic homeomorphism between normal complex analytic spaces, it is a biholomorphism \cite[p. 166]{GrauRemm}, which satisfies Theorem \ref{thm:main1} \ref{mainthm:1} by construction. 
\end{proof}

\begin{proof}[Proof of Corollary \ref{cor:2dim}] 
Suppose $\widetilde{\nu} =(\widetilde{\nu}_t)_{t\in [-T,0)} \in \widehat{\pi}^{-1}(\pi(E_i))$ for some $i\in \{1,...,N\}$, and let $(M_{\sol},J_{\sol},g_{\sol},f_{\sol})$ be the K\"ahler-Ricci shrinker corresponding to some tangent flow of $(M,(\widetilde{g}_t)_{t\in [-T,0)},\widetilde{\nu})$. By \cite[Theorem A]{CHM}, $M_{\sol}$ is smooth. If $M_{\sol}$ is the flat Gaussian, then by \cite[Theorem 2.37]{Bam3}, we have $\widetilde{\nu}=(\nu_{x,0;t})_{t\in [-T,0)}$ for some $x\in M\setminus \cup_{i=1}^N E_i$. Then $\widehat{\pi}(\widetilde{\nu})=\pi(x)\notin \pi(E_i)$, a contradiction. Thus $M_{\sol}$ is non-flat. By \cite[Proof of Theorem B]{CifarelliConlonDeruelleKsurface}, it then follows that $(M_{\sol},J_{\sol},g_{\sol},f_{\sol})$ is the AC K\"ahler-Ricci shrinker $(M_{\operatorname{FIK}},J_{\operatorname{FIK}},g_{\operatorname{FIK}},f_{\operatorname{FIK}})$ on the total space of $\mathcal{O}_{\mathbb{P}^1}(-1)$ constructed in \cite{FIK}. By Corollary \ref{cor:complexrigid} and Example \ref{example:FIK}, $(M,J,(\widetilde{g}_t)_{t\in [-T,0)},\widetilde{\nu})$ can be holomorphically approximated by $(M_{\operatorname{FIK}},J_{\operatorname{FIK}},g_{\operatorname{FIK}},f_{\operatorname{FIK}})$. Then Corollary \ref{cor:2dim} \ref{cor:2dim2}, \ref{cor:2dim3} follow from Theorem \ref{thm:main1}. Because $\operatorname{id}_{\mathbb{C}^2}:\mathbb{C}^2\to \mathbb{C}^2$ is the canonical model of $\mathbb{C}^2$, which is smooth, the hypotheses of \cite[Theorem A]{CHL} are satisfied. Then Corollary \ref{cor:2dim} \ref{cor:2dim1} follows from \cite[Theorem A(iii)]{CHL} and  Corollary \ref{cor:2dim} \ref{cor:2dim4} follows from \cite[Proposition 5.4]{CHL}.
\end{proof}

\begin{proof}[Proof of Corollary \ref{cor:calabi}] 
    By \cite[Proof of Theorem 10.11]{jiansongtian}, for any $\widetilde{\nu} \in \widehat{\pi}^{-1}(y_0)$, $(M,(\widetilde{g}_t)_{t\in [-T,0)},\widetilde{\nu})$ can be holomorphically approximated by $(M_{\sol},J_{\sol},g_{\sol},f_{\sol})$. We may thus apply Theorem \ref{thm:main1} to obtain Corollary \ref{cor:calabi} \ref{cor:calabi1}, \ref{cor:calabi2}. Because the canonical model of $\mathcal{O}_{\mathbb{P}^n}(-1)^{\oplus(m+1)} \to \mathcal{C}$ is $\mathcal{O}_{\mathbb{P}^m}(-1)^{\oplus(n+1)}$, which is smooth, \cite[Theorem A]{CHL} and \cite[Proposition 5.4]{CHL} yield \ref{cor:calabi} \ref{cor:calabi3} and the stated type I curvature behavior emerging from $t=0.$
\end{proof}

\begin{proof}[Proof of Corollary \ref{cor:orb}]
    The proof is the same as Corollary \ref{cor:calabi}, except that for the flow out of singularities, we need a minor generalization of \cite{CHL} to the case where the relative canonical model of $\pi_{\sol}:M_{\sol}\to \mathcal{C}$ is $\mathbb{C}^n/\mathbb{Z}_k$. Let $(M_{\exp},J_{\exp},g_{\exp},f_{\exp})$ denote the orbifold expander on $\mathbb{C}^n/\mathbb{Z}_k$ in the statement of Corollary \ref{cor:orb}. This is a global $\mathbb{Z}_k$-quotient of a smooth AC K\"ahler-Ricci expander $(\widetilde{M},\widetilde{J}_{\exp},\widetilde{g}_{\exp},\widetilde{f}_{\exp})$ on $\widetilde{M}=\mathbb{C}^n$, whose $\mathbb{Z}_k$-quotient is $M_{\exp}$. It follows that $M_{\exp}$ satisfies all conclusions of \cite[Section 2]{CHL}. The gluing construction of \cite[Section 3]{CHL} goes through verbatim, since the gluing only occurs in the region where $M_{\exp}$ is smooth. The estimates of \cite[Section 4]{CHL} and \cite[Subsections 5.1-5.3]{CHL} use only the maximum principle (which applies verbatim for smooth orbifolds) and pseudolocality, which holds for compact orbifold K\"ahler-Ricci flows by \cite{wangorbifold}. It remains only to note that because $M'$ is a smooth orbifold with isolated singularities, the corresponding complex space has log terminal singularities \cite[Corollary 5.21]{KollarMori}, hence one may appeal to \cite[Theorem 4.6.8]{BoucksomEyssydieuxGuedj} to conclude that the flow constructed as a limit of the glued metrics in \cite[Theorem A]{CHL} coincides with the flow out of singularities constructed in \cite{SW2}, since the two flows have the same initial current and have continuous local potentials.
\end{proof}

\begin{proof}[Proof of Theorem \ref{thm:main3}] By \cite[Corollary 4.3]{DerSz}, there are $t_i \nearrow 0$ and biholomorphisms $\psi_i:M\to M$ such that $\widetilde{g}_{i,t}:=|t_i|^{-1}\psi_i^{\ast}\widetilde{g}_{|t_i|t}\to g_{\sol,t}$ in $C_{\operatorname{loc}}^{\infty}(M\times (-\infty,0))$. Writing $\widetilde{\nu}_t^i := \psi_i^{\ast}\widetilde{\nu}_{|t_i|t}=(2\pi |t|)^{-n}e^{-\widetilde{f}_{i,t}}d\widetilde{g}_{i,t}$, parabolic regularity and the heat kernel estimates of \cite{bamscalar} give $\widetilde{f}_{i,t}\to f_{\infty,t}$ for some $f_{\infty}\in C^{\infty}(M\times (-\infty,0))$. Perelman's monotonicity formula implies
\begin{equation*}
    \limsup_{i\to \infty} \int_{-\alpha^{-1}}^{-\alpha} \int_M |\Ric_{\widetilde{g}_{i,t}}+\nabla^{\widetilde{g}_{i,t}}\nabla^{\widetilde{g}_{i,t}}\widetilde{f}_{i,t}-\frac{1}{|t|}\widetilde{g}_{i,t}|^2 d\widetilde{\nu}_{t}^idt =0
\end{equation*}
for each $\alpha \in (0,1)$, which yields (upon passing to the limit) 
\begin{equation*}
    \Ric(g_{\sol,t})+\nabla^{g_{\sol,t}} \nabla^{g_{\sol,t}}f_{\infty,t}=\frac{1}{|t|}g_{\sol,t}
\end{equation*}
for all $t<0$. In particular, $\nabla^{g_{\sol}}\nabla^{g_{\sol}}(f_{\infty,t}-f_{\sol,t})=0$, so that because $M_{\sol}$ does not split $\mathbb{R}$ isometrically, $f_{\infty,t}-f_{\sol,t}$ must be constant. By the normalizations 
\begin{equation*} \widetilde{\nu}_t^i(M)=1=\frac{1}{(2\pi)^n}\int_M e^{-f_{\sol}-W} dg_{\sol}, \end{equation*} 
it follows that $f_{\infty,t}=f_{\sol,t}+W$ for all $t\in (-\infty,0)$. By the $\partial \overline{\partial}$-lemma, there are $\widetilde{\varphi}_{i}\in C^{\infty}(M\times [-|t_i|^{-1},0))$ such that $\widetilde{\omega}_{i,t} = \omega_{\sol,t}+\sqrt{-1}\partial \overline{\partial}\widetilde{\varphi}_{i,t}$ and $\int_{M_{\sol}}\widetilde{\varphi}_{i,-1} dg_{\sol}=0$. It then follows from elliptic estimates that $\widetilde{\varphi}_i \to 0$ in $C_{\operatorname{loc}}^{\infty}(M \times (-\infty,0))$, hence
\begin{equation*} \omega_{s}:=e^s \phi_{-s}^{\ast} \widetilde{\omega}_{i,-e^{-s}},\qquad \varphi_{0,s}:= e^s \phi_{-s}^{\ast} \widetilde{\varphi}_{i,-e^{-s}},
\end{equation*}
satisfy \eqref{eq:modifiedflow} and \eqref{eq:PCMA3}, respectively, $\omega_s=\omega_{\sol}+\sqrt{-1}\partial \overline{\partial} \varphi_{0,s}$, and \begin{equation*}
    \sup_{s\in [-2,k]} \| \omega_s-\omega_{\sol}\|_{C^{\lfloor \epsilon_c^{-1}\rfloor}(M,\omega_{\sol})} <\epsilon_c, \qquad \widetilde{\mathcal{D}}_{c,0}(\varphi_0,k) \leq \delta_c
\end{equation*}
for sufficiently large $i\in \mathbb{N}$,
where $\epsilon_c,\delta_c>0$ are as in Proposition \ref{prop:compactdecay} and $k\in \mathbb{N}$ satisfies $k \geq \epsilon_c^{-1}+2$.  
Repeatedly applying Proposition \ref{prop:compactdecay} as in the proof of Theorem \ref{thm:main1} yields $h_{j} \in H$ and $b_j \in \mathbb{R}$ such that 
\begin{equation*}
    \varphi_{j,s}:= \varphi_{j-1,s}-(u_{h_j}-u_{h_{j-1}})+b_je^{s-(j+k-1)} 
\end{equation*}
satisfies \eqref{eq:normalizedmodifiedflow} with $h$ replaced by $h_j$, and such that \begin{equation} \label{eq:decayincompactcase} \sup_{s\in [j+k-2,j+k]}\int_{M_{\sol}}\varphi_{j,s}^2 d\nu_{\sol} + \|h_j-h_{j-1}\|_H^2+|b_j|^2<Ce^{-\frac{a_c^2}{2}j},
\end{equation}
as well as
\begin{equation} \label{eq:coarseestimatecompactcase} \sup_{s\in [j+k-2,j+k]}\|\omega_s-\omega_{h_j}\|_{C^{\lfloor\epsilon_c^{-1}\rfloor}(M_{\sol},g_{\sol})} \leq \epsilon_c.
\end{equation}
By applying parabolic regularity and interpolation as in the proof of Theorem \ref{thm:main1}, we may moreover conclude 
\begin{equation*}
    \sup_{s\in [j+k-2,j+k]} \sup_{M_{\sol}} |\varphi_{j,s}| \leq e^{-\beta j}
\end{equation*}
for some $\beta=\beta(a_c) \in (0,\frac{a_c^2}{8})$. As in the proof of Theorem \ref{thm:main1}, we set $\mu_{m}(s):=\sum_{j=1}^m b_j e^{s-(j+k)}$ for $m\in \mathbb{N} \cup \{\infty\}$, and $h_{\infty}:= \lim_{j\to \infty}h_j$, so that 
\begin{equation} \label{eq:decayofmodscompact}
    \|h_{\infty}-h_m\|_H \leq Ce^{-\frac{a_c^2}{8}m}, \qquad |\mu_{\infty}(s)-\mu_m(s)|\leq C\sum_{j=m}^{\infty} e^{-\frac{a_c^2}{8}j} e^{s-j}.
\end{equation}
Setting $\tau_j:=-e^{-(j+k-1)}$, it follows that for all $t\in [\tau_{j-1},\tau_j]$, we have
\begin{equation} \label{eq:fastC0decaycompact}
    |\widetilde{\varphi}_{\infty,t}|\leq |t|\cdot \sup_{M_{\sol}}|u_{h_{\infty}}-u_{h_j}|+|t|\cdot |\mu_{\infty}(-\log|t|)-\mu_j(-\log|t|)| +\sup_{M_{\sol}}|\widetilde{\varphi}_{j,t}| \leq C|t|^{1+\beta}.
\end{equation}
On the other hand, \eqref{eq:coarseestimatecompactcase}, \eqref{eq:decayofmodscompact}, and Proposition \ref{prop:modelmetrics} \ref{modelmetrics3} imply that for some $T_0>0$, we have
\begin{equation} \label{eq:coarseestimatescompact2} 
\sup_{t\in [-T_0,0)}\|\widetilde{g}_{i,t}-g_{h_{\infty},t}\|_{C^0(M,g_{h_{\infty},t})} < 2\epsilon_c
\end{equation}
Because $(g_{h_{\infty},t})_{t\in (-\infty,0)}$ satisfies the Type I curvature bound \eqref{eq:TypeIcurvatureconjecturein}, we may apply Shi's derivative estimates, \eqref{eq:coarseestimatescompact2}, and \cite[Lemma A.2]{BamKlein} to obtain $C_k \in (1,\infty)$ such that
\begin{equation} \label{eq:derivativesofmetriccompact}
    \sup_{t\in [-T_0,0)} \sup_M |(\nabla^{g_{h_{\infty},t}})^k (\widetilde{g}_{i,t} -g_{h_{\infty},t})|_{g_{h_{\infty},t}}\leq C_k |t|^{-\frac{k}{2}}
\end{equation}
The claim then follows by combining \eqref{eq:fastC0decaycompact}, \eqref{eq:derivativesofmetriccompact}, Lemma \ref{lem: schauder estimates}, and Lemma \ref{lem:interpolation} as in the proof of Proposition \ref{prop:decay} \ref{prop:decayconclusion4}, where we take
\begin{equation*}
    \zeta:= \psi_i \circ (\zeta_1^{h_{\infty}})^{-1} \circ \phi_{\log \frac{1}{|t_i|}}.
\end{equation*}
\end{proof}
\begin{appendix}

 \section{Proof of Lichnerowicz's theorem} \label{section:appendixsymmetry}

We rewrite here the proof of \cite[Th\'eor\`em 1]{Lich}.

\begin{theorem}[\cite{Lich}] \label{thm:lich} If $(M,J,g)$ is a complete simply connected K\"ahler manifold and $X\in \mathfrak{X}(M)$ is a Killing field which is not real-holomorphic, then there are K\"ahler manifolds $(M',J',g'),(M'',J'',g'')$ such that $\dim_{\mathbb{C}}(M'')\geq 1$, $\Ric(g'')\equiv 0$, and there exists a holomorphic isometry 
\begin{equation*}
    (M,J,g) \cong (M'\times M'',J'+J'',g'+g'').
\end{equation*}
\end{theorem}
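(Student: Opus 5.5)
We decompose the Killing field via the complex structure and use the de Rham decomposition theorem. Set $A := \mathcal{L}_X J$, which is nonzero by assumption. Since $X$ is Killing, $\mathcal{L}_X g=0$, so $\nabla X$ is skew and $\mathcal{L}_X J = [\nabla X, J]$ (using $\nabla J=0$). Thus $A$ is the part of the skew endomorphism $\nabla X$ that anticommutes with $J$. The first step is to show that $A$ is parallel.

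For any Killing field, $\nabla^2 X = -R(X,\cdot)\cdot$, i.e. $\nabla_Y(\nabla X) = R(Y,X)$ as an endomorphism. On a Kähler manifold each curvature endomorphism $R(Y,X)$ commutes with $J$. Since $\nabla J=0$, this gives
\begin{equation*}
\nabla_Y A = [\nabla_Y(\nabla X),J]=[R(Y,X),J]=0.
\end{equation*}
So $A$ is a nonzero parallel skew endomorphism anticommuting with $J$.

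Next, we produce a parallel splitting. Consider the parallel symmetric endomorphism $A^2$, which is negative semidefinite and commutes with $J$. Its kernel $K$ and the orthogonal complement $K^\perp$ are both parallel and $J$-invariant, and $K^\perp\neq 0$ since $A\neq 0$. Because $M$ is complete and simply connected, the de Rham decomposition theorem gives an isometric product $M = M'\times M''$ with $TM'=K$ and $TM''=K^\perp$. Since $J$ preserves both distributions and is parallel, it splits as $J'+J''$ and each factor is Kähler. Thus the splitting is a holomorphic isometry.

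It remains to show that $M''$ is Ricci-flat, and this is the main point. On $K^\perp$ there are two parallel structures: $J''$ and $I:=A(-A^2)^{-1/2}$, the polar part of $A$. The map $I$ is parallel, orthogonal, skew, and anticommutes with $J$; we must check that it preserves $K^\perp$. From $I$ and $J''$ we build the $J''$-anti-linear parallel map, which yields a parallel holomorphic $(2,0)$-form $\sigma(Y,Z)=g(IY,Z)-\sqrt{-1}\,g(IJY,Z)$ on $M''$ that is nondegenerate. Then $\sigma^{m}$, with $2m=\dim_{\mathbb C}M''$, is a nonvanishing parallel holomorphic volume form. Hence the canonical bundle of $M''$ is flat, the restricted holonomy lies in $SU$, and $\Ric(g'')=0$. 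Alternatively, and more directly, one can use the curvature symmetry: since $A$ is parallel, $[R(Y,Z),A]=0$. Combining this with $[R(Y,Z),J]=0$ and $AJ=-JA$ shows that the trace $\operatorname{tr}(J\circ R(Y,Z))$ vanishes on $K^\perp$. That trace equals (up to a constant) $\Ric(JY,Z)$, so the Ricci form of $M''$ vanishes. I would present this trace argument as the key computation. Finally, $\dim_{\mathbb C}M''\geq 1$ because $K^\perp\neq 0$ is $J$-invariant.
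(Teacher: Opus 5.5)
Your proof is correct, and its skeleton is the paper's. Your $A=\mathcal{L}_XJ$ is the paper's $\Psi=\mathcal{L}_X\omega$ after lowering an index, and $\ker A^2=\ker A=\ker\Psi$ because $A$ is skew. Both arguments then apply de Rham along this kernel. (Strictly, $A$ is $J$ composed with twice the $J$-anticommuting part of $\nabla X$, not that part itself. This is harmless, since you only use that $A$ is skew, parallel, and anticommutes with $J$.)

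The differences lie in the two supporting steps.

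\textbf{Parallelity.} You use the Killing identity $\nabla_Y(\nabla X)=R(Y,X)$ together with $[R(Y,X),J]=0$. The paper uses that the flow of $X$ preserves $\nabla^g$, so $\nabla(\mathcal{L}_X\omega)=\mathcal{L}_X(\nabla\omega)=0$. These are the same fact, one infinitesimal and one integrated.

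\textbf{Ricci-flatness.} The paper shows $\Psi^{(1,1)}=0$ on $M''$ and takes the top power of the nondegenerate parallel form $\Psi^{(2,0)}$ as a parallel holomorphic volume form. Your first variant is the same argument. The polar normalization $I=A(-A^2)^{-1/2}$ is unnecessary there, because $g(A\cdot,\cdot)-\sqrt{-1}\,g(AJ\cdot,\cdot)$ is already a nondegenerate parallel $(2,0)$-form on $K^\perp$. Also, the preservation of $K^\perp$ that you flag is automatic, since $\operatorname{im}A=(\ker A)^\perp$ for skew $A$.

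Your preferred trace argument is genuinely different and more elementary. On $K^\perp$, $R=R(Y,Z)$ commutes with the invertible $A$ and with $J$, so
\[
\operatorname{tr}(JR)=-\operatorname{tr}(A^{-1}JRA)=-\operatorname{tr}(JR),
\]
and hence the Ricci form of $g''$ vanishes pointwise. This avoids the canonical bundle and the holomorphicity of parallel forms, and it does not need $\dim_{\mathbb{C}}M''$ to be even.

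What the paper's route (and your $I$-variant) buys in exchange is more structure. It produces a parallel holomorphic symplectic form on $M''$, which forces even dimension and in fact makes $M''$ hyperk\"ahler.
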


\begin{proof}[Proof of Theorem \ref{thm:lich}]

Because $\omega$ is $g$-parallel and the flow of $X$ preserves $\nabla^g$, it follows that $\Psi:=\mathcal{L}_{X}\omega \in \mathcal{A}^2(M)$ is parallel. For any $Y,Z\in \mathfrak{X}(M)$, we have 
\begin{equation*}
\Psi(Y,Z)=g((\mathcal{L}_{X}J)(Y),Z)=-g((\mathcal{L}_{X}J)(Z),Y).
\end{equation*}
Using
\begin{align*}
(\mathcal{L}_{X}J)(JY)=  -\mathcal{L}_{X}Y-J(\mathcal{L}_{X}J)(Y)-J^{2}\mathcal{L}_{X}Y
=  -J(\mathcal{L}_{X}J)(Y),
\end{align*}
we can thus compute
\begin{align*}
\Psi(JY,JZ)= g((\mathcal{L}_{X}J)(JY),JZ)
=  -g(J(\mathcal{L}_{X}J)(Y),JZ)
=  -g((\mathcal{L}_{X}J)(Y),Z)
=  -\Psi(Y,Z).
\end{align*}
Thus $\ker(\Psi)$ is $J$-invariant. Because $\ker(\Psi)$ is also
invariant under parallel transport, and because $M$ is simply connected,
there exist K\"ahler manifolds $(M',J',g')$ and $(M'',J'',g'')$ and a holomorphic isometry
\begin{equation*}
(M,J,g)\cong (M'\times M'',J'+J'',g'+g''),
\end{equation*}
such that $TM'=\ker(\Psi)$, and
where $\Psi$ is a nondegenerate 2-form when restricted to $TM''$. Because $X$ is not real-holomorphic, it follows that $\ker(\Psi)\neq TM$, hence $\dim_{\mathbb{C}}(M'')\geq 1$. 
Restricting $\Psi$ to $M''$ and then identifying
$\Psi$ with its $\mathbb{C}$-bilinear extension to $T_{\mathbb{C}}M''=TM''\otimes_{\mathbb{R}}\mathbb{C}$,
we write 
\begin{equation*}
\Psi=\Psi^{(2,0)}+\Psi^{(1,1)}+\Psi^{(0,2)},
\end{equation*}
where $\Psi^{(2,0)}\in\mathcal{A}^{2,0}(M'')$, $\Psi^{(0,2)}\in\mathcal{A}^{0,2}(M'')$,
and $\Psi^{(1,1)}\in\mathcal{A}^{1,1}(M'')$. Because
\begin{align*}
\Psi^{(2,0)}+\Psi^{(1,1)}+\Psi^{(0,2)}= \Psi
=  -\Psi(J''\cdot,J''\cdot) =\Psi^{(2,0)}-\Psi^{(1,1)}+\Psi^{(0,2)},
\end{align*}
we obtain $\Psi^{(1,1)}=0$. Because $\Psi$ is real, we conclude
that $\Psi^{(0,2)}=\overline{\Psi^{(2,0)}}$, hence $\Psi=2\text{Re}(\Psi^{(0,2)})$
and
\begin{align*}
\frac{1}{2}\left(\Psi-\sqrt{-1}\Psi(J''\cdot,\cdot)\right)= \frac{1}{2}(\Psi^{(2,0)}+\Psi^{(0,2)})-\frac{\sqrt{-1}}{2}\left(\sqrt{-1}\Psi^{(2,0)}-\sqrt{-1}\Psi^{(0,2)}\right)
=  \Psi^{(2,0)},
\end{align*}
hence $\Psi^{(2,0)}$ is also a nondegenerate
parallel $(2,0)$-form on $M''$. This implies that $\dim_{\mathbb{C}}(M''):=k$ is even, and that $\Theta:=(\Psi^{(2,0)})^{\frac{k}{2}}\in\mathcal{A}^{k,0}(M'')$
is a parallel holomorphic volume form, hence $(\sqrt{-1})^{k^{2}}\Theta\wedge\overline{\Theta}=\lambda(\omega'')^{k}$
for some $\lambda >0$, and $\Ric(g'') \equiv 0$.
\end{proof}

\end{appendix}
\bibliographystyle{alpha}
\bibliography{references-shrinkers}
\end{document}